\documentclass[10pt]{article}


\usepackage{amssymb,amsbsy,graphicx,amsmath,amsthm,amscd,psfig,diagrams} 

\addtolength{\voffset}{-20pt} \addtolength{\textheight}{40pt}
\addtolength{\hoffset}{-40pt} \addtolength{\textwidth}{80pt}

\parskip 7pt
\parindent 0pt

\usepackage{times}

\begin{document}

\title{On the testability and repair of hereditary hypergraph properties}
\author{Tim Austin \and Terence Tao}
\date{}

\maketitle


\newenvironment{nmath}{\begin{center}\begin{math}}{\end{math}\end{center}}

\newtheorem{theorem}{Theorem}[section]
\newtheorem{thm}{Theorem}[section]
\newtheorem{lemma}[thm]{Lemma}
\newtheorem{lem}[thm]{Lemma}
\newtheorem{prop}[thm]{Proposition}
\newtheorem{proposition}[thm]{Proposition}
\newtheorem{corollary}[thm]{Corollary}
\newtheorem{cor}[thm]{Corollary}
\newtheorem{conj}[thm]{Conjecture}
\newtheorem{definition}[thm]{Definition}
\newtheorem{dfn}[thm]{Definition}
\newtheorem{example}[thm]{Example}
\newtheorem{examples}[thm]{Examples}
\newtheorem{remark}[thm]{Remark}
\newtheorem{prob}[thm]{Problem}
\newtheorem{ques}[thm]{Question}
\newtheorem{assertion}[thm]{Assertion}


\newcommand{\B}{\mathcal{B}}
\newcommand{\calH}{\mathcal{H}}
\renewcommand{\P}{\mathcal{P}}
\newcommand{\W}{\mathcal{W}}
\newcommand{\X}{\mathcal{X}}
\newcommand{\D}{\mathcal{D}}
\newcommand{\Y}{\mathcal{Y}}
\newcommand{\Q}{\mathcal{Q}}
\newcommand{\I}{\mathcal{I}}
\renewcommand{\X}{\mathcal{X}}
\newcommand{\calZ}{\mathcal{Z}}
\renewcommand{\L}{\Lambda}

\renewcommand{\Pr}{\operatorname{Pr}}
\newcommand{\Hom}{\operatorname{Inj}}
\newcommand{\Col}{\operatorname{Col}}
\newcommand{\id}{\operatorname{id}}
\renewcommand{\th}{{\operatorname{th}}}
\newcommand{\pt}{\operatorname{pt}}
\newcommand{\supp}{{\operatorname{supp}}}
\newcommand{\Stab}{{\operatorname{stab}}}

\newcommand{\E}{\mathbf{E}}
\newcommand{\N}{\mathbf{N}}
\newcommand{\R}{\mathbf{R}}
\newcommand{\Z}{\mathbf{Z}}
\newcommand{\Prob}{\mathbf{P}}
\newcommand{\eps}{\varepsilon}


\newcommand{\bb}[1]{\mathbb{#1}}
\renewcommand{\rm}[1]{\mathrm{#1}}
\renewcommand{\it}[1]{\mathit{#1}}
\renewcommand{\cal}[1]{\mathcal{#1}}
\renewcommand{\bf}[1]{\mathbf{#1}}
\renewcommand{\frak}[1]{\mathfrak{#1}}

\begin{abstract}
Recent works of Alon-Shapira \cite{AloSha2} and R\"odl-Schacht \cite{rs2} have demonstrated that every hereditary property of undirected graphs or hypergraphs is testable with one-sided error; informally, this means that if a graph or hypergraph satisfies that property ``locally'' with sufficiently high probability, then it can be perturbed (or ``repaired'') into a graph or hypergraph which satisfies that property ``globally''.

In this paper we make some refinements to these results, some of which may be surprising.  In the positive direction, we strengthen the results to cover hereditary properties of multiple directed polychromatic graphs and hypergraphs.  In the case of undirected graphs, we extend the result to continuous graphs on probability spaces, and show that the repair algorithm is ``local'' in the sense that it only depends on a bounded amount of data; in particular, the graph can be repaired in a time linear in the number of edges. We also show that local repairability also holds for monotone or partite hypergraph properties (this latter result is also implicitly in \cite{ishi}).  In the negative direction, we show that local repairability breaks down for directed graphs, or for undirected $3$-uniform hypergraphs.  The reason for this contrast in behavior stems from (the limitations of) Ramsey theory.
\end{abstract}

\parskip 0pt

\tableofcontents

\parskip 7pt

\section{Introduction}

The purpose of this paper is to investigate various generalisations of some recent graph and hypergraph property testing results of Alon-Shapira\cite{AloSha2}, R\"odl-Schacht\cite{rs2}, and others, when the graphs and hypergraphs are allowed to become coloured, non-uniform, directed and/or containing loops.  We also investigate a stronger property than local testability of such properties, which we call ``local repairability''.  Very briefly, our conclusions will be that the local testability results of R\"odl and Schacht extend to very general settings, but that the stronger local repairability results of Alon and Shapira are largely restricted to the setting of undirected graphs.

\subsection{Previous results}\label{prevsec}

Before discussing the general setting of coloured, non-uniform, directed hypergraphs in which our main results will take place, we first discuss the more familiar setting of monochromatic, uniform, undirected graphs and hypergraphs, which is the focus of most of the previous literature on this subject.  

We begin with the property testing theory for (monochromatic, undirected) graphs $G = (V,E)$, where $V$ is a finite vertex set and $E \subset \binom{V}{2}$ is\footnote{We use $\binom{V}{k} := \{e \subset V: |e|=k\}$ to denote the $k$-element subsets of $V$, and $|e|$ to denote the cardinality of a finite set $e$.} a set of edges in $V$.  One can also view such a graph as a map\footnote{The notational conventions in this section may seem somewhat odd, but will become clearer in the next section when we generalise these notions to coloured, non-uniform, and directed hypergraphs.  The subscript $2$, in particular, has to do with the $2$-uniform nature of graphs, i.e. that all edges consist of two vertices; the set $\{0,1\}$, meanwhile, is there to emphasise the monochromatic nature of the graph.} $G_2: \binom{V}{2} \to \{0,1\}$, where $G_2(\{v,w\})$ equals $1$ when $\{v,w\}$ lies in $E$ and equals zero otherwise.  The set of all graphs on a fixed vertex set $V$ will be denoted $2^{\binom{V}{2}}$.

A \emph{graph property} $\P$ is an assertion which holds true for some graphs and not for others.  More formally, such a property assigns to each vertex set $V$ a collection $\P^{(V)} \subset \{0,1\}_2^{(V)}$ of graphs on $V$, defined as the set of graphs on $V$ that obey $\P$.  Thus, for instance, if $\P$ is the property of being bipartite, then $\P^{(V)}$ is the collection of bipartite graphs on $V$.

We will restrict attention to two special types of graph properties, namely the monotone and hereditary properties.  A graph property $\P$ is \emph{hereditary} if, for every injection $\phi: V \to W$ between two finite sets $V, W$, and any graph $G \in \P^{(W)}$ on $W$ obeying $\P$, the pullback graph (or \emph{induced graph}) $\{0,1\}_2^{(\phi)}(G)$ on $V$ (defined by declaring an edge $\{v_1,v_2\}$ to lie in $\{0,1\}_2^{(\phi)}(G)$ if and only if $\{\phi(v_1),\phi(v_2)\}$ lies in $W$) also obeys $\P$; in other words, the pullback map $\{0,1\}_2^{(\phi)}$ maps $\P^{(W)}$ to $\P^{(V)}$.  In particular, this implies that the graph property is invariant with respect to graph isomorphism, and is also preserved by passing from a graph $G \in 2^{\binom{V}{2}}$ to an induced subgraph $G \downharpoonright_W \in 2^{\binom{V}{2}}$ for any $W \subset V$.  A \emph{monotone} graph property is a hereditary graph property with the additional property that if one takes a graph in $\P^{(V)}$ and removes one or more edges from it, then the graph continues to have the property $\P$.

\begin{example} The properties of being $4$-colourable, bipartite, or triangle-free are monotone (and hence hereditary).  Given any $k > 0$, the properties of being connected, or of avoiding either the empty graph on $k$ vertices or the complete graph on $k$ vertices are hereditary (but not monotone).  The property of having an odd number of edges, or containing a Hamiltonian cycle, are neither monotone nor hereditary.  It is not hard to show that a graph property $\P$ is monotone if and only if there is a (possibly infinite) family ${\mathcal F}$ of ``forbidden subgraphs'', such that a graph $G$ obeys $\P$ if and only if does not have any of the graphs in ${\mathcal F}$ as subgraphs, while $\P$ is hereditary if and only if there is a family of ${\mathcal F}$ of ``forbidden induced subgraphs'' such that a graph $G$ obeys $\P$ if and only if it does not have any of the graphs in ${\mathcal F}$ as \emph{induced} subgraphs.  For further discussion of monotone and hereditary graph properties, see \cite{AloSha2}.
\end{example}

We now come to the key notion of \emph{testability}.

\begin{definition}[Testability for graph properties]\label{testgraph}\cite{RS}  A graph property $\P$ is said to be \emph{locally testable with one-sided error}, or \emph{testable} for short, if for every $\eps > 0$ there exists $N \geq 1$ and a real number $\delta > 0$ with the following property: whenever $G = (V,E)$ is a graph with $N \leq |V| < \infty$ which locally almost obeys $\P$ in the sense that
\begin{equation}\label{voom}
\frac{1}{|\binom{V}{N}|} |\{ W \in \binom{V}{N}: G \downharpoonright_W \in \P^{(W)} \}| \geq 1-\delta
\end{equation}
(thus most $N$-element induced subgraphs of $G$ obey $\P$), then there exists $G' = (V,E')$ obeying $\P$ which is close to $G$ in the sense that $\frac{1}{\binom{V}{2}} |E \Delta E'| \leq \eps$.
\end{definition}

\begin{remark} See \cite{AloSha2} for a discussion as to why the above concept is equivalent to testability with one-sided error, as defined in \cite{RS}.
\end{remark}

The following is the main result of \cite{AloSha2}:

\begin{theorem}\label{as-basic}\cite{AloSha2} Every hereditary graph property is testable.
\end{theorem}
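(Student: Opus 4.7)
The plan is to establish Theorem \ref{as-basic} via an \emph{induced removal lemma for hereditary properties}: for every hereditary $\P$ and every $\eps > 0$, there exist $h_0 = h_0(\eps,\P) \in \N$ and $c_0 = c_0(\eps,\P) > 0$ such that any graph $G = (V,E)$ which is $\eps$-far from $\P^{(V)}$ in edit distance contains at least $c_0 |V|^h$ induced copies of some forbidden graph $H$ on $h \leq h_0$ vertices with $H \notin \P$. From such a removal lemma, testability follows immediately by contrapositive: take $N$ much larger than $h_0$ and $\delta$ much smaller than $c_0$; if the putative $G$ of Definition \ref{testgraph} were $\eps$-far from $\P^{(V)}$, the many induced copies of $H$ would force a uniformly random $N$-vertex subset $W \in \binom{V}{N}$ to contain an induced copy of $H$ with probability exceeding $1-\delta$, and the hereditary hypothesis would then give $G \downharpoonright_W \notin \P^{(W)}$ for all such $W$, contradicting \eqref{voom}.

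To prove the removal lemma, the strategy is to apply a strong (iterated) form of the Szemer\'edi regularity lemma to $G$, yielding an equipartition $V = V_1 \cup \cdots \cup V_k$ with $k = k(\eps,\P)$ bounded, edge-densities $d_{ij}$ between parts, and the guarantee that an overwhelming majority of pairs $(V_i,V_j)$ are super-regular with parameters prescribed in advance. The density matrix $(d_{ij})$ together with these super-regular pairs define an idealised ``reduced model'' whose independent-Bernoulli blowup produces a random graph on $V$ mimicking $G$ at all bounded scales, by the induced counting lemma.

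One then examines whether the reduced model typically obeys $\P$. If a uniformly chosen $k$-vertex sample from the Bernoulli blowup lies in $\P$ with probability close to $1$, then a careful rounding of the densities $d_{ij}$ produces a graph $G' \in \P^{(V)}$ within edit distance $\eps$ of $G$, contradicting $\eps$-farness. Otherwise, a positive fraction of $k$-samples fails $\P$, and by pigeonhole over the finitely many graphs on at most $k$ vertices, some fixed forbidden $H \notin \P$ occurs as an induced subgraph in a positive fraction of these samples. The induced counting lemma then yields at least $c_0 |V|^h$ induced copies of $H$ in $G$ itself, delivering the removal lemma.

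The main obstacle is the first alternative: hereditary properties are not in general preserved under blowups, so one cannot simply blow up a single sample that lies in $\P$ to produce the required $G'$. Handling this correctly is what forces the use of the \emph{strong} regularity lemma (with nested partitions and two distinct regularity parameters $\eta \gg \eta'$) rather than the ordinary one; the fine partition's statistics on each block of the coarse partition must align with the ``typical sample in $\P$'' tightly enough that a single derandomised rounding of $(d_{ij})$ can be patched into an honest member of $\P^{(V)}$. This coupling between local (sample-level) and global (whole-graph) membership in $\P$ is the technical heart of the Alon-Shapira argument.
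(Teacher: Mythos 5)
The paper does not reprove Theorem \ref{as-basic}: it cites \cite{AloSha2} for it, and its own methods establish the strictly stronger Theorem \ref{lgr} by an entirely different, infinitary route (exchangeable recipes, the structure Theorem \ref{struct}, and the discretisation Propositions \ref{disc-ident} and \ref{disc-ident2}). Your proposal instead reconstructs the original Alon--Shapira finitary strategy: an induced removal lemma for hereditary properties, proved via strong Szemer\'edi regularity, with testability following by contrapositive. That skeleton is correct, and you have put your finger on the right difficulty --- hereditary properties are not closed under blowups, so a reduced model whose random samples typically lie in $\P$ does not obviously yield a global $G' \in \P^{(V)}$.

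The gap is in the claim that the strong regularity lemma alone resolves this. What it cannot supply is a \emph{consistent colouring of the within-cluster pairs}: after regularising, one must decide, for each coarse part $V_i$, whether and how to place edges inside $V_i$, and for merely hereditary (non-monotone) $\P$ this decision must cohere with the cross-part pattern dictated by the typical sample in $\P$. The fine sub-clusters inside a single coarse part have pairwise densities that need not be uniform, and no amount of nesting or further refinement forces them to be. Alon and Shapira use Ramsey's theorem at precisely this juncture: inside each coarse part they colour pairs of fine sub-clusters by approximate density class and pass to a monochromatic subcollection, and that monochromaticity is what legislates the intra-cluster behaviour and lets the rounded graph be certified as a member of $\P$. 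The present paper stresses this dependence repeatedly: it records that the Alon--Shapira proof uses the regularity lemma three times together with Ramsey's theorem for graphs; its Example \ref{triangle-test} notes that when $\P$ is not monotone the diagonal case must be coloured using a colour supplied by Ramsey's theorem; and its own proof of Theorem \ref{lgr} invokes Corollary \ref{ramsey} at the analogous point in Section \ref{tech}. R\"odl and Schacht \cite{rs2} do avoid Ramsey, but only by arguing indirectly by compactness and contradiction, sacrificing the explicit removal lemma your outline leans on. As written, your ``derandomised rounding'' has no rule for the diagonal pairs, and the first alternative of the case analysis does not close.
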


See \cite{AloSha2} for a history of this result and for a survey of the many prior results in this direction, including the earlier result in \cite{AloSha} that every monotone graph property is testable.  The proof of this theorem is rather intricate, involving repeated application of the Szem\'eredi regularity lemma, as well as Ramsey's theorem.

Theorem \ref{as-basic} has been generalised in two different ways.  Firstly, the work of R"odl and Schacht \cite{rs2} found a somewhat simpler (but more indirect) argument, avoiding Ramsey's theorem and using only a single instance of the (hypergraph) regularity lemma, which extended Theorem \ref{as-basic} to the setting of hypergraph properties for $k$-uniform hypergraphs $G = (V,E)$, where $k \geq 2$ and $E \subset \binom{V}{k}$.  It is straightforward to extend all of the above definitions to the $k$-uniform hypergraph setting, basically by replacing $2$ with $k$ in all the above definitions; we omit the details (and in any case, we will also make further generalisations of these definitions in the next section).

The main result of \cite{rs2} can now be stated as follows:

\begin{theorem}\label{rs-basic}\cite{rs2} Let $k \geq 2$.  Then every hereditary $k$-uniform hypergraph property is testable.
\end{theorem}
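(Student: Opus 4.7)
The plan is to argue by contrapositive: assuming $G = (V,E)$ is $\eps$-far from $\P$, meaning that no modification of at most $\eps\binom{|V|}{k}$ potential edges makes $G$ obey $\P$, I will show that for some $N = N(\eps,\P)$ a positive proportion $\delta = \delta(\eps) > 0$ of the $N$-vertex induced sub-hypergraphs of $G$ also fail $\P$, which is the negation of the natural $k$-uniform analogue of \eqref{voom}. Since $\P$ is hereditary, it is characterised by a (possibly infinite) family $\mathcal{F}$ of forbidden induced sub-hypergraphs, so that $H$ obeys $\P$ exactly when no $F \in \mathcal{F}$ embeds into $H$ as an induced sub-hypergraph. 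The central obstacle, unlike the monotone case \cite{AloSha} where a single triangle-style removal lemma suffices, is that $\mathcal{F}$ may be infinite.

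I would handle this in two stages. First, apply the $k$-uniform hypergraph regularity lemma of R\"odl--Skokan and Nagle--R\"odl--Schacht (or Gowers) to partition the lower-level structures on $V$ into regular polyads, and replace $G$ by a cleaned approximant $G^*$ whose edges on each $k$-polyad are either completely present or completely absent, determined by a template of complexity bounded in terms of the regularity parameters. Standard hypergraph regularity estimates yield $G^*$ within $\eps/2$ of $G$ in edit distance, so $G^*$ still fails $\P$ and must therefore contain some $F \in \mathcal{F}$ as an induced sub-hypergraph. Because $G^*$ is governed by a template of bounded size, a pigeonhole/sampling argument shows that one can in fact arrange $|V(F)|$ to be bounded by a function of the regularity parameters, permitting the reduction to a \emph{finite} subfamily $\mathcal{F}_0 \subset \mathcal{F}$.

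In the second stage, invoke the induced hypergraph counting lemma --- the standard companion to the hypergraph regularity lemma --- for the finite family $\mathcal{F}_0$. Since $G$ remains $\Omega(\eps)$-far from being $\mathcal{F}_0$-free, it must contain a positive density $\delta' = \delta'(\eps)$ of induced copies of some $F \in \mathcal{F}_0$. Taking $N$ larger than the maximum vertex count among members of $\mathcal{F}_0$, a positive proportion of $N$-vertex subsets of $V$ induce a sub-hypergraph containing a copy of this forbidden $F$, and hence, since $\P$ is hereditary, fail $\P$. The main obstacle is the first stage: the reduction from the infinite forbidden family $\mathcal{F}$ to the finite $\mathcal{F}_0$ must preserve the edit-distance hypothesis uniformly over all members of $\mathcal{F}$. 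This is precisely the delicate step that the R\"odl--Schacht argument in \cite{rs2} effects via regularity, bypassing the Ramsey-theoretic reduction used in the Alon--Shapira graph proof.
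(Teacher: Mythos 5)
Your sketch follows the spirit of the original finitary R\"odl--Schacht argument (regularity plus counting lemma), which is a genuinely different route from this paper: here Theorem~\ref{rs-basic} is only cited, and its generalisation Theorem~\ref{rs-thm-dir} is proved by an infinitary detour --- convert the hypergraph to an exchangeable $K$-recipe via the correspondence principle (Proposition~\ref{tprop}), apply the structure theorem for exchangeable random hypergraphs (Theorem~\ref{struct}) in place of regularity, and then establish testability through a discretisation-of-the-identity result (Proposition~\ref{disc-ident}). The Alon--Shapira finitisation trick (Remark~\ref{sat}) is what replaces the ``reduce to a finite subfamily'' step you are worried about.

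Unfortunately, your sketch has a real gap exactly at the step you flag as ``delicate,'' and the sentence that is supposed to close it does not do so. The claim that ``a pigeonhole/sampling argument shows that one can in fact arrange $|V(F)|$ to be bounded by a function of the regularity parameters'' is false as stated: the regularity parameters depend only on $\eps$, but the certificate size $N$ genuinely must depend on $\P$. Take, for example, the hereditary property that forbids a single induced $K_{n,n}$ for a fixed huge $n$. A complete bipartite graph on $2m$ vertices with $m \gg n$ is $\eps$-far from $\P$ and is already ``reduced'' (a two-cell template, so regularity is trivial), yet every induced subgraph on fewer than $2n$ vertices obeys $\P$. So no bound on $|V(F)|$ in terms of $\eps$ alone can exist; the reduction to a finite $\mathcal{F}_0$ must go through the specific forbidden family, and the pigeonhole observation that many vertices of a large $F$ land in one cell does not help, because $F$ with a vertex removed need not be forbidden. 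This is precisely what Ramsey's theorem is doing in Alon--Shapira and what the compactness/double-regularisation scheme does in \cite{rs2}; neither is a pigeonhole argument. There is also a smaller inaccuracy: a cleaned hypergraph $G^*$ whose edges on each $k$-polyad are ``completely present or completely absent'' cannot in general be within $\eps/2$ of $G$ in edit distance --- rounding a polyad of density $1/2$ to $0$ or $1$ changes half of its $k$-tuples. The standard cleaning deletes only irregular, sparse, or intra-cell polyads and keeps the quasirandom edge distribution inside the surviving ones.
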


This builds upon a number of earlier hypergraph results which can be interpreted as testability results, such as the hypergraph removal lemma \cite{Gow1}, \cite{rs}, \cite{rodl2} or the induced $3$-uniform hypergraph removal lemma in \cite{KohNagRod}.  We refer the reader to \cite{rs2} for further references and discussion.

There is however a different way to generalise Theorem \ref{as-basic}, in which we stay in the setting of graphs, but instead replace testability by a stronger property which we call \emph{local repairability}, and which is analogous to the notion of \emph{local correctability} in the theory of error-correcting codes.  (Actually, we will eventually discuss two such properties, \emph{strong local repairability} and \emph{weak local repairability}, but we will only discuss the weak one for now.)  For simplicity we now restrict attention to graphs rather than $k$-uniform hypergraphs.

To motivate this concept, recall that if $G$ is a graph that locally almost obeys a testable property $\P$, then it is guaranteed that there is a way to modify a small number of edges of $G$ to create a graph $G'$ which truly does obey $\P$.  We will refer to the act of replacing $G$ by $G'$ as \emph{repairing} the graph $G$.  However, note that no \emph{algorithm} is provided in order to actually execute this repair; one can of course perform a brute force search among all possible candidate graphs $G'$, but this will take a time which is at least exponential in the number of vertices and is thus impractical.  It is thus of interest to determine whether a testable graph (or hypergraph) property $\P$ also comes with an ``efficient'' algorithm that can repair a graph $G$ quickly.  We will focus on a rather strong notion of efficiency here, namely that of a \emph{local} repair algorithm, in which any edge of the repaired graph $G'$ can be decided upon using only a \emph{bounded} number of queries to the original graph $G$ (which in particular implies that the entire graph can be repaired in time linear in the total number $\binom{|V|}{2}$ of possible edges).  More precisely, we seek repair algorithms which are given by a \emph{local modification rule}, which we will define shortly.  For technical reasons we will have to delete a small set $A$ of ``training'' vertices in order to perform this rule; thus the rule will start with a graph $G = (A \uplus V, E)$ almost obeying $\P$, where $A \uplus V$ is the disjoint union of a large vertex set $V$ and a small set $A$ of training vertices, and return a repaired graph $G' = (V,E')$ which obeys $\P$ exactly, but for which the training vertices $A$ have been deleted.

To motivate the concept of a local modification rule, let us discuss (somewhat informally) a specific example of repairability, in which $\P$ is the property of being a complete bipartite graph.  (For instance, one could think of the vertex set of a graph obeying $\P$ as a collection of positive and negative charges, with an edge between two vertices if they have opposite charge.)  Now consider a large graph $G_0 = (A \uplus V,E_0)$ obeying $\P$, and ``corrupt'' it to create a new graph $G = (A \uplus V,E)$ formed by adding or removing a small fraction of the edges to $E_0$.  (For instance, one could imagine a large collection of real-world charged particles $A \uplus V$, with an edge between two vertices $v,w$ in $E$ if the two particles are observed to attract each other in some (mostly reliable) measurement; in this case, the corruption between $E$ and the ``true'' graph $E_0$ would be caused by measurement error.)  Then $G$ approximately obeys $\P$.  If one is given $G$ (but not $G_0$), we now consider the task of repairing $G$ to form a graph $G' = (V,E')$ close to $G$ which obeys $\P$.  (Ideally, we would like $G'$ to recover the original uncorrupted graph $G_0$, but there is not enough information given to do so exactly, and will settle for obtaining a slightly different repaired graph $G'$ which is still complete and bipartite.)  Continuing our measurement example, this task would correspond to that of using the measured attraction and repulsion data to assign "charges" to various particles, thus attempting to correct for corrupted measurements and giving a prediction as to what the ``true'' attraction between any two particles are.

To do this, we first look at the restriction $G\downharpoonright_A$ of $G$ to the training vertices $A$.  If the training vertices were a sufficiently representative subset of the whole set $A \uplus V$ (which, in practice, we will ensure with high probability by drawing $A$ randomly from the vertex set of $G$), then we expect $G \downharpoonright_A$ to be very close to a complete bipartite graph.  By performing a brute force search on $A$ only, we can then find a complete bipartite graph $G'_A := (A, E'_A)$ on $A$ which is very close to $G\downharpoonright_A$ (and thus, presumably, also close to $G_0\downharpoonright_A$.  Note that while a brute force search on all of $V$ is exponentially expensive, if $A$ is bounded size then it will only take a bounded amount of time to locate $G'_A$.  Let $A = A_1 \uplus A_2$ be the partition of $A$ corresponding to the complete bipartite graph $G'_A$.  (This partition is only unique up to interchange of the labels $1,2$, but this will not concern us.)  We can then use this partition to create a partition $V = V_1 \uplus V_2$ of the larger vertex set $V$, by the following rule: a vertex $v$ will lie in $V_1$ if it is connected to more vertices of $A_2$ than to $A_1$, and in $V_2$ otherwise.  (Informally, $G'_A$ has ``decided'' which of the training vertices in $A$ are positively charged or negatively charged, and then one tests those charged particles against any other vertex $v \in V$ to decide whether $v$ should be classified as positive or negative only.)  We then define $G' = (V,E')$ to be the complete bipartite graph between $V_1$ and $V_2$.  Clearly $G'$ obeys $\P$; and it is intuitively clear that if $G$ is sufficiently close to $G_0$, and $A$ is sufficiently large (but still bounded) and drawn randomly from $G$, then $G'$ will be close to $G$ with high probability.  (In particular, if $G$ was exactly equal to $G_0$, one easily sees that $G'$ is equal to $G$.)

Now we make these concepts more precise. 

\begin{definition}[Local modification rule]\label{lacma}  A \emph{local modification rule} is a pair $(A,T)$, where $A$ is a finite set, and $T: 2^{\binom{A \uplus [2]}{2}} \to \{0,1\}$ is a map from graphs on $A \cup [2]$ to $\{0,1\}$, where $[2] := \{1,2\}$, which is symmetric with respect to interchange of the $1$ and $2$ labels.  Given any vertex set $V$, we define a \emph{modification map} $\overline{T}^{(V)}: 2^{\binom{A \uplus V}{2}} \to 2^{\binom{V}{2}}$ by declaring an edge $(v_1,v_2)$ in $V$ to lie in $\overline{T}^{(V)}(G)$ for some $G \in 2^{\binom{A \uplus V}{2}}$ if and only if $T( \{0,1\}_2^{(\id_A \oplus \phi)}(G) ) = 1$, where $\id_A \oplus \phi: A \uplus [2] \to A \uplus \{v_1,v_2\}$ is the map which is the identity on $A$ and maps $1,2$ to $v_1,v_2$ respectively.   
\end{definition}

\begin{example} The rule $G \mapsto G'$ defined in the preceding discussion can be viewed as a local modification rule, in which $T(G)$ for $G \in 2^{\binom{A \uplus [2]}{2}}$ is defined by first constructing the graph $G'_A$ and the partition $A=A_1 \cup A_2$ as above, and then $[2]$ is partitioned into $V_1 \cup V_2$, and $T(G) = 1$ if $1, 2$ lie in distinct partition classes, and $T(G)=0$ otherwise.
\end{example}

\begin{remark}\label{lacma2} Informally, a local modification rule only has to query $G$ between vertices in $\{v_1,v_2\} \cup A$ to decide how $v_1$ and $v_2$ are connected in $G' := \overline{T}^{(V)}(G)$; furthermore; all pairs $\{v_1,v_2\}$ are ``treated equally'' in the sense that the same modification function $T$ is applied to each of them.  There is also an equivalent category-theoretic definition of a local modification rule $(A,T)$, namely it is a finite set $A$ together with a \emph{natural transformation} $\overline{T}$, or more precisely a collection of maps $\overline{T}^{(V)}: 2^{\binom{A \uplus V}{2}} \to 2^{\binom{V}{2}}$ for every vertex set $V$ which obeys the natural transformation property
$$ \overline{T}^{(W)} \circ \{0,1\}_2^{(\id_A \oplus \phi)} = \{0,1\}_2^{(\phi)} \circ \overline{T}^{(V)}$$
for all injections $\phi: W \to V$ between two finite vertex sets $V, W$, where $\id_A \oplus \phi: A \cup W \to A \cup V$ is the extension of $\phi$ which is the identity on $A$.  This alternate characterisation of a local modification rule will be more convenient for us in later sections when we generalise to hypergraphs which may be multicoloured, non-uniform, directed, and/or infinite.
\end{remark}

\begin{definition}[Weak local repairability]\label{wlr}  Let ${\mathcal P}$ be a graph property.  We say that ${\mathcal P}$ is \emph{weakly locally repairable} if for every $\eps > 0$ there exists a finite set $A$, an integer $N \geq |A|+2$, and a $\delta > 0$ with the following property: if $G = (V,E)$ is a graph with $N \leq |V| < \infty$ which almost obeys ${\mathcal P}$ in the sense of \eqref{voom}, then there exists an embedding of $A$ in $V$ (thus identifying $V$ with $A \uplus V'$ for some $|V'| = |V| - |A|$) and a local modification rule $(A,T)$ such that $G' = (V',E') := T^{(V')}(G)$ obeys ${\mathcal P}$, and $G'$ is close to $G$ in the sense that 
$$|E' \Delta (E\downharpoonright_{V'})| \leq \eps |\binom{V'}{2}|$$
where $E\downharpoonright_{V'} := E \cap \binom{V'}{2}$.
\end{definition}

\begin{remark} Observe that weak local repairability stronger than local testability in the sense that the repaired graph $G'$ is given from $G$ by a local modification rule, but weaker because one had to remove a small number of vertices; see Remark \ref{easy-rem} for further discussion.
Also, observe that the embedding of $A$ in $V$ is not specified; also, the rule $(A,T)$ is only guaranteed to produce a graph $G'$ obeying ${\mathcal P}$ for the chosen input $G$.  Later on we shall introduce the notion of \emph{strong local repairability}, which roughly speaking is similar to weak local repairability, but the embedding of $A$ in $V$ is now chosen at random (and the algorithm has a small probability of failure), the rule $(A,T)$ now entails the property ${\mathcal P}$ for \emph{all} choices of input graph $G$, rather than being permitted to depend on $G$, and furthermore the graph $G$ is allowed to be infinite (or even ``continuous'') rather than just finite (or discrete).  However, to keep the discussion simple for now, we will not formally define strong local repairability until later sections.
\end{remark}

An inspection of the arguments in \cite{AloSha2} then reveals the following strengthening of Theorem \ref{as-basic}:

\begin{theorem}\label{as-2} Every hereditary graph property is weakly locally repairable.
\end{theorem}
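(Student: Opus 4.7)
My plan is to refine the proof of Theorem \ref{as-basic} from \cite{AloSha2} so that the modification sending $G$ to a repaired graph in $\P$ is realised by a local modification rule in the sense of Definition \ref{lacma}. Fix $\eps > 0$. The Alon--Shapira argument produces constants $\eta, \delta > 0$, a bound $t$ on the number of clusters, and an $N$ such that any graph $G=(V,E)$ on at least $N$ vertices satisfying (\ref{voom}) admits an $\eta$-regular partition $V = V_1 \uplus \cdots \uplus V_t$ together with a Boolean template $\tau:[t]^2 \to \{0,1\}$ (chosen by the Ramsey-theoretic step of \cite{AloSha2}) for which the ``cleaned'' graph $G^\ast$, obtained by placing the complete bipartite graph on each pair $(V_i,V_j)$ with $\tau(i,j)=1$ and the empty bipartite graph otherwise, satisfies $G^\ast \in \P^{(V)}$ and $|E\triangle E(G^\ast)| \leq \tfrac{1}{2}\eps \binom{|V|}{2}$. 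By merging or refining classes if necessary, I would also arrange that the reduced-graph density rows $(d_{ij})_{j=1}^t$ and $(d_{i'j})_{j=1}^t$ are either identical (in which case $\tau(i,\cdot) = \tau(i',\cdot)$) or differ in some coordinate by at least $\eta' > 0$.

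Next, draw a uniformly random training set $A \subset V$ of size $|A| = m(\eps)$ for some sufficiently large constant $m$. Standard sampling estimates yield, with probability tending to $1$ as $m \to \infty$, the simultaneous events that each $A_i := A \cap V_i$ has relative size close to $|V_i|/|V|$, and that for every vertex $v \in V \setminus A$ with cluster index $i(v)$, the observed densities $|N_G(v)\cap A_j|/|A_j|$ lie within $\eta'/4$ of the reduced-graph values $d_{i(v)j}$ for all $j$. I would fix some particular $A$ realising both events.

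The rule $T:2^{\binom{A\uplus[2]}{2}}\to\{0,1\}$ is defined by hard-coding the partition $A = A_1 \uplus \cdots \uplus A_t$, the reduced-graph densities $d_{ij}$, and the template $\tau$, which is permitted because Definition \ref{wlr} allows $(A,T)$ to depend on $G$. On input $H$, set $i_k := \operatorname{argmin}_{i\in[t]} \max_{j\in[t]} \bigl| |N_H(k)\cap A_j|/|A_j| - d_{ij}\bigr|$ for each $k \in [2]$ (breaking ties arbitrarily among indices with identical density rows, which does not affect the final output), and return $\tau(i_1,i_2)$. By the separation clause and the sampling event, each $v \in V' := V \setminus A$ receives its genuine cluster index $i(v)$ under the induced map $\overline{T}^{(V')}$, so $\overline{T}^{(V')}(G) = G^\ast\downharpoonright_{V'}$; this lies in $\P^{(V')}$ by the hereditary property and is within $\eps \binom{|V'|}{2}$ of $G\downharpoonright_{V'}$ once $|V|$ is taken large enough to absorb the $O(|A| \cdot |V|)$ edges incident to the deleted training vertices.

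The step that I expect to require the most care is arranging, via partition refinement or merging, that the reduced-graph rows become either genuinely distinguishable by a bounded number of density queries to the training sets $A_j$ or else interchangeable under the template $\tau$. This is essentially a quantitative restatement of the Ramsey-based cleaning step in \cite{AloSha2}, together with a choice of $|A|$ large enough in terms of the (tower-type) regularity parameter $t$; everything else is routine bookkeeping on top of the Alon--Shapira argument.
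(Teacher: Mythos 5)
Your overall plan---run the Alon--Shapira regularisation and Ramsey-cleaning step, then localise the cluster classification with density queries to a random training set $A$---is exactly the finitary route the paper alludes to when it says Theorem~\ref{as-2} follows from ``an inspection of the arguments in \cite{AloSha2}, together with the observation that Szemer\'edi partitions can be constructed using random neighbourhoods.''  (The paper itself does not carry out this route; it instead deduces the stronger Theorem~\ref{lgr} via an infinitary correspondence principle and a structure theorem for exchangeable hypergraphs.)  However, there is a genuine gap in the step where you attach vertices to clusters.

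You claim that, conditional on the sampling event, ``for \emph{every} vertex $v \in V \setminus A$ with cluster index $i(v)$, the observed densities $|N_G(v)\cap A_j|/|A_j|$ lie within $\eta'/4$ of the reduced-graph values $d_{i(v)j}$,'' and hence that ``each $v$ receives its genuine cluster index'' and $\overline{T}^{(V')}(G) = G^\ast\downharpoonright_{V'}$.  This conflates two distinct errors.  The sampling event gives you $|N_G(v)\cap A_j|/|A_j| \approx |N_G(v)\cap V_j|/|V_j|$ for all $v$, but $\eta$-regularity of the pair $(V_i,V_j)$ only guarantees $|N_G(v)\cap V_j|/|V_j| \approx d_{ij}$ for all but an $\eta$-fraction of $v \in V_i$; a constant fraction of vertices will have an anomalous degree profile toward at least one other cell, and there is no training set $A$ that can force these into their ``genuine'' clusters.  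Consequently $\overline{T}^{(V')}(G)$ is \emph{not} $G^\ast\downharpoonright_{V'}$; it is a different complete multi-partite graph built over the partition induced by your classifier, not over $V_1,\dots,V_t$.  To repair the argument you must (i) only claim correct classification for $(1-O(\eta))|V|$ vertices, which suffices for the edit-distance bound, and (ii) invoke the fact that the Alon--Shapira Ramsey step yields a template $\tau$ that is $\P$-safe for an \emph{arbitrary} partition of an arbitrary vertex set, not merely for the specific partition $V_1,\dots,V_t$.  Point (ii) is not optional: it is exactly what makes the misclassified vertices harmless, and it is also what handles the ``diagonal'' pairs $\tau(i,i)$ inside a single cluster---the issue the paper flags as the essential obstruction, and the reason this finitary strategy (and Theorem~\ref{lgr}) works for graphs but fails for directed graphs and $3$-uniform hypergraphs (Theorem~\ref{negate}).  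Your ``merging or refining'' step also needs to be replaced by an explicit discretisation of the density rows compatible with the Ramsey colouring; merging cells with nearly identical but unequal rows does not automatically preserve regularity.
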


Strictly speaking, this result is not explicitly stated in \cite{AloSha2}, but is an implicit consequence of their methods, together with the observation that Szemer\'edi partitions can be constructed using random neighbourhoods (see e.g. \cite{ishi-0}).  In any event we will establish a stronger version of this theorem in the next section.

\begin{example} We have informally discussed this result in the case when ${\mathcal P}$ is the property of being a complete bipartite graph.  Another illustrative example is the property of being triangle-free, which is a monotone property.  The local testability of this property is a well-known fact, often called the ``triangle-removal lemma'', and is due to Ruzsa and Szemer\'edi \cite{rsz}.  To repair an almost-triangle-free-graph into a genuinely triangle-free graph, the standard approach is to apply the Szemer\'edi regularity lemma \cite{szemeredi-reg} to the graph, and then delete all edges between pairs of cells of that partition that are too small, have too low an edge density, or too irregular.  This regularisation can be done in purely local fashion, by randomly selecting vertex neighbourhoods to create the partition (see e.g. \cite{ishi-0}), and this can be used to create a local modification rule to repair corrupted triangle-free graphs.
\end{example}

\subsection{General setup}

The prior results were restricted to properties for uniform monochromatic undirected graphs or hypergraphs without loops.  We now generalise much of the above discussion to a more general setting which allows for the hypergraphs to be non-uniform, directed, multi-coloured, and/or contain loops.  As such, there will be some overlap between the discussion here and that in the preceding section.

\begin{definition}[Vertex sets]\label{vertset}  A \emph{vertex set} is any set which is at most countable.  If $V$ and $W$ are vertex sets, we define a \emph{morphism} from $W$ to $V$ to be an injective map $\phi: W \to V$, and use $\Hom(W,V)$ to denote the space of such morphisms.  We use $\id_V \in \Hom(V,V)$ to denote the identity map from $V$ to itself, and if $W \subset V$, we use $\iota_{W \subset V} \in \Hom(W,V)$ to denote the inclusion map.  If $N$ is a non-negative integer, we use $[N] := \{ 1,\ldots,N\}$ to denote the vertex set of integers from $1$ to $N$.  If $v_1,\ldots,v_N$ are distinct vertices of $V$, we use $(v_1,\ldots,v_N) \in \Hom([N],V)$ to denote the morphism that sends $i$ to $v_i$ for all $i \in [N]$ (in particular, we canonically embed $\Hom([N],V)$ in $V^N$, and the unique element of $\Hom([0],V)$ is denoted $()$).  If $V$ is a set, we use $|V|$ to denote the cardinality of $V$, and for any $k \geq 0$ we let
$$\binom{V}{k} := \{ e \subset V: |e| = k \} \equiv \Hom([k],V)/\Hom([k],[k])$$
denote the $k$-element subsets of $V$.  If $V, W$ are vertex sets, we use $V \uplus W := (V \times \{0\}) \cup (W \times \{1\})$ to denote the disjoint union of $V$ and $W$.  We often abuse notation and view $V$ and $W$ as subsets of $V \uplus W$.  If $\phi_1 \in \Hom(W_1,V_1)$ and $\phi_2 \in \Hom(W_2,V_2)$, we use $\phi_1 \oplus \phi_2 \in \Hom(W_1 \uplus W_2, V_1 \uplus V_2)$ to denote the direct sum of $\phi_1$ and $\phi_2$.
\end{definition}

\begin{remark} One can view the collection of all vertex sets and their morphisms as a category.  We will make this category-theoretic perspective more explicit later in our analysis, as it contains a number of useful notions for us, most notably that of a \emph{natural transformation}.  However, readers who are not familiar with category theory can safely skip all remarks in this introductory section referring to this subject.
\end{remark}

\begin{definition}[Palettes] A \emph{finite palette} is a sequence $K = (K_j)_{j=0}^\infty$ of finite non-empty sets, all but finitely many of which are singleton sets.  We refer to the singleton components as \emph{points} and denote them by $\pt$.  We define the \emph{order} of $K$ to be the greatest integer $k$ for which $K_k$ is not a point (or $-1$ if all components are points).  We shall often abbreviate $K$ as $(K_0,\ldots,K_k)$ (thus discarding the trivial palettes $K_j = \pt$ for $j > k$).  For any $k \geq 0$, we define the \emph{monochromatic palette} $\{0,1\}_k$ of order $k$ to be the palette whose $k^{\th}$ component is $\{0,1\}$ and all other components are points.  If $j \in \Z$, we let $K_{\leq j}$ (resp. $K_{<j}$, $K_{\geq j}$, $K_{>j}$, $K_{=j}$) be the palette whose $i^{\th}$ component is $K_i$ when $i \leq j$ (resp. $i < j$, $i \geq j$, $i>j$, $i=j$), and is $\pt$ otherwise, thus for instance $K = K_{\geq 0} = K_{>-1}$.
\end{definition}

\begin{definition}[Hypergraphs]\label{hyperdef}  If $V$ is a vertex set, we define a \emph{$K$-coloured (directed) hypergraph} to be a tuple $G = (G_j)_{j=0}^\infty$, where each $G_j: \Hom([j],V) \to K_j$ is a function.  (Note that $G_j$ will be trivial when $K_j$ is a point, and so only finitely many of the $G_j$ are of any interest.  We will often abuse notation slightly by omitting the trivial components $G_j$ of a hypergraph.)  We let
$$K^{(V)} \equiv \prod_{j=0}^\infty K_j^{\Hom([j],V)}$$
denote the collection of all $K$-coloured hypergraphs on $V$.  We say that the hypergraph is \emph{undirected} if we have the symmetry property $G_j( \phi \circ \sigma ) = G_j(\phi)$ for all $j \geq 0$, all $\sigma \in \Hom([j],[j])$, and all $\phi \in \Hom([j],V)$.  If $\phi \in \Hom(W,V)$ is a morphism between vertex sets, we define the \emph{pullback map} $K^{(\phi)}: K^{(V)} \to K^{(W)}$ by defining $K^{(\phi)}(G)_j(\psi) := G_j(\phi \circ \psi)$ for all $G = (G_j)_{j=0}^\infty \in K^{(V)}$, $j \geq 0$, and $\psi \in \Hom([j],W)$. If $W$ is a subset of $V$, we write $G\downharpoonright_W$ for $K^{(\iota_{W \subset V})}(G)$, and refer to $G\downharpoonright_W$ as the \emph{restriction} of $G$ to $W$.
\end{definition}

\begin{example} An ordinary undirected graph $G = (V,E)$, where $E \subset \binom{V}{2}$ can be viewed as an undirected $\{0,1\}_2$-coloured hypergraph; similarly, a $k$-uniform hypergraph can be viewed as an undirected $\{0,1\}_k$-coloured hypergraph.  In particular, $2^{\binom{V}{2}}$ is nothing more than the hypergraphs in $\{0,1\}_2^{(V)}$ which are undirected.
More generally, if $G = (G_j)_{j=0}^\infty \in K^{(V)}$ is undirected, then the maps $G_j: \Hom([j],V) \to K_j$ can be viewed instead as maps from $\binom{V}{j}$ to $K_j$.  A bipartite graph can be viewed as an undirected $(\pt,\{0,1\},\{0,1\})$-coloured hypergraph, in which the order $1$ palette $\{0,1\}$ is used for the vertex partition, and the order $2$ palette $\{0,1\}$ is used to describe the edges of the graph.  One can similarly view partite hypergraphs using this framework; see also Definition \ref{partite} below.  Later on we will need to generalise the notion of a palette to allow the palettes $K_j$ to be sub-Cantor spaces instead of finite sets; see Definition \ref{subcantor}.
\end{example}

\begin{remark}  In the language of category theory, one can view the palette $K$ as a \emph{contravariant functor} $V \mapsto K^{(V)}$, $\phi \mapsto K^{(\phi)}$ between the category of  vertex sets $V$ (whose morphisms are the injective maps $\phi: W \to V$), and the category of sub-Cantor spaces (see  Definition \ref{subcantor} below), whose morphisms are the continuous maps (and more generally, the probability kernels, see Appendix \ref{prob}).  This category-theoretic language seems to be a natural framework to phrase many of our notions, such as local repairability, as we shall see in later sections.
\end{remark}

\begin{definition}[Hereditary hypergraph properties]\label{hered}  Let $K = (K_j)_{j=0}^\infty$ be a finite palette.  A \emph{hereditary $K$-property} is an assignment $\P: V \mapsto \P^{(V)}$ of a collection $\P^{(V)} \subset K^{(V)}$ of $K$-coloured hypergraphs on $V$ for every\footnote{Technically, the class of finite vertex sets is not itself a set, and so $\P$ is a class function rather than a function.  If one wishes to work with actual functions, one restricting attention to vertex sets which are (for instance) subsets of the integers.  As this issue does not make any actual impact on our arguments, we shall henceforth ignore it.}  finite vertex set $V$, such that
\begin{equation}\label{kphi}
K^{(\phi)}( \P^{(V)} ) \subset \P^{(W)}
\end{equation}
for every morphism $\phi \in \Hom(W,V)$ between finite vertex sets.  In particular, the $K$-property $\P$ is invariant under hypergraph isomorphism and preserved under hypergraph restriction\footnote{In category-theoretic language, one can view $\P$ (like $K$) as a contravariant functor, in which $\P^{(\phi)}: \P^{(V)} \to \P^{(W)}$ is the restriction of the pullback map $K^{(\phi)}$ to $\P^{(V)}$ for any injection $\phi: W \to V$; see Example \ref{propfunc}.}.  We say that the $K$-property $\P$ is \emph{undirected} if $\P^{(V)}$ consists entirely of undirected hypergraphs for each vertex set $V$.  We extend $\P$ to countably infinite vertex sets $V$ by declaring
$$\P^{(V)} := \{ G \in K^{(V)}: G\downharpoonright_W \in \P^{(W)} \hbox{ for all finite } W \subset V \}.$$
We say that a hypergraph $G \in K^{(V)}$ \emph{obeys} $\P$ if $G \in \P^{(V)}$.
\end{definition}

\begin{examples}  In the case of $\{0,1\}_2$-coloured hypergraphs (i.e. graphs), the properties of being undirected and connected, of being bipartite, of being undirected and free of triangles, of being planar, and of being four-colourable, are all hereditary $\{0,1\}_2$-properties.
\end{examples}

\begin{definition}[Testability]\label{Testdef}\cite{RS}  Let $K$ be a finite palette of some order $k \geq 0$, and let $\P$ be a hereditary $K$-property.  We say that $\P$ is \emph{testable with one-sided error} if, for every $\eps > 0$, there exists an integer $N \geq 1$ and a real number $\delta > 0$ with the following property: if $G \in K^{(V)}$ is a $K$-coloured hypergraph with $N \leq |V| < \infty$ which
locally almost obeys $\P$ in the sense that
\begin{equation}\label{injv}
\frac{1}{|\binom{V}{N}|} |\{ W \in \binom{V}{N}: G\downharpoonright_W \in \P^{(W)} \}| \geq 1-\delta,
\end{equation}
then there exists $G' \in \P^{(V)}$ which is close to $G$ in the sense that
\begin{equation}\label{gv}
\frac{1}{|\binom{V}{k}|} |\{ W \in \binom{V}{k}: G\downharpoonright_W \neq G'\downharpoonright_W \}| \leq \eps.
\end{equation}
\end{definition}

This definition of course generalises Definition \ref{testgraph}.

We can now state the main results of Alon-Shapira and R\"odl-Schacht again:

\begin{theorem}[Every hereditary undirected hypergraph property is testable]\label{as-thm}\cite{AloSha2},\cite{rs2}  If $k \geq 0$, then every hereditary undirected $\{0,1\}_k$-property is testable with one-sided error.
\end{theorem}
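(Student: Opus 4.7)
The plan is to combine the hypergraph regularity lemma with a rounding construction, using the hereditary character of $\P$ to guarantee that the rounded object lies in $\P^{(V)}$. Fix $\eps>0$ and choose parameters in the order $\eps \to \nu \to t_0 \to N \to \delta$, each small (or large) enough to tame the effects of the next.

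The first step is regularisation. Apply the strong regularity lemma for $k$-uniform hypergraphs (for instance that of R\"odl--Skokan, Gowers, or Nagle--R\"odl--Schacht--Skokan) to any $G \in \{0,1\}_k^{(V)}$ satisfying \eqref{injv}. This produces a partition of the $(k-1)$-tuples of $V$ into polyads of bounded complexity $t\leq t_0$, on almost all of which $G$ is $\nu$-regular. Define the rounded hypergraph $\tilde G \in \{0,1\}_k^{(V)}$ by replacing $G$ on each regular polyad with the majority colour on that polyad. A standard counting lemma argument then yields two things simultaneously: $\tilde G$ and $G$ agree on all but an $\eps/2$-fraction of $k$-tuples (so \eqref{gv} is under control), and the densities of induced sub-hypergraphs on up to $N$ vertices are essentially preserved. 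Combining the second of these with the hypothesis \eqref{injv} and a Markov-type estimate, most $N$-vertex restrictions of $\tilde G$ still obey $\P$.

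The second and more delicate step is to show $\tilde G\in\P^{(V)}$. Up to a negligible set of ``bad'' tuples (which one removes by a further small modification within the $\eps/2$ budget), $\tilde G$ is a blow-up of a small ``reduced'' hypergraph $H$ on a vertex set $U$ with $|U|\leq t_0$, the classes of the blow-up being the cells of the regular partition. Since almost every $N$-vertex sample of $\tilde G$ lies in $\P$, one can in particular sample $N$-tuples that visit each cell a controlled number of times; applying hereditariness to these samples forces $H\in\P^{(U)}$, and indeed forces every ``bounded'' blow-up of $H$ to lie in $\P$. To promote this from bounded to arbitrary blow-ups, use the characterisation of hereditary properties by a (possibly infinite) family of forbidden induced sub-hypergraphs: any forbidden induced sub-hypergraph of the blow-up of $H$, if one existed, could be transferred via a further random restriction to a bounded-vertex sample inside $V$, contradicting the fact that most such samples obey $\P$.

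The main obstacle is controlling forbidden induced sub-hypergraphs whose sizes exceed $N$ (or even exceed the regularity-lemma bound $t_0$). In the Alon--Shapira approach this is handled by Ramsey's theorem, which extracts from any large forbidden configuration a bounded-size canonical pattern detectable by sampling; in the R\"odl--Schacht approach, Ramsey is sidestepped by iterating the regularity lemma at several scales, so that potential large forbidden patterns get absorbed into the rounding step. Either tool should suffice here; the principal technical hurdle is ensuring that the quantitative choices of $\nu, t_0, N, \delta$ remain consistent through the tower-type bounds produced by the regularity lemma.
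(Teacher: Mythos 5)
Your proposal takes a direct, finitary route via the hypergraph regularity lemma, whereas the paper proves (a strengthening of) this result infinitarily: it passes to a limiting exchangeable $K$-recipe (Lemma \ref{compact}), replaces the regularity lemma with the structure theorem for exchangeable random hypergraphs (Theorem \ref{struct}), and deduces Proposition \ref{rs-prop} from a discretisation-of-identity construction (Proposition \ref{disc-ident}), which is then transferred back by the correspondence principle. Taking the finitary route is legitimate, but there is a genuine gap in your rounding step. Replacing $G$ on each regular polyad by the majority colour fails twice over. First, the regularity lemma gives no control on polyad densities, so a polyad of density near $1/2$ is altered on roughly half its $k$-tuples by a majority vote, and the claimed $\eps/2$-closeness in \eqref{gv} breaks. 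Second, and more seriously, the majority-rounded $\tilde G$ need not be close in induced sub-hypergraph statistics to any member of $\P$: changing a density near $1/2$ to $0$ or $1$ drastically rewrites the distribution of sampled sub-hypergraphs, so ``most $N$-vertex restrictions of $\tilde G$ still obey $\P$'' is not established. The actual R\"odl--Schacht rounding does not use majority vote; it commits each polyad to a colour drawn from the densities of a second, much more finely regularised reference object, so that the rounded hypergraph is (a sample from a law) absolutely continuous with respect to that reference and therefore obeys $\P$ almost surely. That two-scale structure is the substance of the R--S argument, not a technicality one can iterate past.

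There is also a conceptual overclaim: you say that either Ramsey (Alon--Shapira) or iterated regularity (R\"odl--Schacht) ``should suffice.'' This is false once $k\geq 3$. The Alon--Shapira/Ramsey route, when it goes through, delivers the stronger conclusion of local repairability, and the paper's central negative results (Theorem \ref{negate}(b),(c)) show that hereditary undirected $3$-uniform (and $\leq 3$-uniform) properties are in general \emph{not} locally repairable, precisely because a suitable Ramsey theorem for the underlying hypergraphon data fails for $k\geq 3$. So for $k\geq 3$ the Ramsey escape hatch is simply not available, and your second step --- promoting $\P$-membership of the reduced hypergraph from bounded blow-ups to arbitrary blow-ups --- must be carried out entirely with the two-scale device, which your sketch does not supply.
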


\begin{remark}  See \cite{AloSha2} for further discussion of this result, and why it is natural to restrict attention to hereditary properties.  The cases $k=0,1$ of this result are easy.  In the case of graphs $k=2$, this result was first obtained by \cite{AloSha2}, after building upon several earlier results in this direction; see \cite{GGR}, \cite{AloSha}, \cite{AloFisKriSze} and the references therein.  For general $k$, this result was first obtained in \cite{rs2}, with several earlier results in this direction in \cite{Gow1}, \cite{rs}, \cite{ars}, \cite{ishi}, \cite{KohNagRod}.  The special case of the above theorem in which $\P$ is the $\{0,1\}_k$-property of not containing any embedded copy of a fixed hypergraph is known as the \emph{hypergraph removal lemma} and is already a non-trivial result, implying for instance the multidimensional Szemer\'edi theorem; see \cite{Gow1}, \cite{rs} for further discussion.
\end{remark}

The Alon-Shapira argument \cite{AloSha2} that gave the $k=2$ case was somewhat intricate, using the Szemer\'edi regularity lemma three times and also using Ramsey's theorem for graphs.  The R\"odl-Schacht argument \cite{rs2}, in contrast, avoided Ramsey's theorem and used fewer applications of the (hypergraph) regularity lemma, leading to a simpler proof (though of course the fact that it dealt with general $k$ rather than $k=2$ lead to several notational complications).  On the other hand, the R\"odl-Schacht argument was more indirect than the Alon-Shapira one and did not yield explicitly quantitative bounds.  One of the purposes of this paper is to explain why this difference is in fact essential: the Alon-Shapira argument cannot extend to the case of general hypergraphs, for reasons which we shall explain below.

\subsection{New positive results}

In this paper we explore some generalisations and refinements of the above theorem, as well as counterexamples to some of these extensions.  Some obvious generalisations include that of allowing more general palettes $K$, allowing directed edges, allowing loops, and replacing the finite vertex set $V$ with a more general probability space such as $[0,1]$ with uniform measure.  Another direction to pursue is to determine the relationship between the original hypergraph $G$ in the above theorems and the ``repaired'' hypergraph $G'$.  For instance, the argument in \cite{AloSha2} gives an effective procedure to locate $G'$ (albeit one which requires heavy use of the regularity lemma); in contrast, the argument in \cite{rs2} is indirect (proceeding by contradiction) and does not obviously provide any algorithm for locating $G'$ other than brute force search.

In the positive direction we have three main results.  The first result extends Theorem \ref{as-thm} to the directed multicoloured case:

\begin{theorem}[Every hereditary directed hypergraph property is
testable]\label{rs-thm-dir}  Let $K$ be a finite palette, and let $\P$ be a hereditary $K$-property.  Then $\P$ is testable with one-sided error.
\end{theorem}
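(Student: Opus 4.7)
My plan is to adapt the R\"odl--Schacht proof of Theorem \ref{as-thm} to the directed multicoloured setting, using the same combination of a regularity lemma, a compactness/limit argument, and the hereditary property to derive a contradiction. Suppose for contradiction that $\P$ fails to be testable: there exists $\eps > 0$ such that for every $N$ and $\delta$ one can find a counterexample hypergraph $G \in K^{(V)}$ with $|V| \geq N$ which locally $\delta$-obeys $\P$ in the sense of \eqref{injv} but is not $\eps$-close to any element of $\P^{(V)}$ in the sense of \eqref{gv}. Taking $\delta_n \to 0$ and $N_n \to \infty$ yields a sequence $G^{(n)} \in K^{(V^{(n)})}$ of such counterexamples with $|V^{(n)}| \to \infty$.

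The first step is to establish a hypergraph regularity lemma in the directed $K$-coloured setting. The iterative refinement proof of the standard hypergraph regularity lemma extends with only notational changes once one replaces unordered $j$-subsets by ordered tuples in $\Hom([j],V)$ and binary colours by values in $K_j$: within each cell of a candidate partition one tracks the densities of each colour class in $K_j$ and refines whenever any density fluctuates too much. For each $\eta > 0$ this produces, uniformly in $n$ and in $j$ up to the order of $K$, an $\eta$-regular partition of $\Hom([j],V^{(n)})$ of bounded complexity.

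Next, by passing to a subsequence and invoking the bounded complexity of these partitions (together with a diagonal argument over increasing finite subsets), I would extract a limit directed $K$-coloured hypergraph $G^{(\infty)} \in K^{(V^{(\infty)})}$ on a countable vertex set $V^{(\infty)}$. The key claim is that $G^{(\infty)} \in \P^{(V^{(\infty)})}$, which by Definition \ref{hered} reduces to showing $G^{(\infty)}\downharpoonright_W \in \P^{(W)}$ for every finite $W \subset V^{(\infty)}$. This is the heart of the argument: any finite induced substructure of the limit is realised, with probability tending to one, by the analogous finite substructure drawn from $G^{(n)}$, which by the local almost-obeying hypothesis lies in $\P$; a counting argument transported through the regular partitions identifies the two.

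Finally, I would derive a contradiction by constructing, for each sufficiently large $n$, a repaired hypergraph $G'^{(n)} \in \P^{(V^{(n)})}$ that is $\eps$-close to $G^{(n)}$. Randomly identify $V^{(n)}$ with a finite subset of $V^{(\infty)}$ and take $G'^{(n)}$ to be the pullback $K^{(\phi)}(G^{(\infty)})$ under the inclusion $\phi$; the pullback lies in $\P^{(V^{(n)})}$ by the hereditary property \eqref{kphi}, while a counting estimate using the common regular partition shows that with positive probability $G'^{(n)}$ and $G^{(n)}$ agree on $(1-\eps)$-many $k$-subsets, contradicting the counterexample assumption. The main obstacle throughout is the limit step: constructing $G^{(\infty)}$ and verifying $G^{(\infty)}\downharpoonright_W \in \P^{(W)}$ in a way that respects the functoriality of Definition \ref{hered}. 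The core machinery from \cite{rs2} --- hypergraph regularity and counting --- extends naturally once ordered tuples and finite alphabets are allowed, so the substance of the extension beyond \cite{rs2} is chiefly ensuring that directedness and multicolouring interact cleanly with these two ingredients rather than the introduction of any fundamentally new technique.
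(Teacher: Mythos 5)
Your overall strategy -- negate testability, extract a counterexample sequence, pass to a limit, show the limit obeys $\P$, and then repair each $G^{(n)}$ using the limit -- is the right skeleton, and it does track \cite{rs2} in spirit. The paper, however, does not prove any finitary directed multicoloured regularity lemma; instead it recasts the whole problem infinitarily, replacing the regularity lemma with the Hoover--Aldous--Kallenberg structure theorem for exchangeable arrays (Theorem \ref{struct}), replacing the regularity partition with a sub-Cantor palette $Z$ of ``hidden variables,'' and deriving testability from a discretisation of the identity (Proposition \ref{disc-ident}) together with a correspondence principle (Proposition \ref{tprop}). If your argument worked, it would be a genuinely different (more finitary) road to the same theorem. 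But the final step as you describe it has a real gap.

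The gap is in the repair. You propose to take the limit object $G^{(\infty)} \in K^{(V^{(\infty)})}$ to be a single countable $K$-coloured hypergraph, randomly embed $V^{(n)}$ into $V^{(\infty)}$, and pull back to get $G'^{(n)}$ which you claim agrees with $G^{(n)}$ on $(1-\eps)$ of the $k$-subsets. This cannot work: the limit of a sequence of dense hypergraphs (in the sense in which any diagonal argument can extract one) is a \emph{distribution} on countable hypergraphs (a hypergraphon / exchangeable recipe), and a single sample from that distribution is completely uncorrelated with the specific realisation $G^{(n)}$ beyond having the same sampled-subgraph statistics. Concretely, take $K = \{0,1\}_2$ and let $G^{(n)}$ be an Erd\H{o}s--R\'enyi graph of density $1/2$; the natural limit sample is a copy of the Rado graph; pulling back along any embedding of $[n]$ into the Rado graph gives a fresh Erd\H{o}s--R\'enyi graph on $n$ vertices, independent of $G^{(n)}$, agreeing with it on only about half the pairs. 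No counting lemma over the regular partitions can rescue this, because cut-norm (or homomorphism-density) closeness of $G^{(n)}$ to the limit is far weaker than edit-distance closeness to a pullback of a fresh sample. What the paper actually does is quite different: it attaches hidden-variable data to each vertex/edge via the structure theorem, and the repaired hypergraph is obtained by a transformation $T : Z \to K$ that \emph{locally modifies} the observed colours, keeping them equal to the original whenever possible and only re-sampling conditionally on the hidden data; the resulting agreement with $G^{(n)}$ on most $k$-subsets comes from the estimate \eqref{mukan2} that $T$ is close to the identity colouring $\overline{\kappa}$, which has no analogue in a ``pull back a fresh sample'' scheme. To fix your approach you would need (i) the limit object to carry the regularity-partition/type data, not just the visible $K$-colours, and (ii) the repair to be a conditional re-colouring relative to that data rather than a wholesale replacement by a fresh sample -- at which point you would essentially be reconstructing the R\"odl--Schacht mechanism.
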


The proof of Theorem \ref{rs-thm-dir} follows the R\"odl-Schacht argument and is given in Section \ref{posi}.

\begin{remark}\label{directed} As is well known, one can identify a directed graph with an undirected bipartite graph on twice as many vertices, and similar identifications also exist for hypergraphs.  However, it does not appear possible to use such identifications to deduce the testability of directed hypergraph properties from the testability of undirected hypergraph properties, because one cannot canonically recover the directed graph from the undirected one without knowledge of the specific identification used.  Indeed, the negative result in Theorem \ref{negate} below shows that the directed and undirected cases are in fact quite different.  On the other hand, this distinction between directed and undirected hypergraphs disappears for partite properties; see Remark \ref{partite-directed}.
\end{remark}

The next result extends Theorem \ref{as-thm} (in the graph case $k=2$) in a different direction, namely showing that hereditary undirected graph properties are not only testable with one-sided error, but enjoy the stronger property of being \emph{locally repairable}.  Roughly speaking, local repairability (which is somewhat analogous to the concept of \emph{local correctability} in coding theory) shows that the repaired graph $G'$ can be (probabilistically) obtained from $G$ in a ``local'' manner, in that every edge of $G'$ can be determined using only knowledge of $O(1)$ edges of $G$.  Because of this locality, the testability theorem can in fact be extended from finite graphs $G$ to infinite graphs $G$ (with a probability measure on the vertices), and also one can allow the graphs to contain loops.  In fact this turns out to be a natural setting in which to study a certain strong form of local repairability.

To make this more precise we need more definitions, beginning with a continuous analogue of a graph or hypergraph.

\begin{definition}[Continuous hypergraphs]\label{contmap}  Let $K$ be a finite palette. A \emph{$K$-coloured continuous hypergraph} is a quadruplet $G = (V,\B,\nu,(G_j)_{j=0}^\infty)$, where $(V,\B,\nu)$ is a probability space, and $G_j: V^j \to K_j$ is a measurable map for each $j \geq 0$. If $W$ is an vertex set, we define the \emph{sampling map} $\overline{G}^{(W)}: V^W \to K^{(W)}$ by the formula
$$ \overline{G}^{(W)}( v )_j(\phi) = G_j(v \circ \phi)$$
for all $j \geq 0$, all $\phi \in \Hom([j],W)$, and all $v \in V^W$, where we view $v$ as a function from $W$ to $V$ (and identify $V^j$ with $V^{[j]}$).  If $\P$ is a $K$-property, we say that $G$ \emph{obeys} $\P$ if $\overline{G}^{(W)}(v) \in \P^{(W)}$ for all vertex sets $W$ and all $v \in V^W$.
\end{definition}

\begin{example} A $\{0,1\}_2$-coloured continuous hypergraph $G$ is essentially a probability space $(V,\B,\nu)$, together with a measurable subset $G_2$ of $V \times V$, which can be viewed as a continuous analogue of a set of edges on $V$.  In particular, if one takes $V$ to be the unit interval $[0,1]$ with the standard Borel $\sigma$-algebra $\B$ and Lebesgue measure $\nu$, a $\{0,1\}_2$-continuous hypergraph becomes a measurable subset $G_2$ of the unit square.  The sampling map $\overline{G}^{([n])}: [0,1]^n \to \{0,1\}_2^{([n])}$ then maps any $n$-tuple $v_1,\ldots,v_n \in [0,1]$ of ``sampling vertices'' to the directed graph $([n],E)$ on $n$ vertices, with $(i,j) \in E$ if and only if $(v_i,v_j) \in G_2$.  Thus, if one selects a point in $[0,1]^n$ uniformly at random, the image of this point under $\overline{G}^{([n])}$ is a randomly sampled graph of order $n$ from the continuous graph $G$. Note that we do not exclude the diagonal of $V \times V$ from $G_2$, and so we allow continuous hypergraphs to contain loops.
\end{example}

\begin{remark} In the language of category theory, one can view the map $\overline{G}: W \mapsto \overline{G}^{(W)}$ as a \emph{natural transformation} from the contravariant functor $W \mapsto V^W$ to the contravariant functor $W \mapsto K^{(W)}$.  If $G$ obeys $\P$, then the natural transformation $\overline{G}$ factors through the inclusion natural transformation from $\P$ to $K$.
\end{remark}

\begin{example}\label{extend}  Any ordinary hypergraph $G \in K^{(V)}$ on a finite set $V$ can be extended (somewhat arbitrarily) to a continuous hypergraph $\tilde G$, by endowing $V$ with the discrete $\sigma$-algebra $\B$ and the uniform probability measure $\nu$, and defining $\tilde G_j: V^j \to K_j$ to be an arbitrary extension of $G_j: \Hom([j],V) \to K_j$, where we view $\Hom([j],V)$ as a subset of $V^j$ in the obvious manner.  One can view $\tilde G$ as a looped version of the hypergraph $G$.  Observe that if any one of these extensions $\tilde G$ obeys a hereditary hypergraph property $\P$, then $G$ does also.  The framework of continuous hypergraph also allows for placing weights on the vertices by adjusting the probability measure $\nu$ accordingly.
\end{example}

\begin{example}[$0-1$ graphons]  Let $E \subset [0,1]^2$ be a measurable subset of the unit square.  Then the quadruplet $G = ([0,1],\B,\nu,\I(E))$, where $\B$ is the Borel $\sigma$-algebra on the unit interval $[0,1]$, $\nu$ is the uniform measure on $[0,1]$, and $\I(E): [0,1]^2 \to \{0,1\}$ is the indicator function of $E$, is a continuous $\{0,1\}_2$-coloured hypergraph (abusing notation slightly by dropping all the trivial components $G_j$ of the graph $G$ for $j \neq 2$).  If $\P$ is the $\{0,1\}_2$-property of being undirected and triangle-free, then $G$ obeys $\P$ if and only if $E$ is symmetric (thus $(x,y) \in E$ if and only if $(y,x) \in E$) and contains no sets of the form $\{ (x,y),(y,z),(z,x)\}$ for $x,y,z \in [0,1]$.
\end{example}

Now we generalise the local modification rules from Definition \ref{lacma} to more general hypergraphs (including continuous ones).  We give two equivalent definitions of this concept, a concrete one (resembling Definition \ref{lacma}) and a category-theoretic one (resembling Remark \ref{lacma2}):

\begin{definition}[Local modification rule, concrete definition]\label{concmod} Let $K = (K_j)_{j=0}^k$ be a finite palette.  A \emph{local modification rule} is a pair $T = (A, T)$, where $A$ is a finite vertex set, and $T$ is a collection of maps $T_j: K^{(A \uplus [j])} \to K_{=j}^{([j])}$ for $0 \leq j \leq k$ which obey the $\Hom([j],[j])$-equivariance condition
$$ K_{=j}^{(\phi)} \circ T_j = T_j \circ K^{(\id_A \uplus \phi)}$$
for all $\phi \in \Hom([j],[j])$.  Given such a rule, and given a vertex set $V$, we define the modification map $\overline{T}^{(V)}: K^{(A \uplus V)} \to K^{(V)}$ by the formula
$$ \overline{T}^{(V)}(G)_j(\phi) := T_j(K^{(\id_A \uplus \phi)}(G))(\phi)$$
for every vertex set $V$, all $0 \leq j \leq k$, all $G \in K^{(A \uplus V)}$, and all $\phi \in \Hom([j],V)$; the components $\overline{T}^{(V)}(G)_j$ for $j>k$ are of course trivial.  
\end{definition}

\begin{definition}[Local modification rule, categorical definition]\label{locmod}  Let $K$ be a finite palette.  A \emph{local modification rule} is a pair $T = (A, T)$, where $A$ is a finite vertex set, and $T$ is an assignment of a map $\overline{T}^{(V)}: K^{(A \uplus V)} \to K^{(V)}$ for every vertex set $V$ (where $A \uplus V$ denotes the disjoint union of $A$ and $V$), such that the diagram
\begin{equation}\label{natural}
\begin{CD}
K^{(A \uplus V)}                @>{\overline{T}^{(V)}}>>   K^{(V)} \\
@VV{K^{(\id_A \oplus \phi)}}V                             @VV{K^{(\phi)}}V            \\
K^{(A \uplus W)}                @>{\overline{T}^{(W)}}>>   K^{(W)}
\end{CD}
\end{equation}
commutes for any morphism $\phi \in \Hom(W,V)$ between two vertex sets $W,V$.  
\end{definition}

It is not difficult to see that the two definitions are equivalent.  For instance, given a modification rule $(A,T)$ defined by Definition \ref{locmod}, the corresponding maps $T_j$ for Definition \ref{concmod} can be defined by the formula
$$ T_j := \pi_{K \to K_{=j}}^{([j])} \circ \overline{T}^{([j])},$$
where $\pi_{K \to K_{=j}}^{([j])}: K^{([j])} \to K_{=j}^{([j])}$ is the projection map.   In our proofs, we shall adopt a category-theoretic viewpoint and rely on the latter definition rather than the former.  However, for the purpose of understanding the results, the reader may safely ignore the category-theoretic definition.

\begin{remark} The commutative diagram \eqref{natural} is asserting that $\overline{T}$ is a \emph{natural transformation} between the functors $V \mapsto K^{(A \uplus V)}$ and $V \mapsto K^{(V)}$.  It is this natural transformation property that makes the repair rule \emph{local} (and invariant under relabeling of vertices); it implies that the value of a modified edge $T_v(G)_j(\phi)$ for a continuous graph depends only on the edges that involve the vertices $v$ and the vertices of $\phi$, and similarly for the modified edges $T_\phi(G)_j(\psi)$ of finite graphs.
\end{remark}

We now use local modification rules to modify discrete and continuous hypergraphs in order to ensure (or \emph{entail}) certain properties $\P$.

\begin{definition}[Entailment and modification]\label{entail} Let $(A,T)$ be a local modofication rule.  We say that this rule \emph{entails} a $K$-property $\P$ if $\overline{T}^{(V)}( K^{(A \uplus V)} ) \subset \P^{(V)}$ for any vertex set $V$.
\begin{itemize}
\item If $G = (V,\B,\nu,(G_j)_{j=0}^\infty)$ is a continuous $K$-coloured hypergraph, and $v = (v_a)_{ a \in A} \in V^A$ is a collection of vertices in $a$, we define the \emph{modification} $T_v(G) = (V, \B, \nu, (G'_j)_{j=0}^\infty)$ of $G$ to be the continuous $K$-coloured hypergraph given by the requirement that
$$ T_v(G)^{(W)}( w ) = \overline{T}^{(W)}( \overline{G}^{(A \uplus W)}( v, w ) )$$
for all vertex sets $W$ and all $w \in V^W$; one can verify that this requirement uniquely defines a continuous $K$-coloured hypergraph $T_v(G)$.  Note that if $T$ entails $\P$, then $T_v(G)$ obeys $\P$ for every continuous $K$-coloured hypergraph $G$ on a vertex set $V$, and any $v \in V^A$.
\item If $G = (G_j)_{j=0}^\infty$ is a $K$-coloured hypergraph on a vertex set $V$, and $\phi \in \Hom(A,V)$, then we define the \emph{modification} $T_\phi(G)$ of $G$ to be the $K$-coloured hypergraph $G' = (G'_j)_{j=0}^\infty$ on $V \backslash \phi(A)$ defined by the formula
$$ T_\phi(G) = \overline{T}^{(V \backslash \phi(A))}( K^{(\phi \uplus \id_{V \backslash \phi(A)})}(G) ),$$
where $\phi \uplus \id_{V \backslash \phi(A)}: A \uplus (V \backslash \phi(A)) \to V$ is the bijection formed by the direct sum of $\phi: A \to \phi(A)$ and the identity map $\id_{V \backslash\phi(A)}: V \backslash \phi(A) \to V \backslash \phi(A)$.  Again, note that if $T$ entails $\P$, then $T_\phi(G)$ obeys $\P$ for every $K$-coloured hypergraph on a vertex set $V$, and any $\phi \in \Hom(A,V)$.
\end{itemize}
\end{definition}

\begin{example}\label{bipart} Let $K = \{0,1\}_2$, so that $K$-coloured hypergraphs are just directed graphs.  We define the local modification rule $T = (A,T)$ by setting $A := [1] = \{1\}$, and setting $\overline{T}^{(V)}(G) \in K^{(V)}$ for any vertex set $V$ and any directed graph $G \in K^{(A \uplus V)}$ (thus $G$ can be identified with a map $G_2: A \uplus V \times A \uplus V \to \{0,1\}$) to be the collection of all edges $(v,w) \in \Hom([2],V)$ such that $G_2(v,w)=G_2(w,1)=1$ and $G_2(v,1)=0$.  In words, $\overline{T}^{(V)}(G)$ creates a bipartite directed graph from $G$ by deleting all edges from $G$ except those which connect a vertex $V$ which do not have an edge to $1$, to a vertex of $V$ which does have an edge to $1$.  In particular, if $\P$ is the $\{0,1\}_2$-property of being bipartite, then it is clear that $T$ entails $\P$.  If $G = (V,\B,\nu,G_2)$ is a continuous $K$-coloured hypergraph (ignoring the trivial components $G_0,G_1$), and $v_1 \in V$, then the modified continuous graph $T_{v_1}(G) = (V,\B,\nu,G'_2)$ is given by requiring that $G'_2(v,w) = 1$ whenever $G_2(v,w)=G_2(w,v_1)=1$ and $G_2(v,v_1)=0$.  Similarly, if $G = (G_2)$ is a directed graph on a vertex set $V$, and $v_1$ is a vertex in $V$, then the modified directed graph $T_{v_1}(G) = G'_2$ is given by requiring that $G'_2(v,w) = 1$ whenever $G_2(v,w)=G_2(w,v_1)=1$ and $G_2(v,v_1)=0$.
\end{example}

We can now generalise Definition \ref{wlr}:

\begin{definition}[Local repairability]\label{locrep}  Let $K$ be a finite palette of some order $k$, and let $\P$ be a hereditary $K$-property.
\begin{itemize}
\item We say that $\P$ is \emph{strongly locally repairable} if for every $\eps > 0$ there exists a finite set $A$, an $N > 0$, and a real number $\delta > 0$ with the following property: Whenever $G = (V,\B,\nu,(G_j)_{j=0}^k)$ is a continuous $K$-coloured hypergraph which approximately locally obeys $\P$ in the sense that\footnote{We use $\I(E)$ to denote the indicator of an event $E$, thus $\I(E)=1$ when $E$ is true and $\I(E)=0$ otherwise.}
\begin{equation}\label{gp}
 \int_{V^{[N]}} \I\left(\overline{G}^{([N])}(v) \in \P^{([N])}\right)\ d\nu^{[N]}(v) \geq 1-\delta,
\end{equation}
where $\nu^{[N]}$ is the $N$-fold product measure of $\nu$ on $V^{[N]}$, then there exists a local modification rule $T = (A,T)$ that entails $\P$, which does not significantly modify $G$ in the sense that
\begin{equation}\label{intv}
 \int_{V^A} \int_{V^{[k]}} \I\left( \overline{T_v(G)}^{([k])}(w) \neq \overline{G}^{([k])}(w) \right)\ d\nu^A(v) d\nu^{[k]}(w)  \leq \eps.
\end{equation}
\item We say that $\P$ is \emph{weakly locally repairable} if for every $\eps > 0$ there exists a finite set $A$, an integer $N \geq |A|+k$, and a real number $\delta > 0$ with the following property: whenever $G$ is a $K$-coloured hypergraph on a vertex set $V$ with $N \leq |V| < \infty$ which approximately obeys $\P$ in the sense of \eqref{injv}, then there exists a local modification rule $T = (A,T)$ and $\phi \in \Hom(A,V)$ such that $T_\phi(G)$ obeys $\P$, and which is close to $G$ in the sense that
\begin{equation}\label{psieps}
 \frac{1}{\left|\binom{V \backslash \phi(A)}{k}\right|} \left|\left\{ W \in \binom{V \backslash \phi(A)}{k} : T_\phi(G)\downharpoonright_W \neq G\downharpoonright_W \right\}\right|  \leq \eps.
 \end{equation}
\end{itemize}
\end{definition}

\begin{example} Let $\P$ be the $\{0,1\}_2$-property of being a bipartite graph.  The local rule in Example \ref{bipart} entails $\P$, but is not strong enough by itself to show that $\P$ is strongly or weakly locally repairable, because it tends to delete far too many edges to force bipartiteness.  However, one can improve this rule by enlarging the set $A$ and using a rule closer to that discussed in Section \ref{prevsec}; we omit the details.
\end{example}

\begin{remark} Informally, local repairability is the assertion that if a hypergraph locally obeys $\P$ (in the sense that most hypergraphs of order $N$ obtained by randomly sampling $N$ vertices from $V$ will obey $\P$), then there is a modification rule which is guaranteed to produce a new hypergraph which obeys $\P$, and which is also close to the original hypergraph in the sense that most random $k$-element samples of the two hypergraphs will agree.  (Note that this implies automatically implies the same statement for random $j$-element samples for any $j < k$.)

The differences between strong and weak local repairability are that for strong local repairability, one can handle infinite hypergraphs, as well as hypergraphs with loops; one does not need to delete any vertices when repairing the hypergraph; and furthermore, the local modification rule $T$ modifies \emph{all} hypergraphs to obey $\P$, not just the original hypergraph $G$, and the repaired hypergraph is likely to stay close to $G$ for \emph{most} choices of $v \in V^A$, and not just for a \emph{single} $\phi \in \Hom(A,V)$.
\end{remark}

\begin{remark}
Suppose that $\P$ is weakly (or strongly) locally repairable. As stated, the repair algorithm $T$ appearing in the above definition depends on the hypergraph $G$ as well
as on the data $\P$ and $\eps$.  With a bit more effort, one can
show that there exists a repair algorithm $T$ which depends only on
$\P$ and $\eps$, and which works (with high probability) for
\emph{all} hypergraphs (or continuous hypergraphs) $G$ that obey \eqref{gp}.  To see this, observe
that as $A$ does not depend on $G$, the number of possible repair
algorithms $T$ that can arise is bounded (for fixed $\P$ and $\eps$). Thus one can simply try all of these algorithms
in turn on a large random portion of $G$ and verify empirically
whether any of them obey \eqref{intv}, and then use the ``winner'' to
then repair the rest of the hypergraph. We omit the details.
\end{remark}

We make the following simple observations:

\begin{proposition}[Easy implications]\label{easy}  Let $\P$ be a $K$-property for some finite palette $K$.  If $\P$ is strongly locally repairable, then it is weakly locally repairable, and also testable with one-sided error.
\end{proposition}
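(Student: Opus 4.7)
The plan is to deduce both implications via a common mechanism: given a finite hypergraph $G$ on $V$ satisfying \eqref{injv}, extend it to a continuous hypergraph $\tilde G = (V, 2^V, \nu, (\tilde G_j))$ (as in Example \ref{extend}) with $\nu$ the uniform probability measure and each $\tilde G_j: V^j \to K_j$ an arbitrary extension of $G_j$; verify the continuous condition \eqref{gp}; apply strong local repairability to obtain a modification rule $T=(A,T)$ entailing $\P$ with \eqref{intv} bounded by some small precision $\eps' \ll \eps$; then extract a discrete repair by averaging. To verify \eqref{gp}, observe that for an injective $v \in \Hom([N_s],V) \subset V^{[N_s]}$ the sample $\overline{\tilde G}^{([N_s])}(v)$ is the pullback of $G\downharpoonright_{v([N_s])}$ by the bijection $v$, so it lies in $\P^{([N_s])}$ exactly when $G\downharpoonright_{v([N_s])} \in \P^{(v([N_s]))}$. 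Since the fraction of non-injective $v \in V^{[N_s]}$ is at most $\binom{N_s}{2}/|V|$, choosing $|V|$ large compared to $N_s/\delta_s$ and $\delta$ small compared to $\delta_s$ yields \eqref{gp} for $\tilde G$ with parameter $\delta_s$.

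For weak local repairability, apply Markov's inequality to \eqref{intv} to pick some $v \in V^A$ for which the inner integral is $O(\eps')$; since the measure of non-injective $v$ is $O(|A|^2/|V|)$, for $|V|$ large we may further arrange $v$ to be an injection $\phi \in \Hom(A,V)$. The naturality square \eqref{natural} of $\overline{T}$, combined with the identity $(\phi \uplus \id_{V \setminus \phi(A)}) \circ (\id_A \uplus w) = (\phi, w)$ of morphisms, ensures that for every $w \in \Hom([k], V \setminus \phi(A))$ the sample $\overline{T_\phi(\tilde G)}^{([k])}(w)$ coincides with the pullback $K^{(w)}(T_\phi(G))$ of the combinatorial modification, and likewise $\overline{\tilde G}^{([k])}(w) = K^{(w)}(G)$. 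Restricting the inner integral bound to such $w$ and accounting for the $k!$ orderings of each $k$-subset (which are jointly bad or jointly good by naturality of pullback) translates \eqref{intv} into \eqref{psieps}, up to a multiplicative constant depending on $k$ and $|A|$ that may be absorbed into $\eps'$.

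For testability, observe that $T$ entails $\P$, so the continuous hypergraph $T_v(\tilde G)$ obeys $\P$ for every $v \in V^A$. Averaging yields some $v_0$ for which the inner integral is small, and we define $G' \in K^{(V)}$ by restricting each component of $T_{v_0}(\tilde G)$ to $\Hom([j], V) \subset V^j$; applying the $\P$-obedience of $T_{v_0}(\tilde G)$ at the vertex set $V$ and tuple $\id_V \in V^V$ places $G' \in \P^{(V)}$, and the same injective-sample conversion as above turns the integral bound into \eqref{gv}. The main technical obstacle is the identification of the continuous modifications with their combinatorial counterparts on injective samples; this is a formal consequence of \eqref{natural} and the definitions, but must be set up carefully to avoid confusion between measure on $V^k$ and counting on $\binom{V}{k}$.
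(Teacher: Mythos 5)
Your proposal is correct and follows essentially the same route as the paper's own (sketched) proof: extend the finite hypergraph $G$ to a continuous hypergraph $\tilde G$ as in Example \ref{extend}, invoke strong local repairability to obtain a modification rule $T$ entailing $\P$ with small \eqref{intv}, and then average (equivalently pigeonhole) over $v \in V^A$ to extract an injective $\phi \in \Hom(A,V)$ for which the discrete repair $T_\phi(G)$ (or, for testability, the delooped $\overline{T_{\phi}(\tilde G)}^{(V)}(\id_V)$) obeys $\P$ and is close to $G$. You supply several details the paper elides --- the verification of \eqref{gp}, the injectivity and $k!$-ordering bookkeeping, and the naturality calculation identifying $\overline{T_\phi(\tilde G)}^{([k])}(w)$ with $K^{(w)}(T_\phi(G))$ --- but the underlying argument coincides with the paper's.
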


\begin{proof}(Sketch)  Let $k$ be the order of $K$.  To show that strong local repairability implies weak local repairability, we start with a large finite hypergraph $G$ on at least $N$ vertices obeying \eqref{injv} (for some $N$ and $\delta$ to be chosen later), extend it to a continuous hypergraph $\tilde G$ as in Example \ref{extend}, and apply strong local repairability to obtain a local repair rule $T = (A,T)$ entailing $\P$ and obeying \eqref{intv} with $\eps$ replaced by some slightly smaller quantity $\eps'$ depending on $k$ and $\eps$, and assuming that $N$ and $\delta$ were sufficiently large and small respectively depending on $\eps'$.  If $N$ is large enough, we can use \eqref{intv} and the pigeonhole principle to find $\phi \in \Hom(A,V) \subset V^A$ such that\footnote{Here and in the sequel we use $X \ll Y$ and $Y \gg X$ synonymously with $X = O(Y)$ or $Y = \Omega(X)$ for non-negative $X,Y$; if the implied constant depends on some parameters, we will indicate this by appropriate subscripting.}
$$ \frac{1}{|V^{[k]}|} \left|\left\{ w \in V^{[k]}: \overline{T_\phi(\tilde G)}^{([k])}(w) \neq \overline{\tilde G}^{([k])}(w) \right\}\right|  \ll_{k} \eps'$$
which then implies \eqref{psieps} if $N$ is large enough and $\eps'$ is sufficiently small depending on $k$ and $\eps$.  Also, since $T$ entails $\P$, $T_\phi(G)$ will obey $\P$, and we are done.  A similar argument gives testability with one-sided error, by setting $G'$ to be the hypergraph corresponding to $T_\phi(\tilde G)$ (basically, by reversing Example \ref{extend} and deleting all the loops); we omit the details.
\end{proof}

\begin{remark}\label{easy-rem} It is almost true that weak local repairability implies testability with one-sided error; the one problem is that the hypergraph obtained by weak local repairability was forced to delete a bounded number of vertices.  If one strengthens the notion of weak local repairability to allow $T$ to entail $\P$, rather than merely assume that $T_\phi(G)$ obeys $\P$, then one can easily fix the problem by adding a bounded number of ``dummy'' vertices to $G$ to create a slightly enlarged graph $G'$, so that $T_\phi(G')$ still obeys $\P$ and has the same number of vertices as $G$; we leave the details to the reader.  On the other hand, this strengthened notion of weak local repairability becomes equivalent to the strong notion of local repairability, as one can see by viewing a continuous hypergraph as the limit of a sequence of finite hypergraphs (and using the fact that for fixed $A$, the number of possible modification rules $T$ is finite); we omit the details.  Indeed we do not know any example of a hereditary property which is weakly locally repairable but not strongly locally repairable.
\end{remark}

We can now quickly state our next main theorem.

\begin{theorem}[Every hereditary undirected graph property is locally repairable]\label{lgr} Let $K$ be a finite palette of order at most $2$, and let $\P$ be a hereditary undirected $K$-property.  Then $\P$ is strongly locally repairable (and hence also weakly locally repairable).
\end{theorem}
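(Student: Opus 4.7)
The plan is to adapt the Alon-Shapira proof \cite{AloSha2} of Theorem \ref{as-thm} (in the case $k=2$) to the multicoloured undirected setting, with the Szem\'eredi regularity step replaced by a ``random neighbourhood'' partition \`a la \cite{ishi-0}; this last adjustment is what makes the repair procedure local in the sense of Definition \ref{locmod}. Fix $\eps > 0$ and a hereditary undirected $K$-property $\P$ with $K = (K_0,K_1,K_2)$ of order at most $2$. The local modification rule $(A,T)$ will take $A = [m]$ for a large integer $m = m(\eps,K,\P)$, and for a typical training tuple $v \in V^A$ will produce a modification $T_v(G)$ that lies in $\P^{(V)}$ and is $\eps$-close to $G$ in the sense of \eqref{intv}.

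The construction is as follows. Given $v = (v_1,\ldots,v_m) \in V^A$, assign to each $w \in V$ a \emph{type} $\tau_v(w) \in K_1 \times K_2^{[m]}$ recording $G_1(w)$ together with the edge colours $G_2(v_i,w)$; this partitions $V$ into at most $M := |K_1|\cdot|K_2|^m$ measurable classes $V_\tau$. For each ordered pair of types $(\tau,\sigma)$ and each colour $c \in K_2$, the training data $G\downharpoonright_{\{v_1,\ldots,v_m\}}$ lets the rule estimate the density of $c$-coloured edges between $V_\tau$ and $V_\sigma$; declare $c^{\ast}(\tau,\sigma) \in K_2$ to be the dominant such $c$ (breaking ties by a fixed total order on $K_2$). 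The rule $\overline{T}^{(V)}$ then sets the edge colour between $w_1$ and $w_2$ to $c^{\ast}(\tau_v(w_1),\tau_v(w_2))$, and preserves the vertex colour $G_1(w)$. One checks directly that this prescription obeys the naturality condition \eqref{natural}.

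To verify \eqref{intv}, standard concentration (Hoeffding/Chernoff), applied to the random training tuple, gives that for most $v \in V^A$ the sampled densities agree with the true between-class densities to within a small error, so the cleaning step only overwrites edges whose colour already coincides with $c^{\ast}(\tau,\sigma)$ on all but a set of measure $\eps$, provided $m$ is chosen large enough. To verify that $T_v(G) \in \P^{(V)}$, it suffices by heredity to show that every finite sample $\overline{T_v(G)}^{([n])}(w)$ lies in $\P^{([n])}$; suppose for contradiction that this fails for some $w$. The induced hypergraph on $\{w_1,\ldots,w_n\}$ is determined entirely by the type sequence $\tau_v(w_1),\ldots,\tau_v(w_n)$ and the cleaned pattern $c^{\ast}$. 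Apply Ramsey's theorem (valid since $K_2$ is finite) inside a sufficiently large random $N$-sample from $G$ to extract, with positive probability, a sub-sample of size $n$ realising the same type-sequence whose induced subgraph in $G$ matches the cleaned pattern up to negligible error; combining with \eqref{gp} and heredity then contradicts $\overline{T_v(G)}^{([n])}(w) \notin \P^{([n])}$.

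The main obstacle is this last Ramsey--heredity step: the abstract cleaned pattern produced by the local rule must be matched with an actual induced subgraph of $G$ obeying $\P$. The finiteness of $K_2$ is essential, since the Ramsey-type extraction of a sub-sample realising a prescribed type-sequence requires a bounded edge palette in order to yield a bounded Ramsey number; the resulting bounds on $m$ and $1/\delta$ are of tower type. It is precisely this step which, as the negative results later in the paper will show, obstructs any analogous local repairability statement for directed graphs or for $3$-uniform hypergraphs, where the relevant Ramsey-type colourings are no longer finite.
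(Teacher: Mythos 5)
Your proposed rule sets the colour of \emph{every} edge between type classes $V_\tau$ and $V_\sigma$ to the majority colour $c^{\ast}(\tau,\sigma)$, and then claims that ``the cleaning step only overwrites edges whose colour already coincides with $c^{\ast}(\tau,\sigma)$ on all but a set of measure $\eps$, provided $m$ is chosen large enough.'' This claim is false and is the central gap. Taking types with respect to a random training tuple is a form of weak regularisation, and it does not make the bipartite blocks anywhere close to monochromatic. For a concrete failure, take $K = \{0,1\}_2$ and let $\P$ be the trivial property of being an undirected graph: then any $G$ obeys $\P$ exactly and the correct repair is to change nothing, yet a $G$ resembling a random graph of edge density $1/2$ will have density close to $1/2$ between any two sizeable type classes, so the majority-vote rule alters a constant fraction of the edges regardless of $m$. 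No amount of refining the partition fixes this: increasing $m$ refines the types, but the blocks need not become edge-homogeneous. A correct cleaning rule must, as in the triangle-free discussion of Example~\ref{triangle-test} (and as in \cite{AloSha2}), \emph{preserve} the existing colour of $G_2(w_1,w_2)$ whenever it is consistent with the cleaned picture, and only overwrite when the colour has negligible density between the two type classes. Because you wholesale-replace by the dominant colour, the estimate \eqref{intv} is simply not established.

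There are two further soft spots, both of which become moot once the main issue is fixed but are worth flagging. First, estimating the between-class density of colour $c$ from $G\downharpoonright_{\{v_1,\ldots,v_m\}}$ alone is only meaningful when a given pair of types $(\tau,\sigma)$ is realised with multiplicity among the training vertices, and there is no reason for that with $m$ fixed and types valued in $K_1\times K_2^{[m]}$. Second, and more substantively, the Ramsey--heredity verification is underspecified precisely at the spot where the Alon--Shapira argument is hardest: the ``diagonal'' case of pairs $w_1,w_2$ having the \emph{same} type $\tau$. There the rule is forced to assign a single colour to all such pairs, and this colour must come from Ramsey's theorem applied to vertex-pair colourings within a single type, not from a majority vote; without this, the resulting cleaned pattern need not be one that appears as an induced subgraph of $G$, and the contradiction with \eqref{gp} does not go through. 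The paper handles exactly this via a Ramsey extraction of subsets $V'_{a_0,a_1}$ on which the modification rule becomes monochromatic (see the proof of Proposition~\ref{lgr-prop}). In short: the route via a random-neighbourhood partition and a direct finitary argument is viable in principle (cf.\ Theorem~\ref{as-2}), but the present writeup has a wrong cleaning rule and a hand-waved Ramsey step, and both need substantive repair.
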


The proof of Theorem \ref{lgr} follows the Alon-Shapira argument and is given in Section \ref{posi}.

\begin{remark} Theorem \ref{lgr} implies the existence of a probabilistic algorithm that can generate each edge of the graph $G'$ in Theorem \ref{as-thm} in time $O_{\P,\eps}(1)$ (and using $O_{\P,\eps}(1)$ queries to $G$), i.e. in a time bounded by a quantity depending only\footnote{We caution however that our result, which is proven by indirect means, is \emph{ineffective} or \emph{non-uniform} in the sense that we do not provide a way to explicitly compute this bound $O_{\P,\eps}(1)$ given $\P$ and $\eps$.  Indeed, given the discussion in \cite{AloSha}, \cite{AloSha2}, it is extremely likely that the bound here is \emph{uncomputable} from that data in general, even when $\P$ itself is computable; the issue seems to be related to that of solving various halting problems associated to $\P$.  In particular, we have a somewhat subtle distinction: for any \emph{fixed} $\P$, $\eps$, and $G$, the repair algorithm $T$ can be described in a finite (but uncomputable) amount of time, but we do not have an algorithm to \emph{compute} this description from $\P$, $\eps$, and $G$.}  on $\P$ and $\eps$.  In particular, the entire graph $G'$ can be reconstructed in time $O_{\P,\eps}(|V|^2)$ (of course, one needs to query the entire graph $G$ to do this).  Similar remarks apply to Theorems \ref{monotone}, \ref{part} below.
\end{remark}

Another way to contrast local repairability with testability is to observe that Theorem \ref{lgr} also easily implies Ramsey's theorem:

\begin{corollary}[Ramsey's theorem]\label{ramsey}  Let $K$ be a finite palette of order at most $2$ and let $n \geq 1$.  If $N'$ is sufficiently large depending on $K$ and $n$, then for every undirected graph $G \in K^{([N'])}$ there exists a set $W \subset [N']$ with $|W| = n$ such that the induced graph $G\downharpoonright_W \in K^{(W)}$ is monochromatic, or equivalently that $K^{(\phi)}( G\downharpoonright_W ) = G\downharpoonright_W$ for all $\phi \in \Hom(W,W)$.
\end{corollary}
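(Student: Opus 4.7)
The plan is to argue by contradiction. Suppose Corollary \ref{ramsey} fails for some palette $K$ and some $n$: for every integer $M$, there exists an undirected graph $G_M \in K^{([M])}$ with no monochromatic induced $n$-subset. Define the undirected $K$-property
$$\P^{(V)} := \{\text{undirected } G \in K^{(V)} : G \text{ contains no monochromatic induced } n\text{-subset}\}.$$
Since pullback by an injection preserves undirectedness and only loses potential $n$-subsets, $\P$ is a hereditary undirected $K$-property in the sense of Definition \ref{hered}.

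By Theorem \ref{lgr} applied to $\P$ with $\eps = 1/2$, there exist a finite set $A$, an integer $N_0$, and $\delta > 0$ as in Definition \ref{locrep}. Pick $M \gg N_0^2/\delta$ and let $\tilde G$ be any symmetric continuous extension of $G_M$ to $V = [M]$ equipped with the uniform probability measure, as in Example \ref{extend}. The fraction of non-injective $v \in V^{[N_0]}$ is at most $O(N_0^2/M) \leq \delta$, while every injective $v$ pulls $\tilde G$ back to an induced subgraph of $G_M$, which inherits the ``no monochromatic $n$-subset'' property from $G_M$. Hence $\tilde G$ satisfies \eqref{gp}, and Theorem \ref{lgr} furnishes a local modification rule $T = (A, T)$ entailing $\P$.

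The decisive observation is that any monochromatic input to $T$ produces a monochromatic output. Indeed, if $H \in K^{(A \uplus V')}$ has each component $H_j$ constant, then for every $\phi \in \Hom([j], V')$ the pullback $K^{(\id_A \uplus \phi)}(H)$ is literally the \emph{same} monochromatic hypergraph $H^*$ on $A \uplus [j]$, independent of $\phi$. Consequently $T_j(H^*)$ is a fixed element of $K_{=j}^{([j])}$, and so $\overline{T}^{(V')}(H)_j(\phi)$ is constant in $\phi$. This forces $\overline{T}^{(V')}(H)$ to be monochromatic on $V'$.

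Now take $V' = [n]$ and let $H$ be any monochromatic (hence undirected) hypergraph on $A \uplus [n]$. The preceding paragraph shows $\overline{T}^{([n])}(H)$ is monochromatic on $[n]$, so it trivially contains a monochromatic induced $n$-subset, namely itself. But since $T$ entails $\P$, we must have $\overline{T}^{([n])}(H) \in \P^{([n])}$, which by definition forbids any such $n$-subset -- a contradiction. Hence Corollary \ref{ramsey} must hold. The crux is pinning down the correct hereditary property: one whose entailment rule, although constructed to accommodate a putative no-Ramsey graph, is nonetheless forced into an inconsistency when fed a completely monochromatic input on $n$ vertices.
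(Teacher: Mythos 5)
Your proof is correct and follows essentially the same approach as the paper: both define the hereditary undirected $K$-property $\P$ of forbidding monochromatic induced $n$-subsets and apply Theorem \ref{lgr} to derive a contradiction. Your explicit observation that a local modification rule, by naturality under permutations, maps any monochromatic input on $A \uplus [n]$ to a monochromatic output on $[n]$ is precisely the finitary content of the paper's footnote that no rule $T$ can entail this $\P$; the paper instead phrases the same obstruction infinitarily via the fact that any continuous hypergraph, sampled at a point with all coordinates equal, yields a monochromatic $n$-hypergraph.
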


\begin{remark} Ramsey's theorem is of course also true for palettes $K$ of order greater than $2$, but Theorem \ref{lgr} turns out to fail in this case, due to the failure of a generalised version of Ramsey's theorem: see Theorem \ref{negate} below.
\end{remark}

\begin{proof} Let $\P$ be the $K$-property of being undirected and not containing any monochromatic induced sub-hypergraph on $n$ vertices.  This is clearly a hereditary $K$-property, and hence strongly locally repairable by Theorem \ref{lgr}.  On the other hand, it is impossible for any non-empty $K$-coloured continuous graph $G = (V,\B,\nu,G_2)$ to obey this property\footnote{For closely related reasons, it is also impossible to find a local repair rule $T$ which entails $\P$.}, since if $v \in V^n$ is any $n$-tuple with all coordinates equal then $\overline{G}^{([n])}(v)$ is a monochromatic hypergraph on $n$ vertices.  Applying Theorem \ref{lgr} in the contrapositive, we conclude the existence of an $N \geq 1$ and $\delta > 0$ such that
$$ \frac{1}{|V|^N} \left|\left\{ v \in V^{[N]}: \overline{G}^{([N])}(v) \hbox{ obeys } \P \right\}\right| < 1-\delta.$$
On the other hand, if $G$ contained no induced monochromatic sub-hypergraphs on $n$ vertices, the left-hand side would be\footnote{We use subscripts on the $O()$ notation to indicate that the implied constants in that notation depend on the variables in the subscripts.} $1 - O_{N,n,K}(1/|V|)$.  The claim then follows by taking $N'$ sufficiently large depending on $N, n, K, \delta$.
\end{proof}

It does not appear possible to similarly deduce Ramsey's theorem just from Theorem \ref{as-thm}.  One indirect piece of evidence for this claim is that the arguments in \cite{rs2} do not invoke Ramsey-theoretic arguments anywhere, but are still able to obtain Theorem \ref{as-thm}.  On the other hand, the Alon-Shapira arguments used to prove Theorem \ref{as-thm} in the $k=2$ case crucially relies on Ramsey's theorem.  Similarly, our proof of Theorem \ref{lgr} will also invoke Corollary \ref{ramsey} at a key juncture (see Section \ref{tech}).
The arguments used to prove Theorem \ref{lgr} can also be used (after some modification) to establish local repairability of monotone hypergraph properties and partite hypergraph properties.  More precisely, we have the following two results.

\begin{definition}[Monotonicity]\label{mono}  An \emph{ordered finite palette} is a finite palette $K = (K_j)_{j=0}^\infty$, together with a partial ordering $<_j$ on each component $K_j$ which is a \emph{meet-semilattice}, in the sense that any two elements $c_j, c'_j$ in $K_j$ have a unique meet\footnote{We say that $z = x \wedge y$ is the \emph{meet} of two elements $x,y$ of a partially ordered set if $z \leq x,y$, and if $z \geq z'$ for any $z' \leq x,y$.} $c_j \wedge c'_j$; note that this is automatically a commutative and associative operation.  

Now let $K$ be an ordered finite palette and $\P$ a hereditary $K$-property.
\begin{itemize}
\item We say that $\P$ is \emph{monotone} if if given any vertex set $V$ and any $K$-coloured hypergraphs $G \in \P^{(V)}$, any hypergraph $G' \in K^{(V)}$ with the property that $G'_j(\phi) \leq G_j(\phi)$ for all $j \geq 0$ and $\phi \in \Hom([j],V)$, will obey $\P$.  (Informally: ``deleting''  edges (or lowering the colour of edges) will preserve the property $\P$.)
\item We say that $\P$ is \emph{weakly monotone} if given any vertex set $V$ and any $K$-coloured hypergraphs $G,G' \in \P^{(V)}$, the hypergraph $G \wedge G' \in K^{(V)}$ defined by $(G \wedge G')_j(\phi) := G_j(\phi) \wedge G'_j(\phi)$ for all $j \geq 0$ and $\phi \in \Hom([j],V)$, also obeys $\P$.  (Informally, the ``intersection'' (or color-meet) of two hypergraphs obeying $\P$, continues to obey $\P$.)
\end{itemize}
\end{definition}

\begin{example} Suppose we are in the ``boolean'' case where $K = \{0,1\}_k$ is the monochromatic finite palette of some order $k \geq 0$, so that a $K$-coloured hypergraph on a vertex set $V$ can be identified with a set $E \subset \Hom([k],V)$ of morphisms from $[k]$ to $V$.  A hereditary $K$-property $\P$ is then monotone if, given any $E \in \Hom([k],V)$ which obeys $\P$, the hypergraph associated to any subset of $E$ also obeys $\P$.  Similarly, $\P$ is weakly monotone if, given any two $E, E' \subset \Hom([k],V)$ which obey $\P$, the hypergraph associated to $E \cap E'$ also obeys $\P$.  Note that any directed monotone or undirected monotone hypergraph property is weakly monotone.  However, one can easily concoct examples of weakly monotone properties which are not monotone (e.g. the property of being a complete hypergraph is weakly monotone).
\end{example}

\begin{theorem}[Every weakly monotone directed hypergraph property is locally repairable]\label{monotone} Let $K$ be an ordered finite palette, and let $\P$ be a weakly monotone $K$-property.  Then $\P$ is strongly locally repairable (and hence also weakly locally repairable).
\end{theorem}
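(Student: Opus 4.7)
I adapt the Alon--Shapira argument used to prove Theorem \ref{lgr} in Section \ref{posi}, replacing Ramsey's theorem (Corollary \ref{ramsey}) with the meet operation supplied by weak monotonicity. In the proof of Theorem \ref{lgr}, Ramsey is invoked to locate a canonical monochromatic configuration inside a sufficiently rich local neighbourhood. In the weakly monotone setting, one instead uses the meet: a finite family of candidate $\P$-colourings on a local configuration can be aggregated into a single $\P$-colouring by componentwise meet. This substitution removes the restriction to palettes of order at most $2$ and gives the theorem for all finite ordered palettes.

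Given a continuous $K$-coloured hypergraph $G = (V, \B, \nu, (G_j)_{j=0}^k)$ approximately obeying $\P$ in the sense of \eqref{gp}, I apply the hypergraph regularity lemma, realised through random neighbourhoods (cf.\ \cite{ishi-0}), so that a sufficiently large but bounded random training set $A \subset V$ partitions $V$ into finitely many ``types'' $\tau_A$ encoding the $A$-colour pattern of each vertex. For each $H \in K^{(A \uplus [j])}$, the approximate testability hypothesis \eqref{gp} combined with the regularity structure yields a finite subfamily $\mathcal{F}(H) \subseteq \P^{(A \uplus [j])}$ of $\P$-colourings that are statistically consistent with the observed local structure of $G$ at the type tuple encoded by $H$. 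I then define the local modification rule $T = (A, T)$ by
$$T_j(H)(\iota) := \bigwedge_{G^* \in \mathcal{F}(H)} G^*_j(\iota),$$
extended to all $\iota \in \Hom([j],[j])$ via the equivariance required by Definition \ref{concmod}. The key structural claim, to be proved by induction on the size of finite restrictions, is that for every input $H \in K^{(A \uplus V)}$ and every finite $W \subset V$, the restriction $\overline{T}^{(V)}(H)\downharpoonright_W$ can be written as the componentwise meet of a finite family of elements of $\P^{(W)}$; weak monotonicity then places this meet in $\P^{(W)}$, and running over all finite $W$ gives $\overline{T}^{(V)}(H) \in \P^{(V)}$, so that $T$ entails $\P$.

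Closeness of $T_v(G)$ to $G$ then follows in the spirit of the Alon--Shapira proof: hypergraph regularity combined with \eqref{gp} ensures that for typical $v \in V^A$ and typical $w \in V^{[k]}$ the colour $G_k(w)$ already equals the meet defining $T_k$, so $T_v(G)$ and $G$ agree on a $1 - O(\eps)$ fraction of $k$-tuples, giving \eqref{intv}. The main obstacle is calibrating the families $\mathcal{F}(H)$ so that the rule simultaneously (i) entails $\P$ on \emph{every} input, not merely on $G$, and (ii) leaves $G$ unchanged on a $1 - O(\eps)$ fraction of $k$-tuples. Enlarging $\mathcal{F}(H)$ makes (i) easier but hurts (ii), since a larger meet family yields a smaller meet; narrowing $\mathcal{F}(H)$ does the reverse. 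Weak monotonicity combined with the quantitative control from hypergraph regularity provides exactly the algebraic flexibility needed to navigate this trade-off and produce a rule $T$ that works for both purposes.
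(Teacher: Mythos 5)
You have the right high-level idea — that weak monotonicity supplies a meet operation that can substitute for Ramsey's theorem as a "symmetrizer" — and that is indeed the key conceptual move in the paper's proof of this theorem. However, your specific construction has a genuine gap in the entailment step, and the object you meet over is different from what the paper uses, in a way that matters.

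The paper first reduces (via the correspondence principle and the structure theorem for exchangeable recipes) to constructing a deterministically continuous natural transformation $S\colon A\to K$ on a *finite* palette $A$ refining $\kappa$, and then invokes the Alon--Shapira finitisation trick (Remark \ref{sat}): since there are only finitely many such $S\colon A\to K$, there is a fixed integer $N$ so that $S$ entails $\P$ as soon as $S^{([N])}(A^{([N])})\subset\P^{([N])}$. It then produces, via Proposition \ref{repair}, a single local map $U\colon A^{([N])}\to\P^{([N])}$ which is not $\Hom([N],[N])$-equivariant, and symmetrizes it by meeting over the group: $S^{([N])}(a):=\bigwedge_{\phi\in\Hom([N],[N])}K^{(\phi)}(U(a))$. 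Each factor $K^{(\phi)}(U(a))$ lies in $\P^{([N])}$ since $\P$ is hereditary, the meet lies in $\P^{([N])}$ by weak monotonicity, and equivariance is automatic because the meet ranges over all of $\Hom([N],[N])$. Entailment on arbitrary $V$ then follows from the certificate $N$, with no induction on $W$ required.

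Your construction instead takes, for each local pattern $H\in K^{(A\uplus[j])}$, a "statistically consistent" family $\mathcal{F}(H)\subseteq\P^{(A\uplus[j])}$ and outputs the componentwise meet. The problem is that you never specify $\mathcal{F}(H)$ precisely enough to justify the pivotal claim that for arbitrary $H\in K^{(A\uplus V)}$ and finite $W\subset V$, $\overline{T}^{(V)}(H)\downharpoonright_W$ is a meet of elements of $\P^{(W)}$. That claim is highly non-trivial: the families $\mathcal{F}(\cdot)$ are chosen independently at different edges, and when you assemble the edge-by-edge outputs over three or more vertices there is no reason for the result to be a meet of globally-coherent $\P$-hypergraphs on $W$ rather than just a patchwork that has no reason to lie in $\P^{(W)}$. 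You acknowledge this difficulty at the end ("calibrating the families $\mathcal{F}(H)$") but treat it as a tuning issue; it is actually the entire content of the proof. The paper sidesteps this coordination problem completely by meeting over permutations of a \emph{single} $U$, so that compatibility across edges is automatic, and by using the finitisation trick so that entailment only needs to be checked once on $[N]$. Without either of these ingredients your argument does not yet establish that $T$ entails $\P$.
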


\begin{definition}[Partiteness]\label{partite}  Let $K$ be a palette of order $k \geq 1$.  If $G \in K^{(V)}$ is a $K$-coloured hypergraph, $0 \leq j \leq k$, and $\phi \in \Hom([j],V)$, we say that $\phi$ is a \emph{partite edge} of $G$ if the map $G_1: V \to K_1$ is injective on $\phi([j])$.  If $G, G' \in K^{(V)}$, we say that $G, G'$ are \emph{partite equivalent} if $G_1 = G'_1$ and if $G_j(\phi) = G'_j(\phi)$ for every $0 \leq j \leq k$ and every partite edge $\phi \in \Hom([j],V)$ of $G$ (and thus of $G'$).  We say that a hereditary $K$-property $\P$ is \emph{partite} if it is preserved under partite equivalence, thus if $G \in \P^{(V)}$ and $G'$ is partite equivalent to $G$, then $G' \in \P^{(V)}$.
\end{definition}

\begin{example}[Tripartite triangle-freeness]  Let $K$ be the finite palette
$K := (\pt, \{1,2,3\}, \{0,1\})$ of order $2$.
Thus a $K$-coloured graph $G \in K^{(V)}$ on a vertex set $V$ can be viewed as a vertex colouring $G_1: V \to \{1,2,3\}$, together with a set $E_2 \subset \Hom([2],V)$ of edges.  Let $\P$ be the $K$-property of being undirected (thus $(v,w) \in E_2$ if and only if $(w,v) \in E_2)$, partite (thus $(v,w) \in E_2$ only if $G_1(v) \neq G_1(w)$), and triangle-free (thus there do not exist $u,v,w \in V$ such that $(u,v), (v,w), (w,u) \in E_2$).  With our definitions, $\P$ is hereditary but is not a partite $K$-property, because it is not preserved under partite equivalent operations, such as adding edges $(v,w)$ within a single vertex colour class $G_1^{-1}(\{i\})$.  However, if we define $\P'$ to be the $K$-property that $G'$ obeys $\P$, where $G'$ is the $K$-coloured graph with the same vertex colouring $G'_1 := G_1$ as $G$, and whose edge set $E'_2$ consists of those edges $(v,w) \in E_2$ for which $G_1(v) \neq G_1(w)$, then $\P'$ is a hereditary partite $K$-property.
\end{example}

\begin{remark}\label{partite-directed} In Remark \ref{directed} we commented that property testing of directed hypergraph properties could not be easily reduced to the property testing of undirected hypergraph properties.  However, in the case of partite properties one can canonically convert directed hypergraphs into undirected hypergraphs in a manner which allows one to transfer property testing results back and forth between the directed and undirected cases.  For instance, given a bipartite directed graph $G = (V,E)$ (so the palette here is $(\pt, \{0,1\},\{0,1\})$), one can lift $G$ to an undirected bipartite $(\pt, \{0,1\}, \{ \emptyset, (0,0), (0,1), (1,0), (1,1)\})$-coloured graph $G'$, by declaring the colour of an undirected edge $\{v_0,v_1\}$ in $G'$, where $v_0$ and $v_1$ are in the $0$-vertex and $1$-vertex classes respectively, to be the ordered pair consisting of the colour of the directed edges $(v_0, v_1)$ and $(v_1,v_0)$ in $G$ respectively (and all edges not connecting a $0$-vertex to a $1$-vertex can be assigned the colour $\emptyset$).  It is then not hard to see that a partite property $\P$ of directed bipartite graphs $G$ can be lifted to an equivalent partite property $\P'$ on undirected bipartite graphs $G'$, and that local testability or repair results for $\P$ are equivalent to those for $\P'$.  More generally, if $K$ is any finite palette and $G \in K^{(V)}$ is a directed $K$-coloured hypergraph, one can create an undirected $K$-coloured hypergraph $G' \in (K')^{(V)}$, where the finite palette $K' = (K'_j)_{j=0}^\infty$ is defined by setting $K'_j := K_j$ for $j =0,1$ and $K'_j := \binom{\Hom([j],K_1) \times K_j}{j!} \cup \{\emptyset\}$ for $j > 1$, by setting $G'_j := G_j$ for $j=0,1$, and setting $G'_j(\phi) := \{ (G_1 \circ \phi \circ \psi, G_j(\phi \circ \psi)): \psi \in \Hom([j],[j]) \}$ when $j \geq 2$ and $\phi$ is a partite edge, and $G'_j(\phi) := \emptyset$ when $j \geq 2$ and $\phi$ is not a partite edge.  Then one can identify each directed partite $K$-property $\P$ with a undirected partite $K'$-property $\P'$, such that $G$ obeys $\P$ if and only if $G'$ obeys $\P'$; we omit the details.
\end{remark}

\begin{theorem}[Every partite hypergraph property is locally repairable]\label{part} Let $K$ be an finite palette of order $k \geq 1$, and let $\P$ be a partite hereditary $K$-property.  Then $\P$ is strongly locally repairable (and hence also weakly locally repairable).
\end{theorem}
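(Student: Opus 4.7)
Plan: I would adapt the Alon--Shapira-style argument used to prove Theorem \ref{lgr}, with the vertex colouring $G_1: V \to K_1$ playing the role that Ramsey's theorem (Corollary \ref{ramsey}) plays in the graph case. The key simplification afforded by partiteness is that the modification rule only needs to correctly specify the colours of \emph{partite} edges $\phi \in \Hom([j],V)$, i.e., those for which $G_1 \circ \phi$ is injective; non-partite edges may be filled in with any default colour without affecting membership in $\P$.

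First, I would apply the hypergraph regularity lemma to the continuous hypergraph $G$ to obtain an equitable partition of $V$ into finitely many cells that is regular at all orders up to $k$, refining the partition so that each cell is contained in a single $G_1$-colour class. Using the near-obedience assumption \eqref{gp} together with the regularity, I would construct a model continuous hypergraph $\tilde G$ indexed by the cells such that (a) $\tilde G$ obeys $\P$ globally, and (b) on each regular tuple of cells, $\tilde G$ prescribes for each partite edge the dominant colour taken by partite edges of $G$ in that tuple. Next I would take $A \subset V$ to be a random sample of size bounded in terms of the number of cells and $\eps$; with high probability over $A$, the cell containing a generic vertex $v \in V$ is determined by the ``type'' of $v$ relative to $A$, i.e., the tuple of colours on all $j'$-subsets of $A \uplus \{v\}$ for $j' \leq k$. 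Define $T = (A,T)$ by: given $H \in K^{(A \uplus [j])}$, read off the $G_1$-values of the $[j]$-vertices from $H$; if the resulting tuple is not partite, output a fixed default colour; otherwise, read off the cell-types of $1,\ldots,j$ against $A$ and output the colour prescribed by $\tilde G$ on the corresponding cell-tuple. Symmetrization over $\Hom([j],[j])$ then yields the equivariance required by Definition \ref{concmod}.

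Entailment is then automatic: for any input $G'$, every partite edge of $\overline{T}^{(V)}(G')$ is coloured via a cell assignment pulled back from $\tilde G$, so $\overline{T}^{(V)}(G') \in \P^{(V)}$ because $\tilde G$ obeys $\P$ and $\P$ is partite. Closeness $T_v(G) \approx G$ in the sense of \eqref{intv} follows from property (b) together with the cell-identification estimate: for a generic $v \in V^A$ and most random samples $w \in V^{[k]}$, the vertices $w(1),\ldots,w(k)$ lie in a regular cell-tuple, their cell-types are correctly identified, and the colour $\tilde G$ prescribes agrees with the true colour of $G$, up to a total density of $O(\eps)$.

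The main obstacle I anticipate is the construction of the model $\tilde G$ obeying $\P$ \emph{globally}, not merely on most random samples. This is the analogue of the step where \cite{AloSha2} invokes Ramsey's theorem; for non-partite hypergraph properties with $k \geq 3$, the failure of a suitable higher-order Ramsey statement (the content of Theorem \ref{negate}) is precisely what prevents such a $\tilde G$ from existing. In the partite case, however, the cells of the refined partition are already canonically labeled by the vertex colouring $G_1$, so no Ramsey-style amalgamation into a monochromatic block is needed---the partite structure itself supplies the canonical labeling of model points, allowing the compactness/removal argument used to promote ``obeys $\P$ on most samples'' to ``obeys $\P$ on all samples'' (via Theorem \ref{rs-thm-dir}) to go through cleanly.
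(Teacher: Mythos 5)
Your proposal takes a genuinely different route from the paper --- a direct finitary regularity-lemma argument in the spirit of \cite{AloSha2}, rather than the paper's infinitary reduction to the non-exchangeable repair statement Proposition \ref{repair} and back --- but the step on which you express the most confidence is the one that actually fails.  You assert that ``no Ramsey-style amalgamation into a monochromatic block is needed'' because the partite vertex colouring supplies canonical labels; in fact the paper's proof of Proposition \ref{part-prop} invokes the \emph{partite hypergraph Ramsey theorem} at precisely this point.  The vertex colouring $G_1$ labels the vertex \emph{classes}, but it does not select which vertices within a class act as the model representatives, and when one converts a non-exchangeable repair (the local map $U$ of Proposition \ref{repair}, or in your framework $\tilde G$ together with choices of ideal vertices) into an exchangeable rule $S\colon A\to K$, different choices of representatives from the same classes must give the same answer.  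The paper secures this by applying partite Ramsey to the partite $(\P^{([j])})^{A^{([j])}}$-coloured hypergraphs formed by the restricted maps $U_{\psi;v_1,\dots,v_j}$, extracting sub-classes $V'_{a_1} \subset V_{a_1}$ on which these maps are constant in $(v_1,\dots,v_j)$.  Partite Ramsey is far more elementary than the full Ramsey theorem used in Corollary \ref{ramsey} --- which is exactly why the partite case is tractable for all $k$ while the general undirected case fails at $k=3$ (Theorem \ref{negate}(c)) --- but it is still a Ramsey theorem, and your argument needs some substitute for it.

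A second gap appears for $k\geq 3$, which your order-$k\geq 1$ claim must handle.  The hypergraph regularity lemma produces a hierarchy of partitions of $\binom{V}{j}$ for $1\leq j<k$, and the colour a regularised model $\tilde G$ assigns to a $k$-edge depends on the lower-order cells containing each $j$-subset for $1<j<k$, not merely on the vertex classes.  These lower-order cells are \emph{not} labelled by $G_1$, and your local rule as described reads off only vertex-cell types against $A$.  That suffices in the graph case $k=2$ but not beyond, and is exactly the phenomenon the paper flags as the source of the $k\geq 3$ difficulties: in the paper's construction each $U_\psi$ accepts the \emph{entire} restriction in $A^{([j])}$ --- including the lower-order sub-edge colours --- as input, and the partite Ramsey application is what makes this higher-order dependence representative-independent.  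Your write-up would need an analogous mechanism for identifying higher-order cells from bounded local data and then building $\tilde G$ so that this read-off actually determines it.  Relatedly, Theorem \ref{rs-thm-dir} supplies only the \emph{existence} of a nearby $\P$-obeying hypergraph; it does not guarantee that the repaired $\tilde G$ remains a function of cell data that your local rule can read off, which is why the paper works with Proposition \ref{repair} directly to obtain a genuinely local map into $\P^{(V)}$ rather than a model produced by repair-after-the-fact.
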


\begin{remark} A similar result to Theorem \ref{part} implicitly appears in \cite{ishi}.  It is also quite likely that Theorem \ref{monotone} can be deduced from the methods in \cite{ars}, although this is not done explicitly in that paper.
\end{remark}

Theorems \ref{rs-thm-dir}, \ref{lgr}, \ref{monotone}, and \ref{part} will all be proven in Section \ref{posi}.  The arguments have many features in common (and in fact share many key propositions) and so will be proven concurrently.  To do this, we will use a version of the hypergraph correspondence principle \cite{Tao3}, combined with a structure theorem \cite{Aus1} for exchangeable random hypergraphs, to convert these problems into an infinitary\footnote{There are a number of advantages in working in the infinitary framework.  One is that there are fewer epsilons that one needs to manage in the argument.  Another is that one gains access to a number of useful infinitary tools, such as the Lebesgue dominated convergence theorem, Littlewood's principle that measurable functions are almost continuous, and the Lebesgue-Radon-Nikodym theorem.  While each of these infinitary tools does have some sort of finitary analogue, these analogues are significantly messier to use (and are less well known) than their infinitary counterparts.} one concerning the testability and repairability of certain ``infinitely regular'' exchangeable random hypergraphs (or more precisely, for exchangeable ``recipes'' for producing such hypergraphs, whose palettes are sub-Cantor sets rather than finite sets).  This conversion, which is completed in Section \ref{reduce-sec} is analogous to the exploitation of graph and hypergraph limits in \cite{LovSze2}, \cite{EleSze}, with the infinitely regular exchangeable random hypergraphs being closely related to the graphons and hypergraphons from those papers.

The (infinitary versions of) three local repairability results (Theorem \ref{lgr}, \ref{monotone}, and \ref{part}) will then be deduced from a single ``non-exchangeable'' local repairability result, Proposition \ref{repair}, in Section \ref{tech}.  It is at this stage that a certain amount of Ramsey theory is needed, and assumptions such as undirectedness, monotonicity, or partiteness become crucial.  On the other hand, the result in Proposition \ref{repair} does not require any Ramsey theory, and works for arbitrary hereditary properties.

Proposition \ref{repair}, as well (the infinitary version of) Theorem \ref{rs-thm-dir}, is then deduced from two discretisation results, Propositions \ref{disc-ident} and \ref{disc-ident2}, which construct certain discretisation transformations from continuous palettes to discrete palettes that converge in certain technical senses to the identity as the discrete palette becomes increasingly fine.  These propositions form the heart of the paper and are proven in Sections \ref{disc-sec}, \ref{disc2-sec}.  Proposition \ref{disc-ident}, which underlies the local testability result in Theorem \ref{rs-thm-dir} (and is also used in the proof of Proposition \ref{repair}) follows the R\"odl-Schacht approach and is relatively easy, whereas Proposition \ref{disc-ident2}, which is needed only for the repairability results, uses the Alon-Shapira method and is significantly more technical due to the breakdown of independence caused by ``indistinguishable'' edges\footnote{In the setting of \cite{AloSha2}, this corresponds to the difficulty of repairing edges that connect a single cell in a Szemer\'edi partition to itself.  Once one considers (not necessarily undirected) hypergraphs of higher order, more complicated forms of indistinguishability also appear.}.

\subsection{New negative results}\label{subs:negative}

The above positive results are fairly unsurprising, given the prior work in this direction such as \cite{AloSha2}, \cite{rs2}, and \cite{ishi}.  On the other hand, the following negative results seem to be somewhat more unexpected.

\begin{theorem}[Negative results]\label{negate}
\begin{itemize}
\item[(a)] (Directed graph properties are not locally repairable) There exists a hereditary $\{0,1\}_2$-property which is not weakly locally repairable.
\item[(b)] (Undirected $\leq 3$-uniform hypergraph properties are not locally repairable) There exists a hereditary undirected $(\pt,\{0,1\},\{0,1\},\{0,1\})$-property which is not weakly locally repairable.
\item[(c)] (Undirected $3$-uniform hypergraph properties are not locally repairable)  There exists a hereditary undirected $\{0,1\}_3$-property which is not weakly locally repairable.
\end{itemize}
\end{theorem}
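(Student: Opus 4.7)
The plan is to refute weak local repairability in each setting by exploiting the sharply restricted structure that any modified hypergraph $T_\phi(G)$ must possess. For a local modification rule $(A,T)$ with $|A|=m$, the naturality condition \eqref{natural} forces $T_\phi(G)$ to be \emph{type-based}: each vertex $v \in V\setminus \phi(A)$ carries a type $\tau(v) = G\downharpoonright_{\phi(A)\cup\{v\}}$ taking at most boundedly many (depending only on $m$ and $K$) values, and for each $j\leq k$ the output colour on a $j$-tuple is a fixed function of the joint $A$-type of that tuple together with the value of $G$ on the tuple itself. Accordingly, in each of parts (a), (b), (c), I would construct a hereditary property $\P$ and a witness hypergraph $G$ such that (i) $G$ satisfies \eqref{injv} for $\P$ with arbitrarily small $\delta$, yet (ii) no type-based hypergraph of bounded complexity that is close to $G$ obeys $\P$. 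Property (ii) directly contradicts Definition \ref{locrep}.

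The guiding principle is the contrast with the undirected graph case established in Corollary \ref{ramsey}, where local repairability is essentially equivalent to Ramsey's theorem: Ramsey forces every bounded-type undirected graph to contain large monochromatic classes, and this is exactly what the repair algorithm needs to exploit. Conversely, I would exhibit in each of (a), (b), (c) a family $\mathcal F$ of ``forbidden induced sub-structures'' such that (A) random-like hypergraphs avoid all members of $\mathcal F$ on small samples with overwhelming probability, but (B) every type-based hypergraph of bounded complexity on sufficiently many vertices must contain a member of $\mathcal F$. Property (B) follows from the pigeonhole principle in the bounded type-space: a large type class is forced to exist, and the rule then reduces the colours on this class to a rigid fixed pattern which we design to lie in $\mathcal F$. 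Property (A) will come from a probabilistic or algebraic pseudo-random construction of $G$, chosen so that random $N$-samples avoid $\mathcal F$ with probability $\geq 1 - \delta$.

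The concrete constructions proceed as follows. For (a), take $\mathcal F$ to be a family of tournaments on $n$ vertices admitting a large ``twin class'' with a prescribed local edge pattern, and let $G$ be a Paley-type pseudo-random tournament; quasi-randomness prevents such twin classes from appearing in $N$-samples, whereas any bounded-type directed graph on enough vertices is forced by pigeonhole into exactly such a configuration. For (c), the analogous construction uses a family of $3$-uniform hypergraphs whose pair-type pattern on a large vertex subset is rigid in a way that every bounded-type $3$-uniform hypergraph must exhibit, with $G$ taken to be a quasi-random $3$-uniform hypergraph. Part (b) uses the additional $\{0,1\}_1$ vertex-colour palette to encode the forbidden vertex partition directly, sharpening the construction. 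The main technical obstacle in each case is calibrating $\mathcal F$ to be simultaneously rich enough to force (B) and sparse enough to permit (A); this calibration is precisely the Ramsey-theoretic phenomenon that distinguishes these settings from the undirected graph case of Theorem \ref{lgr}, and the bulk of the work will be in carrying out the probabilistic estimates that verify (A) for the chosen $G$.
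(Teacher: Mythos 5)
There is a genuine gap here, and it lies precisely in assertion (B) of your outline. After a vertex $v$ is assigned its type $\tau(v) = G\downharpoonright_{\phi(A)\cup\{v\}}$ and pigeonhole hands you a large type class $V_\tau$, the repaired colour on a pair $(v_1,v_2) \in V_\tau^2$ is not a ``rigid fixed pattern'': as you yourself note one sentence earlier, it is a fixed function of the joint type \emph{together with the value of $G$ on the tuple itself}, and that value varies freely within a type class. So pigeonhole on types alone forces nothing on the induced sub-tournament. Concretely, in your directed-graph example the only constraint that naturality gives on a single intra-class pair is that if $G$ is invariant under swapping $v_1 \leftrightarrow v_2$ (i.e. $G_2(v_1,v_2) = G_2(v_2,v_1)$ together with the type match), then $T_\phi(G)$ must also be invariant; but a tournament is by definition antisymmetric on every pair, so in a Paley tournament \emph{no} such indistinguishable pair exists, and the constraint is vacuous. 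There is nothing stopping $T_\phi$ from acting as the identity on a type class of a tournament, preserving transitivity or any other desirable structure, and your family $\mathcal F$ of ``twin class with prescribed pattern'' configurations will not be forced to appear.

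What the paper does instead is engineer the witness $G$ so that indistinguishable pairs \emph{are} forced to appear. For (a) one takes a genuine total ordering on $[M]$ (which does satisfy $\P$ exactly) and corrupts each directed edge independently with tiny probability $\sigma$, so that a positive density of pairs become symmetric ($v_1 <_G v_2$ and $v_2 <_G v_1$ having the same truth value); a second-moment/union-bound argument over all choices of $\phi$ then shows that with high probability every large type class contains such a symmetric pair, and naturality forces $T_\phi(G)$ to put a symmetric edge there, killing the total-ordering property. The slight corruption is essential: it simultaneously keeps \eqref{injv} valid (since $G$ is close to the exact total order) and manufactures the symmetric pairs that make repair impossible. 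For (b) and (c) one cannot re-use a directed property, so the paper constructs an \emph{ad hoc} undirected encoding of ``consistent orderability'' using auxiliary red/blue vertex colours and, in the $3$-uniform case, an extra green vertex class whose edge pattern reconstitutes the lower-order data; the impossibility argument is then the undirected analogue, forcing an inconsistent $4$- or $9$-tuple to appear. These encodings are the bulk of the work in (b) and (c), and are not produced by the ``quasi-random $3$-uniform hypergraph plus forbidden twin structure'' heuristic you sketch; your proposal leaves these constructions entirely unspecified and, more importantly, rests on the incorrect claim that the repaired colouring is constant on a type class.

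To summarise the comparison: you correctly identify the type-based description of $T_\phi(G)$ and the Ramsey-theoretic intuition for why $\leq 2$-uniform undirected graphs behave differently, but you are missing the step of \emph{producing} indistinguishable (or, in (b)/(c), inconsistent) tuples inside the witness graph $G$ -- which requires corrupting a rigid structure rather than using a pseudorandom one -- and the encoding machinery that makes parts (b) and (c) work at all.
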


\begin{remark} Combining this theorem with Theorem \ref{rs-thm-dir} we see that there exist hereditary undirected hypergraph properties $\P$ which are testable with one-sided error, but not weakly or strongly locally repairable.  Informally, what this means is that for hypergraphs $G$ which almost obey such properties $\P$, there do exist nearby hypergraphs $G'$ which genuinely obey $\P$, but such hypergraphs cannot be obtained from $G$ by purely local modifications.  We will make this more precise in Section \ref{negchap}, when we prove more refined versions of Theorem \ref{negate}.
\end{remark}

\begin{remark}  There are analogous results\footnote{We are indebted to Luca Trevisan for this remark.} in the coding theory literature.  For instance, in \cite{GS} one finds constructions of locally testable codes which map messages of length $k$ to strings of length $k^{1+o(1)}$, but such codes cannot be locally correctable due to the lower bound results in \cite{katz}.
\end{remark}

We prove Theorem \ref{negate} in Section \ref{negchap}.  For part (a), the directed graph property is actually very simple\footnote{This example is of course closely related to the example of the \emph{half-graph}, which is a familiar counterexample to many overly strong assertions about graph regularity or graph property testing.} - it is the property that a directed graph determines a total ordering on $V$.  The theorem is thus asserting that a lightly corrupted total ordering on an extremely large vertex set cannot be ``cleaned up'' by a purely local algorithm.  The failure in (a) can ultimately be traced back to the simple fact that directed graphs do not obey the Ramsey theorem (which in turn reflects the basic fact that the two directed edges connecting two vertices $v$ and $w$ may well have distinct colours).  Parts (b) and (c) are derived from the counterexample in (a) and some \emph{ad hoc} combinatorial constructions, which ``encode'' the property of being a directed graph as a $\leq 3$-uniform undirected property, and then as a $3$-uniform undirected property.  It is somewhat surprising that one has failure of local repairability in these undirected cases, since Ramsey's theorem is known to be true for hypergraphs.  The problem is rather subtle, and lies in the fact that in the $3$-uniform case, Ramsey's theorem fails for a certain generalisation of a hypergraph known as a \emph{hypergraphon}, in which the colour of a given $3$-uniform edge is not completely determined by its three vertices, but is also dependent on the colour of the $\binom{3}{2}$ $2$-uniform edges between those vertices, which are in turn not completely determined by the vertices themselves.

\begin{remark}  In \cite{KohNagRod}, a positive property testing result for $3$-uniform hypergraphs was proven in the case that $\P$ was the $\{0,1\}_3$-property of not containing a fixed hypergraph as an induced subhypergraph.  This argument relied on Ramsey theory and it seems likely that the repaired hypergraph $G'$ given by this argument could be generated by a local modification rule, though we were unable to fully verify that the arguments in \cite{KohNagRod} would yield this conclusion.  If this is the case, it illustrates an interesting contrast with Theorem \ref{negate}(c), in that arbitrary hereditary properties can in fact behave differently from the properties formed from forbidding a single hypergraph.  Unsurprisingly, the counterexample for local repair of $3$-uniform hypergraphs can be modified to also be a counterexample for local repair of $k$-uniform hypergraphs for any $k \geq 3$, but we will not detail this here.
\end{remark}

\subsection{Acknowledgements}

The second author is supported by NSF grant CCF-0649473 and a grant from the MacArthur Foundation.  The authors thank Asaf Shapira for bringing these problems to our attention and for encouragement, and also thank Noga Alon, Oded Goldreich, Luca Trevisan, and Avi Wigderson for helpful conversations, especially with regard to references and terminology.  We are especially indebted to the anonymous referees for their careful reading and useful suggestions for improving the exposition of the paper.

\subsection{Summary of notation}

For the readers convenience we summarise some of the key notation used in this paper.

The cardinality of a finite set $E$ is denoted $|E|$, and we write $\binom{V}{j} := \{ e \subset V: |e|=j\}$.  For any positive integer $N$, we write $[N] := \{1,\ldots,N\}$.  For any event $E$, we write $\I(E)$ for the indicator function of $E$.  The injections $\phi$ from $V$ to $W$ are denoted $\Hom(V,W)$.  The notation $X \ll Y$, $Y \gg X$, $X=O(Y)$, or $Y = \Omega(X)$ is used to denote $X \leq CY$ for some absolute constant $C$; if $C$ needs to depend on some additional parameters such as $\eps$, we will denote this by subscripting, e.g. $X \ll_\eps Y$ or $X=O_\eps(Y)$.

Hypergraphs and pullback maps $K^{(\phi)}$ are defined in Definition \ref{hyperdef}.  Hereditary properties are defined in Definition \ref{hered}.  Testability is defined in Definition \ref{Testdef}, and local repairability is defined in Definition \ref{locrep}, after introducing the notions of a continuous hypergraph (Definition \ref{contmap}), a local modification rule (Definition \ref{concmod} or \ref{locmod}), and entailment and modification (Definition \ref{entail}).

In Appendix \ref{prob} a number of key probabilistic concepts are defined, including the conditioning $(\mu|E)$ of a probability measure to an event of positive probability, the notion of a probability kernel $P: Y \rightsquigarrow X$, and the composition of $P \circ Q$ of two such kernels.

\section{Proofs of negative results}\label{negchap}

We begin with the proofs of the various counterexamples to local repairability in Theorem \ref{negate}.  The material here is largely independent of those of the positive results, which will be given in Section \ref{posi}.

\subsection{The counterexample for directed graphs}\label{lrs}

In this section we construct a counterexample that will demonstrate part (a) of Theorem \ref{negate}.  In this section we set $K := \{0,1\}_2$ and $k := 2$.  Note in this case that we can identify a $K$-coloured hypergraph $G$ on a vertex set $V$ with a \emph{directed graph} $G = (V,<_G)$, where $<_G$ is a binary relation $<_G: V \times V \to \{\hbox{true}, \hbox{false}\}$ on $V$.  We let $\P$ be the $\{0,1\}_2$-property that $<_G$ is a total ordering, then this is clearly a hereditary $K$-property.  It will suffice to show that $\P$ is not weakly locally repairable.

In order to illustrate some of the ideas involved, let us first demonstrate the much simpler fact that $\P$ is not \emph{strongly} locally repairable.  Consider the continuous $K$-coloured hypergraph $G$ in which $V$ is the unit interval $[0,1]$ with the Borel $\sigma$-algebra $\B$ and Lebesgue measure $\mu$, and $<_G$ is the usual ordering relation on $[0,1]$.  Then we certainly have \eqref{gp}; in fact we can take $\delta=0$ in this case.  On the other hand, it is impossible to repair $G$ to a new continuous hypergraph $G'$ that obeys $\P$, because if $W$ is any finite set with at least two elements, then $\overline{G'}^{(W)}(v)$ cannot obey $\P$ whenever $v$ has a repeated coefficent (thus $v_w = v_{w'}$ for some for some distinct $w,w' \in W$), since the statements $w <_{\overline{G'}^{(W)}(v)} w'$ and $w' <_{\overline{G'}^{(W)}(v)} w$ would have the same truth value, which is inconsistent with $\P$.  Thus $\P$ is not strongly locally repairable.

Now we disprove weak local repairability for the same property $\P$.  This counterexample will be so strong that the parameter $\eps$ in Definition \ref{locrep} (and the estimate \eqref{psieps}) will play no role whatsoever.  (However, we will take advantage of \eqref{psieps} for some suitably small $\eps$ when proving parts (b) and (c) of Theorem \ref{negate}.)

Let $A$ be an arbitrary finite non-empty set, let $N > 0$ be an integer, and let $\delta > 0$ be an arbitrary small number, which we can assume to be small compared to $A,N$.  Let $\sigma > 0$ be an even smaller number (depending on these parameters) to be chosen later, and then let $M$ be an enormous number (depending on all previous parameters), again to be chosen later.  We set $V := [M]$.

To prove Theorem \ref{negate}(a), it will suffice to construct a directed graph $G = (V,<_G)$ obeying \eqref{injv} for which there does \emph{not} exist a local modification rule $T = (A,T)$ and $\phi \in \Hom(A,V)$ such that the repaired graph $T_{\phi}(G)$ obeys $\P$ (note that by construction, our counterexample $V$ can be larger than any specified size).  Our construction will be probabilistic in nature.

To define $G$, we first define an ``uncorrupted'' directed graph $G^{(0)} = (V, <_{G^{(0)}})$ by letting $<_{G^{(0)}} = <$ be the standard total ordering on $V=[M]$, thus $G^{(0)}$ obeys $\P$.  Now let $G = ([M], <_G)$ be a corrupted version of $G^{(0)}$, in which for any $(v,w) \in \Hom([2],[M])$, the statements $v <_G w$ and $v < w$ have the same truth value with probability $1-\sigma$ and the opposite truth value with probability $\sigma$, with these events being independent as $(v,w)$ varies.

Since the uncorrupted $G^{(0)}$ obeys $\P$, and $G$ is a random corruption of $G^{(0)}$, it is easy to see that for each fixed morphism $\phi \in \Hom([N],V)$, that $K^{(\phi)}( G )$ will obey $\P$ with probability $1 - O_N(\sigma)$.  By the first moment method and linearity of expectation, we conclude that \eqref{injv} holds with probability $1 - O_{N,\delta}(\sigma)$.  Let us now condition on the event that \eqref{injv} holds.

Now suppose for contradiction that there exists a local modification rule $T = (A,T)$ and $\phi \in \Hom(A,V)$ such that the repaired graph $G' = (V \backslash \phi(A), <') := T_{\phi}(G)$ obeys $\P$.

Let us say that two distinct vertices $v_1, v_2 \in V \backslash \phi(A)$ are \emph{indistinguishable} if the graph $K^{(\phi \uplus (v_1,v_2))}(G) \in K^{(A \uplus \{1,2\})}$ is invariant under permutation of the $1$ and $2$ indices; more explicitly, $v_1, v_2$ are indistinguishable whenever one has the symmetries
$$ \I( v_1 <_G a ) = \I( v_2 <_G a )$$
and
$$ \I( a <_G v_1 ) = \I( a <_G v_2 )$$
for all $a \in A$, as well as the symmetry
$$ \I( v_1 <_G v_2 ) = \I( v_2 <_G v_1 ).$$
Note that if an indistinguishable pair $v_1, v_2$ of vertices exists, then by \eqref{natural} (applied to the map from $V$ to itself interchanging $v_1$ and $v_2$) the statements $v_1 <_{G'} v_2$ and $v_2 <_{G'} v_1$ have the same truth value, which implies that $G'$ cannot obey $\P$, a contradiction.  Thus, in order to establish Theorem \ref{negate}(a), it will suffice (by the probabilistic method) to show

\begin{lemma}\label{indis}  Suppose $M$ is sufficiently large (depending on $N,\delta,\sigma,A$).  Then with probability $1-O_{A}(\sigma)$, it is true that for every $\phi \in \Hom(A,V)$, there exists at least one pair $(v_1,v_2)$ of distinct but indistinguishable vertices in $V \backslash \phi(A)$.
\end{lemma}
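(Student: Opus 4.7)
The plan is to use pigeonhole on ``profiles with respect to $\phi(A)$'' to find many vertices automatically satisfying the first two indistinguishability conditions, and then to exploit the independent randomness of the remaining edges inside $V \setminus \phi(A)$ to produce a pair that also satisfies the third condition; a union bound over $\phi$ will complete the argument. Concretely, I would fix $\phi \in \Hom(A,V)$, set $B := \phi(A)$ and $V' := V \setminus B$, and for each $v \in V'$ record the profile
$$ \pi_\phi(v) \;:=\; \bigl(\I(v <_G b),\,\I(b <_G v)\bigr)_{b \in B} \;\in\; \{0,1\}^{2|A|}, $$
which takes at most $4^{|A|}$ values. Any two $v_1,v_2 \in V'$ with $\pi_\phi(v_1) = \pi_\phi(v_2)$ automatically satisfy the first two conditions of indistinguishability, so only the symmetric-edge condition inside $V'$ will remain to be arranged.

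Next I would condition on the values of all edges of $G$ incident to $B$, which determines the profile partition of $V'$; by pigeonhole one of its at most $4^{|A|}$ classes, call it $C$, will have $|C| \ge (M-|A|)/4^{|A|}$. The edges of $G$ lying inside $V'$ are disjoint from the conditioned edges, so they remain independent $\sigma$-corruptions of the restriction of the standard order $<$ to $V'$. Inside $C$ I would select $\lfloor |C|/2\rfloor$ vertex-disjoint pairs; for each such pair $\{v_1,v_2\}$ the third indistinguishability condition $\I(v_1 <_G v_2) = \I(v_2 <_G v_1)$ holds iff exactly one of the two directed edges between $v_1$ and $v_2$ is flipped relative to $G^{(0)}$, an event of probability $2\sigma(1-\sigma)$. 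Since vertex-disjoint pairs use disjoint edges, these events are mutually independent, giving a conditional failure probability at most $\bigl(1 - 2\sigma(1-\sigma)\bigr)^{\lfloor|C|/2\rfloor} \le \exp(-c_A\,\sigma\,M)$ for some $c_A > 0$ depending only on $|A|$, once $\sigma \le 1/2$ and $M \gg |A|$.

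Finally I would union-bound over the at most $M^{|A|}$ choices of $\phi \in \Hom(A,V)$: the total failure probability is at most $M^{|A|}\exp(-c_A \sigma M)$, which is far below the prescribed $O_A(\sigma)$ target once $M$ is taken sufficiently large in terms of $|A|,N,\delta,\sigma$. The only point that will need careful bookkeeping is the decoupling of randomness: the pigeonhole step uses only edges incident to $B$, whereas the third-condition step uses only edges inside $V'$, and these two edge sets are disjoint, so the two steps can be chained without conditional-independence issues. Beyond this, the argument is just pigeonhole plus a large-deviation estimate for independent Bernoulli flips, and I would not expect any essential additional difficulty.
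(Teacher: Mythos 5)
Your proposal is correct, and it arrives at the lemma by essentially the same pigeonhole-plus-concentration idea as the paper, but with a genuinely different bookkeeping of where the union bound falls. The paper first proves a \emph{uniform} statement: with very high probability, \emph{every} subset $B \subset V$ of cardinality at least $cM$ contains a ``coincident'' pair (a pair $v_1,v_2$ with $\I(v_1<_G v_2)=\I(v_2<_G v_1)$). To beat the union bound over $2^M$ subsets, the paper uses the fact that for distinct unordered pairs in $B$ the relevant directed edges are disjoint, so the $\binom{|B|}{2}$ coincidence events are jointly independent, giving concentration $1-\exp(-\Omega(\sigma c^2 M^2))$. The pigeonhole over the at most $O_A(1)$ profile classes of a given $\phi$ is then applied deterministically after conditioning on the good event. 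You instead fix $\phi$, condition on the edges meeting $\phi(A)$ (which fixes the profile classes), pigeonhole to obtain one large class $C$, and then argue concentration only for that single data-dependent $C$ using vertex-disjoint pairs; since you no longer union over exponentially many subsets, the weaker exponent $\exp(-\Omega(\sigma M))$ suffices against the $M^{|A|}$ choices of $\phi$. Both routes close; yours is slightly more economical on the probabilistic side but relies on the edge-decoupling step you correctly flag, while the paper's uniform ``good event'' formulation avoids any conditioning at the cost of a harder (but still routine) union bound. One small strengthening available to you: you could use all $\binom{|C|}{2}$ pairs in $C$ rather than only vertex-disjoint ones -- they are still jointly independent for the same disjoint-directed-edges reason -- which recovers the quadratic exponent, though you don't need it.
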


\begin{proof} Let $c > 0$ be a small constant depending on $A$ to be chosen later (actually, one can take $c := 100^{-|A|}$).  Let $B$ be an arbitrary subset of $V$ of cardinality at least $cM$.  We assume $M$ is large enough that $cM > 2$.  Call $B$ \emph{corrupted} if there exists distinct $v_1, v_2 \in B$ such that $v_1 <_G v_2$ and $v_2 <_G v_1$ have the same truth value.  Observe from construction of $G$ for any distinct $v_1, v_2 \in V$ that $v_1 <_G v_2$ and $v_2 <_G v_1$ have the same truth value with probability $\gg \sigma$.  By independence, we conclude that $B$ will be corrupted with probability at least $1 - \exp( - \Omega( \sigma c^2 M^2 ) )$.  On the other hand, the total number of sets $B$ is at most $2^M$.  Also, the total number of choices for $\phi$ can be crudely bounded by $M^{|A|}$.
By the union bound, we conclude that with probability at least $1 - 2^M M^{|A|} \exp( - \Omega( \sigma c^2 M^2 ) )$, \emph{every} set of cardinality at least $cM$ is corrupted, for all choices of $\phi$.
If $M$ is large enough depending on $A, \sigma$, we thus see that this event holds with probability $1-O_{A}(\sigma)$.

Let us condition on the above event, and let $\phi \in \Hom(A,V)$ be arbitrary.  Let $\Omega := 2^A$ denote the power set of $A$.  We can then partition
$$ V = A \cup \bigcup_{U, U' \in \Omega} V_{U,U'}$$
where for each $U \in \Omega$, $V_{U,U'}$ is the set of all $v \in V \backslash A$ such that
$$ U = \{ a \in A: v <_G \phi(a) \} \hbox{ and } U' = \{ a \in A: \phi(a) <_G v \}.$$
The total number of pairs $(U,U')$ is $O_{A}(1)$.  Thus by the pigeonhole principle (and taking $M$ large enough), we can find $U, U'$ such that $|V_{U,U'}| \geq cM$, if $c$ is sufficiently small depending on $A$.  In particular, $V_{U,U'}$ is corrupted and we can find distinct $v_1,v_2 \in V_{U,U'}$ such that $v_1 <_G v_2$ and $v_2 <_G v_1$ have the same truth value.  By construction, we see that $v_1,v_2$ are indistinguishable, and the claim follows.
\end{proof}

The proof of Theorem \ref{negate}(a) is now complete.

\subsubsection{Further remarks}
We close this section with some further remarks about Theorem \ref{negate}(a).  Informally, the above result asserts that there does not exist a repair algorithm to convert a corrupted total ordering $G$ on a large finite set into an exact total ordering $<'$, in which the order relationship of two vertices $v_1, v_2$ of the set is repaired by inspecting the corrupted relationship between $v_1, v_2$ and a bounded number of other vertices, selected in advance.  It is likely that this result can be strengthened to allow for a more adaptive repair algorithm in which the other vertices that one queries need not be selected in advance, and for which the probability of the algorithm successfully obtaining a total ordering is lowered to, say, $2/3$ rather than $1$.  One should also be able to obtain a similar result even if the algorithm is allowed to retain a bounded amount of ``memory'' between repairing one edge and the next.  However, we will not pursue such strengthenings here.

On the other hand, once one relaxes the requirement of locality (or bounded memory), it becomes very easy to repair the corrupted total ordering $G$ used in the above proof to obtain an exact total ordering $<'$, while only modifying a proportion $O(\eps)$ of the edges.  We sketch the details as follows.  Fix $\eps > 0$, let $A = [N']$ for some large integer $N'$, and select $\phi \in \Hom(A,V)$ at random.  With probability $1-O_{A}(\delta)$, the directed graph $K^{(A)}(G)$ is totally ordered; we condition on this event, and then without loss of generality (relabeling $A$ if necessary) we may assume that the total ordering on $K^{(A)}(G)$ is the usual ordering on $A$.

For each $0 \leq i \leq N'$, let $V_i$ be the set of all vertices $v \in V \backslash \phi(A)$ such that
$$ \{ j \in A: i < j \leq N' \} = \{ 1 \leq j \leq N': v <_G \phi(j) \} $$
and
$$
\{ j \in A: 1 \leq j \leq i \} = \{ 1 \leq j \leq N': \phi(j) <_G v \};$$
roughly speaking, $V_i$ is the set of those vertices which $\phi(A)$ "predicts" should lie in the interval between $\phi(i)$ and $\phi(i+1)$.

These sets are clearly disjoint, and using the first moment method one can show that with probability $1 - O_{N',\eps}(\delta)$, these sets cover a proportion $1-O(\eps |V|)$ of the vertices in $V$.  We then define the total order $<'$ by declaring $v_i <' v_j$ whenever $v_i \in V_i, v_j \in V_j$, and $i < j$, and placing an arbitrary total ordering $<'$ on each of the $V_i$ separately, and also completing the total ordering to the complement of $\bigcup_i V_i$ (these are the non-local components of the repair algorithm).  It is not difficult to show that for $\delta$ sufficiently small, and then $M$ sufficiently large, that with probability $1 - O_{A,\eps}(\delta)$, this total order $<'$ will differ from $G$ on only $O(\eps)$ of the edges; we omit the details.  Note that the run time of this algorithm will be linear in the number of edges (i.e. the run time will be $O(|V|^2)$).

\subsection{The counterexample for undirected $\leq 3$-uniform hypergraphs}\label{leq3}

We now prove Theorem \ref{negate}(b).  We fix $k = 3$ and $K = \{ \pt, \{0,1\}, \{0,1\}, \{0,1\} \}$.  Note that a $K$-coloured undirected hypergraph $G$ on a vertex set $V$ can thus be viewed as a quadruplet $G = (V, E_1, E_2, E_3)$, where $E_1 \subset V$ is a set of vertices, $E_2 \subset \binom{V}{2}$ is a set of undirected $2$-edges, and $E_3 \subset \binom{V}{3}$ is a set of undirected $3$-edges.  The basic idea will be to ``encode'' the notion of a total ordering using the undirected data $E_1, E_2, E_3$.

Let us introduce the following notation.  Given a $K$-coloured undirected hypergraph $G$ and vertices $r, b, b' \in V$, we say that
\begin{itemize}
\item $b$ is \emph{$G$-blue} if $\{b\} \in E_1$;
\item $r$ is \emph{$G$-red} if $\{r\} \not \in E_1$;
\item $r$ \emph{$G$-likes} $b$ if $r$ is $G$-red, $b$ is $G$-blue, and $\{r,b\} \in E_2$;
\item $r$ \emph{$G$-prefers} $b$ to $b'$ if $r$ is $G$-red, $b, b'$ are $G$-blue, $r$ $G$-likes $b$, and $r$ does not $G$-like $b$;
\item $r$ \emph{ranks $\{b,b'\}$ $G$-correctly} if $r$ is $G$-red, $b,b'$ are $G$-blue, and $\{r,b,b'\} \in E_3$;
\item We write $b >_{G,r} b'$ if $r$ either (a) $G$-prefers $b$ to $b'$ and ranks $\{b,b'\}$ $G$-correctly, or (b) $G$-prefers $b'$ to $b$ and does not rank $\{b,b'\}$ $G$-correctly;
\item The hypergraph $G$ is \emph{consistently orderable} if there exists a total ordering $>_G$ on $V$ such that $b >_G b'$ whenever $r, b, b'$ are such that $b >_{G,r} b'$.
\end{itemize}

We let $\P$ be the $K$-property of being undirected and consistently orderable.  One easily verifies that $\P$ is a hereditary undirected $K$-coloured hypergraph property.  To show Theorem \ref{negate}(b), it suffices to show that $\P$ is not weakly locally repairable.

Let $\eps > 0$ be a small absolute constant (one could take $\eps = \frac{1}{1000}$ for concreteness),
let $A$ be an arbitrary finite non-empty set, let $N > 0$ be an integer, and let $\delta > 0$ be an arbitrary small number, which we can assume to be small compared to $A,N$.  Let $\sigma > 0$ be an even smaller number (depending on these parameters) to be chosen later, and then let $M$ be an enormous number (depending on all previous parameters), again to be chosen later.  We set $V := [M]$.

To prove Theorem \ref{negate}(b), it will suffice to construct a $K$-coloured undirected graph $G = (V,E_1,E_2,E_3)$ obeying \eqref{injv}, for which there does \emph{not} exist a local modification rule $T = (A,T)$ and $\phi \in \Hom(A,V)$ such that the repaired hypergraph $T_{\phi}(G)$ obeys $\P$ and \eqref{psieps}. (Again, note that by construction that $V$ can be made larger than any specified number.)

As before, to define $G$ we first define a (random) ``uncorrupted'' $K$-coloured hypergraph
$$G^{(0)} = (V, E^{(0)}_1, E^{(0)}_2, E^{(0)}_3)$$
by the following construction:

\begin{itemize}
\item $E^{(0)}_1 := [M/2]$ (thus vertices between $1$ and $M/2$ are $G^{(0)}$-blue, and vertices between $M/2+1$ and $M$ are $G^{(0)}$-red);
\item $E^{(0)}_2$ is a random graph on $V$, with each edge $\{v_1,v_2\}$ lying in $E^{(0)}_2$ with a probability of $1/2$, with these events being jointly independent.  (Thus, a given $G^{(0)}$-red vertex will $G^{(0)}$-like a given $G^{(0)}$-blue vertex with a probability of $1/2$, independently of all other instances of the $G^{(0)}$-like relation.)
\item $E^{(0)}_3$ is the set of unordered triples $\{r,b,b'\}$ such that $r$ is $G^{(0)}$-red, $b, b'$ are $G^{(0)}$-blue, and one of the following statements hold:
\begin{itemize}
\item[(i)] $r$ $G^{(0)}$-likes both $b$ and $b'$;
\item[(ii)] $r$ does not $G^{(0)}$-like either $b$ or $b'$;
\item[(iii)] $r$ $G^{(0)}$-prefers $b$ to $b'$, and $b > b'$.
\end{itemize}
\end{itemize}

\begin{figure}[tb]
\centerline{\psfig{figure=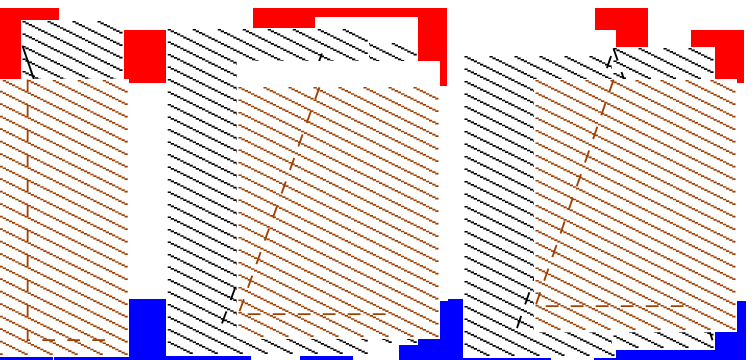}}
\caption{The three types of triples (indicated by shaded triangles) which lie in $E^{(0)}_3$.  Solid lines indicate edges in $E_2 = E^{(0)}_2$, while dashed lines indicate edges not in $E_2 = E^{(0)}_2$.  The vertices on the top row are red, while the bottom vertices are blue; the blue points are ordered so that the larger points are on the right.}
\label{fig1}
\end{figure}

It is not hard to verify that $G^{(0)}$ is consistently orderable (with $>_{G^{(0)}}$ being the usual ordering $>$ on $[M]$) and so obeys $\P$.

Next, we define the ``corrupted'' $K$-coloured undirected hypergraph
$$G = (V, E) = (V, E_1, E_2, E_3)$$
as follows:

\begin{itemize}
\item $V = [M]$;
\item $E_j = E^{(0)}_j$ for $j=1,2$ (thus, $G$ and $G^{(0)}$ have the same notions of red, blue, like, and prefer).
\item For each $e \in \binom{V}{3}$, the statements $e \in E^{(0)}_3$ and $e \in E_3$ have the same truth value with probability $1-\sigma$, and have opposite truth value with probability $\sigma$, independently of each other and of the random graph $E^{(0)}_2$.  (Thus the relations $>_{G,r}$ will be a slight corruption of $>_{G^{(0)},r}$.)
\end{itemize}

Since $G^{(0)}$ obeys $\P$, we can use the first moment method as in the preceding section to conclude that \eqref{injv} holds with probability $1 - O_{N,\delta}(\sigma)$.  Let us now condition on the event that \eqref{injv} holds.

Suppose for contradiction that there exists a local modification rule $T = (A,T)$ and a morphism $\phi: A \to V$ such that the repaired hypergraph $G' = (V \backslash \phi(A), E'_1,E'_2,E'_3) := T_{\phi}(G)$ obeys $\P$ and \eqref{psieps}.  From \eqref{psieps} we see in particular that
\begin{equation}\label{lowcorrupt}
|E'_j \Delta E_j| \ll \eps M^j
\end{equation}
for $j=1,2,3$, where $\Delta$ denotes the symmetric difference operator.

Fix $T,\phi$.  Call a quadruplet $(r_1,r_2,b_1,b_2)$ of distinct vertices in $V \backslash \{\phi(A)\}$ \emph{inconsistent} (relative to $T$ and $\phi$) if the following properties hold:

\begin{itemize}
\item[(i)] $r_1,r_2$ are both $G$-red and $G'$-red, and $b_1,b_2$ are both $G$-blue and $G'$-blue.
\item[(ii)] $r_1$ $G'$-prefers $b_1$ to $b_2$, and $r_2$ $G'$-prefers $b_2$ to $b_1$.
\item[(iii)]  The undirected hypergraph  $K^{(\phi \uplus (r_1,r_2,b_1,b_2))}(G) \in K^{(A \uplus [4])}$ is invariant under the morphism $\id_A \oplus (2,1,4,3) \in \Hom(A \cup [4], A \cup [4])$, where $(2,1,4,3) \in \Hom([4],[4])$ is the permutation which switches $1$ and $2$, and also switches $3$ and $4$.   More explicitly, for any $a \in A$, we have the $E_2$ symmetries $\I( \{ b_1, \phi(a) \} \in E_2 ) = \I( \{ b_2, \phi(a) \} \in E_2 )$ and $\I( \{ r_1, \phi(a) \} \in E_2 ) = \I( \{ r_2, \phi(a) \} \in E_2 )$, as well as the $E_3$ symmetries
\begin{equation}\label{symmetry}
\begin{split}
\I( \{r_1,r_2,b_1\} \in E_3 ) &= \I( \{r_1,r_2,b_2\} \in E_3 ) \\
\I( \{b_1,b_2,r_1\} \in E_3 ) &= \I( \{b_1,b_2,r_2\} \in E_3 ) \\
\I( \{r_1,b_1,\phi(a)\} \in E_3 ) &= \I( \{r_2,b_2,\phi(a)\} \in E_3 ) \hbox{ for all } a \in A\\
\I( \{r_1,b_2,\phi(a)\} \in E_3 ) &= \I( \{r_2,b_1,\phi(a)\} \in E_3 ) \hbox{ for all } a \in A\\
\I( \{r_1,\phi(a),\phi(a')\} \in E_3 ) &= \I( \{r_2,\phi(a),\phi(a')\} \in E_3 ) \hbox{ for all } \{a,a'\} \in \binom{A}{2}\\
\I( \{b_1,\phi(a),\phi(a')\} \in E_3 ) &= \I( \{b_2,\phi(a),\phi(a')\} \in E_3 ) \hbox{ for all } \{a,a'\} \in \binom{A}{2}.
\end{split}
\end{equation}
\end{itemize}

\begin{figure}[tb]
\centerline{\psfig{figure=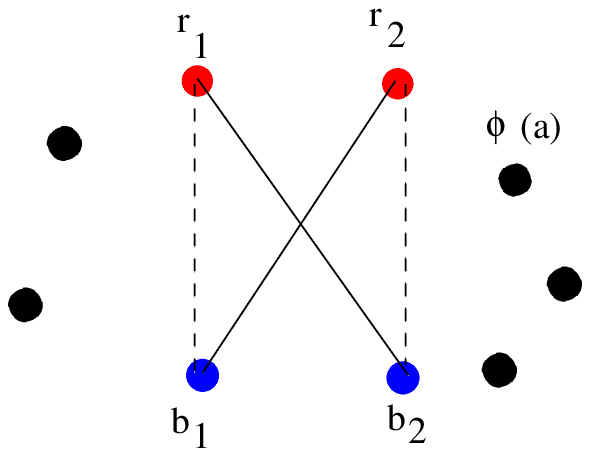}}
\caption{A partial depiction of an inconsistent quadruple $(r_1,r_2,b_1,b_2)$, surrounded by a number of vertices $\phi(a)$ with $a \in A$.  The connectivity between $(r_1,r_2,b_1,b_2)$ and $\phi(A)$ needs to be symmetric with respect to the ``reflection map'' $\id_A \oplus (2,1,4,3)$ which swaps $r_1$ and $r_2$, and swaps $b_1$ and $b_2$, but leaves the vertices in $\phi(A)$ unchanged.}
\label{fig2}
\end{figure}

Observe that if $(r_1,r_2,b_1,b_2)$ are inconsistent, then from properties (iii) and Definition \ref{locmod} we conclude that
$$ \I( \{ r_1, b_1, b_2 \} \in E'_3 ) = \I( \{ r_2, b_2, b_1 \} \in E'_3 ).$$
By properties (i) and (ii), this implies either that $b_1 <_{G',r_1} b_2$ and $b_2 <_{G',r_2} b_1$ are both true, or that $b_2 <_{G',r_1} b_1$ and $b_1 <_{G',r_2} b_2$ are both true.  But this implies that $G'$ is not consistently orderable and thus does not obey $\P$, a contradiction.  Thus to conclude the proof of Theorem \ref{negate}(b), it suffices to show

\begin{lemma}\label{triple} Suppose $\eps > 0$ is sufficiently small, and $M$ is sufficiently large (depending on $N,\sigma,A,\eps$).  Then with probability $1-O_{A,\eps}(\sigma)$, it is true that for all morphisms $\phi \in \Hom(A,V)$ and all local modification rules $T$ obeying \eqref{lowcorrupt}, there exists at least one quadruplet $(r_1,r_2,b_1,b_2)$ of inconsistent vertices in $V \backslash \phi(A)$.
\end{lemma}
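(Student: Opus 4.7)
The plan is to adapt the strategy of Lemma \ref{indis}, using pigeonholing twice --- once on ``singleton types'' of vertices relative to $\phi(A)$ and once on ``pair types'' --- to reduce the symmetry condition (iii) to a small number of ``internal'' constraints involving only the quadruple $(r_1, r_2, b_1, b_2)$, and then to exploit randomness of the uncorrupted $E^{(0)}_2$ and of the $E_3$ corruption to produce such a quadruple satisfying (i) and (ii) as well.

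Fix $\phi \in \Hom(A, V)$ and a rule $T$ satisfying \eqref{lowcorrupt}. First I would record, for each $v \in V \setminus \phi(A)$, its singleton type
\[
\sigma(v) := \left(\{a \in A : \{v, \phi(a)\} \in E_2\}, \; \{\{a,a'\} \in \binom{A}{2} : \{v, \phi(a), \phi(a')\} \in E_3\}\right),
\]
taking values in a set of size $O_A(1)$. Pigeonholing on red and blue vertices separately, and discarding the $O(\eps M)$ whose $G'$-colour disagrees with their $G$-colour (using \eqref{lowcorrupt} for $j=1$), yields sets $R_0, B_0$ of size $\gg_A M$ in which singleton types are constant; this already handles condition (i) and those parts of (iii) involving $\phi(A)$ through $E_2$ or through $E_3$-triples with two $\phi(A)$-coordinates. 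Next, on $R_0 \times B_0$ I would define the pair type $\tau(r, b) := \{a \in A : \{r, b, \phi(a)\} \in E_3\} \in 2^A$; a standard double-averaging / Cauchy--Schwarz argument on this bounded colouring produces a single colour $\alpha_* \in 2^A$ and subsets $R_1 \subseteq R_0$, $B_1 \subseteq B_0$ with $|R_1|, |B_1| \gg_A M$ and $\tau(r, b) = \alpha_*$ throughout $R_1 \times B_1$. For any distinct $r_1, r_2 \in R_1$ and $b_1, b_2 \in B_1$, every $\phi(A)$-involving symmetry of (iii) then holds automatically, leaving only condition (ii) and the two ``internal'' $E_3$-equalities
\[
\I(\{r_1, r_2, b_1\} \in E_3) = \I(\{r_1, r_2, b_2\} \in E_3), \quad \I(\{b_1, b_2, r_1\} \in E_3) = \I(\{b_1, b_2, r_2\} \in E_3)
\]
to verify.

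I would handle these remaining constraints probabilistically. The $E_3$-edges appearing in the two internal equalities involve at least two ``fresh'' vertices among $\{r_1, r_2, b_1, b_2\}$ and so (together with the $E^{(0)}_2$-edges governing $G$-likes on the same vertices) are essentially independent of the data used in the earlier steps. A two-red--one-blue triple is never in $E^{(0)}_3$ by construction, so each such $E_3$-edge is an independent $\sigma$-Bernoulli and the first internal equality holds with probability $(1-\sigma)^2 + \sigma^2$; the second, after conditioning on $E^{(0)}_2$, holds with positive probability depending only on $\sigma$ (for half of the relevant $E^{(0)}_2$-patterns the uncorrupted indicators already agree). Condition (ii) is stated in terms of $E'_2$, but the bound $|E'_2 \Delta E_2| \leq \eps M^2$ from \eqref{lowcorrupt} forces $G'$-likes to agree with $G$-likes for all but an $O(\sqrt{\eps})$-fraction of pairs $(r_i, b_j)$, while $G$-likes are independent fair coin flips by the construction of $E^{(0)}_2$. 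Combining these independent estimates, the expected number of inconsistent quadruples in $R_1 \times R_1 \times B_1 \times B_1$ is $\gg_{A, \sigma} M^4$; a second-moment estimate then converts this into the existence of at least one inconsistent quadruple with probability $1 - O_{A, \eps}(\sigma)$. A union bound over the at most $M^{|A|}$ morphisms $\phi$ and the $O_{A, K}(1)$ local rules $T$, exactly as in the proof of Lemma \ref{indis}, then concludes the proof provided $M$ is sufficiently large and $\eps$ sufficiently small.

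The main obstacle will be that $T$ depends on $G$, so the bad sets $E'_j \Delta E_j$ are themselves random and a priori correlated with the type data used in the pigeonholing steps. What rescues the argument is the naturality condition \eqref{natural}: the rule $T$ only probes a bounded number of $\phi(A)$-adjacent edges of $G$ to decide each edge of $G'$, so the $E^{(0)}_2$- and $E_3$-randomness among the fresh vertices $r_i, b_j$ stays independent of the singleton and pair types chosen in Steps~1--2. This decoupling --- precisely the one protected by the symmetry condition (iii) --- is what makes the final expected-count argument go through.
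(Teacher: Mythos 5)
Your high-level plan matches the paper's: pigeonhole on types relative to $\phi(A)$ to reduce the symmetry condition (iii) to the two internal $E_3$-equalities, find many admissible quadruplets using the regularity of $E_2^{(0)}$, and then use the unfrozen $\sigma$-noise on $\binom{V\setminus\phi(A)}{3}$ together with a union bound over $\phi$ and $T$. But two of your intermediate steps do not actually work as stated.

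First, your Step~2 is impossible as written. You claim that a ``standard double-averaging / Cauchy--Schwarz argument'' produces $R_1\subseteq R_0$, $B_1\subseteq B_0$ of size $\gg_A M$ on whose product $\tau(r,b):=\{a\in A:\{r,b,\phi(a)\}\in E_3\}$ is constant. No such result holds: for a genuinely random $2$-colouring of a bipartite set the largest monochromatic product has side length $O(\sqrt{\log M})$, and $\tau$ here contains exactly such randomly coloured data, since $\{r,b,\phi(a)\}\in E_3^{(0)}$ is governed by the independent fair coins $\I(\{r,b\}\in E_2^{(0)})$ and is then further corrupted by independent $\sigma$-noise. Cauchy--Schwarz can be used to count swap-symmetric quadruples $(r_1,r_2,b_1,b_2)$ with $\tau(r_1,b_1)=\tau(r_2,b_2)$ and $\tau(r_1,b_2)=\tau(r_2,b_1)$ directly (this gives $\gg_A M^4$ such quadruples without any monochromatic subproduct), but that is a different statement from the one you make, and the remainder of your argument would need to be reorganised around it. The paper takes a different route: it only pigeonholes on the $E_2$-types of single vertices relative to $\phi(A)$ and argues (via a footnote) that the $\phi(A)$-involving $E_3$-symmetries then come along automatically for admissible quadruplets, so no pair-type pigeonholing appears at all.

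Second, your Step~4 has a quantitative gap that is fatal to the union bound. A second-moment estimate bounds $\Prob(X=0)$ by $\mathrm{Var}(X)/(\E X)^2$; with $\E X\asymp M^4$ and, after accounting for the covariance between quadruplets sharing a triple, $\mathrm{Var}(X)=O(M^5)$, this yields $\Prob(X=0)=O(M^{-3})$ for each fixed $(\phi,T)$. But the union bound ranges over $M^{|A|}$ morphisms $\phi$, so you would need $\Prob(X=0)=O(\sigma M^{-|A|})$, which a second-moment argument cannot supply once $|A|\ge 3$. The paper resolves this by a greedy selection of $\gg_{\eps,A}M$ admissible quadruplets no two of which share three vertices; the events that each such quadruplet fails the two internal $E_3$-equalities are then \emph{jointly independent} (each is decided by disjoint unfrozen $\sigma$-Bernoulli triples), and the probability that they all fail is $\exp(-\Omega_{\sigma,\eps,A}(M))$, which comfortably beats the $M^{|A|}$ union bound. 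You should replace your second-moment step with this disjointification/product argument (or any other argument giving super-polynomially small failure probability in $M$).

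One smaller comment: your description of the decoupling in the final paragraph is basically right in spirit, but the paper achieves it not through naturality per se but through an explicit ordering of conditioning steps --- first condition on $c$-regularity of $E_2$, then freeze $E_2$, $\phi$, $T$, and finally the $\phi(A)$-touching slice of $E_3$ --- so that the remaining $E_3$-noise on $\binom{V\setminus\phi(A)}{3}$ is cleanly independent of everything already frozen, including $E'_1$ and $E'_2$. Your plan would also benefit from making the order of conditioning equally explicit, since conditioning on \eqref{lowcorrupt} for $j=3$ (rather than a deterministic surrogate) would, as the paper notes, destroy the independence you need.
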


\begin{proof}
Let $c > 0$ be a small number depending on $\eps, A$ to be chosen later.
Recall that the $2$-uniform graph $E_2 \subset \binom{V}{2}$ was selected to be a random graph on $V = [M]$, with edge density $1/2$.  By standard arguments (similar\footnote{In other words, one shows that \eqref{regular} holds for each pair $X, Y$ with super-exponentially high probability $1 - \exp(\Omega_c(|V|^2))$, and then applies the union bound.  See also \cite{wilson} for a proof that random graphs are regular.} to that used to prove Lemma \ref{indis}), we thus see that if $M$ is sufficiently large depending on $c, \sigma$, with probability $1 - O(\sigma)$, the graph $E_2$ is \emph{$c$-regular} in the sense that
\begin{equation}\label{regular}
|\{ (a,b) \in X \times Y: \{a,b\} \in E_2 \}| = ( \frac{1}{2} + O(c) ) |X| |Y|
\end{equation}
for all disjoint $X, Y \subset V$ with cardinality $|X|, |Y| \geq c |V|$.  Let us now condition on the event that we have this $c$-regularity, and freeze $E_2$ (and hence $G^{(0)}$).

Next, by paying a factor of $M^{|A|}$ in all future probability upper bounds, we may freeze the morphisms $\phi$. The total number of possible modification rules $T$ is clearly $O_A(1)$, so by paying this factor as well we may also freeze $T$.

We now freeze the set $E_3 \backslash \binom{ V \backslash \phi(A) }{3}$, which describes all the edges of $E_3$ which contain at least one vertex from $\phi(A)$.  Now that we have frozen these edges, as well as $E_2$ and $T$, we see from Definition \ref{locmod} that $E'_1$ and $E'_2$ are also frozen.

The only randomness that remains after all this freezing comes from the random variables $\I(e \in E_3 \Delta E^{(0)}_3)$ for $e \in \binom{ V \backslash \phi(A) }{3}$, which are jointly independent (even after all the freezing) and equal $1$ with probability $\sigma$ each.  From \eqref{natural} we conclude that if $e \in \binom{V}{3}$ intersects $\phi(A)$ then the quantity $\I(e \in E'_3)$ is now deterministic, whereas if $e$ does not intersect $\phi(A)$ then the quantity $\I(e \in E'_3)$ depends only on the quantity $\I(e \in E_3 \Delta E^{(0)}_3)$ (as well as all the frozen data, of course).

Since $E'_1$ and $E'_2$ are frozen, we may condition on the event that \eqref{lowcorrupt} holds for $j=1,2$ without difficulty.  (We will not attempt to condition on the event that \eqref{lowcorrupt} holds for $j=3$, because this creates the technical problem that such a conditioning will disrupt the joint independence of the events $e \in E_3 \Delta E^{(0)}_3$, which we will need to exploit later.)

Let $V_R$ denote the set of vertices in $V \backslash \phi(A)$ which are both $G$-red and $G'$-red, and similarly let $V_B$ denote the set of vertices in $V \backslash \phi(A)$ which are both $G$-blue and $G'$-blue.  From \eqref{lowcorrupt} for $j=1$ we have
\begin{equation}\label{vrb}
|V_R|, |V_B| \geq M/4
\end{equation}
if $\eps$ is small enough.

Let $E^*_2 \subset V_R \times V_B$ be the set of all pairs $(r,b) \in V_R \times V_B$ such that $\{r,b\} \in E_2 \Delta E'_2$.  From \eqref{lowcorrupt} for $j=2$ and \eqref{vrb}, we have
\begin{equation}\label{vrbb}
 |E^*_2| \ll \eps |V_R| |V_B|.
\end{equation}

Let $\Omega = 2^{A}$ be the power set of $A$.  If $U_R \in \Omega$, define $V_{R,U_R}$ to be the set of all vertices $r \in V_R$ such that
$$ U_R = \{ a \in A: \{\phi(a),r\} \in E_2 \}.$$
Similarly, for any $U_B \in \Omega$, define $V_{B,U_B}$ to be the set of all $b \in V_B$ such that
$$ U_B = \{ a \in A: \{\phi(a),b\} \in E_2 \}.$$
Then we have the partitions
$$ V_R = \bigcup_{U_R \in \Omega} V_{R,U_R}; \quad V_B = \bigcup_{U_B \in \Omega} V_{B,U_B}$$
and thus
$$ V_R \times V_B = \bigcup_{U_R, U_B \in \Omega} V_{R,U_R} \times V_{B,U_B}.$$
The number of pairs $(U_R, U_B)$ is $O_{A}(1)$.  By the pigeonhole principle (first discarding all small pairs $V_{R,U_R} \times V_{B,U_B}$) we can choose a pair $(U_R,U_B)$ such that
\begin{equation}\label{verb}
|V_{R,U_R}|, |V_{B,U_B}| \gg_{\eps,A} M
\end{equation}
and
\begin{equation}\label{es2}
 |E^*_2 \cap (V_{R,U_R} \times V_{B,U_B})| \ll \eps |V_{R,U_R}| |V_{B,U_B}|.
 \end{equation}
Fix this pair $(U_R, U_B)$ (if there are multiple pairs available, choose one arbitrarily).

By \eqref{regular} and standard ``counting lemma'' arguments (see e.g. \cite{wilson}), we see that
there exist $\gg |V_{R,U_R}|^2 |V_{B,U_B}|^2$ quadruplets $(r_1,r_2,b_1,b_2)$ with $r_1, r_2 \in V_{R,U_R}$ and $b_1, b_2 \in V_{B,U_B}$ such that $r$ $G$-prefers $b_1$ to $b_2$, and $r_2$ $G$-prefers $b_2$ to $b_1$.  In view of \eqref{es2}, we conclude (if $\eps$ is small enough) that the same assertion holds with ``$G$-prefers'' replaced by ``$G'$-prefers''.

Call a quadruplet $(r_1,r_2,b_1,b_2)$ \emph{admissible} if it is of the above form, thus $r_1, r_2 \in V_{R,U_R}$ and $b_1, b_2 \in V_{B,U_B}$ such that $r_1$ $G'$-prefers $b$ to $b_2$, and $r_2$ $G'$-prefers $b_2$ to $b_1$.  From \eqref{verb} we thus see that there are $\gg_{\eps,A} M^4$ admissible quadruplets.

From chasing all the definitions, we see that if an admissible quadruplet $(r_1,r_2,b_1,b_2)$ obeys \eqref{symmetry}, then it is inconsistent.  Thus, it will suffice to upper bound the probability that no admissible quadruplet obeys \eqref{symmetry} for any choice of $\phi$.

Since $E_2$ and $E'_2$ are already frozen, so are the set of admissible quadruplets $(r_1,r_2,b_1,b_2)$.  Observe from construction of $E_3$ that for any admissible quadruplet $(r_1,r_2,b_1,b_2)$, the probability that this quadruplet obeys\footnote{Note from construction that only the first two conditions in \eqref{symmetry} are in doubt; the remaining conditions, which involve at least one element from $\phi(A)$, are automatic due to $r_1, r_2$ and $b_1, b_2$ lying in the same cells $V_{R,U_R}$ and $V_{B,U_B}$ respectively.} \eqref{symmetry} is $\Omega_{\sigma,A}(1)$, and thus the probability that it does \emph{not} obey \eqref{symmetry} is $\exp( - \Omega_{\sigma,A}(1) )$.  Furthermore, the events that a family of quadruplets do not obey \eqref{symmetry} will be jointly independent as long as no two of these quadruplets share a vertex in common (recall that we are freezing all the edges of $E_3$ which intersect $\phi(A)$).  Since there are $\gg_{\eps,A} M^4$ admissible quadruplets, an easy greedy algorithm argument allows us to find $\gg_{\eps,A} M$ admissible quadruplets for which no two share three vertices in common.  Thus the probability that no admissible quadruplet is corrupted is at most $\exp( - \Omega_{\sigma,\eps,A}(M) )$.  Combining this with our previous factors of $M^{|A|}$ and $O_{A}(1)$ introduced earlier, we obtain the claim if $M$ is sufficiently large.
\end{proof}

The proof of Theorem \ref{negate}(b) is now complete.

\subsection{The counterexample for undirected $3$-uniform hypergraphs}\label{eq3}

We now adapt the methods of the previous section to prove Theorem \ref{negate}(c).  The main challenge is to find analogues of $G^{(0)}$ and $\P$ in the $3$-uniform setting rather than the $\leq 3$-uniform setting.  This will be done in a rather artificial and \emph{ad hoc} fashion, encoding a $\leq 3$-uniform hypergraph property in a $3$-uniform one.

We fix $k = 3$ and $K = \{ 0,1\}_3$.  Note that a $K$-coloured undirected hypergraph $G$ on a vertex set $V$ can thus be viewed as a pair $G = (V, E)$, where $E_3 \subset \binom{V}{3}$ is a set of $3$-edges.

In order to motivate the property $\P$ that we will need here, we first construct the uncorrupted $K$-coloured hypergraph $G^{(1)} = (V, E^{(1)})$ which will play the role of $G^{(0)}$ in the previous section.

Let $M$ be a large integer.  Then we define the $(\pt,\{0,1\},\{0,1\},\{0,1\})$-coloured undirected hypergraph $G^{(0)} = ([M], E^{(0)}_1, E^{(0)}_2, E^{(0)}_3)$ as in the previous section.  We then define the notions of ``red'', ``blue'', ``likes'', ``prefers'', ``ranks correctly'' as before (dropping the $G^{(0)}$ prefix).  We then let $V := [2M]$.  We call the vertices in $[2M] \backslash [M]$ \emph{green} (thus every vertex in $V$ is either red, blue, or green).  We then define $G^{(1)} = (V, E^{(1)})$ to be the $3$-uniform graph, where $E^{(1)}$ consists of all triples $\{x,y,z\} \in \binom{V}{3}$ for which one of the following statements are true:
\begin{itemize}
\item $x,y,z$ are all green.
\item $\{x,y,z\}$ consists of a red vertex, a blue vertex, and a green vertex, and the red vertex likes the blue vertex.
\item $\{x,y,z\}$ consists of a red vertex and two blue vertices, and the red vertex ranks the two blue vertices correctly.
\end{itemize}

Note how $E^{(1)}$ involves the three components $E^{(0)}_1, E^{(0)}_2, E^{(0)}_3$ of $E^{(0)}$.

\begin{figure}[tb]
\centerline{\psfig{figure=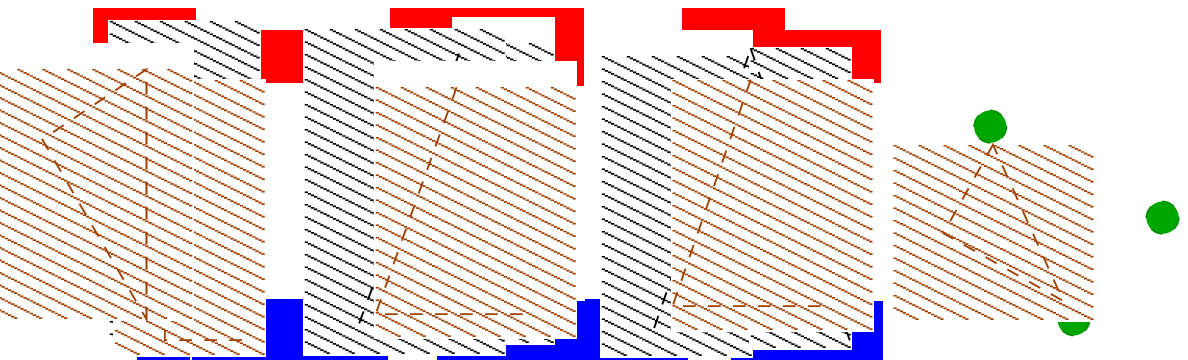}}
\caption{The various types of triples that make up $E^{(1)}$.  In addition to the triples that are inherited from $E^{(0)}_3$, one also has triples that connect three green vertices together, or else connect a green vertex to a red vertex that likes a blue vertex.  Note that the four green vertices on the right will in fact form a tetrahedron (and thus be $G$-green), whereas any quadruple of vertices which is not entirely green cannot form such a tetrahedron.}
\label{fig3}
\end{figure}

Now we define $\P$.  For any $K$-coloured undirected hypergraph $G = (V,E)$, we introduce the following notation:

\begin{itemize}
\item We call an element $g_1 \in V$ \emph{$G$-green} if there exists $\{g_2,g_3,g_4\} \in \binom{V \backslash \{g_1\}}{3}$ such that $\binom{\{g_1,g_2,g_3,g_4\}}{3} \subset E$.
\item We call an element $x \in V$ \emph{$G$-nongreen}\footnote{We allow for the possibility that a vertex is both $G$-green and $G$-nongreen, or is neither $G$-green nor $G$-nongreen.  However, these situations will not occur for the model graph $G^{(1)}$.} if there exist distinct $G$-green vertices $g, g'$ such that $\{x,g,g'\} \not \in E$.
\item If $x, y \in V$ are distinct, we say that $x$ \emph{$G$-likes} $y$ if they are both $G$-nongreen, and there exists a $G$-green vertex $g$ such that $\{ x, y, g \} \in E$.
\item Two vertices $x, x' \in V$ are \emph{$G$-similar} if there exists $y$ such that $x, x'$ both $G$-like $y$.
\item If $r, b \in V$ are distinct, we say that $r$ \emph{$G$-dislikes} $b$ if $r,b$ are both $G$-nongreen, and there exists a $G$-green vertex $g$ such that $\{ x, y, g \} \not \in E$.
\item If $b, b', r$ are distinct elements of $V$, we say that $r$ \emph{$G$-prefers $b$ to $b'$} if $r,b,b'$ is $G$-nongreen, $b,b'$ are $G$-similar, $r$ $G$-likes $b$, and $r$ $G$-dislikes $b'$.
\item If $b, b', r$ are distinct elements of $V$, we write $b >_{G,r} b'$ if either (a) $r$ $G$-prefers $b$ to $b'$ and $\{r,b,b'\} \in E$; or (b) $r$ $G$-prefers $b'$ to $b$ and $\{r,b,b'\} \not \in E$.
\item The hypergraph $G$ is \emph{consistently orderable} if there exists a total ordering $>_G$ on $V$ such that $b >_G b'$ whenever $r, b, b'$ are such that $b >_{G,r} b'$.
\end{itemize}

We say that a $K$-coloured hypergraph obeys $\P$ if it is undirected and consistently orderable.  One can verify with some tedious effort that $\P$ is an undirected $K$-property.  One can also verify that when $G = G^{(1)}$, the $G^{(1)}$-green vertices are precisely the green vertices, the $G^{(1)}$-nongreen vertices the red and blue vertices, and $G^{(1)}$-similar vertices are either both red or both blue.  From this one can easily verify that $G^{(1)}$ obeys $\P$ (using the usual ordering $>$ on $[2M]$ for $>_{G^{(1)}}$).

We set $\eps > 0$ to be small ($\eps := \frac{1}{1000}$ will do).    Let $A$, $N > 0$, and $\delta > 0$ be arbitrary, and let $\sigma > 0$ be sufficiently small depending on all these parameters.  We then let $M$ be a large integer (depending on all previous parameters).
We define the ``corrupted'' $3$-uniform hypergraph $G = (V, E)$ by declaring $\I( e \in E ) := \I( e \in E^{(1)} )$ with probability $1-\sigma$ and $\I( e \in E ) := 1-\I( e \in E^{(1)} )$ with probability $\sigma$ for each $e \in \binom{V}{3}$, independently for each choice of $e$.

Since $G^{(1)}$ obeys $\P$, we can use the first moment method as in the preceding two sections to conclude that \eqref{injv} holds with probability $1 - O_{N,\delta}(\sigma)$.  Let us now condition on the event that \eqref{injv} holds.

To prove Theorem \ref{negate}(c), it will suffice to show that there does \emph{not} exist a local modification rule $T = (A,T)$ and a morphism $\phi \in \Hom(A,V)$ such that the repaired hypergraph $T_{\phi}(G)$ obeys $\P$ and \eqref{psieps}.

Suppose for contradiction that $T$ and $\phi$ exists with the above properties.  We write $G' = (V \backslash \phi(A),E') := T_\phi(G)$.  From \eqref{psieps} we thus have
\begin{equation}\label{lowc}
|E' \Delta E| \ll \eps M^3.
\end{equation}
Let us call an $9$-tuple
\begin{equation}\label{rgb}
(r_1, r_2, r_3, b_1, b_2, g_1, g_2, g_3, g_4 )
\end{equation}
of distinct vertices in $V \backslash \phi(A)$ \emph{inconsistent} if the following properties hold:

\begin{itemize}
\item[(i)] $\binom{\{g_1,g_2,g_3,g_4\}}{3} \subset E'$.
\item[(ii)] For $x \in \{r_1,r_2,r_3,b_1,b_2\}$ we have $\{x,g_1,g_2\} \not \in E'$.
\item[(iii)] For $i \in \{1,2,3\}$ and $j \in \{1,2\}$ we have $\{ r_i, b_j, g_1 \} \in E'$ if and only if $(i,j) \not \in \{ (1,1), (2,2) \}$.
\item[(iv)] We have the symmetries \eqref{symmetry} (with $E_3$ replaced by $E$).
\end{itemize}

Suppose that we can locate an inconsistent $9$-tuple \eqref{rgb}. From property (i) we see that $g_1,g_2,g_3,g_4$ are $G'$-green.  From property (ii) we then conclude that $r_1,r_2,r_3,b_1,b_2$ are $G'$-nongreen.  From property (iii) we conclude that for $i \in \{1,2,3\}$ and $j \in \{1,2\}$, that $r_i$ $G'$-likes $b_j$ whenever $(i,j) \not \in \{ (1,1), (2,2)\}$.  In particular, $b_1, b_2$ are  $G'$-similar (thanks to $r_3$).  From property (iii) again we also see that $r_1$ $G'$-dislikes $b_1$ and $r_2$ $G'$-dislikes $b_2$.  Thus $r_1$ $G'$-prefers $b_2$ to $b_1$, and $r_2$ $G'$-prefers $b_1$ to $b_2$.  On the other hand from property (iv) and Definition \ref{locmod} as in the previous section we see that $\I( \{ r_1, b_1, b_2\} \in E' ) = \I( \{ r_2, b_2, b_1\} \in E' )$.  Thus either $b_1 >_{G',r_1} b_2$ and $b_2 >_{G',r_2} b_1$ are both true, or $b_2 >_{G',r_1} b_1$ and $b_1 >_{G',r_2} b_2$ are both true, and so $G'$ is not consistently orderable and thus does not obey $\P$, a contradiction.  Thus to conclude the proof of Theorem \ref{negate}(c), it will suffice to show

\begin{lemma} Suppose $\eps > 0$ is sufficiently small, and $M \geq N_*$ is sufficiently large (depending on $N,\delta,\sigma,A,N_*,\eps$).  Then with probability $1-O_{A,\delta,\eps}(\sigma)$, there will exist at least one $9$-tuple \eqref{rgb} of inconsistent vertices in $V \backslash \phi(A)$, for all choices of morphism $\phi$ and modification rule $T$ for which \eqref{lowc} holds.
\end{lemma}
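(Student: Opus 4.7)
The plan is to adapt Lemma~\ref{triple} to the 9-tuple setting, following the same four-step template: freeze most of the randomness, count admissible 9-tuples, partition by a type that enforces the frozen symmetries, and conclude by a greedy plus independence argument on the remaining internal symmetries. First, I would condition on the high-probability events that $E^{(0)}_2$ is $c$-regular for a small $c=c(\eps,A)$ and that $|E\Delta E^{(1)}|\leq 2\sigma M^3$; combined with \eqref{lowc} these give $|E'\Delta E^{(1)}|\leq 3\eps M^3$. I then union-bound over the $M^{|A|}$ choices of $\phi$ and the $O_A(1)$ choices of $T$, freeze $E^{(0)}_2$ together with the restriction of $E$ to triples meeting $\phi(A)$, and note from Definition~\ref{locmod} that the only remaining randomness consists of the independent Bernoulli bits $(\I(e\in E))_{e\in\binom{V\backslash\phi(A)}{3}}$.

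For the counting step I take $r_1,r_2,r_3 \in (M/2,M]\backslash\phi(A)$, $b_1,b_2\in [M/2]\backslash\phi(A)$, and $g_1,\ldots,g_4\in (M,2M]\backslash\phi(A)$, so that conditions (i) and (ii) hold automatically in $E^{(1)}$ (any three greens form a triple, while (red/blue)--green--green triples never do), while condition (iii) is equivalent to six prescribed like/dislike relations in $E^{(0)}_2$; by $c$-regularity these are satisfied jointly by $\Theta(M^9 \cdot 2^{-6})$ choices. The 15 triples implicated in (i)--(iii) are distinct and each lies in $O(M^6)$ 9-tuples, so the passage from $E^{(1)}$-admissibility to $G'$-admissibility costs at most $O(\eps M^9)$, leaving $\gg M^9$ admissible 9-tuples in $G'$ for $\eps$ small. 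I next assign to each vertex $v$ and pair $\{v,w\}$ in $V\backslash\phi(A)$ the types $\tau_1(v):=\{\{a,a'\}\in\binom{A}{2}:\{\phi(a),\phi(a'),v\}\in E\}$ and $\tau_2(\{v,w\}):=\{a\in A:\{\phi(a),v,w\}\in E\}$, and pigeonhole. An iterated $O_A(1)$-cost pigeonholing---first a single $\tau_1$-cell for each colour class, then a common spectator pair-profile $(\tau_2(\{r,s\}))_{s\in\{r_3,g_1,\ldots,g_4\}}$ for the candidates for $r_1,r_2$ (and analogously for $b_1,b_2$), and finally a common ``diagonal'' cell of the colouring $b\mapsto(\tau_2(\{r_1,b\}),\tau_2(\{r_2,b\}))$ for $b_1,b_2$---isolates a type class invariant under the swap $r_1\leftrightarrow r_2$, $b_1\leftrightarrow b_2$, still containing $\gg_A M^9$ admissible 9-tuples and automatically satisfying every component of the symmetry~(iv) that involves at least one vertex of $\phi(A)$.

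Finally, a greedy procedure selects $\gg_A M$ admissible 9-tuples of this class with pairwise disjoint vertex sets (each vertex lies in $O(M^8)$ such 9-tuples). The only residual constraints in (iv) are the two internal identities $\I(\{r_1,r_2,b_i\}\in E)=\I(\{r_1,r_2,b_{3-i}\}\in E)$ and $\I(\{b_1,b_2,r_i\}\in E)=\I(\{b_1,b_2,r_{3-i}\}\in E)$, which involve only the independent Bernoulli bits on triples inside $V\backslash\phi(A)$; each is satisfied with probability $\Omega_\sigma(1)$, their conjunction with probability $\Omega_{\sigma,A}(1)$, and the events for disjoint 9-tuples are independent. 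Thus the probability that no admissible 9-tuple is inconsistent is at most $\exp(-\Omega_{\sigma,A}(M))$, which when multiplied by the $M^{|A|}\cdot O_A(1)$ union-bound factor is $O_{A,\delta,\eps}(\sigma)$ once $M$ is sufficiently large. The main obstacle is the final ``diagonal'' pigeonholing that simultaneously handles both cross-type constraints $\tau_2(\{r_1,b_1\})=\tau_2(\{r_2,b_2\})$ and $\tau_2(\{r_1,b_2\})=\tau_2(\{r_2,b_1\})$; one places both $b_1$ and $b_2$ in a single cell where $\tau_2(\{r_1,b\})=\tau_2(\{r_2,b\})$ and verifies via a Cauchy--Schwarz counting that a positive fraction of the admissible 9-tuples remain, in direct analogy with the argument used in Lemma~\ref{triple}.
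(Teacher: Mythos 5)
Your proposal follows the same broad template as the paper's proof: freeze $\phi$, $T$, $E^{(0)}_2$, and the $E$-bits on triples meeting $\phi(A)$; count admissible $9$-tuples using regularity; pigeonhole into a type class that forces the $\phi(A)$-facing parts of the symmetry (iv); and finish by a greedy choice of $\gg M$ vertex-disjoint admissible $9$-tuples followed by an independence calculation on the four internal triples of $\binom{\{r_1,r_2,b_1,b_2\}}{3}$. That last step is correct and matches the paper's $\Sigma_7$ argument.

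The genuine gap is exactly what you flag as ``the main obstacle,'' and it is not fixed by the Cauchy--Schwarz remark. The constraints $\I(\{r_1,b_1,\phi(a)\}\in E)=\I(\{r_2,b_2,\phi(a)\}\in E)$ and $\I(\{r_1,b_2,\phi(a)\}\in E)=\I(\{r_2,b_1,\phi(a)\}\in E)$ are \emph{pair-level} constraints; your proposed ``diagonal'' pigeonholing chooses $b_1,b_2$ from a cell where $\tau_2(\{r_1,b\})=\tau_2(\{r_2,b\})$, but for a fixed pair $(r_1,r_2)$ it is entirely possible that this diagonal is empty or captures only an $o(1)$-fraction of the blue vertices, so no pigeonhole over a bounded list of $\tau_2$-values forces it to be large. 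The paper sidesteps the issue in two moves that your proposal does not make: first, it imposes the constraint $\Sigma_6$, forcing $\{r_i,b_j,\phi(a)\}\notin E\Delta E^{(1)}$ for the relevant triples, so that these frozen $E$-values equal the deterministic $E^{(1)}$-values; second, it pigeonholes on \emph{lower-order} data ($E^{(0)}_2$-adjacency to $\phi(A)$ for the reds, $E^{(0)}_2$-adjacency together with the linear order relative to $\phi(A)$ for the blues, and the internal like/dislike pattern from $\Sigma_1$) from which $E^{(1)}$-membership of $\{r_i,b_j,\phi(a)\}$ is determined by a case analysis on the colour of $\phi(a)$. Your type $\tau_1(v)=\{\{a,a'\}:\{\phi(a),\phi(a'),v\}\in E\}$ does not cleanly recover the needed $E^{(0)}_2$-pair-profile (it is a function of $E$ after corruption, and of $E^{(1)}$'s triple structure, neither of which injects into the pair profile), and you omit the ordering profile of $b$ entirely, which the paper uses directly in the blue-$\phi(a)$ case.

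A secondary issue: conditioning on $|E\Delta E^{(1)}|\le 2\sigma M^3$ involves the ``inside'' Bernoulli bits on $\binom{V\backslash\phi(A)}{3}$, so after that conditioning those bits are no longer jointly independent, contrary to the claim in your penultimate step. Because the conditioned-away event has probability $\exp(-\Omega(\sigma M^3))$ this can be patched (the conditional probabilities differ from the unconditioned ones by an exponentially small total variation), but the cleaner route is the paper's: condition only on Chernoff-type bounds such as \eqref{v3-weak} that are functions of the frozen outside bits alone, and use \eqref{lowc} only as a hypothesis restricting which $(T,\phi)$ are considered, not as an event to condition on.
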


\begin{proof}
Let $c > 0$ be a small number depending on $\eps, A$ to be chosen later.
Recall the $2$-uniform random graph $E_2$ on $[M]$ used to construct $G^{(0)}$.  By arguing exactly as in the proof of Lemma \ref{triple}, we see (for $M$ large enough) that with probability $1-O(\sigma)$,
we have the regularity property \eqref{regular} for all disjoint $X, Y \subset [M]$ with $|X|, |Y| \geq cM$.  Let us condition on the event that this regularity property holds.  We now freeze $E_2$, which in turn freezes $G^{(1)}$ and $E^{(1)}$.

As in the proof of Lemma \ref{triple}, we pay a factor of $O_{A}(M^{|A|})$ in all future probability upper bounds in order to freeze $\phi$ and $T$.

From construction, we have
for any $\{ v_1, v_2, v_3 \} \in \binom{V}{3}$, that $\{v_1, v_2, v_3 \} \in E \Delta E^{(1)}$ with an independent probability $\delta$.  From Chernoff's inequality, we conclude that for each $v_1, v_2 \in V$, that
\begin{equation}\label{v3}
 | \{ v_3 \in V \backslash \{v_1,v_2\}: \{v_1,v_2,v_3\} \in E \Delta E^{(1)} \} | \leq \delta^{1/2} M
\end{equation}
with probability at least $1 - \exp( - \Omega_{\delta}(M) )$.  For technical reasons (related to the reason we did not condition on \eqref{lowcorrupt} for $j=3$ in the previous section), we will weaken \eqref{v3} to
\begin{equation}\label{v3-weak}
 | \{ v_3 \in V \backslash \{v_1,v_2\}: \{v_1,v_2,v_3\} \in (E \Delta E^{(1)}) \backslash \binom{[M] \backslash \phi(A)}{3} \} | \leq \delta^{1/2} M
\end{equation}
in order not to destroy the independence of the events $\{v_1,v_2,v_3\} \in E \Delta E^{(1)}$ when $v_1,v_2,v_3$ lie in $[M] \backslash \phi(A)$.

By the union bound, we thus see that (if $M$ is sufficiently large) that with probability
$1 - O( M \exp( - \Omega_{\delta}(M) ) ) = 1 - O(\sigma)$, the assertion \eqref{v3-weak} holds for \emph{all} $v_1,v_2 \in V$.  We now condition on the event that this holds.

We now freeze the restriction $E \backslash \binom{[M] \backslash \phi(A)}{3}$ of $E$ to those edges which are not contained in $[M] \backslash \phi(A)$.  Thus the only randomness remaining comes from the random variables $\I(e \in E^{(1)} \Delta E)$ for $e \in \binom{[M] \backslash \phi(A)\}}{3}$, which are jointly independent with probability $\delta$ each.  Note (from Definition \ref{locmod}) that the quantity $\I(e \in E')$ for $e \in \binom{[M]}{3}$ is now deterministic unless $e \in \binom{[M] \backslash \phi(A)\}}{3}$, in which case it depends only on the quantity $\I(e \in E^{(1)} \Delta E)$ (as well as frozen data, of course).

We would like to condition on the event that \eqref{locmod} holds, but this would destroy the joint independence of the events $e \in E^{(1)} \Delta E$, which will be important later.  So we shall be content to condition on the slightly weaker statement
\begin{equation}\label{small-delta-weak}
 \frac{1}{|\binom{V}{3}|} \left|(E \Delta E') \backslash \binom{[M] \backslash \phi(A)}{3}\right| \ll \eps
\end{equation}
as this is a deterministic statement that does not depend on the truth value of any of the events
$e \in E^{(1)} \Delta E$ for $e \in \binom{[M] \backslash \phi(A)}{3}$.

The next step is to select some good vertex sets to work with.  From \eqref{small-delta-weak} we have
$$ \sum_{v_1 \in V \backslash \phi(A)} \left| \left\{ \{ v_2, v_3 \} \in \binom{V \backslash \{v_1\}}{2}: \{v_1,v_2,v_3 \} \in (E \Delta E') \backslash \binom{[M] \backslash \phi(A)}{3} \right\} \right| \ll \eps M^3$$
and so (for $M$ large enough) by Markov's inequality we can find a subset $V' \subset V \backslash \phi(A)$ with $|V \backslash V'| \ll \eps^{1/2} M$ such that
\begin{equation}\label{vsting}
\left| \left\{ \{ v_2, v_3 \} \in \binom{V \backslash \{v_1\}}{2}: \{v_1,v_2,v_3 \} \in (E \Delta E') \backslash \binom{[M] \backslash \phi(A)}{3} \right\} \right| \ll \eps^{1/2} M^2
\end{equation}
for all $v_1 \in V'$.

Set
\begin{align*}
V_B &:= [M/2] \cap V'\\
V_R &:= ([M] \backslash [M/2]) \cap V'\\
V_G &:= ([2M] \backslash [M]) \cap V'.
\end{align*}
In particular (for $\eps$ small enough) we have $|V_B|, |V_R|, |V_G| \gg M$.

For $b \in V_B$ and $r \in V_R$, define
\begin{equation}\label{eep}
f(r,b) := | \{ v \in V \backslash \{r,b\}: \{r,b,v\} \in (E \Delta E') \backslash \binom{[M] \backslash \phi(A) }{3} \}|,
\end{equation}
thus $0 \leq f(r,b) \ll M$.  From \eqref{lowc} we observe that
$$\sum_{r \in V_R} \sum_{b \in V_B} f(r,b) \ll \eps |V_R| |V_B| M.$$
Thus if we define
\begin{equation}\label{es2-again}
E^*_2 := \{ (r,b) \in V_R \times V_B: f(r,b) \geq \sqrt{\eps} M \}
\end{equation}
then by Markov's inequality we have
\begin{equation}\label{fiber}
|E^*_2| \ll \sqrt{\eps} |V_R| |V_B|.
\end{equation}

Let $\Omega_R := 2^{\binom{A}{2}}$ and $\Omega_B := 2^{\binom{A}{\leq 2}}$
be the power sets of $\binom{A}{2}$ and $\binom{A}{\leq 2} := \bigcup_{j \leq 2} \binom{A}{j}$ respectively.  If $U_R \in \Omega_R$, define $V_{R,U_R}$ to be the set of all vertices $r \in V_R$ such that
$$ U_R = \{ \{a,a'\} \in \binom{A}{2}: \{\phi(a),\phi(a'),r\} \in E_2 \}.$$
Similarly, for any $U_B \in \Omega_B$, define $V_{B,U_B}$ to be the set of all $b \in V_{B}$ such that
$$ U_B = \{ \{ a \} \in \binom{A}{1}: b < \phi(a) \} \cup
\{ \{a,a'\} \in \binom{A}{2}: \{\phi(a),\phi(a'),b\} \in E_2 \}.$$
The $V_{R,U_R}$ and $V_{B,U_B}$ partition $V_R$ and $V_B$ respectively. Since $|\Omega_R|, |\Omega_B| \ll_{A} 1$, we thus see from \eqref{fiber} and the pigeonhole principle that there exists $U_R \in \Omega_R$ and $U_B \in\Omega_B$ with
\begin{equation}\label{verb-again}
 |V_{R,U_R}|, |V_{B,U_B}| \gg_{A} M
 \end{equation}
and
\begin{equation}\label{estar2}
 |E^*_2 \cap (V_{R,U_R} \times V_{B,U_B})| \ll \sqrt{\eps} |V_{B,U_B}| |V_{R,U_R}|.
\end{equation}
Henceforth we fix $U_B$ and $U_R$ so that \eqref{verb-again}, \eqref{estar2} hold.

To locate inconsistent $9$-tuples \eqref{rgb} we shall constructed a nested sequence $\Sigma_0 \supset \ldots \supset \Sigma_7$ of candidate $9$-tuples as follows.  We let $\Sigma_0$ be the collection of all $9$-tuples \eqref{rgb} such that $\{r_1,r_2,r_3\} \in \binom{V_{R,U_R}}{3}$, $\{b_1,b_2\} \in \binom{V_{B,U_B}}{2}$, and $\{g_1,g_2,g_3,g_4\} \in \binom{V_G}{4}$.  Clearly we have $|\Sigma_0| \gg |V_{R,U_R}|^3 |V_{B,U_B}|^2 M^4$.

Let $\Sigma_1$ be the collection of all $9$-tuples \eqref{rgb} in $\Sigma_0$ such that for all $i \in \{1,2,3\}$ and $j \in \{1,2\}$, we have $\{ r_i, b_j \} \in E_2$ if and only if $(i,j) \not\in \{ (1,1), (2,2) \}$.  Using \eqref{regular} and standard ``counting lemma'' arguments we see that if $c$ is sufficiently small (depending on $N'$), then $|\Sigma_1| \gg |V_{R,U_R}|^3 |V_{B,U_B}|^2 M^4$.

Let $\Sigma_2$ be the collection of all $9$-tuples \eqref{rgb} in $\Sigma_1$ such that $(r_i,b_j) \not \in E^*_2$ for all $i \in \{1,2,3\}$ and $j \in \{1,2\}$.  From \eqref{estar2} we have $|\Sigma_1 \backslash \Sigma_2| \ll \sqrt{\eps} |V_{R,U_R}|^3 |V_{B,U_B}|^2 M^4$.  Thus if $\eps$ is sufficiently small we have $|\Sigma_2| \gg |V_{R,U_R}|^3 |V_{B,U_B}|^2 M^4$.

Let $\Sigma_3$ be the collection of all $9$-tuples \eqref{rgb} in $\Sigma_2$ such that $\{r_i,b_j,g_k\} \not \in E \Delta E'$ for all $i \in \{1,2,3\}$, $j \in \{1,2\}$ and $k \in \{1,2,3,4\}$.  From \eqref{eep}, \eqref{es2-again} we see that $|\Sigma_2 \backslash \Sigma_3| \ll \sqrt{\eps} |V_{R,U_R}|^3 |V_{B,U_B}|^2 M^4$. Thus if $\eps$ is sufficiently small we have $|\Sigma_3| \gg |V_{R,U_R}|^3 |V_{B,U_B}|^2 M^4$.

Let $\Sigma_4$ be the collection of all $9$-tuples \eqref{rgb} in $\Sigma_3$ such that $\{x, y, z \} \not \in E \Delta E'$ for all $x \in \{r_1,r_2,r_3,b_1,b_2\}$ and distinct $y,z \in \{g_1,g_2,g_3,g_4\}$.  From \eqref{vsting} we see that $|\Sigma_3 \backslash \Sigma_4| \ll \sqrt{\eps} |V_{R,U_R}|^3 |V_{B,U_B}|^2 M^4$. Thus if $\eps$ is sufficiently small we have $|\Sigma_4| \gg |V_{R,U_R}|^3 |V_{B,U_B}|^2 M^4$.

Let $\Sigma_5$ be the collection of all $9$-tuples \eqref{rgb} in $\Sigma_4$ such that $\{x, y, z \} \not \in E \Delta E'$ for all $\{x,y,z\} \in \binom{\{g_1,g_2,g_3,g_4\}}{3}$.  From \eqref{small-delta-weak} we observe that $|\Sigma_4 \backslash \Sigma_5| \ll \eps |V_{R,U_R}|^3 |V_{B,U_B}|^2 M^4$. Thus if $\eps$ is sufficiently small we have $|\Sigma_5| \gg |V_{R,U_R}|^3 |V_{B,U_B}|^2 M^4$.  In particular, by \eqref{verb-again} we have $|\Sigma_5| \gg_{A} M^{9}$.

Let $\Sigma_6$ be the collection of all $9$-tuples \eqref{rgb} in $\Sigma_5$ such that $\{x,y,z\} \not \in E \Delta E^{(1)}$ for all
\begin{equation}\label{xyz}
 \{x,y,z\} \in \binom{\{ r_1, r_2, r_3, b_1, b_2, g_1,g_2,g_3,g_4 \} \cup \phi(A)}{3} \backslash
\left(\binom{\phi(A)}{3} \cup \binom{\{r_1,r_2,b_1,b_2\}}{3} \right).
\end{equation}
From \eqref{v3} we see that $|\Sigma_5 \backslash \Sigma_6| \ll_{N'} \delta^{1/2} M^{9}$.  Thus if $\delta$ is sufficiently small (depending on $N'$) then $|\Sigma_6| \gg_{A} M^{9}$.

Let $\Sigma_7$ be the collection of all $9$-tuples \eqref{rgb} in $\Sigma_6$ such that
$$ \I( \{r_1,b_1,b_2\} \in E ) = \I( \{r_2,b_1,b_2\} \in E ) \hbox{ and }
\I( \{b_1,r_1,r_2\} \in E ) = \I( \{b_2,r_1,r_2\} \in E ) $$
To estimate the size of $\Sigma_7$ we will need a slightly different type of argument than those used in previous paragraphs, namely a probabilistic argument.  Let $\Hom([9],[2M])$ denote the space of all $9$-tuples \eqref{rgb}.  From the lower bound $|\Sigma_6| \gg_{A} M^{9}$ we see that for each fixed $n$, a randomly selected $9$-tuple \eqref{rgb} in $\Hom([9],[2M])$ would lie in $\Sigma_6$ with probability $\gg_{A} 1$.

Now observe from construction of $\Sigma_6$ and $E$ that the event that \eqref{rgb} lies in $\Sigma_6$ is independent\footnote{Note how it is important here that $\binom{\{r_1,r_2,b_1,b_2\}}{3}$ is excluded in \eqref{xyz}.} of the events $\{x,y,z\} \in E \Delta E^{(1)}$ for $\{x,y,z\} \in \binom{\{r_1,r_2,b_1,b_2\}}{3}$, which each occur with an independent probability of $\delta$.  Thus, regardless of the truth values of $\{x,y,z\} \in E^{(1)}$ for $\{x,y,z\} \in \binom{\{r_1,r_2,b_1,b_2\}}{3}$, we see that if one conditions on the event \eqref{rgb} lies in $\Sigma_6$, then \eqref{rgb} will lie in $\Sigma_7$ with probability $\gg_{\delta} 1$.  Undoing the conditioning on $\Sigma_6$, we see that a randomly chosen $9$-tuple \eqref{rgb} in $\Hom([9],[2M])$ lies in $\Sigma_7$ with probability $\gg_{\delta,A} 1$.

Let $A := \lfloor M^{0.1} \rfloor$.
We pick $A$ $9$-tuples $t_1,\ldots,t_A \in \Hom([9],[2M])$ independently at random (and independently of $E_2$ and $E$).  With probability $1 - O( M^{-0.8} )$, these tuples will be disjoint; we condition on this event.  Now we make the crucial observation the events $t_i \in \Sigma_7$ are jointly independent for $i=1,\ldots,A$.  Indeed, in view of all the frozen data, the event that $t_i$ lies in $\Sigma_7$ depends only on the truth value of the events $\{x,y,z\} \in (E \Delta E^{(1)}) \cap \binom{[M] \backslash \phi(A)}{3}$, where $\{x,y,z\}$ lies $t_i$, and the independence assertion follows.  (It is for this reason that we have jealously guarded the joint independence of the edge events associated to $\binom{[M] \backslash \phi(A)}{3}$.)

Now for any $1 \leq i \leq A$, if we condition on $t_1,\ldots,t_{i-1}$ then each $t_i$ will lie in $\Sigma_7$ with probability $\gg_{\delta,N'} 1$ (the constraint that $t_1,\ldots,t_A$ are all disjoint only distorts this probability by $O(M^{-0.8})$, which is negligible if $M$ is large enough).  Multiplying this together we see that with probability at least $1 - \exp( \Omega_{\delta,N'}(M^{0.1}) )$, at least one of the $t_i$ will lie in $\Sigma_7$.  Unfreezing $\phi$ and $T$, we conclude from the union bound that with probability $1 - O_{A,\delta}( M^{|A|} \exp( \Omega_{\delta,A}(M^{0.1}) ) )$, we have $\Sigma_7$ non-empty for \emph{all} choices of $\phi(A)$ and $T$.  In particular, this event occurs with probability $1 - O(\sigma)$ if $M$ is large enough.

To conclude the lemma, it will suffice to show that every $9$-tuple in $\Sigma_7$ is inconsistent.  Let \eqref{rgb} be a tuple in $\Sigma_7$.  By definition of $\Sigma_0$, we have $g_1,g_2,g_3,g_4 \in V_G$, and thus by definition of $E^{(1)}$
$$ \binom{\{g_1,g_2,g_3,g_4\}}{3} \subset E^{(1)}.$$
From the definition of $\Sigma_6$ we thus have
$$ \binom{\{g_1,g_2,g_3,g_4\}}{3} \subset E$$
and then by definition of $\Sigma_5$ we have
$$ \binom{\{g_1,g_2,g_3,g_4\}}{3} \subset E'$$
which is part (i) of the definition of inconsistency.

Similarly, by definition of $\Sigma_0$ and $E^{(1)}$ we have $\{x,g_1,g_2\} \not \in E^{(1)}$ for all $x \in \{r_1,r_2,r_3,b_1,b_2\}$.  By definition of $\Sigma_6$ we then have $\{x,g_1,g_2\} \not \in E$, and by definition of $\Sigma_4$ we have $\{x,g_1,g_2\} \not \in E'$. This is part (ii) of the definition of inconsistency.

From the definition of $\Sigma_0$, $g_1$ is green.  From the definitions of $\Sigma_1$ and $E^{(1)}$ we then have for every  $i \in \{1,2,3\}$ and $j \in \{1,2\}$ that $\{ r_i, b_j, g_1 \} \in E^{(1)}$ if and only if $(i,j) \not \in \{ (1,1), (2,2) \}$.  By definition of $\Sigma_6$, the same statement holds with $E^{(1)}$ replaced by $E$, and by definition of $\Sigma_2$ the same statement holds with $E$ replaced by $E'$.  This is part (iii) of the definition of inconsistency.

It remains to verify \eqref{symmetry} (with $E_3$ replaced by $E$).
The first two symmetries follow from the definition of $\Sigma_7$.
The last two symmetries follow from the definitions of $\Sigma_0$
and $V_{R,U_R}$, $V_{B,U_B}$.  To verify the middle two symmetries,
we see from definition of $\Sigma_3$ that it suffices to show that
$\I( \{r_1,b_1,\phi(a)\} \in E^{(1)} ) = \I( \{r_2,b_2,\phi(a)\} \in
E^{(1)} )$ and $\I( \{r_1,b_2,\phi(a)\} \in E^{(1)} ) = \I(
\{r_2,b_1,\phi(a)\} \in E^{(1)} )$ for all $a \in A$.

Fix $a$.  There are several cases.  If $\phi(a)$ is green, then the claim follows from the definitions of $E^{(1)}$ and $\Sigma_1$.  If $\phi(a)$ is red, then by definition of $E^{(1)}$, none of the $\{r_j,b_k,\phi(a)\}$ lie in $E^{(1)}$, and the claim follows.  Finally, suppose that $\phi(a)$ is blue.  By definition of $V_{B,U_B}$ and $V_{R,U_R}$, we see that $\I( b_1 < \phi(a) ) = \I( b_2 < \phi(a) )$ and $\I( (r_1,\phi(a)) \in E_2 ) = \I( (r_2,\phi(a)) \in E_2 )$.  Also, by definition of $\Sigma_1$ we have $\I( (r_1,b_1) \in E_2 ) = \I( (r_2,b_2) \in E_2 )$ and $\I( (r_1,b_2) \in E_2 ) = \I( (r_2,b_1) \in E_2 )$.  The claim then follows from the definition of $E^{(1)}$.
\end{proof}

This concludes the proof of Theorem \ref{negate}(c).

\begin{remark}  One does not need the full strength of consistent orderability to define $\P$; it is enough that there do not exist $r,r',b,b'$ such that $b >_{G,r} b'$ and $b' >_{G,r'} b$.  With this modification, the property $\P$ can now be expressed as a single first-order sentence\footnote{Equivalently, there exists a finite collection of ``forbidden'' hypergraphs which describe $\P$, in the sense that $G$ obeys $\P$ if and only if it contains no induced copy of any of the forbidden hypergraphs.  In contrast, hereditary properties are associated to an \emph{at most countable} family of forbidden hypergraphs.} using only the universal quantifier $\forall$, which is a slightly stronger statement than saying that $\P$ is hereditary.  This gives a slight strengthening to Theorem \ref{negate}(c).
\end{remark}

\section{Proofs of the positive results}\label{posi}

We now begin the proofs of the positive results.  Except in side remarks and examples, the material here is independent of that in Section \ref{negchap}.

\subsection{An infinitary setting: exchangeable random hypergraphs and their structure}

In order to prove our new positive results, it will be helpful to
recast the graphs and hypergraphs that we are studying into a more infinitary form (although the actual arguments will still be
structured much as in the finitary presentations in Alon and
Shapira~\cite{AloSha} and elsewhere). The formalism we will use is
that of `exchangeable random hypergraphs', which have already
appeared in the study of single hypergraph removal lemmas
in~\cite{Tao3} and whose structure is examined in more detail
in~\cite{Aus1}.  In addition to providing a reasonably clean
language for handling continuous graphs, these also admit their own
versions of the theorem we shall prove, in whose statement the
existence of an $\varepsilon$-modification of a given graph or
hypergraph to another that satisfies a certain property is replaced
by that of a near-diagonal joining of a given exchangeable random
graph or hypergraph to another that satisfies the relevant property
almost surely.

The infinitary setting offers several advantages.  Firstly, it conceals from view many quantitative parameters such as $\eps$ and $N$ which would otherwise have to be managed directly by hand; the process of taking a limit sends most (though not all) of these parameters to zero or infinity, and the remaining parameters often just need to be controlled qualitatively (e.g. knowing that they are finite) rather than quantitatively (i.e. with an explicit bound).  Secondly, it allows one to use the standard tools and intuition from basic infinitary theories, most notably topology, measure theory, and probability theory.  For instance, the well-known fact that measurable functions can be approximated by continuous ones will form a partial substitute for the Szemer\'edi regularity lemma.

The purpose of this section is to review the relevant theory from \cite{Aus1} which we will need here.  To begin with we shall work with undirected graphs, and then discuss the (minor) modifications needed to handle directed graphs later in this section.

\subsubsection{The category of sub-Cantor spaces}

Our infinitary analysis will take place in the category of \emph{sub-Cantor spaces}, which we now pause to define.

\begin{dfn}[Sub-Cantor spaces]\label{subcantor}  A \emph{sub-Cantor space} is a topological space $Z$ which is homeomorphic to a compact subset of the standard Cantor space $\{0,1\}^\N$. We always endow sub-Cantor spaces with their Borel $\sigma$-algebra generated by the open sets (or compact sets).  We say that a sub-Cantor space is \emph{trivial} or a \emph{point} if $Z$ is a singleton set, and write $Z = \pt$ in this case.
\end{dfn}

\begin{examples} Any finite set is a sub-Cantor space, a closed subspace of a sub-Cantor space is again a sub-Cantor space, and any at most countable product of a sub-Cantor space is again a sub-Cantor space.  In particular, $K^{(V)}$ is a sub-Cantor space for any finite palette $K$ and any vertex set $V$.
\end{examples}

\begin{remark}\label{borel} By a theorem of Borel, a space is a sub-Cantor space if and only if it is totally disconnected, compact, and metrisable.  However, we will not need that characterisation here.  We also make the useful observation that the topology of a sub-Cantor space can be generated from a countable algebra of clopen sets, as this property can be easily verified for the Cantor space $\{0,1\}^\N$ and is preserved under passage to compact subspaces.
\end{remark}

We will view the class of sub-Cantor spaces as a category, where the morphisms are the \emph{probability kernels} $P: X \rightsquigarrow Y$ between sub-Cantor spaces $X,Y$; see Appendix \ref{prob} for a definition of a probability kernel and their relevant properties.  (This is distinct from the category of vertex sets, defined in Definition\ref{vertset}.) Informally, one can think of a probability kernel as a stochastic analogue of a function from $X$ to $Y$, mapping points in $X$ to probability distribtions in $Y$ rather than to deterministic points. We distinguish several special types of probability kernels between sub-Cantor spaces:

\begin{itemize}
\item A probability kernel $P: X \rightsquigarrow Y$ is \emph{deterministic} if we have $P(x) = \delta_{\phi(x)}$ for all $x \in X$ and some measurable $\phi: X \to Y$;
\item A probability kernel $P: X \rightsquigarrow Y$ is \emph{deterministically continuous} if we have $P(x) = \delta_{\phi(x)}$ for all $x \in X$ and some continuous $\phi: X \to Y$;
\item A probability kernel $P: X \rightsquigarrow Y$ is \emph{weakly continuous} if the function $x \mapsto \int_Y f(y)\ P(x,dy)$ is continuous for every continuous function $f: Y \to \R$.
\end{itemize}

\begin{remark} Recall from Remark \ref{borel} that a sub-Cantor space has a countable base of clopen sets.  Because of this, one can easily verify that a probabilistic kernel is deterministically continuous if and only if it is both deterministic and weakly continuous.  As we will show later (see Proposition \ref{tprop} and Definition \ref{infitest}), the concept of weak continuity will correspond to testability with one-sided error, while deterministic continuity will correspond to strong local repairability.  Roughly speaking, weak continuity is the minimal amount of regularity necessary for one to be able to transfer infinitary results back to the finitary setting, while strong continuity, in view of the sub-Cantor structure, means that the relevant continuous maps $\phi: X \to Y$ between sub-Cantor spaces ``depend on only finitely many coordinates'' and will thus define a local modification rule.
\end{remark}

Rather than work on an individual sub-Cantor space, it will be useful to conduct our analysis on \emph{families} of sub-Cantor spaces indexed by vertex sets, with various morphisms between these spaces.  The most convenient way to handle these families is via the notion of a \emph{contravariant functor} from category theory.

\begin{definition}[Contravariant functor]  A \emph{contravariant functor} $Z$ is an assignment of a sub-Cantor space $Z^{(V)}$ to every vertex set $V$, together with a probability kernel $Z^{(\phi)}: Z^{(V)} \to Z^{(W)}$ for every morphism\footnote{Recall that in the category of vertex sets (as opposed to that of sub-Cantor spaces), the morphisms are just the (deterministic) injective maps between vertex sets.} $\phi \in \Hom(W,V)$ between vertex sets, such that $Z^{(\id_V)}: Z^{(V)} \to Z^{(V)}$ is the identity probability kernel on $Z^{(V)}$ for every vertex sets $V$, and such that $Z^{(\phi \circ \psi)} = Z^{(\psi)} \circ Z^{(\phi)}$ for any morphisms $\phi \in \Hom(W,V)$ and $\psi \in \Hom(V,U)$ between vertex sets.  We say that the contravariant functor is \emph{deterministically continuous} (resp. \emph{weakly continuous}) if all the probability kernels $Z^{(\phi)}$ are deterministically continuous (resp. weakly continuous).  If $z \in Z^{(V)}$ and $W \subset V$, we write $z\downharpoonright_W \in Z^{(W)}$ for $Z^{(\iota_{W \subset V})}(z)$, and refer to $z\downharpoonright_W$ as the \emph{restriction} of $z$ to $W$. Similarly, if $\mu \in \Pr(Z^{(V)})$ and $W \subset V$, we write $\mu\downharpoonright_W \in \Pr(Z^{(W)})$ for the projected measure $Z^{\iota_{W \subset V}} \circ \mu$.

If $Z$ is a contravariant functor and $S$ is a vertex set, we define the \emph{shift} $Z^{\uplus S}$ to be the contravariant functor given by requiring that
$$ (Z^{\uplus S})^{(V)} := Z^{(V \uplus S)}$$
for all vertex sets $V$ and
$$ (Z^{\uplus S})^{(\phi)} := Z^{(\phi \oplus \id_S)}$$
for all morphisms $\phi$.  One easily verifies that $Z^{\uplus S}$ is a contravariant functor, which is deterministically continuous (resp. weakly continuous) if $Z$ is.
\end{definition}

\begin{remark}  Intuitively, a contravariant functor is a recipe for generating a space of objects $Z^{(V)}$ to every vertex set $V$, to which one can meaningfully perform operations such as relabeling $V$, or restricting $V$ to a subset $W$.  A typical example of such a space $Z^{(V)}$ would be $K^{(V)}$, the space of $K$-coloured hypergraphs on $V$.  Note however that we allow the relabeling and restriction operations to be stochastic rather than deterministic.
\end{remark}

In this paper we will only be dealing with either deterministically continuous or weakly continuous contravariant functors.  One such functor is the \emph{trivial functor} $\pt$, which maps every vertex set to a point (and every morphism to the unique probability kernel between two points).  More generally, an important source of such functors for us will come from \emph{sub-Cantor palettes}.

\begin{definition}[Sub-Cantor palettes]  A \emph{sub-Cantor palette} is a tuple $Z = (Z_j)_{j=0}^\infty$ of sub-Cantor spaces, all but finitely many of which are trivial.  We define the \emph{order} of $Z$ to be the largest $k$ for which $Z_k$ is non-trivial, or $-1$ if all components $Z_j$ are trivial.  We identify $Z$ with a deterministically continuous contravariant functor by defining
$$ Z^{(V)} := \prod_{j=0}^\infty Z_j^{\Hom([j],V)}$$
for all vertex sets $V$, and defining $Z^{(\phi)}: Z^{(V)} \to Z^{(W)}$ for all morphisms $\phi \in \Hom(W,V)$ by the formula
$$ Z^{(\phi)}( ( ( z_j(\psi) )_{\psi \in \Hom([j],V)} )_{j=0}^\infty ) = ( ( z_j(\phi \circ \psi) )_{\psi \in \Hom([j],W)} )_{j=0}^\infty $$
for all $( ( z_j(\psi) )_{\psi \in \Hom([j],V)} )_{j=0}^\infty \in Z^{(V)}$.  One easily verifies that $Z$ is indeed a deterministically continuous contravariant functor.

If $j$ is an integer, we write $Z_{\leq j}$ (resp. $Z_{<j}$, $Z_{\geq j}$, $Z_{>j}$, $Z_{=j}$) for the sub-Cantor palette whose $i^{\th}$ component is $Z_i$ when $i \leq j$ (resp. $i<j$, $i\geq j$, $i>j$, $i=j$) and a point otherwise.
\end{definition}

\begin{example} The finite palettes in Definition \ref{hyperdef} are sub-Cantor palettes.
\end{example}

\begin{example}[Sub-Cantor spaces as contravariant functors]\label{trivial} A sub-Cantor space $X$ can be viewed as a sub-Cantor palette of order $0$, and can therefore be viewed as a contravariant functor, in which $X^{(V)} = X$ and $X^{(\phi)} = \id_X$ for all vertex sets $V$ and morphisms $\phi$.
\end{example}

\begin{example}[Hypergraph properties as contravariant functors]\label{propfunc}  If $K$ is a finite palette and $\P$ is a hereditary $K$-property, one easily verifies for every vertex set $V$ that $\P^{(V)}$ is a closed subspace of $K^{(V)}$ and is therefore itself a sub-Cantor space.  From this and the hereditary nature of $\P$ we see that $\P$ is in fact a contravariant functor.
\end{example}

We will also need to deal with families of probability kernels between one family of sub-Cantor spaces and another.  The most convenient way to handle such a concept is using the notion of a \emph{natural transformation} from category theory.

\begin{definition}[Natural transformation]\label{natdef}  A \emph{natural transformation} $N: Z \to Y$ between two contravariant functors $Z, Y$ is an assignment of a probability kernel $N^{(V)}: Z^{(V)} \rightsquigarrow Y^{(V)}$ for every vertex set $V$, such that the diagram
\begin{equation}\label{natprop}
\begin{CD}
Z^{(V)}             @>{N^{(V)}}>>   Y^{(V)} \\
@VV{Z^{(\phi)}}V                   @VV{Y^{(\phi)}}V            \\
Z^{(W)}             @>{N^{(W)}}>>   Y^{(W)}
\end{CD}
\end{equation}
commutes for every morphism $\phi \in \Hom(W,V)$ between vertex sets (the horizontal arrows here being probability kernels rather than continuous maps).  We say that the natural transformation is \emph{deterministically continuous} (resp. \emph{weakly continuous}) if all the probability kernels $N^{(V)}$ are deterministically continuous (resp. weakly continuous).

An \emph{exchangeable $Z$-recipe} on a contravariant functor $Z$ is a natural transformation $\mu: \pt \to Z$ from the trivial functor to $Z$, or equivalently an assignment of a probability measure $\mu^{(V)} \in \Pr(Z^{(V)})$ to every vertex set $V$, such that one has the exchangeability property
\begin{equation}\label{zphi}
Z^{(\phi)} \circ \mu^{(V)} = \mu^{(W)}
\end{equation}
for all morphisms $\phi \in \Hom(W,V)$ between two vertex sets.  If $S$ is a vertex set, we define the exchangeable $Z^{\uplus S}$-recipe $\mu^{\uplus S}: \pt \to Z^{\uplus S}$ by the formula $(\mu^{\uplus S})^{(V)} := \mu^{(V \uplus S)}$.
\end{definition}

\begin{remark} The condition \eqref{natprop} can be divided into two sub-conditions, namely \emph{equivariance} (or \emph{exchangeability})
\begin{equation}\label{exchange}
 Y^{(\phi)} \circ N^{(V)} = N^{(V)} \circ Z^{(\phi)} \hbox{ for all } \phi \in \Hom(V,V)
 \end{equation}
and \emph{locality}
\begin{equation}\label{local}
 N^{(V)}(z)\downharpoonright_W = N^{(W)}(z\downharpoonright_W) \hbox{ for all } W \subset V \hbox{ and } z \in Z^{(V)}.
\end{equation}
Similarly, if $\mu$ is an exchangeable $Z$-recipe, then $\mu^{(V)}$ is an $\Hom(V,V)$-invariant measure on $Z^{(V)}$, and the pushforward of $\mu^{(V)}$ under the restriction map to a subset $W$ of $V$ is the measure $\mu^{(W)}$.

Intuitively, a natural transformation $N: Z \to Y$ is a rule (which may be either deterministic or stochastic) for converting $Z$-type objects on a given vertex set $V$ to $Y$-type objects on the same vertex set, in a manner which is both local (in the sense of \eqref{local}) and exchangeable (in the sense of \eqref{exchange}).  We will shortly give a number of examples of natural transformations, such as recolouring maps, and local modification rules.

If $Z$ is a palette, one can view an exchangeable $Z$-recipe as a means for constructing a random $Z$-coloured hypergraph on any vertex set $V$, which is exchangeable with respect to relabeling of $V$, and also respects restriction from one vertex set to a subset.
\end{remark}

\begin{remark}  For future reference we observe the obvious fact that the composition $N_1 \circ N_2: Z \to X$ of two natural transformations $N_1: Y \to X$ and $N_2: Z \to Y$, defined by $(N_1 \circ N_2)^{(V)} := N_1^{(V)} \circ N_2^{(V)}$, is again a natural transformation.  
\end{remark}

Many important combinatorial operations on hypergraphs can be interpreted as natural transformations\footnote{Informally, any operation on hypergraphs which is both local (the effect of an operation on a subset $W$ of the vertex set $V$ depends only on the restriction of the hypergraph to $W$) and exchangeable (the operation respects hypergraph isomorphism) will have an interpretation as a natural transformation.}.  We list some examples of relevance to our applications here.

\begin{definition}[Colouring as a natural transformation]\label{colour}  Let $Z = (Z_j)_{j=0}^\infty$ be a sub-Cantor palette.  A \emph{colouring} $\alpha: Z \to A$ of $Z$ is a tuple $\alpha = (\alpha_j)_{j=0}^\infty$ of \emph{continuous}\footnote{Informally, this means that the colour assigned to any point in $Z$ depends only on ``finitely many coordinates'' of that point.} maps $\alpha_j: Z_j \to A_j$, where $A = (A_j)_{j=0}^\infty$ is a finite palette.  Each individual map $\alpha_j$ can be interpreted as a deterministically continuous natural transformation $\overline{\alpha_j}: Z_j \to A_j$ defined by the formula
$$ \overline{\alpha_j}^{(V)}( (z(\phi))_{\phi \in \Hom([j],V)} ) := ( \alpha_j(z(\phi)))_{\phi \in \Hom([j],V)}$$
and then the entire colouring can be viewed as a deterministically continuous natural transformation $\overline{\alpha}: Z \to A$ by
$$ \overline{\alpha}^{(V)}( (z_j)_{j=0}^\infty ) := ( \overline{\alpha_j}^{(V)}(z_j) )_{j=0}^\infty.$$
One easily verifies that $\overline{\alpha_j}$ and $\overline{\alpha}$ are indeed deterministically continuous natural transformations.  We say that a colouring $\alpha: Z \to A$ \emph{refines} or \emph{is finer than} another $\kappa: Z \to K$ if we have $\kappa = \sigma \circ \alpha$ for some colouring $\sigma: A \to K$.
\end{definition}

\begin{example}[Probability measures as exchangeable recipes]\label{trivial-2} If $X$ is a sub-Cantor space (which we can view as a palette of order 0 and thus as a contravariant functor, by Example \ref{trivial}), then an exchangeable $X$-recipe $\mu$ is nothing more than just a probability measure $\mu \in \Pr(X)$ on $X$.
\end{example}

\begin{definition}[Sampling as an exchangeable recipe]\label{sampling}  Let $K = (K_j)_{j=0}^k$ be a finite palette, and let $G = (V,\B,\nu,(G_j)_{j=0}^k)$ be a continuous $K$-coloured hypergraph.  For any vertex set $S$, the sampling map $\overline{G}^{(S)}: V^S \to K^{(S)}$ is a measurable map, and $\overline{\nu}^{(S)} := \nu^S$ is a probability measure on $V^S$.  Thus the pushforward measure $\overline{G}^{(S)} \circ \overline{\nu}^{(S)}$ is a probability measure on $K^{(S)}$, which can be viewed as a probability kernel from $\pt$ to $K^{(S)}$.  We can then define the exchangeable $K$-recipe $\overline{G} \circ \overline{\nu}: \pt \to K$ by letting $(\overline{G} \circ \overline{\nu})^{(S)} := \overline{G}^{(S)} \circ \nu^S$; one easily verifies that this is indeed an exchangeable $K$-recipe.  (If $V$ was a sub-Cantor space, and thus identifiable with a sub-Cantor palette of order $1$, one could interpret $\overline{\nu}: \pt \to V$ as an exchangeable $V$-recipe, and $\overline{G}: V \to K$ as a deterministic natural transformation; however, we will not need to adopt this perspective here.)
\end{definition}

\begin{example}[Inclusion as a natural transformation]  If $K$ is a finite palette and $\P$ is a hereditary $K$-property, then the inclusion natural transformation $\iota: \P \to K$ is a deterministically continuous natural transformation.
\end{example}

\begin{example}[Local modification rule as natural transformation]\label{lmr-nat} A local modification rule $T = (T,A)$ on a finite palette $K$ can be viewed as a deterministically continuous natural transformation $\overline{T}: K^{\uplus A} \to K$, with the maps $\overline{T}^{(V)}: K^{(A \uplus V)} \to K^{(V)}$ given by either Definition \ref{concmod} or Definition \ref{locmod}; the locality condition \eqref{local} reflects the fact that the colour assigned to an edge $\phi \in \Hom([j],V)$ by such a rule only depends on the restriction of the original graph to $A \cup \phi([j])$.  If $\P$ is a hereditary $K$-property $\P$, then $T$ entails $\P$ if and only if the associated natural transformation $\overline{T}$ factors through the inclusion natural transformation $\iota: \P \to K$.
\end{example}



\begin{definition}[Direct sum of natural transformations] If $Y_1$ and $Y_2$ are contravariant functors, we define the \emph{Cartesian product} $Y_1 \times Y_2$ to be the contravariant functor defined by $(Y_1 \times Y_2)^{(V)} :=Y_1^{(V)} \times Y_2^{(V)}$ for all vertex sets $V$, and $(Y_1 \times Y_2)^{(\phi)}(y_1,y_2) := (Y_1^{(\phi)}(y_1), Y_2^{(\phi)}(y_2))$ for all morphisms $\phi \in \Hom(W,V)$ and points $y_1 \in Y_1^{(V)}$, $y_2 \in Y_2^{(V)}$; one easily verifies that $Y_1 \times Y_2$ is indeed a contravariant functor.  If $N_1: Z_1 \to Y_1$ and $N_2: Z_2 \to Y_2$ are natural transformations, we define the \emph{direct sum} $N_1 \oplus N_2: Z_1 \times Z_2 \to Y_1 \times Y_2$ to be the natural transformation defined by $(N_1 \oplus N_2)^{(V)}(z_1,z_2) = (N_1^{(V)}(z_1), N_2^{(V)}(z_2))$ for all vertex sets $V$ and points $z_1 \in Z_1^{(V)}$ and $z_2 \in Z_2^{(V)}$; one easily verifies that $N_1 \oplus N_2$ is indeed a natural transformation.
\end{definition}

\begin{example} If $Z = (Z_j)_{j=0}^k$ is a sub-Cantor palette, then we have $Z = Z_{=0} \times \ldots \times Z_{=k}$ as contravariant functors.  If $\alpha = (\alpha_j)_{j=0}^k: Z \to A$ is a colouring, then we have $\overline{\alpha} = \overline{\alpha_{0}} \oplus \ldots \oplus \overline{\alpha_{k}}$.
\end{example}

We now turn to an important weak compactness property of recipes, which in fact is the main reason why we have set up all this infinitary machinery in the first place.

\begin{definition}[Vague convergence of recipes]  Let $Z$ be a sub-Cantor palette, let $\mu_n: \pt \to Z$ be a sequence of exchangeable $Z$-recipes, and let $\mu: \pt \to Z$ be another exchangeable $Z$-recipe.  We say that $\mu_n$ \emph{converges vaguely} to $\mu$ if $\mu_n^{(V)}$ converges vaguely to $\mu^{(V)}$ for every vertex set $V$ (see Appendix \ref{prob} for a definition of vague convergence of measures).
\end{definition}

\begin{lemma}[Vague sequential compactness of recipes]\label{compact}  Let $Z$ be a sub-Cantor palette, and let $\mu_n: \pt \to Z$ be a sequence of exchangeable $Z$-recipes.  Then there exists a subsequence $\mu_{n_j}: \pt \to Z$ which converges vaguely to another exchangeable $Z$-recipe $\mu: \pt \to Z$.
\end{lemma}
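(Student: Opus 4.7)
The plan is to combine standard weak-$*$ compactness of probability measures on each fixed sub-Cantor space with a diagonal argument across a countable cofinal family of vertex sets, and then verify that the compatibility relation \eqref{zphi} survives in the limit.

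First, I would reduce to countably many vertex sets. Since every vertex set is at most countable, each one is isomorphic, via a bijective morphism, to one of $V_m := [m]$ for $m = 0, 1, 2, \ldots$ or to $V_\infty := \N$. Once we construct a compatible family $(\mu^{(V_m)})_{m \in \{0,1,2,\ldots,\infty\}}$ on these representatives, the $\Hom(V_m,V_m)$-exchangeability of each $\mu_n^{(V_m)}$ (a property preserved under vague limits, since it amounts to invariance under pushforward by the continuous maps $Z^{(\sigma)}$ for $\sigma \in \Hom(V_m,V_m)$) ensures that for any other vertex set $V$ with bijection $\beta : V \to V_m$ the pushforward of $\mu^{(V_m)}$ under $\beta^{-1}$ is independent of the choice of $\beta$, giving a well-defined $\mu^{(V)}$ consistent with \eqref{zphi}.

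Next, for each $m$ in our countable list, $Z^{(V_m)} = \prod_{j \geq 0} Z_j^{\Hom([j],V_m)}$ is a countable product of sub-Cantor spaces and is therefore itself compact and metrizable. Hence $\Pr(Z^{(V_m)})$ is sequentially compact in the vague topology (for instance by Prokhorov's theorem, or by Banach--Alaoglu applied to the dual of $C(Z^{(V_m)})$). Thus the sequence $(\mu_n^{(V_m)})_n$ admits a vaguely convergent subsequence. A routine diagonal extraction across the countably many indices $m \in \{0,1,2,\ldots,\infty\}$ then produces a single subsequence $(\mu_{n_j})_j$ such that $\mu_{n_j}^{(V_m)}$ converges vaguely to some $\mu^{(V_m)} \in \Pr(Z^{(V_m)})$ for every $m$.

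Finally, I would verify \eqref{zphi} within the representative family. For any morphism $\phi \in \Hom(W,V)$ between two such vertex sets, the pullback $Z^{(\phi)} : Z^{(V)} \to Z^{(W)}$ is a deterministic continuous map, since it merely reindexes coordinates along the injection $\phi$. Pushforward by such a continuous map is continuous in the vague topology, so
\begin{equation*}
Z^{(\phi)} \circ \mu^{(V)} \;=\; \lim_{j \to \infty} Z^{(\phi)} \circ \mu_{n_j}^{(V)} \;=\; \lim_{j \to \infty} \mu_{n_j}^{(W)} \;=\; \mu^{(W)},
\end{equation*}
where the middle equality is the exchangeability of each $\mu_n$. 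Combined with the extension to arbitrary vertex sets described in the first step, this gives an exchangeable $Z$-recipe $\mu$ and the desired vague convergence $\mu_{n_j} \to \mu$. No serious obstacle arises; the only mild subtlety is remembering to include the infinite representative $V_\infty = \N$ in the diagonal extraction, since an exchangeable recipe must be defined on countable as well as finite vertex sets.
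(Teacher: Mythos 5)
Your proof is correct, but it travels a slightly longer road than the paper's. You carry out a diagonal extraction across the countable cofinal family $[0], [1], [2], \ldots, \N$ and then patch together the limits; the paper observes that a \emph{single} extraction on one countably infinite vertex set $S$ already suffices. Once $\mu_{n_j}^{(S)}$ converges vaguely to some $\mu^{(S)}$, every other $\mu^{(V)}$ is defined as $Z^{(\phi)} \circ \mu^{(S)}$ for an arbitrary injection $\phi \in \Hom(V,S)$ (well-definedness coming from the $\Hom(S,S)$-invariance of $\mu^{(S)}$, which, as you note, passes to the vague limit since $Z^{(\phi)}$ is continuous). Convergence of $\mu_{n_j}^{(V)}$ to $\mu^{(V)}$ for every $V$ then follows automatically by pushing the single convergent sequence on $S$ forward under the continuous map $Z^{(\phi)}$ — no diagonalization is needed because every vertex set embeds into the universal $S$ and the recipes $\mu_n$ are already coherent under these embeddings. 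What your route buys is that it avoids singling out one privileged vertex set and is perhaps more transparently forced; what the paper's route buys is brevity and the structural observation that an exchangeable recipe is determined entirely by its value on any one countably infinite vertex set. Both constructions yield the same limit recipe; your verification of \eqref{zphi} in the limit via continuity of $Z^{(\phi)}$ is exactly the argument the paper uses as well.
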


\begin{proof} Let $S$ be a countably infinite vertex set.  Then by Lemma \ref{seq}, we can find a subsequence $\mu_{n_j}: \pt \to Z$ such that the probability measures $\mu_{n_j}^{(S)} \in \Pr(Z^{(S)})$ converge vaguely to a measure $\mu^{(S)} \in \Pr(Z^{(S)})$.  Observe from \eqref{zphi} that $Z^{(\phi)} \circ \mu_{n_j}^{(S)} = \mu_{n_j}^{(S)}$ for all $\phi \in \Hom(S,S)$.  Since $Z^{(\phi)}$ is continuous, we can use vague convergence and conclude that
\begin{equation}\label{zamu}
Z^{(\phi)} \circ \mu^{(S)} = \mu^{(S)}.
\end{equation}
We can then define the exchangeable $Z$-recipe $\mu: \pt \to Z$ by defining $\mu^{(V)} := Z^{(\phi)} \circ \mu^{(S)}$ for any vertex set $V$ and any morphism $\phi \in \Hom(V,S)$; one easily verifies from \eqref{zamu} that $\mu$ is well-defined and is an exchangeable $Z$-recipe.  Also, as $\mu_{n_j}^{(S)}$ converges vaguely to $\mu^{(S)}$, one can see (by pulling back by an arbitrary morphism $\phi \in \Hom(V,S)$) that $\mu_{n_j}^{(V)}$ converges vaguely to $\mu^{(V)}$ for all vertex sets $V$.  The claim follows.
\end{proof}

\subsubsection{A structure theorem for exchangeable random hypergraphs}


In the infinitary framework, graphs and hypergraphs will be modeled
by exchangeable recipes, via the sampling operation in Definition
\ref{sampling}.  In order to use this formalism, we will need a classification of all the possible exchangeable recipes that one could
associate with a given palette $Z$.  Such a classification is
analogous to the Szemer\'edi and hypergraph regularity lemmas in the
finitary setting, or to the description of `limit objects' of
certain sequences of finite graphs or hypergraphs in terms of
`graphons' and `hypergraphons' in the works of Lov\'asz and
Szegedy~\cite{LovSze2} and Elek and Szegedy~\cite{EleSze}.  (The $k=1$ version of this classification is essentially de Finetti's theorem, a foundational result in the study of exchangeable probability measures.)

In fact, the classification that we need has been available in the
probabilistic literature for quite some time, appearing first in the
study of `exchangeable arrays of random variables' in the work of
Hoover~\cite{Hoo1,Hoo2}, Aldous~\cite{Ald1,Ald2,Ald3} and
Kallenberg~\cite{Kal}.  Their formalism is slightly removed from the
more combinatorial set-up and demands of the present paper, and so
we refer the reader to~\cite{Aus1} for a description of the
relationship between them and versions of these results suited to
our present purposes.

Let us first give some illustrative examples of exchangeable
$Z$-recipes that provide simple instances of the general result to
follow.

\begin{example}[Random vertex colouring]\label{rvc} Let $Z = (Z_0,Z_1)$ be a palette of order $1$, let $P_0 \in
\Pr(Z_0)$ be a probability measure (and thus identifiable with a
$Z_{\leq 0}$-recipe $P_0: \pt \to Z_{\leq 0}$), and let $Q_1: Z_0
\rightsquigarrow Z_1$ be a probability kernel.  If we then define
the probability kernel $P_1: Z_{\leq 0} \to Z$ by the formula
$P_1^{(V)}(z) := Q_1(z)^V$ for all vertex sets $V$ and $z \in Z_0$,
then $\mu := P_1 \circ P_0$ is an exchangeable $Z$-recipe.  This
recipe colours a given vertex set $V$ by first assigning a colour $z
\in Z_0$ at random with law $P_0$ to the empty set, and then
assigning a colour in $Z_1$ to each vertex independently at random
with law $Q_1(z)$.

A classical theorem of de Finetti asserts (in this language) that if
$Z_1$ is a sub-Cantor space and $\mu_{=1}$ is an exchangeable
$Z_{=1}$-recipe, then there exists $Z_0, P_0, Q_1, \mu$ as above
such that $\mu_{=1} = \pi \circ \mu$, where $\pi: Z \to Z_{=1}$ is
the projection map.  This theorem gives a satisfactory
classification of exchangeable recipes on palettes of order $1$, and
the later work of Hoover, Aldous and Kallenberg was motivated by an effort to generalise this special case.
\end{example}

\begin{example}[Erd\H{o}s-Renyi hypergraphs]\label{eph}
Let $Z = \{0,1\}_k$ for some $k \geq 1$, and let $0 < p < 1$.  Then
we can define the exchangeable $Z$-recipe $\mu: \pt \to Z$ by
setting $\mu^{(V)} = \prod_{e \in \binom{V}{k}} \mu_{p,e}$ for all
vertex sets $V$, where we identify $\{0,1\}_k^{(V)}$ with $\prod_{e
\in \binom{V}{k}} \{0,1\}_k^{(e)}$, and $\mu_{p,e} \in \Pr(
\{0,1\}_k^{(e)} )$ is the law of the random hypergraph of order $k$
on $e$ which is complete with probability $p$ and empty with
probability $1-p$; thus $\mu^{(V)}$ is the law of a random
undirected hypergraph of Erd\H{o}s-Renyi type.
\end{example}

\begin{example}[Random complete bipartite graph]\label{rcbg} 
Let $Z = (\pt,\{0,1\},\{0,1\})$, and let $Q_1 \in \Pr(\{0,1\})$ be
the uniform measure on $\{0,1\}$.  From Example \ref{rvc}, $Q_1$
induces a $Z_{\leq 1}$-exchangeable recipe $P_1: \pt \to Z_{\leq
1}$.  We also define a natural transformation $P_2: Z_{\leq 1} \to
Z$ by the formula $P_2^{(V)}(z) := \delta_z \times \prod_{e \in
\binom{V}{2}} Q_2^{(e)}(z\downharpoonright_e)$ for all vertex sets
$V$, where we identify $Z^{(V)}$ with $Z_{\leq 1}^{(V)} \times
\prod_{e \in \binom{V}{2}} Z_{=2}^{(e)}$, and for any $e = \{v,w\}$
and $z = (z_v,z_w) \in Z_{\leq 1}^{(e)}$, $Q_2^{(e)}(z)$ is the law
of the random graph on $e$ which is complete when $z_v \neq z_w$ and
empty otherwise.  The recipe $\mu := P_2 \circ P_1$ is then an
exchangeable $Z$-recipe, which describes a random complete bipartite
graph on any given vertex set $V$.
\end{example}

\begin{example}[Erd\H{o}s-Renyi graphs with random density]  Let $Z = (Z_0,\pt,\{0,1\})$, let $P_0 \in \Pr(Z_0)$ be a probability measure, and let $p: Z_0 \to [0,1]$ be a measurable function.  We can view $P_0$ as a natural transformation $P_0: \pt \to Z_{\leq 0}$.  We can then define the natural transformation $P_2: Z_{\leq 1} \to Z$ by setting $P_2^{(V)}(z_0) = \delta_{z_0} \times \prod_{e \in \binom{V}{2}}\mu_{p(z_0),e}$ for all vertex sets $V$ and all $z_0 \in Z_0 \equiv Z_{\leq 1}^{(V)}$, where we identify $Z^{(V)}$ with $Z_{\leq 1}^{(V)} \times \prod_{e \in \binom{V}{2}} Z_{=2}^{(e)}$, and $\mu_{p,e}$ are the measures defined in Example \ref{eph}.  Then $\mu := P_2 \circ P_0$ is an exchangeable $Z$-coloured hypergraph, which describes an Erd\H{o}s-Renyi random graph whose expected edge density $p$ is itself a random variable.
\end{example}

\begin{example}[Random directed complete graph]  Let $Z = \{0,1\}_2$ and let $P_2: \pt \to Z$ be exchangeable $Z$-recipe $P_2^{(V)} = \prod_{e \in \binom{V}{2}} Q_2^{(e)}$, where for each $e = \{v,w\}$, $Q_2^{(e)} \in \Pr( \{0,1\}_2^{(e)} )$ is the law of the random directed graph $G_2: \Hom([2],e) \to \{0,1\}$ such that $G_2(v,w) = 1$ and $G_2(w,v)=0$ with probability $1/2$, and $G_2(v,w)=0$ and $G_2(w,v)=1$ with probability $1/2$.  Thus $P_2^{(V)}$ is the law of a random directed complete graph on $V$, on which given any two vertices $v$ and $w$, exactly one of the directed edges $(v,w)$ and $(w,v)$ will lie in the graph, with an equal probability $1/2$ of each.
\end{example}

\begin{example}[Random 3-uniform hypergraphs]\label{r3h} We now consider a somewhat more general example than those above. Let $Z = (\pt,Z_1,Z_2,\{0,1\})$, let $Q_1 \in \Pr(Z_1)$ be a
probability measure, let $Q_2: Z_1 \times Z_1 \rightsquigarrow Z_2$
be a symmetric probability kernel, and let $p: Z_{\leq 2}^{([3])} \to
[0,1]$ be a measurable function which is symmetric with respect to
the $\Hom([3],[3])$ action on the base $Z_{\leq 2}^{([3])} \equiv Z_1^3
\times Z_2^6$.  (Actually, for this construction, only the values of
$p$ on \emph{undirected} hypergraphs in $Z_{\leq 2}^{([3])}$ - a set
which is identifiable with $Z_1^3 \times Z_2^3$ - will be relevant.)
From Example \ref{rvc} with $Z_0 = \pt$, $Q_1$ induces a natural
transformation $P_1: \pt \to Z_{\leq 1}$.  Similarly, the map $Q_2$
induces a natural transformation $P_2: Z_{\leq 1} \to Z_{\leq 2}$
defined by $P_2^{(V)}(z) := \delta_z \times \prod_{e \in
\binom{V}{2}} Q_2^{(e)}(z\downharpoonright_e)$ for all vertex sets
$V$ and $z \in Z_{\leq 1}^{(V)}$, where for each $e = \{v,w\}$,
$Q_2^{(e)}(z_v,z_w)$ is the law of the random hypergraph $G_e$ in
$Z_{=2}^{e}$ which is symmetric (thus $G_e(v,w)=G_e(w,v)$) and such
that $G_e(v,w)$ has law $Q_2(v,w)=Q_2(w,v)$.  The function $p$ also
induces a natural transformation $P_3: Z_{\leq 2} \to Z$, defined by
$P_3^{(V)}(z) := \delta_z \times \prod_{e \in \binom{V}{3}}
Q_3^{(e)}(z\downharpoonright_e)$ for all vertex sets $V$ and $z \in
Z_{\leq 2}^{(V)}$, where $Q_3^{(\{v_1,v_2,v_3\})}(y)$ is the law of
the random hypergraph in $\{0,1\}_3^{(\{v_1,v_2,v_3\})}$ which is
complete with probability $p( Z_{\leq 2}^{(v_1,v_2,v_3)}(y) )$ and
empty otherwise (note that the exact ordering of $\{v_1,v_2,v_3\}$
is irrelevant due to the symmetry assumptions on $p$).  This generates an exchangeable $Z$-recipe $\mu := P_3 \circ P_2 \circ P_1$, which creates a hypergraph on any vertex set $V$ by first using $Q_1$ to colour the vertices, then $Q_2$ to colour $2$-edges, and finally $Q_3$ to colour $3$-edges.  This sort of recipe has also appeared, for example, in the different
formalism of `hypergraphons' studied in~\cite{EleSze}, and can be viewed as the infinitary analogue of the regularisations of finitary hypergraphs given for instance in \cite{gowers-3} or \cite{rodl}.
\end{example}

These examples can be generalized to create exchangeable recipes of
any given order.  To do this, we introduce some more notation.

\begin{definition}[Independence]  Let $X$ be a sub-Cantor space with a probability measure $\mu \in \Pr(X)$, and let $\pi_\alpha: X \to Y_\alpha$, $\alpha \in A$ be a collection of measurable maps to other sub-Cantor spaces $Y_\alpha$.  We say that the maps $\pi_\alpha$ are \emph{jointly independent relative to $\mu$} if we have
$$ \int_X (\prod_{\alpha \in A'} f_\alpha \circ \pi_\alpha)\ d\mu = \prod_{\alpha \in A'} \int_X f_\alpha \circ \pi_\alpha\ d\mu$$
for all finite subsets $A'$ of $A$ and all bounded measurable functions $f_\alpha: Y_\alpha \to \R$.
\end{definition}

\begin{remark} If we choose $x \in X$ at random with law $\mu$, then the $\pi_\alpha$ are jointly independent relative to $\mu$ if and only if the random $\pi_\alpha(x) \in Y_\alpha$ are jointly independent in the usual probabilistic sense.
\end{remark}

\begin{definition}[$j$-independence]\label{j-indep}  Let $N: Z \to Y$ be a natural transformation, and let $j \geq 0$.  We say that $N$ is \emph{$j$-independent} if for every vertex set $V$ and every $z \in Z^{(V)}$, the restriction maps $\pi_W: Y^{(V)} \to Y^{(W)}$ for $W \in \binom{V}{j}$ are jointly independent relative to the measure $N^{(V)}(z) \in \Pr(Y^{(V)})$.
\end{definition}

\begin{remark} Informally, $j$-independence asserts that for any fixed $z \in Z^{(V)}$, the $j$-edges of the random element of $Y^{(V)}$ drawn using the law $N^{(V)}(z)$ are jointly independent random variables.  For instance in Example \ref{rvc}, once one fixes $z \in Z_0$, $P_1$ colours the vertices in $V$ independently with law $Q_1(z)$, and thus $P_1: Z_{<0} \to Z_{\leq 1}$ is $1$-independent.  (Note, however, that if $z \in Z_0$ is chosen randomly rather than deterministically, then the colours assigned to vertices in $V$ need not be independent any more.) More generally, in all of the examples discussed earlier in this section, the natural transformations $P_j: Z_{<j} \to Z_{\leq j}$ that appear in those examples are $j$-independent.
\end{remark}

\begin{example}\label{jep}  Let $Z$ be a contravariant functor and $j \geq 0$.  Let $Y_{=j}$ be a sub-Cantor palette with only the $j^\th$ component non-trivial, and let $Q^{([j])}: Z^{([j])} \rightsquigarrow Y_{=j}^{([j])}$ be a probability kernel which is $\Hom([j],[j])$-equivariant, thus $Y_{=j}^{(\phi)} \circ Q^{([j])} = Q^{([j])} \circ Z^{(\phi)}$ for all $\phi \in \Hom([j],[j])$.  If we then define the natural transformation $Q: Z \to Y_{=j}$ by
$$  Q^{(V)}(z) := \prod_{e \in \binom{V}{j}} Y_{=j}^{(\phi_e^{-1})} \circ Q^{([j])}( Z^{(\phi_e)}(z) )$$
for all vertex sets $V$ and all $z \in Z^{(V)}$, where we identify $Y_{=j}^{(V)}$ with $\prod_{e \in \binom{V}{j}} Y_{=j}^{(e)}$, and where we choose an arbitrary morphism $\phi_e \in \Hom([j],e)$ for each $e \in \binom{V}{j}$ (the exact choice of $\phi_e$ is irrelevant, thanks to the $\Hom([j],[j])$-equivariance of $Q^{([j])}$), then one verifies that $Q$ is a $j$-independent natural transformation.  Indeed, $Q$ is the unique $j$-independent natural transformation which agrees with $Q^{([j])}$ at $[j]$, and conversely every $j$-independent natural transformation from $Z$ to $Y_{=j}$ arises in this fashion.
\end{example}

\begin{definition}[Regular exchangeable recipes]\label{regexp}  Let $Z$ be a sub-Cantor palette of some order $k \geq 0$, and let $\mu: \pt \to Z$ be an exchangeable $Z$-recipe.  We say that $\mu$ is \emph{regular} if there exists a factorisation
$$ \mu = P_k \circ \ldots \circ P_0$$
where for each $0 \leq j \leq k$, $P_j: Z_{<j} \to Z_{\leq j}$ is a $j$-independent natural transformation which partially inverts the projection natural transformation $\pi_j: Z_{\leq j} \to Z_{<j}$ in the sense that $\pi_j \circ P_j = \id_{Z_{<j}}$.  (Here we identify $Z$ with $Z_{\leq k}$ in the obvious manner.)
\end{definition}

\begin{remark} If one sets $\mu_{\leq j} = \mu_{<j+1} := P_j \circ \ldots \circ P_0$, then the situation can be described by a commutative diagram whose $j^\th$ layer for $j=0,\ldots,k$ takes the form
\begin{equation}\label{mupj}
\begin{CD}
\pt  @>{\mu_{\leq j}}>> Z_{\leq j}             @>{\id_{Z_{\leq j}}}>>        Z_{\leq j} \\
@|                       @AA{P_j}A                                          @V{\pi_j}VV \\
\pt  @>{\mu_{<j}}>>       Z_{<j}                 @>{\id_{Z_{<j}}}>>            Z_{<j}
\end{CD}
\end{equation}
\end{remark}

\begin{example}  Let $Z$ be a sub-Cantor palette of order $k \ge 0$.  If $0 \leq j \leq k$ and $Q_j^{([j])}: Z_{<j}^{([j])} \rightsquigarrow Z_{=j}^{([j])}$ is a $\Hom([j],[j])$-equivariant probability kernel, and $Q_j: Z_{<j} \to Z_{=j}$ is the associated $j$-independent natural transformation, as defined by Example \ref{jep}, then the natural transformation $P_j: Z_{<j} \to Z_{\leq j}$ defined by $P_j^{(V)}(z) := \delta_z \times Q_j^{(V)}(z)$ for all vertex sets $V$ and $z \in Z^{(V)}$, where we identify $Z_{\leq j}^{(V)}$ with $Z_{<j}^{(V)} \times Z_{=j}^{(V)}$, is a $j$-independent natural transformation with $\pi_j \circ P_j = \id_{Z_{<j}}$.  Thus by selecting a $Q_j^{([j])}$ for each $0 \leq j \leq k$ and then composing the resulting $P_j$ together, one obtains a regular exchangeable $Z$-recipe $\mu$; conversely, all such regular exchangeable $Z$-recipes arise in this manner.
\end{example}

In terms of the notation set out above, we can now state the full structure
theorem that we need.

\begin{theorem}[Structure theorem]\label{struct}
Let $K$ be a finite palette of some order $k \geq 0$, let $\mu: \pt
\to K$ be an exchangeable $K$-recipe and let $S$ be a countably
infinite vertex set.  Then there exists a sub-Cantor palette $Z$, a
deterministically continuous natural transformation $\L: K^{\uplus S} \to Z$,
and a colouring map $\kappa:Z\to K$ such that the natural
transformation $\overline{\kappa} \circ \Lambda: K^{\uplus S} \to K$
is just the restriction map, thus
\begin{equation}\label{kappalam}
(\overline{\kappa} \circ \Lambda)^{(V)}(G) = G\downharpoonright_V
\end{equation}
for all vertex sets $V$ and all $G \in K^{(V \uplus S)}$, and such
that $\Lambda \circ \mu^{\uplus S}: \pt \to Z$ is a regular
exchangeable $Z$-recipe.
\end{theorem}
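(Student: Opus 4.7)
The plan is to construct the palette $Z$, the natural transformation $\Lambda$, and the colouring $\kappa$ inductively in $j = 0, 1, \ldots, k$, adding one layer $Z_j$ at each step that records precisely the "hidden parameters" which render the $j$-edge colours conditionally independent given all lower-order data. Concretely, I would maintain a commutative diagram of the form \eqref{mupj} for the partial recipes $\mu_{\leq j}^{\uplus S} := \Lambda_{\leq j} \circ \mu^{\uplus S}$, together with a partial colouring $\kappa_{\leq j} \colon Z_{\leq j} \to K_{\leq j}$ and a natural transformation $\Lambda_{\leq j}\colon K^{\uplus S} \to Z_{\leq j}$ such that $\overline{\kappa_{\leq j}} \circ \Lambda_{\leq j}$ equals the restriction natural transformation to $K_{\leq j}$.

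For the inductive step, I would invoke the exchangeable-array structure theorem of Hoover--Aldous--Kallenberg, in the form established in \cite{Aus1}: given the $\Hom(V,V)$-invariant probability measure on $K_{=j}^{(V)}$ conditional on the lower-order data at level $<j$, there is a representation of each $j$-edge colour $G_j(\phi)$ as a measurable function of independent ``seeds'' attached to the sub-edges of $\phi$. I would package these seeds into a sub-Cantor space $Z_j$ and define $Z_{\leq j} := Z_{<j}\times Z_{=j}$, with $P_j\colon Z_{<j}\to Z_{\leq j}$ the $j$-independent kernel given by Example \ref{jep} applied to the conditional seed distribution. The colour-recovery map giving $\kappa_j\colon Z_j \to K_j$ is continuous by construction.

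The critical role of the \emph{infinite} vertex set $S$ is to make $\Lambda_{\leq j}$ deterministically continuous rather than merely measurable. The idea is that, for each morphism $\phi \in \Hom([j],V)$, the value $\Lambda_{\leq j}^{(V)}(G)_j(\phi) \in Z_j$ is read off from the restriction of $G$ to $\phi([j]) \uplus S$ by computing empirical frequencies of a suitable countable family of clopen cylinder events in $K^{(\phi([j]) \uplus S)}$; because $S$ is infinite and $\mu$ is exchangeable, these empirical frequencies converge $\mu^{\uplus S}$-almost surely to the conditional probabilities required to recover the seeds, and the target space $Z_j$ can be taken to be the sub-Cantor space of all possible limit values of such frequencies. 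Naturality of $\Lambda_{\leq j}$ under morphisms of $V$ follows because both the frequency computation and the pullback on $K^{\uplus S}$ act only on the $V$-coordinates of $V \uplus S$, and $j$-independence of $P_j$ encodes the conditional independence built into the Hoover-type representation. The identity \eqref{kappalam} is automatic once one ensures at each stage that $G_j(\phi)$ itself is recoverable from $\Lambda_{\leq j}^{(V)}(G)_j(\phi)$ via $\kappa_j$.

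The main obstacle is precisely this simultaneous achievement of (i) deterministic continuity of $\Lambda$, (ii) sub-Cantor-ness of each $Z_j$, and (iii) exact compatibility of the colouring $\kappa$ with restriction for \emph{every} $G \in K^{(V \uplus S)}$, not merely $\mu^{\uplus S}$-almost surely. Measurable selection and the a.s.-defined seeds from Aldous--Hoover must be upgraded to an everywhere-defined, continuous construction; this is accomplished by choosing $Z_j$ large enough to absorb the ``bad'' null set of $G$ at which the empirical frequencies fail to stabilise (these configurations still receive some value in $Z_j$, but one which $\mu^{\uplus S}$ assigns measure zero). Verifying that these exceptional configurations do not disturb $j$-independence of the image measure then completes the argument; this is where the detailed exchangeable-structure results of \cite{Aus1} must be applied rather than the classical probabilistic statement, which is only almost sure.
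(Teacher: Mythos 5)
The paper itself does not prove this theorem: it cites Theorems~3.15 and~3.22 of \cite{Aus1}.  So the question is whether your sketch is a viable reconstruction of that proof, and I believe it is not, for a concrete reason.

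The gap is in your mechanism for obtaining deterministic \emph{continuity} of $\Lambda$.  You propose to read off $\Lambda_{\leq j}^{(V)}(G)_j(\phi)$ from $G\downharpoonright_{\phi([j])\uplus S}$ as a \emph{limit} of empirical frequencies of cylinder events over $S$, and to take $Z_j$ to be the space of attainable limit values.  But a map defined pointwise as a limit of empirical averages over an infinite index set is essentially never continuous on the whole sub-Cantor space $K^{(V\uplus S)}$: there exist points $G$ at which the averages do not converge at all, and any two limit values can be realised by $G$'s that agree on arbitrarily large finite portions of $V\uplus S$.  Your fix of ``enlarging $Z_j$ to absorb the bad null set'' does not help, because a continuous function cannot be repaired by modifying it on a dense null set, and the frequency map is discontinuous even at its points of definition.  (There is also a minor but symptomatic issue that limit values of frequencies live in $[0,1]$, which is connected and hence not sub-Cantor, so some further encoding step would be needed in any case.)  In short: you are trying to \emph{compute} the Aldous--Hoover seeds continuously, which is not possible.

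The construction in \cite{Aus1} sidesteps this by never computing the hidden parameters at all.  One takes $Z_j$ to be (essentially) the observation space itself, e.g.\ $Z_j := K^{([j]\uplus S)}$, and defines $\Lambda^{(V)}(G)_j(\phi) := K^{((\phi\oplus\id_S)^{-1})}\bigl(G\downharpoonright_{\phi([j])\uplus S}\bigr)$.  This is a restriction-and-relabelling map, i.e.\ a coordinate projection, so it is automatically deterministically continuous, and each $Z_j$ is a countable product of finite sets, hence sub-Cantor; the colouring $\kappa_j : Z_j\to K_j$ is the projection onto the coordinate $\id_{[j]}\in\Hom([j],[j]\uplus S)$, which gives \eqref{kappalam} on the nose.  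The whole burden of the theorem then shifts to showing that $\Lambda\circ\mu^{\uplus S}$ is \emph{regular}, i.e.\ that conditional on the observed $Z_{<j}$-data the $Z_j$-colours of distinct $j$-edges are conditionally independent; this is where the Aldous--Hoover--Kallenberg theory (in the sub-Cantor functorial form of \cite{Aus1}) is genuinely used, because one must know that the $\sigma$-algebra generated by the $<j$-restrictions over $S$ suffices for the conditional independence in the Hoover representation.  Your inductive framework and commutative diagram bookkeeping are the right shape, and your identification of the key external input is correct; the error is purely in the attempt to realise $\Lambda$ by limits rather than by projections.
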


\begin{remark}
The situation in the structure theorem can be summarised by the following commutative diagram,
\begin{equation}\label{mustruct}
\begin{CD}
K^{\uplus S}             @>{\Lambda}>>   Z         @>{\overline{\kappa}}>>    K \\
@AA{\mu^{\uplus S}}A        @AA{\Lambda \circ \mu^{\uplus S}}A              @AA{\mu}A\\
\pt                       @=            \pt          @=                       \pt
\end{CD}
\end{equation}
with the map from $K^{\uplus S}$ to $K$ being the restriction map (by \eqref{kappalam}), and the middle vertical map being an exchangeable $Z$-recipe and thus factorable as $P_k \circ \ldots \circ P_0$ for some $j$-independent ingredients $P_j: Z_{<j} \to Z_{\leq j}$.
\end{remark}

\begin{proof}
See Theorems~3.15 (for the undirected case) and Theorem~3.22 (for
the general case) in~\cite{Aus1}.
\end{proof}

Informally, the above theorem asserts that any exchangeable recipe can (after adding a sufficient number of ``hidden variables'') be constructed from randomly colouring $0$-edges, then $1$-edges, then $2$-edges, etc. as in Examples \ref{rvc}-\ref{r3h}.  It is analogous to the hypergraph regularity lemma, which roughly speaking asserts that any $k$-uniform hypergraph $G = (V,E)$ can be regularised by first colouring (i.e. partitioning) the $1$-edges (i.e. vertices), and then on each pair of $1$-cells, colouring/partitioning the $2$-edges between those cells in a regular fashion (regularity being the analogue of $2$-independence), then on each triplet of $1$-cells and triplet of $2$-cells, colouring/partitioning the $3$-edges with vertices in the $1$-cells and $2$-edges in the $2$-cells in a (hypergraph)-regular fashion, and so forth.

\subsection{Infinitary reductions of main theorems}\label{reduce-sec}

In this section, we use the structure theorem to deduce the main positive results of this paper (Theorems \ref{rs-thm-dir}, \ref{lgr}, \ref{monotone}, \ref{part}) from infinitary counterparts (Proposition \ref{rs-prop}, \ref{lgr-prop}, \ref{monotone-prop}, \ref{part-prop}), which will then be proven in later sections.

We begin with some notation.

\begin{definition}[Entailment]  Let $K$ be a finite palette, let $\P$ be a hereditary $K$-property, and let $N: Z \to K$ be a natural transformation from some contravariant functor $Z$.  We say that $N$ \emph{almost entails} $\P$ if we have $N^{(V)}(z)(\P^{(V)}) = 1$ for all vertex sets $V$ and all $z \in Z^{(V)}$.  We say that $N$ \emph{entails} $\P$ if $N$ is deterministically continuous and almost entails $\P$.
\end{definition}

\begin{remark} If $N$ is deterministically continuous, then $N^{(V)}$ can be viewed as a continuous function from $Z^{(V)}$ to $K^{(V)}$, and then the assertion that $N$ entails $\P$ is equivalent to the claim that $N^{(V)}(Z^{(V)}) \subset \P^{(V)}$.  Note that this notation of entailment is consistent with that in Definition \ref{locmod} after using Example \ref{lmr-nat}.
\end{remark}

\begin{remark}[Alon-Shapira finitisation trick]\label{sat} From \eqref{kphi} in Definition \ref{hered} and \eqref{natprop} in Definition \ref{natdef} we see that to verify that $N$ almost entails $\P$, it suffices to verify $N^{(V)}(z)(\P^{(V)}) = 1$ for a single countably infinite vertex set $V$ and all $z \in Z^{(V)}$.  Actually, from countable additivity and the way $\P$ is extended from finite hypergraphs to infinite ones, it suffices to verify $N^{(V)}(z)(\P^{(V)}) = 1$ for all finite vertex sets $V$ and all $z \in Z^{(V)}$.  For similar reasons, to verify that a continuous natural transformation $N: Z \to K$ entails $\P$, it suffices to show that $N^{(V)}(Z^{(V)}) \subset \P^{(V)}$ for all finite $V$.  This ability to reduce entailment to verification on finite vertex sets is crucial to our arguments; not coincidentally, an analogous finitisation observation played a similarly central role in \cite{AloSha2}.
\end{remark}


\begin{definition}[Infinitary repairability and testability]\label{infitest} Let $K$ be a finite palette of some order $k \geq 0$, and let $\P$ be a hereditary $K$-property.  We say that $\P$ is \emph{infinitarily testable with one-sided error} (resp. \emph{infinitarily strongly locally repairable}) if given any sub-Cantor palette $Z$ of order $k$, any colouring $\kappa: Z \to K$, any regular exchangeable $Z$-recipe $\mu: \pt \to Z$ such that $\overline{\kappa} \circ \mu$ almost entails $\P$, and every $\varepsilon > 0$, there exists a weakly continuous (resp. deterministically continuous) natural transformation $T: Z \to K$ that almost entails (resp. entails) $\P$ and is close to $\overline{\kappa}$ in the sense that
\begin{equation}\label{mukan}
\int_{Z^{([k])}} T^{([k])}(z)( K^{([k])} \backslash
\{\overline{\kappa}^{([k])}(z)\} )\ d\mu^{([k])}(z) < \varepsilon.
\end{equation}
\end{definition}

\begin{remark}\label{jojo-rem} When $T$ is deterministically continuous, \eqref{mukan} simplifies to
\begin{equation}\label{jojo}
\mu^{([k])}( \{ z \in Z^{([k])}: T^{([k])}(z) \neq \overline{\kappa}^{([k])}(z) \} ) < \varepsilon.
\end{equation}
\end{remark}

\begin{example}[Testing and repair of the triangle-free property, I]\label{triangle-test}  Let $Z = (\pt, Z_1, \{0,1\})$ be a sub-Cantor palette, let $K := \{0,1\}_2$, and let $\kappa: Z \to K$ be the colouring map which is the identity on the order $2$ component and trivial on lower order components.  Let $Q_1 \in \Pr(Z_1)$; and let $P_1: \pt \to Z_{\leq 1}$ be as in Example \ref{rvc}.  Let $p: Z_1 \times Z_1 \to [0,1]$ be a symmetric measurable function, and let $P_2: Z_{\leq 1} \to Z$ be the $2$-independent natural transformation $P_2^{(V)}(z) := \delta_z \times \prod_{e \in
\binom{V}{2}} Q_2^{(e)}(z\downharpoonright_e)$ for all vertex sets
$V$ and $z \in Z_{\leq 1}^{(V)}$, where for each $e = \{v,w\}$,
$Q_2^{(e)}(z_v,z_w)$ is the law of the random graph on the doubleton $e$ which is complete with probability $p(z_v,z_w)$ and empty otherwise.  Then $\mu := P_2 \circ P_1$ is a regular exchangeable $Z$-recipe (closely related to the \emph{graphons} introduced in \cite{LovSze2}); it randomly colours any vertex set $V$ by assigning each vertex $v \in V$ a random colour $G_1(v)$ in $Z_1$ with law $Q_1$, and then assigns any edge $\{v,w\}$ the colour $1$ with probability $p(G_1(v),G_1(w))$, independently for all edges $\{v,w\}$ (once the colours $G_1(v)$ have all been picked).  Let $\P$ be the hereditary $K$-property of being undirected and triangle-free.  Observe that $\mu$ will almost entail $\P$ if we have $p(x,y)p(y,z)p(z,x)=0$ for $Q_1$-almost every $x,y,z \in Z_1$; suppose that this is the case.  Now we seek a weakly (resp. deterministically) continuous natural transformation $T: Z \to K$ that almost entails (resp. entails) $\P$, and is close to $\overline{\kappa}: Z \to K$ (observe that $\overline{\kappa}$ itself does not entail $\P$ at all) in the sense of \eqref{mukan}.  We know of two methods to achieve this, which we shall call the \emph{R\"odl-Schacht method} and the \emph{Alon-Shapira method}, being loosely based on the constructions in \cite{rs2} and \cite{AloSha2} respectively (we will also discuss finitary analogues of these schemes in the next remark).  Both methods proceed by first choosing a refinement $\alpha: Z \to A$ of $\kappa: Z \to K$, which amounts to subdividing the vertex space $Z_1$ into finitely many clopen ``cells'' $\alpha_1^{-1}(\{a\})$; the finer one takes the colouring $\alpha$, the better the value of $\varepsilon$ one will eventually obtain in \eqref{mukan}. The R\"odl-Schacht method then constructs the law $T^{(V)}(z) \in \Pr(K^{(V)})$ of a random $K$-coloured graph on a vertex set $V$, starting from a $Z$-coloured graph $z \in Z^{(V)}$, as follows.  For each vertex $v \in V$, one looks at the cell $C_v := \alpha_1^{-1}(\alpha(z_1(v))) \in Z_1$ that $z_1(v)$ lives in.  If this cell has positive measure with respect to $Q_1$, then we select a point $\zeta_v \in C_v$ at random with law $(Q_1|C_v)$ (see Appendix \ref{prob} for the definition of conditioned measure).  Otherwise, we select $\zeta_v \in Z_1$ with law $Q_1$.  Note that in either case, the law of $\zeta_v$ is absolutely continuous with respect to $Q_1$.  We perform this selection procedure independently for each $v \in V$.  One now selects $T^{(V)}(z)_2(v,w) = T^{(V)}(z)_2(w,v) \in \{0,1\}$ for each $(v,w) \in \Hom([2],V)$ separately by the following rule:
\begin{itemize}
\item If $z_2(v,w)=z_2(w,v)$ and $p(\zeta_v,\zeta_w) = p(\zeta_w,\zeta_v) \neq 1-z_2(v,w)$, then set $T^{(V)}(z)_2(v,w)=z_2(v,w)$ and $T^{(V)}(z)_2(w,v)=z_2(w,v)$.
\item Otherwise, set $T^{(V)}(z)_2(v,w)=T^{(V)}(z)_2(w,v)$ equal to $1$ with probability $p(\zeta_w,\zeta_v)$, and equal to 0 otherwise.
\end{itemize}
One can verify that $T$ is a weakly continuous natural transformation which almost entails $\P$, and that \eqref{mukan} is obeyed for sufficiently fine $\alpha$, which demonstrates that $\P$ is infinitarily testable with one-sided error.

Now we turn to the Alon-Shapira method, which is more complicated, but constructs a natural transformation $T: Z \to K$ which is deterministically continuous rather than weakly continuous.  To simplify matters we shall take advantage of the monotonicity of the property $\P$, and also make the additional assumption that the measure $Q_1$ is \emph{atomless} (i.e. $Q_1(\{z\})=0$ for all $z \in Z_1$).  Let $\alpha: Z \to A$ be as before.  For each $a \in A_1$ independently in turn, we construct the cell $C_a := \alpha_1^{-1}(\{a\})$, and select $\zeta_a \in Z_1$ at random with law $(Q_1|C_a)$ if $Q_1(C_a) > 0$, and with law $Q_1$ otherwise.  For each pair $\{a,a'\} \in \binom{A_1}{2}$, we then select $\zeta_{a,a'} = \zeta_{a',a} \in \{0,1\}$ independently at random, such that $\zeta_{a,a'}=1$ with probability $p( \zeta_a, \zeta_{a'} )$.  With all these choices, we then define the (random) deterministically continuous natural transformation $T: Z \to K$ by setting $T^{(V)}(z)_2(v,w)=T^{(V)}(z)_2(w,v)$ for vertex sets $V$, $z \in Z^{(V)}$, and $(v,w) \in \Hom([2],V)$ by the following rule:
\begin{itemize}
\item If $\alpha_1(v)\neq \alpha_1(w)$, $z_2(v,w)=z_2(w,v)$, and $p(\zeta_{\alpha_1(v)},\zeta_{\alpha_1(w)})=p(\zeta_{\alpha_2(v)},\zeta_{\alpha_1(w)}) \neq 1-z_2(v,w)$, then we set $T^{(V)}(z)_2(v,w)=T^{(V)}(z)_2(w,v) = z_2(v,w)$.
\item If $\alpha_1(v)\neq \alpha_1(w)$ but we are not in the previous case, we set $T^{(V)}(z)_2(v,w) = \zeta_{\alpha_1(v),\alpha_1(w)}$.
\item If we are in the ``diagonal case'' $\alpha_1(v)=\alpha_1(w)$ then we set $T^{(V)}(z)_2(v,w) = \zeta_{\alpha_1(v),\alpha_1(w)} = 0$.
\end{itemize}
One can verify that with probability 1, $T$ is a deterministically continuous transformation which entails $\P$; the monotonicity of $\P$ is used to ensure that the ``zeroing out'' of the diagonal case does not interfere with this entailment.  One can also verify \eqref{mukan} if the colouring $\alpha$ is sufficiently fine; the atomless nature of $Q_1$ is used to ensure that the contribution of the diagonal case can be made arbitrarily small.  (One can handle the diagonal contributions of any atoms in $Q_1$ by adding an additional case to the above rule; we leave the details to the interested reader.)  If $\P$ is not monotone, the diagonal case causes much more difficulty, and needs to be coloured according to a colour provided by an application of Ramsey's theorem; see \cite{AloSha2} for details (albeit in a rather different language).
\end{example}

\begin{example} Testing and repair of the triangle-free property, II]  We now adapt the above discussion to the finitary setting, to help provide a partial dictionary between the finitary and infinitary worlds.  Our discussion will be somewhat informal.  We start with a fixed \emph{graphon} - a measurable symmetric function $p: [0,1] \times [0,1] \to [0,1]$ by the following procedure.  Given such a graphon, and given a vertex set $V$, we construct a random graph $G = (V,E)$ by the following procedure.  First, randomly assign to each vertex $v \in V$ a colour $G_1(v) \in [0,1]$ using the uniform distribution on $[0,1]$, with each vertex being coloured independently.  (Note that the uniform distribution on $[0,1]$ is atomless, thus avoiding some of the technicalities alluded to in the previous example.)  Next, we define the edge set $E$ of $G$ by declaring each edge $\{v,w\}$ to lie in $E$ with probability $p(G_1(v),G_1(w))$, with these events being independent once the colours of the vertices have been chosen.  

The finitary analogue of the R\"odl-Schacht method involves two vertex sets\footnote{Of course, in the initial setup in \cite{rs2} no graphon is initially provided.  Instead, one takes a hypothetical sequence of increasingly large counterexamples to the local testability claim, passes to a subsequence which does converge to a graphon $p$ (cf. Lemma \ref{compact}), selects two widely separated elements of this sequence, and then applies the argument described here.}, a relatively small one $V$ and a very large one $V^*$, and generates two random graphs $G = (V,E)$, $G^* = (V^*,E^*)$ using the same graphon $p$.  We assume that the large graph $G^*$ is very close to being triangle-free, and in particular we assume that the triangle density of $G^*$ is extremely small compared to the size $|V|$ of the smaller graph.  On the other hand, the triangle density of $G^*$ is extremely close to the quantity
\begin{equation}\label{evo}
 \int_0^1 \int_0^1 \int_0^1 p(x,y) p(y,z) p(z,x)\ dx dy dz.
\end{equation}
We may thus assume (with high probability) that this quantity is very small - smaller than any quantity depending only on $|V|$.

We then use the nearly triangle-free nature of $G^*$ to obtain a genuinely triangle-free perturbation $G' = (V,E')$ of $G$ as follows.  Pick a large number $N$ (much larger than $|V|$) and subdivide the intervals $[0,1]$ into $N$ intervals $I_1,\ldots,I_N$ of equal length.  We then define a random map $\zeta: V \to [0,1]$ as follows.  For each $v \in V$, we look at the colour $G_1(v) \in [0,1]$ of $v$; this falls into one of the intervals $I_i$ of $[0,1]$.  We then pick an element of $I_i$ uniformly at random and call this $\zeta_v$.  (Note that different $v, v' \in V$ may correspond to the same $i$, but in such cases we pick $\zeta_v, \zeta_{v'}$ independently; in any event, such collisions will be rare if $N$ is chosen large enough depending on $|V|$.)  This gives rise to a random map $\zeta: V \to [0,1]$.  From the smallness of \eqref{evo} and the first moment method we see that the quantity
\begin{equation}\label{poof}
 \sum_{u,v,w \in V, \hbox{ distinct}} p( \zeta_u, \zeta_v ) p( \zeta_v, \zeta_w ) p( \zeta_w, \zeta_u )
\end{equation}
can be made (with high probability) to be as small as desired depending on $|V|$. 

We use this map $\zeta$ to construct $G'=(V,E')$ as follows.  We will need a small threshold $\sigma > 0$ depending on $|V|$.  Let $\{v,w\}$ be an edge in $V$.
\begin{itemize}
\item If $\{v,w\} \in E$, and $p( \zeta_v, \zeta_w ) \geq \sigma$, we place $\{v,w\}$ in $E'$.
\item If $\{v,w\} \in E$, and $p( \zeta_v, \zeta_w ) < \sigma$, we exclude $\{v,w\}$ from $E'$.
\item If $\{v,w\} \not \in E$ and $p( \zeta_v, \zeta_w ) \leq 1-\sigma$, we exclude $\{v,w\}$ from $E'$.
\item If $\{v,w\} \not \in E$ and $p( \zeta_v, \zeta_w ) < 1-\sigma$, we place $\{v,w\}$ in $E'$.
\end{itemize}
One can check (if \eqref{poof} is sufficiently small) that $G'$ is genuinely triangle-free; meanwhile, from the Lebesgue differentiation theorem we know that $p$ is approximately constant on most cells $I_i \times I_j$; since $G$ is generated using $p$, this can be used to show that $G$ and $G'$ differ in a relatively small number of edges if the parameters are selected correctly (it is here that it is crucial that $N$ is large compared with $|V|$).  Note however that the rule generating $G'$ from $G$ is not local in nature, as it requires an initial assignment of a real number $\zeta_v \in [0,1]$ to each vertex $v$ and thus requires far more ``memory'' than is available to a local modification rule.  Also, the ``complexity'' $N$ of the modification procedure here has to be large compared with $V$, and in particular this procedure would not work if $V$ were infinite.

Now we briefly sketch the Alon-Shapira approach to constructing $G'$.  Here we will not use the large graph $G^*$, and work solely with $G$.  We assume that $G$ is close to triangle-free, thus we may assume that \eqref{evo} is small; but now the bound is much weaker.  More precisely, for any $\delta > 0$, we may assume that \eqref{evo} is less than $\delta$, but only if $|V|$ is sufficiently large depending on $\delta$; we no longer have the luxury of assuming \eqref{evo} to be arbitrarily small depending on $V$.

We now construct the perturbation $G'$ by a variant of the R\"odl-Schacht method.  We pick an $N$ which is moderately large, but now \emph{independent} of $|V|$, and create the intervals $I_1,\ldots,I_N$ as before; this induces a partition $V = V_1 \cup \ldots \cup V_N$ of $V$ into cells which (with high probability) are of roughly equal size.  Rather than assign a number $\zeta_v \in [0,1]$ to each vertex $v \in V$, we now only assign a number $\zeta_i \in I_i$ for each $1 \leq i \leq N$, drawn uniformly at random from $I_i$ and independently for each $i$.  We then construct $G' = (V,E')$ as follows for $v \in V_i, w \in V_j$, this time with a threshold $\sigma > 0$ that is small compared with $N$, but independent of $|V|$:
\begin{itemize}
\item If $\{v,w\} \in E$, $i \neq j$, and $p( \zeta_i, \zeta_j ) \geq \sigma$, we place $\{v,w\}$ in $E'$.
\item If $\{v,w\} \in E$, $i \neq j$, and $p( \zeta_i, \zeta_j ) < \sigma$, we exclude $\{v,w\}$ from $E'$.
\item If $\{v,w\} \not \in E$, $i \neq j$, and $p( \zeta_i, \zeta_j ) \leq 1-\sigma$, we exclude $\{v,w\}$ from $E'$.
\item If $\{v,w\} \not \in E$, $i \neq j$, and $p( \zeta_i, \zeta_j ) < 1-\sigma$, we place $\{v,w\}$ in $E'$.
\item If $i=j$, we exclude $\{v,w\}$ from $E'$.
\end{itemize}
This procedure will (if $V$ is large enough to ensure \eqref{evo} sufficiently small depending on $N,\sigma$) create a triangle-free graph $G'$ which is close (with high probability) to $G$.  Technically, $G'$ is not obtained from $G$ from a local modification rule; however, the rule that decides when an edge $\{v,w\}$ belongs to $G'$ depends only on whether $\{v,w\}$ lies in $G$, and on the cells $V_i, V_j$ that $v,w$ lie in.  As mentioned before, the $V_i$ can be viewed as a Szemer\'edi partition of the graph $G$.  Another way to obtain a Szemer\'edi partition is to select a number of random vertices $v_1,\ldots,v_k$ and use the neighbourhoods of these vertices to determine a partition; see e.g. \cite{ishi-0}.  Using such a regularisation instead of the one based on the intervals $I_1,\ldots,I_n$, one can eventually obtain a local modification rule that repairs $G$ to a triangle-free graph and which only modifies a small number of edges on the average; we omit the details, which could in principle be extracted from the argument in \cite{AloSha2} using random vertex neighbourhoods to regularise graphs as in \cite{ishi-0}.
\end{example}

The connection of the notions in Definition \ref{infitest} to the those in Definitions \ref{Testdef}, \ref{locrep} is given by the following correspondence principle.

\begin{proposition}[Correspondence principle]\label{tprop} Let $K$ be a finite palette, and let $\P$ be a hereditary $K$-property.
\begin{itemize}
\item[(i)] If $\P$ is infinitarily testable with one-sided error, then $\P$ is testable with one-sided error.
\item[(ii)] If $\P$ is infinitarily strongly locally repairable, then $\P$ is strongly locally repairable.
\end{itemize}
\end{proposition}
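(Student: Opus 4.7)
The plan is to argue by contradiction, feeding a hypothetical sequence of finitary counterexamples into the infinitary hypothesis via a compactness argument and the structure theorem. Suppose $\P$ fails to be testable (resp.\ strongly locally repairable) for some fixed $\eps > 0$; then one can extract a sequence of finite (resp.\ continuous) $K$-coloured hypergraphs $G_n$, with $|V_n|, N_n \to \infty$ and $\delta_n \to 0$, witnessing the relevant ``almost obeys'' hypothesis \eqref{injv} (resp.\ \eqref{gp}) but admitting no valid repair. In case (i) I would first extend each $G_n$ to a continuous hypergraph as in Example \ref{extend}; in either case, form the exchangeable $K$-recipes $\mu_n := \overline{G_n} \circ \overline{\nu_n}$ via Definition \ref{sampling}, and apply Lemma \ref{compact} to pass to a vaguely convergent subsequence $\mu_n \to \mu$. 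Since $\P$ is hereditary, a short argument using $N_n \to \infty$ and $\delta_n \to 0$ gives $\mu_n^{([M])}(\P^{([M])}) \to 1$ for every finite $M$; as $\P^{([M])}$ is clopen in the finite discrete space $K^{([M])}$, this passes to the limit to yield $\mu^{([M])}(\P^{([M])}) = 1$ for all finite $M$, so $\mu$ almost entails $\P$.

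Next I would apply Theorem \ref{struct} to $\mu$ (for some countably infinite $S$) to obtain a sub-Cantor palette $Z$, a colouring $\kappa: Z \to K$, and a deterministically continuous natural transformation $\Lambda: K^{\uplus S} \to Z$ such that $\tilde\mu := \Lambda \circ \mu^{\uplus S}$ is a regular exchangeable $Z$-recipe and $\overline\kappa \circ \Lambda$ is the restriction map. Consequently $\overline\kappa \circ \tilde\mu$ agrees with $\mu$ on every finite vertex set, and so almost entails $\P$. The hypothesis of infinitary testability (resp.\ infinitary strong local repairability), applied to $(Z,\kappa,\tilde\mu)$ and some $\eps' < \eps$, then delivers a weakly continuous (resp.\ deterministically continuous) natural transformation $T: Z \to K$ that almost entails (resp.\ entails) $\P$ and satisfies \eqref{mukan} with $\eps'$.

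The transfer back to the finitary world will be organised around the composite $\Phi := T \circ \Lambda: K^{\uplus S} \to K$ and the bounded continuous test function $h(G) := \Phi^{([k])}(G)(K^{([k])} \setminus \{G\downharpoonright_{[k]}\})$ on $K^{([k]\uplus S)}$. Using the identity that $\overline\kappa \circ \Lambda$ is the restriction map, the estimate \eqref{mukan} becomes $\int h\, d\mu^{([k]\uplus S)} < \eps'$; vague convergence then gives $\int h\, d\mu_n^{([k]\uplus S)} < \eps$ for all large $n$. In case (i), $\Phi$ is only weakly continuous, so I would use it stochastically: draw a random extension $\hat G_n \in K^{(V_n \uplus S)}$ of $G_n$ by sampling each $S$-coordinate independently and uniformly from $V_n$ against the continuous extension of $G_n$, and then sample $G'_n$ from $\Phi^{(V_n)}(\hat G_n)$. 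Because $T$ almost entails $\P$ we have $G'_n \in \P^{(V_n)}$ almost surely; unwinding the naturality of $\Phi$ shows that the expected proportion of $k$-subsets $W \in \binom{V_n}{k}$ on which $G'_n$ and $G_n$ disagree is exactly $\int h\, d\mu_n^{([k]\uplus S)}$ up to an $O_k(1/|V_n|)$ correction from loops, so a probabilistic selection yields a specific $G'_n$ satisfying both \eqref{gv} and $\P$, contradicting the counterexample.

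Case (ii) uses the additional fact that $\Phi$ is deterministically continuous. Since $K$ has finite order $k$ and each $K^{([j])}$ is finite and discrete, each continuous map $\Phi^{([j])}: K^{([j]\uplus S)} \to K^{([j])}$ for $0 \leq j \leq k$ is locally constant on a compact sub-Cantor space and so depends on only a finite set $A_j \subset S$ of coordinates. Setting $A := \bigcup_{j \leq k} A_j$ and invoking naturality, $\Phi^{(V)}$ factors through the restriction $K^{(V \uplus S)} \to K^{(V \uplus A)}$ for every vertex set $V$, which directly manufactures a local modification rule $(A, T')$ entailing $\P$; the earlier bound on $h$ then translates verbatim into the closeness condition \eqref{intv} for $T'_v(G_n)$. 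I expect this last extraction to be the main technical point: one must leverage naturality together with the fact that continuous maps into finite discrete spaces are locally constant in order to distill from the abstract transformation $T$ a \emph{single} finite training set $A$ that works at every arity $j \leq k$ simultaneously, thereby upgrading the infinitary deterministic-continuous repair to an honest finitary local modification rule.
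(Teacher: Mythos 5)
Your proposal is correct and follows the same two-step architecture as the paper: pass a sequence of finitary counterexamples to a vague limit $\mu$ via Lemma~\ref{compact}, resolve it with Theorem~\ref{struct}, and transfer the resulting natural transformation $T$ back to the finitary setting using vague convergence together with the fact that a deterministically continuous map from a sub-Cantor space to the finite space $K^{([k])}$ is locally constant and hence factors through finitely many coordinates of $S$. The only non-cosmetic difference is your transfer step for part (i), where you extend $G_n$ by uniformly sampling the $S$-coordinates from $V_n$ and apply $\Phi^{(V_n)}$ directly on $V_n$; the paper instead routes through an auxiliary hypergraph on $\N$ drawn with law $T^{(\N)}(z)$ and a random section $m \in \Hom(V_n,\N)$ constructed after conditioning on the covering event \eqref{mvn} --- both routes are valid and incur the same $O_k(1/|V_n|)$ loop/injectivity correction, but yours is a mild simplification.
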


\begin{proof}  Let $k$ denote the order of $K$.  We first prove (i).  Suppose for contradiction that $\P$ is infinitarily testable with one-sided error but not testable with one-sided error.  Carefully negating all the quantifiers, we conclude that there exists an error tolerance $\eps > 0$ and a sequence $G_n \in K^{(V_n)}$ of $K$-coloured hypergraphs on finite vertex sets $V_n$ with $|V_n| \geq \max(n,k)$, which increasingly locally obey $\P$ in the sense that
\begin{equation}\label{gnp}
\frac{1}{|\binom{V_n}{n}|} \left|\left\{ W \in \binom{V_n}{n}: G_n\downharpoonright_W \in \P^{(W)} \right\}\right| \geq 1-\frac{1}{n},
\end{equation}
but is far from $\P$ in the sense that for any $n$, there does not exist any $G'_n \in \P^{(V_n)}$ for which
\begin{equation}\label{joy}
\frac{1}{|\binom{V_n}{k}|} \left|\left\{ W \in \binom{V_n}{k}: G_n\downharpoonright_W \neq G'_n\downharpoonright_W \right\}\right| \leq \eps.
\end{equation}
From \eqref{gnp} and the hereditary nature of $\P$ we easily see that
\begin{equation}\label{gnp2}
\lim_{n \to \infty} \frac{1}{|\binom{V_n}{N}|} \left|\left\{ W \in \binom{V_n}{N}: G_n\downharpoonright_W \in \P^{(W)} \right\}\right| = 1
\end{equation}
for all fixed $N \ge 1$.

We now arbitrarily extend each $G_n \in K^{(V_n)}$ to a $K$-coloured continuous $\tilde G_n$ on $V_n$ as in Example \ref{extend}, endowing each $V_n$ with uniform probability measure $\nu_n$.  By Definition \ref{sampling}, we thus have a sequence of exchangeable $K$-recipes $\overline{\tilde G_n} \circ \overline{\nu_n}: \pt \to K$.  From \eqref{gnp2} (and the fact that $|V_n| \to \infty$ as $n \to \infty$) we see that the $\overline{\tilde G_n} \circ \overline{\nu_n}$ increasingly entail $\P$ in the sense that
\begin{equation}\label{nuf}
\lim_{n \to \infty} (\overline{\tilde G_n} \circ \overline{\nu_n})^{([N])}( \P^{([N])} ) = 1
\end{equation}
for any $N \ge 1$.

By Lemma \ref{compact}, and passing to a subsequence if necessary,
we may assume that $\tilde G_n \circ \nu_n$ converges vaguely to an
exchangeable $K$-recipe $\mu: \pt \to K$.  From \eqref{nuf} (and the
fact that $\P^{([N])}$ is clopen) we conclude that
$\mu^{([N])}(\P^{([N])}) = 1$ for all $N$.  By Remark \ref{sat}, we
conclude that $\mu$ almost entails $\P$.

Let $S$ be a countably infinite vertex set.  We now invoke Theorem
\ref{struct} to obtain a sub-Cantor palette $Z$, a natural
transformation $\Lambda: K^{\uplus S} \to Z$ and a colouring
$\kappa: Z \to K$ such that $\Lambda \circ \mu^{\uplus S}: \pt \to
Z$ is a regular exchangeable $Z$-recipe. From \eqref{mustruct} we
see that $\overline{\kappa} \circ \Lambda \circ \mu^{\uplus S} =
\mu$, thus $\overline{\kappa} \circ \Lambda \circ \mu^{\uplus S}$
almost entails $\P$.

Let $\delta$ be a small number (depending on $\eps$ and $k$) to be chosen later.  As $\P$ is infinitarily testable with one-sided error, we can find a weakly continuous natural transformation $T: Z \to K$ that almost entails $\P$ such that
\begin{equation}\label{mukan2}
\int_{Z^{([k])}} T^{([k])}(z)( K^{([k])} \backslash
\{\overline{\kappa}^{([k])}(z)\} )\ d\Lambda^{([k])} \circ \mu^{([k]
\uplus S)}(z) < \delta.
\end{equation}

The situation can be summarised by the commutative diagram
\begin{equation}\label{tstruct}
\begin{diagram}
                  &              &        K         & \lTo^{\iota}       &    \P && \\
                  &              &       \uTo^T     & \ruTo              &       &&\\
K^{\uplus S}      & \rTo^\Lambda &       Z          & \rTo^{\overline{\kappa}} & K  &\lTo^\iota & \P\\
\uTo^{\mu^{\uplus S}} &    &   \uTo^{\Lambda \circ \mu^{\uplus S}} & & \uTo^\mu & \ruTo &\\
\pt               & \rEq         &      \pt         & \rEq               & \pt & &
\end{diagram}
\end{equation}
where the two maps $T$, $\overline{\kappa}$ are close in the sense of \eqref{mukan}.
The fact that $\mu$ almost entails $\P$ means that it in fact factors through the inclusion map $\iota: \P \to K$, and similarly for $T$.

Fix this $T$, and let $n$ be a large integer to be chosen later.  We perform the following random construction.  Let $\N$ be the natural numbers (actually, we could use any countably infinite vertex set here).  Let $\psi \in V_n^{\N}$ be a point drawn at random with law $\nu_n^{\N}$ (or equivalently, $\psi: \N \to V_n$ is a random function from $\N$ to $V_n$).  Then the point
\begin{equation}\label{zDef}
z := \Lambda^{(\N)}(\overline{\tilde G_n}^{(\N)}(\psi))
\end{equation}
is a random point in $Z^{(\N)}$ with law $(\Lambda \circ \overline{\tilde G_n} \circ \overline{\nu_n})^{(\N)}$.

After choosing $\psi$ and hence $z$, let $G \in K^{(\N)}$ be drawn at random with law $T^{(\N)}(z)$.  By construction of $T$, we see that $G$ almost surely obeys $\P$.

We now claim that for $n$ sufficiently large we have
\begin{equation}\label{pze}
 \Prob( \overline{\kappa}^{(e)}(z\downharpoonright_e) \neq G\downharpoonright_e ) < \delta
\end{equation}
for all $e \in \binom{\N}{k}$.  As the joint distribution of $(z,G)$ is exchangable with respect to the action of $\Hom(\N,\N)$, we see that the probability on the left is independent of the choice of $e$, and so it suffices to verify \eqref{pze} for $e=[k]$.  Since $T$ is a natural transformation, we observe that for fixed $z$, $G\downharpoonright_{[k]}$ has the distribution $T^{([k])}(z\downharpoonright_{[k]})$.  Also, $z\downharpoonright_{[k]}$ has the distribution $\Lambda^{([k])} \circ (\overline{\tilde G_n}\circ \overline{\nu_n})^{([k] \uplus S)}$.  We can thus re-express the left-hand side of \eqref{pze} as
$$ \int_{Z^{([k])}} T^{([k])}(z)( K^{([k])} \backslash \{ \overline{\kappa}^{([k])}(z) \} )\ d\Lambda^{([k])} \circ (\overline{\tilde G_n}\circ \overline{\nu_n})^{([k] \uplus S)}(z).$$
But as $T$ is weakly continuous, the integrand here is continuous.  Since $\overline{\tilde G_n}\circ \overline{\nu_n}$ converges vaguely to $\mu$, the claim \eqref{pze} thus follows from \eqref{mukan2}.

Now let $M = M_n$ be a large integer (depending on $|V_n|$ and $\eps$) to be chosen later.  The vertices $\psi(1),\ldots,\psi(M) \in V_n$ are drawn uniformly at random, so by the law of large numbers we see (if $M$ is sufficiently large) that with probability at least $1/2$, that we have
\begin{equation}\label{mvn}
\frac{M}{2|V_n|} \leq |\{ m \in [M]: \psi(m) = v \}| \leq \frac{2M}{|V_n|}
\end{equation}
for all $v \in V_n$.  (Note that it is crucial here that $M$ is taken large compared to $|V_n|$; it is because of this that we only obtain testability here rather than local repairability.)

We now condition on the event that \eqref{mvn} holds.  Because this event has probability at least $1/2$, we see that after this conditioning, $G$ still continues to obey $\P$ almost surely, and from \eqref{pze} we have
\begin{equation}\label{kzkz}
 \Prob( \kappa^{([k])}(z\downharpoonright_e) \neq G\downharpoonright_e ) \ll \delta
\end{equation}
for all $e \in \binom{\N}{k}$.

For any $v \in V_n$, let $m_v \in [M]$ be chosen uniformly and independently at random from the set $\{ m \in [M]: \psi(m) = v \}$, which is non-empty by \eqref{mvn}.  This gives us a random function $m \in \Hom(V_n,\N)$ which partially inverts $\psi$.  We then define the hypergraph $G'_n \in K^{(V_n)}$ by the formula $G'_n := K^{(m)}(G)$.  From \eqref{zDef} we also have $G_n = K^{(m)}(\kappa^{(\N)}(z))$.

Since $G$ almost surely obeys $\P$, the hypergraph $G'_n = K^{(m)}(G)$ does also.  From \eqref{mvn}, \eqref{kzkz}, and the construction of $m$ we also see that
$$ \sum_{W \in \binom{V_n}{k}} \Prob( G_n\downharpoonright_W \neq G'_n\downharpoonright_W ) \ll_k \delta |\Hom([k],V_n)|;$$
by linearity of expectation, we thus have
$$ \E( \frac{1}{|\binom{V_n}{k}|} \left|\left\{ W \in \binom{V_n}{k}: G_n\downharpoonright_W \neq G'_n\downharpoonright_W \right\}\right| ) \ll_k \delta.$$
Thus by the the first moment method, there exists a deterministic hypergraph $G'_n \in \P^{(V_n)}$ such that
$$ \frac{1}{|\binom{V_n}{k}|} \left|\left\{ W \in \binom{V_n}{k}: G_n\downharpoonright_W \neq G'_n\downharpoonright_W \right\}\right| \ll_k \delta.$$
Choosing $\delta$ sufficiently small depending on $\eps$, we obtain \eqref{joy}, which is a contradiction.  This concludes the proof of (i).

Now we prove (ii).
Suppose for contradiction that $\P$ is infinitarily strongly locally repairable but not strongly locally repairable.  Carefully negating all the quantifiers, we conclude that there exists an error tolerance $\eps > 0$ and a sequence of $K$-coloured continuous hypergraphs $(G_n)_{n\geq 1}$, each on a different probability space $(V_n, \B_n, \nu_n)$, which increasingly obey $\P$ in the sense that
$$ \lim_{N \to \infty} \int_{V_n^{[N]}} \I(\overline{G_n}^{([N])}(v) \in \P^{([N])})\ d\nu^{[N]}(v) = 1$$
for every $N$, but such that for each $n$, there does not exist any modification rule $T = (A,T)$ entailing $\P$ with $|A| \leq n$ for which
\begin{equation}\label{joy2}
\int_{V^A} \int_{V^{[k]}} \I\left( \overline{T_v(G_n)}^{([k])}(w) \neq \overline{G_n}^{([k])}(w) \right)\ d\nu^A(v) d\nu^{[k]}(w) < \eps.
\end{equation}

As in the proof of (i), we may assume after passing to a subsequence that the exchangeable $K$-recipes $\overline{G_n} \circ \overline{\nu_n}: \pt \to K$ converge vaguely to an exchangeable $K$-recipe $\mu: \pt \to K$ which almost entails $\P$.

Let $S$ be a countably infinite vertex set.  As before, we invoke
Theorem \ref{struct} to obtain a sub-Cantor palette $Z$ and natural
transformations $\Lambda: K^{\uplus S} \to Z$ and $\kappa: Z \to K$,
with $\Lambda \circ \mu^{\uplus S}: \pt \to Z$ is a regular
exchangeable $Z$-recipe, and with $\overline{\kappa} \circ \Lambda
\circ \mu^{\uplus S}$ almost entailing $\P$ (by \eqref{mustruct}).

As $\P$ is infinitarily strongly locally repairable, we can find a deterministically continuous natural transformation $\tilde T: Z \to G$ entailing $\P$ such that
\begin{equation}\label{mukan-3}
\Lambda^{([k])} \circ \mu^{([k] \uplus S)}( \{ z \in Z^{([k])}: \tilde T^{([k])}(z) \neq \overline{\kappa}^{([k])}(z) \} ) < \eps.
\end{equation}
The situation is once again depicted by \eqref{tstruct}, except with the weakly continuous $T$ replaced by the deterministically continuous $\tilde T$.

Now consider the map
$$ (\tilde T \circ \Lambda)^{([k])}: K^{([k] \uplus S)} \to K^{([k])}.$$
This is a continuous map from the sub-Cantor space $K^{([k] \uplus S)}$ to the finite space $K^{([k])}$.  As such, all of its level sets are clopen, and thus factor through $K^{([k] \uplus A)}$ for some finite subset $A$ of $S$.  In other words, we can find a finite set $A \subset S$ and a continuous map $\overline{T}^{([k])}: K^{([k] \uplus A)} \to K^{([k])}$ such that
$$ (\tilde T \circ \Lambda)^{([k])} = \overline{T}^{([k])} \circ \pi_A^{([k])}$$
where $\pi_A: K^{\uplus S} \to K^{\uplus A}$ is the restriction natural transformation.  If we then define the natural transformation $\overline{T}: K^{\uplus A} \to K$ by requiring that
$$ K^{(\phi)} \circ \overline{T}^{(V)} = \overline{T}^{([k])} \circ K^{(\phi)}$$
for all vertex sets $V$ and all $\phi \in \Hom([k],V)$, one easily verifies that $\overline{T}$ is well-defined, is a deterministically continuous natural transformation, and that the diagram
\begin{equation}\label{tcirc}
\begin{CD}
K^{\uplus A}         @>{\overline{T}}>>    K \\
@AA{\pi_A}A                            @AA{\tilde T}A \\
K^{\uplus S}         @>{\Lambda}>>       Z
\end{CD}
\end{equation}
commutes.  (The reader may wish to connect this diagram together with \eqref{tstruct}, with $T$ again replaced by $\tilde T$ of course.) In particular, $(T, A)$ is a local modification rule in the sense of \ref{locmod}.

Since $\tilde T$ entails $\P$, we see from \eqref{tcirc} and the surjectivity of $\pi_A$ that $\overline{T}$ also entails $\P$.  By chasing all the definitions we conclude that the local modification rule $(T,A)$ also entails $\P$.

Now we turn to \eqref{mukan-3}.  From \eqref{tcirc} and the structure theorem (Theorem \ref{struct}) we can rewrite this as
$$
\mu^{([k] \uplus A)}( \{ H \in K^{([k] \uplus A)}: \overline{T}^{([k])}(H) \neq H\downharpoonright_{[k]} \} ) < \eps.
$$
Since the set here is clopen, and $\overline{G_n} \circ \overline{\nu_n}$ converges vaguely to $\mu$, we conclude that
$$
(\overline{G_n} \circ \overline{\nu_n})^{([k] \uplus A)}( \{ H \in K^{([k] \uplus A)}: \overline{T}^{([k])}(H) \neq H\downharpoonright_{[k]} \} ) < \eps
$$
for all sufficiently large $n$.  But the left-hand side can be rearranged using Definition \ref{contmap} and Definition \ref{locmod} as
$$ \int_{V_n^A} \int_{V_n^k} \I\left( \overline{T_v(G_n)}^{([k])}(w) \neq \overline{G_n}^{([k])}(w) \right)\ d\nu_n^k(w) d\nu_n^A(v).$$
But this contradicts \eqref{joy2} (for $n$ sufficiently large).  This concludes the proof of (ii).
\end{proof}

In view of the above correspondence principle, the Theorems \ref{rs-thm-dir}, \ref{lgr}, \ref{monotone}, \ref{part} now follow immediately from the following four infinitary counterparts respectively.

\begin{proposition}[Every hereditary directed hypergraph property is
testable]\label{rs-prop}  Let $K$ be a finite palette, and let $\P$ be a hereditary $K$-property.  Then $\P$ is infinitarily testable with one-sided error.
\end{proposition}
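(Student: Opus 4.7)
My plan is to follow the R\"odl--Schacht approach of Example \ref{triangle-test}, iterated through all arities $0 \le j \le k$. First I would invoke the structure theorem (Theorem \ref{struct}) to factorise $\mu = P_k \circ \cdots \circ P_0$, with each $P_j \colon Z_{<j} \to Z_{\leq j}$ being $j$-independent. Thus a $\mu^{(V)}$-sample is built by colouring each $j$-edge $\phi \in \Hom([j],V)$ independently, given its lower-order skeleton, from a law $Q_j^{([j])}(z_{<j}\downharpoonright_{\phi})$; and almost every such trajectory, once pushed forward by $\overline{\kappa}$, lies in $\P$ by hypothesis.

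Given $\varepsilon > 0$ I would choose a refinement $\alpha \colon Z \to A$ of $\kappa$ by a finite palette $A$ whose cells in each $\alpha_j^{([j])}$ (clopen subsets of $Z^{([j])}$, available in abundance since $Z$ is sub-Cantor; cf.\ Remark \ref{borel}) are fine enough that, for each $j$, conditioning $Q_j^{([j])}$ on the $\alpha_j$-cell of its argument leaves the unconditioned sampling essentially unchanged except on a $\mu^{([j])}$-small set. I would then define $T \colon Z \to K$ layerwise: given $z \in Z^{(V)}$ and $\phi \in \Hom([j], V)$, sample an auxiliary $\hat z_j(\phi)$ from the cell-conditioned law $(Q_j^{([j])}(z_{<j}\downharpoonright_{\phi}) \,|\, C_\phi)$, where $C_\phi$ is the $\alpha_j$-cell of $z_j(\phi)$. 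Set $T^{(V)}(z)_j(\phi) := \kappa_j(z_j(\phi))$ whenever this $K$-value lies in the support of the marginal law of $\kappa_j(\hat z_j(\phi))$ (the ``consistent'' branch), and otherwise set $T^{(V)}(z)_j(\phi) := \kappa_j(\hat z_j(\phi))$.

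Weak continuity of $T$ holds because cell-conditioning depends continuously on $z$ (the cells being clopen) and the consistent-branch test is itself a clopen condition. For almost entailment, observe that with $z \sim \mu^{(V)}$ the auxiliary output $\overline{\kappa}^{(V)}(\hat z)$ has a distribution absolutely continuous with respect to $(\overline{\kappa} \circ \mu)^{(V)}$ (cell-conditioning only multiplies the density by a bounded factor on each cell), and the consistent-branch substitution merely replaces a random value by another value already in the support; since $(\overline{\kappa} \circ \mu)^{(V)}$ almost entails $\P$ and $\P^{(V)}$ is clopen, the output almost surely lies in $\P^{(V)}$, which upgrades to every finite $V$ via Remark \ref{sat}. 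For the closeness bound \eqref{mukan}, when $\alpha$ is chosen fine enough the consistent branch is taken at every arity $j \le k$ on a set of $\mu^{([k])}$-mass at least $1-\varepsilon$, and on this set $T^{([k])}(z) = \overline{\kappa}^{([k])}(z)$ deterministically.

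The main obstacle is the tension between almost entailment (which forces $T^{(V)}(z)$ always to land in $\P^{(V)}$) and closeness (which demands that $T^{(V)}(z)$ seldom depart from $\overline{\kappa}^{(V)}(z)$). The consistent-branch device resolves this by preserving the original $K$-colour whenever doing so is compatible with a $\mu$-like sampling, and resampling only where the original colour was already impossible under the cell-conditioned law. Organising this cleanly will probably require packaging the construction as a separate discretisation statement --- presumably Proposition \ref{disc-ident} referred to in the introduction --- producing a family of finite-palette refinements $\alpha$ whose associated resampling natural transformations converge weakly to the identity on $Z$ while always composing with $\overline{\kappa}$ to give a natural transformation that almost entails $\P$.
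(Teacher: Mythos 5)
Your plan is in the right spirit --- discretise via a fine colouring $\alpha$, resample conditioned on cells, and use absolute continuity to get almost entailment --- but the specific construction you describe has two genuine gaps that the paper's route through Proposition~\ref{disc-ident} is designed precisely to avoid.

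The first and most serious is a layering error: you draw $\hat z_j(\phi)$ from $(Q_j^{([j])}(z_{<j}\downharpoonright_\phi)\,|\,C_\phi)$, i.e.\ conditioned on the \emph{original} lower-order skeleton $z_{<j}$, but the output hypergraph carries the \emph{resampled} skeleton $\hat z_{<j}$. Because the two skeletons differ, the joint law of $(\hat z_{<j},\hat z_j)$ is not that of a regular $Z$-recipe --- a $\mu$-sample draws its $j$-edges conditionally on \emph{its own} lower-order skeleton. As a consequence $\overline{\kappa}^{(V)}(\hat z)$ need not be absolutely continuous with respect to $(\overline\kappa\circ\mu)^{(V)}$, and almost entailment breaks. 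A concrete failure: take $Q_2$ deterministic, $z_2(v,w)=f(z_1(v),z_1(w))$, and let $\P$ be the hereditary property encoding this functional relation; if $\alpha_1$ is fine but not complete, then $\hat z_1\neq z_1$ on the finer coordinates while $\hat z_2=z_2=f(z_1(v),z_1(w))$, so the pair $(\hat z_1,\hat z_2)$ almost surely violates $\P$. The fix is exactly what Proposition~\ref{disc-ident}(i)--(ii) enforces: each $Q_{\alpha,j}$ has domain $Z_{<j}\times A_{\geq j}$ and range $Z_{\leq j}\times A_{>j}$, so the chain $Q_{\alpha,k}\circ\cdots\circ Q_{\alpha,0}$ feeds each stage's resampled $Z_{\leq j}$-output into the next stage's conditioning, never touching the original $z_{<j}$ after $\overline\alpha$ is applied.

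The second problem is your weak continuity claim: as written, your kernel depends on $z_{<j}\downharpoonright_\phi$ itself (through $Q_j^{([j])}(z_{<j}\downharpoonright_\phi)$), not merely on its $\alpha$-cell, so $T$ does not factor through $\overline\alpha$, and weak continuity does not follow from the clopen-ness of cells alone. The paper sidesteps this entirely: its $T:=\overline\kappa\circ Q_{\alpha,k}\circ\cdots\circ Q_{\alpha,0}\circ\overline\alpha$ literally begins with $\overline\alpha$ and never looks at $z$ again, so weak continuity is automatic because $A$ is a finite palette.

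On a more minor note, your consistent-branch device (retain $\kappa_j(z_j(\phi))$ if it lies in the support of the resampled marginal) is not needed; the paper's $T$ always resamples, and \eqref{mukan} follows from Proposition~\ref{disc-ident}(iii) applied to the continuous function $F(z,z')=\I(\overline\kappa^{([k])}(z)\neq\overline\kappa^{([k])}(z'))$, which vanishes on the diagonal. The consistent branch is a feature of the informal R\"odl--Schacht sketch in Example~\ref{triangle-test}, but the clean proof dispenses with it, and retaining it here would create further absolute-continuity checks on the point-mass factors that you would then need to verify. Your closing paragraph --- that the argument should be packaged as a separate discretisation proposition --- is the right instinct; what that proposition has to deliver, beyond what you state, is exactly the chaining and the factorisation through $\overline\alpha$.
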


\begin{proposition}[Every hereditary undirected graph property is locally repairable]\label{lgr-prop} Let $K$ be a finite palette of order at most $2$, and let $\P$ be a hereditary undirected $K$-property.  Then $\P$ is infinitarily strongly locally repairable.
\end{proposition}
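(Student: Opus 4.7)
The plan is to adapt the Alon--Shapira argument to the infinitary framework. Write $\mu = P_2 \circ P_1 \circ P_0$ via regularity, let $Q_1 \in \Pr(Z_1)$ be the induced vertex measure and $Q_2 : Z_1^{[2]} \rightsquigarrow Z_2^{[2]}$ the $\Hom([2],[2])$-equivariant edge kernel coming from the $2$-independent $P_2$. The goal is a deterministically continuous natural transformation $T : Z \to K$ that entails $\P$ and is $\mu^{([2])}$-close to $\overline{\kappa}$.

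First I would choose a clopen refinement $\alpha : Z_1 \to A_1$ of $\kappa_1$ sufficiently fine that there exists a symmetric function $c : A_1 \times A_1 \to K_2$ with
$$
\int \I\left( \overline{\kappa_2}^{([2])} \circ Q_2(z_1,z_2) \text{ disagrees with } c(\alpha_1(z_1),\alpha_1(z_2)) \right)\, dQ_1^{[2]}(z_1,z_2) < \varepsilon/3,
$$
where for $a \ne a'$ we set $c(a,a')$ to be the most frequent value of $\overline{\kappa_2}^{([2])} \circ Q_2$ on $C_a \times C_{a'}$ (with $C_a := \alpha_1^{-1}(a)$). The existence of such $\alpha$ follows from the martingale convergence theorem applied to successively finer clopen partitions of $Z_1$, available because $Z_1$ is sub-Cantor.

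Second, and most importantly, I would choose the \emph{diagonal} colours $c(a,a)$ by invoking Corollary \ref{ramsey}. For each cell $C_a$ of positive $Q_1$-measure, the hypothesis that $\overline{\kappa} \circ \mu$ almost entails $\P$, applied to configurations whose vertices are all sampled from $(Q_1|C_a)$, yields $\P$-compliant undirected $K$-coloured hypergraphs on arbitrarily large vertex sets; Corollary \ref{ramsey} then locates, within such hypergraphs, monochromatic induced sub-hypergraphs of any desired (bounded) size, with some colour $c_a^\ast \in K_2$. Setting $c(a,a) := c_a^\ast$ and performing the selection \emph{jointly} across all positive-measure cells, by a single Ramsey step applied to a sufficiently rich product configuration, guarantees the following template consistency: for every finite $V$ and every map $\beta : V \to A_1$, the $K$-coloured hypergraph on $V$ with vertex colours $\kappa_1(\zeta_{\beta(v)})$ (for chosen representatives $\zeta_a \in C_a$) and edge colours $c(\beta(v),\beta(w))$ lies in $\P^{(V)}$. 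Define then $T$ by $T^{(V)}(z)_j := \overline{\kappa_j}^{(V)}(z_j)$ for $j \in \{0,1\}$ and $T^{(V)}(z)_2(v,w) := c(\alpha_1(z_1(v)),\alpha_1(z_1(w)))$; clopenness of $\alpha$ makes $T$ deterministically continuous, and Remark \ref{sat} combined with the template consistency shows that $T$ entails $\P$. The closeness bound \eqref{jojo} follows from the first step together with the observation that $\mu^{([2])}$-almost every pair $(v_1,v_2)$ has $\alpha_1(v_1) \ne \alpha_1(v_2)$, provided $\alpha$ is chosen to isolate any atoms of $Q_1$ into singleton cells that can be handled separately via the same Ramsey argument.

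The main obstacle is the joint diagonal step: one must simultaneously select colours $c(a,a)$ for \emph{all} cells $a \in A_1$ so that every finite template (induced by an arbitrary $\beta : V \to A_1$) obeys $\P$, and this selection must be compatible with the off-diagonal $c(a,a')$ fixed in the first step. This is where undirectedness, and Corollary \ref{ramsey}, are genuinely essential; the failure of Ramsey for directed graphs corresponds to the counterexample in Theorem \ref{negate}(a), explaining why Proposition \ref{lgr-prop} is restricted to undirected palettes of order at most $2$.
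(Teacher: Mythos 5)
Your high-level plan is recognisable (discretise via a fine clopen colouring, treat the diagonal with Ramsey, argue undirectedness is what lets Ramsey apply, and deduce entailment via the finitisation trick), but there is a genuine gap at the heart of the argument: the "template consistency" you claim does not follow from anything you have written.

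The problem is your first step. You fix the off-diagonal colours $c(a,a')$ for $a \neq a'$ by a \emph{majority vote} over the measure $Q_1^{[2]}$, independently for each pair of cells $(C_a, C_{a'})$. But for a general hereditary undirected $\P$, these marginal majorities need not be jointly realisable in any $\P^{(V)}$: the hypothesis that $\overline{\kappa}\circ\mu$ almost entails $\P$ gives a measure on $K$-coloured hypergraphs supported on $\P^{(V)}$, and the collection $\{c(a,a')\}$ is only a tuple of \emph{marginal modes} of that measure, not a point in its support. Your subsequent "single Ramsey step applied to a sufficiently rich product configuration" touches only the diagonal entries $c(a,a)$; it cannot retroactively repair off-diagonal choices that were already fixed before Ramsey was invoked, and you give no mechanism by which it would. (If instead you meant to run Ramsey over \emph{all} the colours at once, you would abandon the majority-vote step — but then you would still need to explain why the monochromatic substructure Ramsey hands you is $\P$-compliant, which is not automatic.) This is precisely the difficulty the paper's Proposition \ref{repair} is designed to resolve: rather than voting, the paper first constructs a \emph{single local map} $U : A^{(V)} \to \P^{(V)}$ whose outputs are already guaranteed to lie in $\P^{(V)}$ (this is where the discretisation-of-the-identity machinery from Propositions \ref{disc-ident} and \ref{disc-ident2}, and the absolute continuity with respect to $\mu$, are essential). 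Ramsey is then applied to the \emph{values of $U$} on various vertex subsets, so the monochromaticity it extracts is monochromaticity of a function already known to take values in $\P$; the resulting natural transformation inherits entailment for free (after a symmetrisation step via a $\min$ operation on $K_2$).

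Two further points. First, you invoke Remark \ref{sat} but do not use the accompanying certificate $N = N_{A,K,\P}$: without it you have infinitely many templates $\beta : V \to A_1$ (one for each finite $V$) to check, and a Ramsey step over a "sufficiently rich" finite configuration cannot address an unbounded family. The paper makes the finitisation explicit and chooses $V$ (and the Ramsey parameters) large enough relative to that certificate. Second, your treatment of atoms of $Q_1$ is an annotation, not an argument: when $Q_1$ has an atom, the diagonal carries positive $\mu^{([2])}$-measure and the closeness bound \eqref{jojo} cannot be obtained merely by "isolating" the atom; one needs the diagonal colour for that cell to agree, most of the time, with the true colour drawn from $\mu$, which is again what Proposition \ref{repair} and the groupoid/indistinguishability analysis in Section \ref{disc2-sec} deliver.
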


\begin{proposition}[Every weakly monotone directed hypergraph property is locally repairable]\label{monotone-prop} Let $K$ be an ordered finite palette, and let $\P$ be a weakly monotone $K$-property.  Then $\P$ is infinitarily strongly locally repairable.
\end{proposition}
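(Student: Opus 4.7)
The plan is to deduce Proposition \ref{monotone-prop} from the non-exchangeable local repairability result Proposition \ref{repair} (stated and proved in Section \ref{tech}), with weak monotonicity playing the role that Corollary \ref{ramsey} (Ramsey's theorem) plays in the proof of Proposition \ref{lgr-prop}. First I would use Definition \ref{regexp} to factor the regular exchangeable $Z$-recipe $\mu$ as $\mu = P_k \circ \cdots \circ P_0$, where each $P_j : Z_{<j} \to Z_{\leq j}$ is a $j$-independent partial inverse of $\pi_j$; the hypothesis that $\overline{\kappa} \circ \mu$ almost entails $\P$ then translates, level by level, to a constraint on the factors $P_j$ that is preserved by the discretization machinery of Section \ref{disc-sec}.

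Next I would apply Proposition \ref{disc-ident2} to produce a sufficiently fine finite colouring $\alpha : Z \to A$ refining $\kappa$, and combine this with Proposition \ref{repair} to obtain a candidate deterministically continuous natural transformation $T_0 : Z \to K$ which is close to $\overline{\kappa}$ in the sense of \eqref{jojo}. For "non-degenerate" $[k]$-edge types in $A$ --- those in which the coordinates of $\phi \in \Hom([k],V)$ lie in distinct cells at every level of the $P_j$ hierarchy --- the $\mu$-conditional distribution of the $K$-colour is supported in $\P^{([k])}$ up to negligible error, so $T_0$ can assign a $\P$-valid colour by a direct lookup. The difficulty, and the point where the proof of Proposition \ref{lgr-prop} requires Corollary \ref{ramsey}, is the "diagonal" or \emph{indistinguishable} case, in which several coordinates of $\phi$ lie in the same $\alpha$-cell and no locality-respecting rule can tell them apart from a single-vertex configuration.

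In the weakly monotone case, I would bypass Ramsey by modifying $T_0$ at every diagonal $\phi$ to output the componentwise meet, in the ordered palette $K$, of the $K$-values that $T_0$ assigns to the generic non-diagonal completions of $\phi$ of the same combinatorial type. Weak monotonicity guarantees that the meet of finitely many $\P$-hypergraphs is again in $\P$, so the modified rule $T$ still entails $\P$; since $K^{(e)}$ is finite for each finite $e$, the meet is computed at a single edge without compactness subtleties. The resulting $T : Z \to K$ depends only on the $\alpha$-type of its input and is therefore deterministically continuous, and the bound \eqref{jojo} follows because the $\mu^{([k])}$-measure of the diagonal types, together with the small error from the meet-replacement on non-degenerate types, is controlled by the fineness of $\alpha$ via Proposition \ref{disc-ident2}.

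The main obstacle is verifying that the meet-modified $T$ satisfies the naturality condition \eqref{natprop} globally across the hierarchy, i.e. that taking meets at level $k$ is compatible with restriction to lower-order edges and with the action of $\Hom(W,V)$. This compatibility is automatic from the componentwise definition of the meet on $K^{(V)}$, but must be checked to interact correctly with the discretization $\alpha$ so that indistinguishability detected at level $j$ is consistently propagated to level $k$. This hierarchical compatibility is precisely what the non-exchangeable framework of Proposition \ref{repair} is designed to encapsulate, so the substantive content of the argument reduces to the meet construction above plus the discretization bounds already provided by Proposition \ref{disc-ident2}.
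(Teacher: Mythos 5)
Your central insight---that weak monotonicity lets one replace the Ramsey step in the proof of Proposition~\ref{lgr-prop} by a meet operation---is essentially correct, and this is indeed what the paper does. But the proposal has several genuine gaps in how that idea is supposed to be executed, and in how the supporting machinery fits together.

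First, you misdescribe the roles of the auxiliary propositions. Proposition~\ref{disc-ident2} does not \emph{produce} a colouring $\alpha$; it takes a colouring as input and returns the data $(X_\alpha,\nu_\alpha,\zeta_\alpha)$, and it is used inside the proof of Proposition~\ref{repair}, not directly here. Proposition~\ref{repair} itself does not yield a natural transformation $T_0:Z\to K$; it only supplies, for each fixed finite $V$, a map $U:A^{(V)}\to\P^{(V)}$ that satisfies the locality condition \eqref{local} but \emph{not} the equivariance condition \eqref{exchange}. That failure of $\Hom(V,V)$-equivariance---not a ``diagonal'' issue about several coordinates landing in the same $\alpha$-cell---is the actual obstruction one faces, and is exactly what Ramsey is used to repair in the undirected graph case. (The diagonal/atom phenomenon you describe is a separate, lower-level issue that Proposition \ref{disc-ident2} already deals with inside Proposition~\ref{repair}.) Because you diagnose the wrong obstruction, your proposed fix---taking meets only at diagonal $\phi$ over ``non-diagonal completions of the same combinatorial type''---does not restore equivariance, and it is not even clearly well-defined as a local rule.

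The paper's resolution is cleaner and different in two ways. First, it invokes the Alon--Shapira finitisation trick (Remark~\ref{sat}): since $A$ and $K$ are finite palettes, there is a single $N=N_{A,K,\P}$ such that a deterministically continuous natural transformation $S:A\to K$ entails $\P$ as soon as $S^{([N])}(A^{([N])})\subset\P^{([N])}$. Your proposal never reduces to a finite $V$ in this way, so you have no means of certifying entailment. Second, one applies Proposition~\ref{repair} with $V=[N]$ to get the local map $U$, and then forcibly symmetrises it over \emph{all} permutations:
\begin{equation*}
S^{([N])}(a):=\bigwedge_{\phi\in\Hom([N],[N])}K^{(\phi)}(U(a)).
\end{equation*}
This is automatically $\Hom([N],[N])$-equivariant and inherits locality from $U$, hence (since $N\geq k$) extends uniquely to a deterministically continuous natural transformation $S:A\to K$; weak monotonicity is what guarantees the meet stays in $\P^{([N])}$, giving \eqref{snap}; and \eqref{mussel} then follows from \eqref{repo} because the bad set $\Omega_U^c$ absorbs every discrepancy between $S^{([k])}$ and $\overline{\sigma}^{([k])}$. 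You should revise your account of what Proposition~\ref{repair} delivers, add the finitisation step, and replace your partial meet with the global symmetrisation above.
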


\begin{proposition}[Every partite hypergraph property is locally repairable]\label{part-prop} Let $K$ be an finite palette of order $k \geq 1$, and let $\P$ be a partite hereditary $K$-property.  Then $\P$ is infinitarily strongly locally repairable.
\end{proposition}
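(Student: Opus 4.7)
The plan is to deduce Proposition~\ref{part-prop} from the master non-exchangeable repairability result, Proposition~\ref{repair}, by exploiting the following dichotomy afforded by the partite structure. Since $\P$ is partite, membership of a hypergraph in $\P^{(V)}$ depends only on its vertex colouring together with its partite edges, i.e., those $\phi \in \Hom([j],V)$ for which $G_1$ is injective on $\phi([j])$. Non-partite edges are free parameters as far as $\P$ is concerned, and so they can be used to absorb the ``diagonal'' contributions that cause trouble in the undirected or monotone arguments.

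Let $Z, \kappa, \mu, \varepsilon$ be as in Definition~\ref{infitest}, so that $\mu = P_k \circ \cdots \circ P_0$ is regular and $\overline{\kappa} \circ \mu$ almost entails $\P$. I would construct $T: Z \to K$ in three layers. First, for $j \leq 1$ set $T^{(V)}(z)_j := \overline{\kappa}^{(V)}(z)_j$, so that $T^{(V)}(z)$ inherits the vertex colouring of $\overline{\kappa}^{(V)}(z)$; this in particular determines which tuples $\phi$ are partite edges of $T^{(V)}(z)$. Second, for $j \geq 2$ and $\phi$ a non-partite edge, set $T^{(V)}(z)_j(\phi) := \overline{\kappa}^{(V)}(z)_j(\phi)$. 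This copy-through contributes nothing to the integral in \eqref{jojo}, and, because $\P$ ignores non-partite edges, imposes no constraint on entailment. Third, for $j \geq 2$ and $\phi$ partite, feed the partite slice of the problem into Proposition~\ref{repair}: the restriction of $\mu$ to the components indexing vertex colourings and partite edge colourings is again a regular exchangeable recipe on an auxiliary sub-Cantor palette, and the restriction of $\P$ to such configurations is a hereditary property on that palette. Proposition~\ref{repair} then provides a deterministic continuous rule for $T^{(V)}(z)_j(\phi)$ on partite $\phi$ such that the resulting partite subgraph lies in $\P$, and hence, by partiteness of $\P$, the full $T^{(V)}(z)$ lies in $\P^{(V)}$, with the partite contribution to the discrepancy bounded by $\varepsilon$.

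The main obstacle lies in the third step: formulating the ``partite restriction'' of $(Z,\kappa,\mu,\P)$ as a valid instance of the hypotheses of Proposition~\ref{repair}, and verifying that the output rule descends to a genuine deterministic continuous natural transformation on all of $Z$ rather than merely on the partite locus. This requires organising the partite-edge data across all orders $j = 2,\ldots,k$ simultaneously in a way that respects the naturality diagrams \eqref{natprop} under arbitrary morphisms $\phi \in \Hom(W,V)$, including those that alter the partite structure by identifying previously distinct colour classes. Once the set-up is correct, the finitisation observation of Remark~\ref{sat} certifies that the deterministic rule entails $\P$ on every finite vertex set, completing the verification. As noted in the paper, a version of this result appears implicitly in \cite{ishi}, which gives partial guidance for executing this reduction.
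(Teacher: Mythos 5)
There is a genuine gap, and it turns on what Proposition~\ref{repair} actually delivers.  Proposition~\ref{repair} produces a \emph{local} map $U: A^{(V)} \to \P^{(V)}$ on a single finite vertex set $V$; it is emphatically \emph{not} exchangeable (hence the title ``non-exchangeable repair'').  Your third step treats the output of Proposition~\ref{repair} as if it were already a deterministic continuous natural transformation for partite edges, but converting a merely local map $U$ into a genuine natural transformation $S: A \to K$ obeying the naturality diagrams \eqref{natprop} is precisely where the difficulty lies, and is precisely why a separate argument is needed for each of the three cases (monotone, undirected graph, partite).  You do gesture at this at the end (``the output rule descends to a genuine deterministic continuous natural transformation''), but you misdiagnose the obstruction as being about the partite locus versus all of $Z$, and you supply no mechanism to overcome it.

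The paper's proof does \emph{not} restrict $(Z,\kappa,\mu,\P)$ to a ``partite slice'' and apply Proposition~\ref{repair} there.  Instead it applies Proposition~\ref{repair} on the full palette, but on the vertex set $V := [M] \times A_1$ (a disjoint union of one block per $1$-colour class), obtaining a local map $U: A^{(V)} \to \P^{(V)}$.  Each tuple $(v_1,\ldots,v_j)$ drawn from distinct blocks $V_{\psi(1)},\ldots,V_{\psi(j)}$ induces a map $U_{\psi;v_1,\ldots,v_j}: A^{([j])} \to \P^{([j])}$, and these form a partite $j$-uniform hypergraph colouring of the blocks.  Applying the \emph{partite hypergraph Ramsey theorem} yields subsets $V'_{a_1} \subset V_{a_1}$ of size $N$ on which these induced maps become monochromatic: $U_{\psi;v_1,\ldots,v_j} = U_\psi$ for all admissible $v_i$.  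These constants $U_\psi$ are what make an exchangeable rule $S$ definable via \eqref{swaj}; for non-partite edges one then copies through via \eqref{swaj-2}, which is essentially your second layer and is the only place partiteness of $\P$ is used to guarantee entailment.  The Ramsey step is the missing idea: without it you have no way to select a single rule that works regardless of which representatives you sample, and the naturality diagrams will fail.  (The analogous role is played by bipartite/ordinary Ramsey in the undirected graph case and by taking meets in the weakly monotone case; this is also why the argument cannot be ``Ramsey-free'' and why partiteness, rather than being a convenience, is a structural hypothesis that makes a Ramsey theorem available.)

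One further caution: your proposed ``auxiliary sub-Cantor palette for the partite restriction of $(Z,\kappa,\mu,\P)$'' is not easy to make sense of, because whether a $j$-tuple $\phi$ is a partite edge is a function of the $1$-colouring $G_1$, i.e.~a property that varies with the hypergraph rather than with the palette, and injections identifying colour classes do not respect it.  So even the set-up of your third step would need to be reworked rather than merely completed.
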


We will prove these four propositions in future sections, with the proof of Propositions \ref{lgr-prop}, \ref{monotone-prop}, \ref{part-prop} being started in Section \ref{tech}, after some preliminaries in Section \ref{fine}, and Proposition \ref{rs-prop} being started in Section \ref{discred-sec}. For now, we reinterpret the negative results from Section \ref{negchap} by indicating why infinitary strong local repairability fails\footnote{The authors in fact discovered this failure at the infinitary level first, and only converted it to the finitary counterexamples in Section \ref{negchap} afterwards, and with some non-trivial effort.} for directed graph properties or undirected hypergraph properties of order $\leq 3$.

\subsubsection{Directed graph properties are not infinitarily strongly repairable}

We begin by recasting the argument in Section \ref{lrs} in the infinitary setting.  Let $Z_1  = C \subset \R$ be the standard middle-thirds Cantor set consisting of numbers in $[0,1]$ whose base $3$ expansion consists only of $0$s and $2$s with Cantor measure $Q_1 = \mu_C$ (which would be the law of a random base $3$ string in $[0,1]$ consisting of $0$s and $2$s); by Example \ref{rvc}, this induces a natural transformation $P_1: \pt \to Z_{\leq 1}$.  We set $Z := (\pt,Z_1,\{0,1\})$ and $k:=2$, and let $P_2: Z_{<2} \to Z$ be the natural transformation defined by $P_2^{(V)}(z) := \delta_z \times \prod_{e \in \binom{V}{2}} Q^{(e)}(z\downharpoonright_e)$ for all vertex sets $V$ and $z \in Z_{<2}^V$, where $Q^{(\{v,w\})}(z)$ is the law of the directed graph $G_2$ in $\{0,1\}_2^{(\{v,w\})}$ defined by $G_2(v,w) := \I( z(v) < z(w) )$ and $G_2(w,v) := \I( z(w) < z(v) )$.  Then $\mu := P_2 \circ P_1$ is a regular exchangeable $Z$-recipe.  We let $K := \{0,1\}_2$ and let $\kappa: Z \to K$ be the colouring map which is the identity on the second component and trivial on the zeroth and first components.  Then we easily check that $\overline{\kappa} \circ \mu$ almost entails the $\{0,1\}_2$-property $\P$ of being a total ordering (as in Section \ref{lrs}).  However, one cannot find any deterministically continuous natural transformation $T: Z \to K$ which entails $\P$, because any $Z$-coloured hypergraph $z \in Z^{(V)}$ which has a pair $v,w$ of vertices which are indistinguishable in the sense that $z_1(v)=z_1(w)$ and $z_2(v,w)=z_2(w,v)$, will necessarily map under $T$ to a directed graph $G \in K^{(V)}$ such that $G_2(v,w)=G_2(w,v)$, which implies that $G$ cannot obey $\P$.  Thus the $\{0,1\}_2$-property $\P$ is not infinitarily strongly repairable.

One can view the argument in Section \ref{lrs} that shows that $\P$ is not strongly repairable as the finitary analogue of the argument above. (The much more complicated demonstration that $\P$ is also not weakly repairable does not seem to have an easily describable infinitary counterpart.)

\subsubsection{$\leq 3$-uniform hypergraph properties are not infinitarily strongly repairable}

Now let $Z_1 = C \cup \{R\}$, where $C$ is the middle-thirds Cantor set and $R$ is an abstract ``red'' point, and let $Q_1 := \frac{1}{2} \mu_C + \frac{1}{2} \delta_R$, thus the red point has mass $1/2$ and the Cantor set has total mass $1/2$.  We set $Z := (\pt,Z_1,\{0,1\},\{0,1\})$ and $k := 3$, thus by Example \ref{rvc}, $Q_1$ induces a natural transformation $P_1: \pt \to Z_{\leq 1}$.  We then define a natural transformation $P_2: Z_{\leq 1} \to Z_{\leq 2}$ by $P_2^{(V)}(z) := \delta_z \times \prod_{e \in \binom{V}{2}} Q^{(e)}_2(z\downharpoonright_e)$, where $Q^{(\{v,w\})}_2(z)$ is the law of the random graph in $\{0,1\}_2^{(\{v,w\})}$ which is complete with probability $1/2$ and empty otherwise if $z_1(v) \neq z_1(w)$, and always empty when $z_1(v)=z_1(w)$.  We then define a natural transformation $P_3: Z_{\leq 2} \to Z$ by $P_3^{(V)}(z) := \delta_z \times \prod_{e \in \binom{V}{3}} Q^{(e)}_3(z\downharpoonright_e)$, where $Q^{(e)}_3(z)$ is the law of the random hypergraph in $\{0,1\}_3^{(e)}$ which is empty unless $e$ can be expressed as $\{r,b,b'\}$ where $z_1(r)=R$, $z_1(b) > z_1(b')$ lie in $C$, $z_2(r,b)=1$, and $z_2(r,b')=0$, in which case the hypergraph is complete.  Then $\mu := P_3 \circ P_2 \circ P_1$ is a regular exchangeable $Z$-recipe.  If we let $K := (\pt,\{0,1\},\{0,1\},\{0,1\})$, and let $\kappa: Z \to K$ be the colouring which is trivial on the zeroth component, the identity on the second and third components, and maps $C$ to $0$ and $R$ to $1$ on the first component, one verifies that $\overline{\kappa} \circ \mu$ almost entails the $K$-property $\P$ defined in Section \ref{leq3}.

Now let $V := \{r_1,r_2,b_1,b_2\}$ be an abstract set with four elements, and consider the $Z$-coloured hypergraphs $z \in Z^{(V)}$ such that
\begin{itemize}
\item $z_1(r_1)=z_1(r_2)=R$ and $z_1(b_1)=z_1(b_2) \in C$;
\item $z_2(r_1,b_1)=z_2(r_2,b_2)=1$ and $z_2(r_1,b_2)=z_2(r_2,b_1)=0$;
\item $z$ is symmetric with respect to the morphism $\phi \in\Hom(V,V)$ which swaps $r_1$ and $r_2$, and swaps $b_1$ and $b_2$.
\end{itemize}

If $T: Z \to K$ is a deterministically continuous natural transformation and $G := T^{(V)}(z) \in K^{(V)}$, then we see that $G$ is also symmetric with respect to the morphism $\phi$ mentioned above.  If $G_1 = \kappa_1 \circ z_1$ and $G_2 = z_2$, then we also have $G_1(r_1)=G_1(r_2)=1$, $G_1(b_1)=G_1(b_2)=0$, $G_2(r_1,b_1)=G_2(r_2,b_2)=1$ and $G_2(r_1,b_2)=G_2(r_2,b_1)=0$.  But this implies that either $b_1 >_{G,r_1} b_2$ and $b_2 >_{G,r_2} b_1$ are both true, or $b_2 >_{G,r_1} b_1$ and $b_1 >_{G,r_2} b_2$ are both true, which in either case is incompatible with $T$ entailing $\P$.  Thus the only way that $T$ can entail $\P$ is if we have $G_1 \neq \kappa_1 \circ z_1$ or $G_2 \neq z_2$ for all $z \in Z^{(V)}$ of the above form.  But this can be shown to be inconsistent with $T$ obeying \eqref{mukan} for $\eps$ sufficiently small, and so $\P$ is not infinitarily strongly locally repairable.

One can perform a similar infinitary translation of the scenario in Section \ref{eq3}; we leave this to the reader.


\subsection{The asymptotics of increasingly fine colourings}\label{fine}

Much of our analysis will revolve around the colouring of an infinite palette $Z$ by a finite palette $A$; such colouring is roughly analogous to that of dividing the vertices (or lower-order edges) of a graph (or hypergraph) into cells, as is done in the graph and hypergraph regularity lemmas.  We will need a notion of a statement becoming asymptotically true for ``sufficiently fine'' colourings, similar to how a graph becomes increasingly regular as one partitions the vertices into finer and finer cells, or how a measurable function increasingly resembles a continuous one when viewed at finer and finer scales.  In this section we set out some notation that will help us achieve these goals.

\begin{definition}[Colouring topology]\label{color-top}  Let $Z$ be a sub-Cantor palette of order at most $k$.  For each $0 \leq j \leq k$, we let $\Col_j(Z)$ denote the collection of all finite $\sigma$-algebras $\B$ of $Z_j$ that are generated by clopen sets, and let $\Col(Z) := \prod_{j=0}^k \Col_j(Z)$.  Note that every colouring $\alpha = (\alpha_j)_{j=0}^\infty: Z \to A$ generates an element $\B_\alpha = (\B_{\alpha_j})_{j=0}^k$ of $\Col(Z)$, where $\B_{\alpha_j}$ is the $\sigma$-algebra of $Z_j$ is generated by the level sets of $\alpha_j: Z_j \to A_j$.  (The maps $\alpha_j$ for $j>k$ are trivial and thus of no consequence.)

We endow $\Col(Z)$ with the topology whose sub-basic open sets take the form
\begin{equation}\label{sub-basic}
\{ (\B_j)_{j=0}^k \in \Col(Z): \B_i \supset F( \B_{i+1},\ldots, \B_k ) \}
\end{equation}
where $0 \leq i \leq k$ and $F: \Col_{i+1}(Z) \times \ldots \times \Col_k(Z) \to \Col_i(Z)$ is an arbitrary function.  Thus a set is open if it is the union of sets which are finite intersections of sets of the form \eqref{sub-basic}.  We make the simple but important observation that the intersection of finitely many non-empty open sets in $\Col(Z)$ is again a \emph{non-empty} open set.

Let $\alpha: Z \to A$ be a colouring.  A statement involving $\alpha$ is said to hold \emph{for sufficiently fine $\alpha$} if there exists a non-empty open set $U \subset \Col(Z)$ such that the statement holds whenever $\B_\alpha \in U$.  If $c(\alpha) \in \R$ is a real-valued quantity depending\footnote{Technically, the class of all colourings on a given palette $Z$ is not a set, so that $c$ here is a class function rather than a function, but one can rectify this by any number of artificial expedients, for instance by forcing all palettes to take values in the set of integers.} on $\alpha$, and $c_\infty$ is a real number, we say that $c(\alpha)$ \emph{tends to $c_\infty$ as $\alpha \to \infty$}, and write $\lim_{\alpha \to \infty} c(\alpha) = c_\infty$ or $c(\alpha) = c_\infty + o_{\alpha \to \infty}(1)$, if for every $\eps > 0$, the statement $|c(\alpha) - c_\infty| \leq \eps$ is true for sufficiently fine $\alpha$.
\end{definition}

\begin{remark} Readers familiar with the hypergraph regularity lemma may recall that in order to usefully regularise a hypergraph of order $k$ on a vertex set $V$, one must partition each of the edge classes $\binom{V}{j}$ for $1 \leq j \leq k$ into cells.  Typically, the regularisation will only be useful if the partitions for lower values of $j$ are sufficiently fine compared to higher values of $j$, as the lower order partitions are used to regularise the higher order ones.  Our notion of sufficiently fine colourings in the above definition captures the infinitary analogue of this phenomenon.
\end{remark}

One can treat the limit $\alpha \to \infty$ much like a sequential limit $n \to \infty$.  For instance, any finite linear combination of quantities which are $o_{\alpha \to \infty}(1)$ is also $o_{\alpha \to \infty}(1)$.  More generally, we have

\begin{lemma}[Dominated convergence theorem]\label{dct}  Let $Z$ be a sub-Cantor palette, and let $(X,\nu)$ be a probability space.  For each colouring $\alpha: Z \to A$, let $F_\alpha: X \to [-1,1]$ be a measurable function.  If we have
\begin{equation}\label{falpha}
\lim_{\alpha \to \infty} F_\alpha(x) = 0
\end{equation}
for $\nu$-almost every $x \in X$, then we have
$$ \lim_{\alpha \to \infty} \int_X F_\alpha\ d\nu(x) = 0.$$
\end{lemma}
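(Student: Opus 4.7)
The plan is to induct on the order $k$ of $Z$, proving the slightly strengthened reverse-Fatou statement $\limsup_{\alpha \to \infty} \int_X |F_\alpha|\,d\nu \leq \int_X \limsup_{\alpha \to \infty} |F_\alpha(x)|\,d\nu(x)$, from which the stated dominated convergence conclusion follows immediately once the right-hand integrand vanishes $\nu$-a.e.  The base case $k = -1$ (so $\Col(Z)$ is a singleton) is vacuous.

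For the inductive step, I would exploit the structural fact that every basic open set in $\Col(Z)$ has the form $\bigcap_{i=0}^{k}\{\B : \B_i \supset F_i(\B_{i+1}, \ldots, \B_k)\}$ for some selector functions $F_i$.  After Skolemising this into the nested quantifier pattern ``$\exists F_k \in \Col_k(Z),\ \forall \B_k \supset F_k,\ \exists F_{k-1} \in \Col_{k-1}(Z),\ \forall \B_{k-1} \supset F_{k-1},\ \ldots$'', the filter ``$\alpha \to \infty$'' factors as $\lim_{\B_k \to \infty}\lim_{\alpha_{<k} \to \infty \mid \B_k}$, where the outer limit lies along the countably-based sequential filter on $\Col_k(Z)$ (with a cofinal sequence obtained by taking cumulative joins of any enumeration of the countable set $\Col_k(Z)$) and the inner limit is along the order-$(k{-}1)$ filter on $\Col(Z_{<k})$.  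For each fixed $\B_k \in \Col_k(Z)$, I set $g_{\B_k}(x) := \limsup_{\alpha_{<k} \to \infty} |F_\alpha(x)|$; the inductive hypothesis applied at level $k-1$ (with $\B_k$ frozen) yields
\begin{equation*}
\limsup_{\alpha_{<k} \to \infty} \int |F_\alpha|\,d\nu \;\leq\; \int g_{\B_k}\,d\nu,
\end{equation*}
and the classical bounded reverse-Fatou lemma applied along the cofinal sequence in $\Col_k(Z)$ produces $\limsup_{\B_k \to \infty}\int g_{\B_k}\,d\nu \leq \int \limsup_{\B_k \to \infty} g_{\B_k}\,d\nu = \int \limsup_{\alpha\to\infty}|F_\alpha|\,d\nu$, closing the induction.

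The hard part will be establishing measurability of $g_{\B_k}$, since the definition $\inf_U \sup_{\B_{\alpha_{<k}} \in U}|F_\alpha|$ involves an infimum over uncountably many basic open sets $U \subset \Col(Z_{<k})$ (parameterised by arbitrary selector tuples), and the filter on $\Col(Z_{<k})$ has no countable base once $k \geq 2$.  I would handle this by noting that each individual $U$ is itself a countable subset of $\Col(Z_{<k})$ (which is countable as a whole), so that each $\sup_{\B_{\alpha_{<k}} \in U}|F_\alpha|$ is a countable supremum and hence measurable; then, by iterating a measurable-selection argument through the $k-1$ lower levels (the well-ordering of each countable $\Col_i(Z)$ lets one choose the selector $F_i(\B_{i+1},\ldots,\B_{k-1})$ measurably in $x$ and in the already-chosen higher-level parameters), the global infimum is reduced at each level to a countable infimum, which preserves Borel measurability throughout.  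This is precisely where the sub-Cantor hypothesis is essential, since it guarantees the countability of every $\Col_i(Z)$.
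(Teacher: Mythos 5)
Your route is genuinely different from the paper's, and I think the step you flag as ``the hard part'' is a real gap rather than a routine detail.

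The paper's proof has no induction on the order $k$ and no reverse-Fatou: it observes that $\Col(Z)$ \emph{in its entirety} is countable (not just each factor $\Col_j(Z)$ separately — this is why the palette having finite order matters, so that $\Col(Z)$ is a finite product of countable sets), picks once and for all a sequence $(\alpha_n)$ with $\B_{\alpha_n}$ exhausting $\Col(Z)$, reduces the conclusion to convergence in measure via the trivial bound $\int F_\alpha\,d\nu \le \eps + \nu(\{F_\alpha>\eps\})$, and then notes that the pointwise hypothesis \eqref{falpha} forces $\nu\bigl(\bigcap_n\{F_{\alpha_n}>\eps\}\bigr)=0$ (any non-empty open must meet the exhausting sequence). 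Continuity of $\nu$ from above then finishes. No intermediate $\limsup$ function is ever formed, so there is nothing whose measurability needs to be established.

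The difficulty with your argument is precisely the one you half-acknowledge. Your $g_{\B_k}(x)=\limsup_{\alpha_{<k}\to\infty}|F_\alpha(x)|$ is $\inf_U\sup_{\B_{\alpha_{<k}}\in U}|F_\alpha(x)|$, where $U$ ranges over non-empty basic opens of $\Col(Z_{<k})$. Each inner $\sup$ is indeed over the countable set $U\subset\Col(Z_{<k})$ and so is measurable — that part is fine. The trouble is the outer infimum: the basic opens $U$ are parameterised by selector tuples $(F_0,\dots,F_{k-1})$ with $F_i:\Col_{i+1}\times\cdots\times\Col_{k-1}\to\Col_i$, and the set of such tuples is uncountable as soon as there are at least two selector levels below which another level sits (equivalently once the order of $Z_{<k}$ is $\ge 2$, i.e.\ $k\ge 3$). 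Moreover, the colouring filter on $\Col(Z_{<k})$ has \emph{no countable base} in that range: given any countable family of basic opens $U_i=\{\B:\B_j\supset F^{(i)}_j(\B_{j+1},\dots)\}$, one builds by diagonalisation over a strictly increasing sequence $\B_2^{(i)}\supsetneq F_2^{(i)}\vee\B_2^{(1)}\vee\cdots\vee\B_2^{(i-1)}$ a selector $F_1$ with $F_1(\B_2^{(i)})\not\subset F_1^{(i)}(\B_2^{(i)})$, so no $U_i$ is contained in $U_{F_1}$. Thus you cannot turn the outer inf into a countable inf over a cofinal sequence, and the limsup does not become a Borel function of $x$ by any routine argument. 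The ``iterated measurable selection using the well-ordering of each countable $\Col_i$'' that you sketch would require producing, Borel-measurably in $x$, nearly-optimising selectors at each of $k-1$ nested levels; that is not a known selection theorem in this setting, and you don't carry it out. Without measurability of $g_{\B_k}$, the expression $\int g_{\B_k}\,d\nu$ appearing in your inductive step and the application of bounded reverse-Fatou along a cofinal sequence in $\Col_k(Z)$ are not yet meaningful.

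The fix is to throw away the level-by-level factorisation altogether and use the countability of the full product $\Col(Z)$ at the outset, as the paper does: a single fixed countable sequence of colourings then simultaneously witnesses the pointwise hypothesis for a.e.\ $x$, and the argument proceeds along that sequence without ever needing to integrate a filter-$\limsup$ in $x$.
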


\begin{proof}  By splitting $F_\alpha$ into positive and negative components we may assume that all the $F_\alpha$ are non-negative.  Let $\eps > 0$ be arbitrary.  Since
$$ \int_X F_\alpha\ d\nu(x) \leq \eps + \nu( \{ x \in X: F_\alpha(x) > \eps \} )$$
it will suffice to show that $F_\alpha$ converges to zero in measure, in the sense that
$\nu( \{ x \in X: F_\alpha(x) > \eps \} )\leq \eps$ for all sufficiently fine $\alpha$ (depending on $\eps$).

Since any sub-Cantor space has at most countably many clopen subsets, we see that the set $\Col(Z)$ is at most countable.  We can thus find a sequence $\alpha_n$ of colourings whose associated $\sigma$-algebras $\B_{\alpha_n}$ exhaust $\Col(Z)$.  From this and the hypothesis \eqref{falpha}, we see that
$$\nu( \bigcap_{n=1}^\infty \{ x \in X: F_{\alpha_n}(x) > \eps \} ) = 0.$$
By the monotone convergence theorem, we thus have
$$\nu( \bigcap_{n=1}^N \{ x \in X: F_{\alpha_n}(x) > \eps \} ) < \eps$$
for some finite $N$.  Taking $\alpha$ to be finer than $\alpha_1,\ldots,\alpha_N$, the claim follows.
\end{proof}

An important principle for us will be \emph{Littlewood's principle}, which asserts that measurable functions are almost continuous at sufficiently fine scales.  We shall need the following technical version of this principle.

\begin{lemma}[Littlewood's principle]\label{dom}  Let $Z$ be a sub-Cantor palette of order $k \geq 0$, and let $\alpha: Z \to A$ be a colouring of $Z$.  Let $V$ be a finite vertex set, and let $0 \le j \le k$.  Let $H$ be a finite-dimensional Hilbert space, let $\mu \in \Pr(Z_{=j}^{(V)})$ be a probability measure, and let $F: Z_{=j}^{(V)} \to H$ be a bounded measurable function; we allow $H, \mu, F$ to depend on $\alpha_{j+1},\ldots,\alpha_k,V$, but they must be independent of $\alpha_0,\ldots,\alpha_j$.  For any $a \in A_{=j}^{(V)}$, let $C_a \subset Z_{=j}^{(V)}$ be the set $C_a := (\overline{\alpha_{=j}}^{(V)})^{-1}(\{a\})$.
Then $F$ is almost continuous on most cells $C_a$ in the sense that
$$ \sum_{a \in A_{=j}^{(V)}} \mu(C_a)
\int_{Z_{=j}^{(V)}} \left\| F(z) - \int_{Z_{=j}^{(V)}} F(w)\ d(\mu|C_a)(w) \right\|_H\ d(\mu|C_a)(z) = o_{\alpha \to \infty}(1),$$
where the conditioning $(\mu|C_a)$ is defined in Appendix \ref{prob}, and we adopt the convention that the summand vanishes when $\mu(C_a)=0$.
\end{lemma}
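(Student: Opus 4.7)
The plan is to recognise the quantity being bounded as an $L^1$-style error for approximation by conditional expectation, then apply the standard fact that measurable functions on a sub-Cantor space can be approximated by simple functions built from finitely many clopen sets.

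First I would reformulate. The cells $\{C_a\}_{a\in A_{=j}^{(V)}}$ partition $Z_{=j}^{(V)} = Z_j^{\Hom([j],V)}$, and generate a finite $\sigma$-algebra $\mathcal{F}_\alpha$ on that space. Writing $F_\alpha := \mathbf{E}_\mu[F\mid \mathcal{F}_\alpha]$ (defined to be zero on null cells, matching the stated convention), the expression in the lemma equals
\[
 \int_{Z_{=j}^{(V)}} \|F(z) - F_\alpha(z)\|_H \, d\mu(z).
\]
Our task is therefore to show this integral is $o_{\alpha\to\infty}(1)$.

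Next I would perform the approximation step, which is the technical heart of the argument. Fix $\varepsilon > 0$, and first fix $\alpha_{j+1},\ldots,\alpha_k$ (so that $H$, $\mu$, $F$ are frozen). By Remark \ref{borel}, $Z_j$ admits a countable algebra of clopen sets, and hence so does the finite product $Z_j^{\Hom([j],V)}$; the indicator functions of such clopen products span an $L^1(\mu)$-dense subspace of bounded measurable functions. Since $H$ is finite-dimensional, one can combine this with density of simple functions to approximate $F$ in $L^1(\mu;H)$ by an $H$-valued simple function of the form $F' = \sum_{i=1}^n v_i \mathbf{1}_{B_i}$, where each $B_i = \prod_{\phi\in\Hom([j],V)} B_{i,\phi}$ is a product of clopen subsets $B_{i,\phi}\subset Z_j$, and $\int \|F - F'\|_H \, d\mu < \varepsilon/2$.

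Now I would conclude using Jensen's inequality. Let $\mathcal{B}_0 \in \Col_j(Z)$ be the finite $\sigma$-algebra on $Z_j$ generated by $\{B_{i,\phi} : 1\le i\le n,\ \phi \in \Hom([j],V)\}$. If $\mathcal{B}_{\alpha_j} \supset \mathcal{B}_0$, then $\mathcal{F}_\alpha$ refines $\sigma(B_1,\ldots,B_n)$, so $\mathbf{E}_\mu[F'\mid \mathcal{F}_\alpha] = F'$. Jensen (applied coordinatewise to $\|\cdot\|_H$ in the finite-dimensional Hilbert space $H$) gives
\[
 \int \|F' - F_\alpha\|_H \, d\mu = \int \|\mathbf{E}_\mu[F-F'\mid \mathcal{F}_\alpha]\|_H \, d\mu \le \int \|F - F'\|_H \, d\mu < \varepsilon/2,
\]
and the triangle inequality then bounds $\int \|F - F_\alpha\|_H \, d\mu$ by $\varepsilon$.

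Finally I would interpret this in the language of Definition \ref{color-top}. The $\sigma$-algebra $\mathcal{B}_0$ constructed above is determined by the data $(H,\mu,F)$ attached to $\alpha_{j+1},\ldots,\alpha_k$, so choosing one such $\mathcal{B}_0$ for each tuple of higher-order $\sigma$-algebras yields a function $U:\Col_{j+1}(Z)\times\cdots\times\Col_k(Z)\to\Col_j(Z)$. The corresponding set \eqref{sub-basic} with $i=j$ is a (non-empty) open subset of $\Col(Z)$, and the estimate $\int\|F-F_\alpha\|_H\,d\mu \le \varepsilon$ holds whenever $\mathcal{B}_\alpha$ lies in it. Since $\varepsilon>0$ was arbitrary, this is precisely the assertion that the integral is $o_{\alpha\to\infty}(1)$. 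The only subtle point — and the main conceptual obstacle — is the tension between "$F,\mu,H$ depend on the colourings $\alpha_{>j}$" and "sufficiently fine is phrased in terms of $\sigma$-algebras"; this is resolved by the freedom to choose $U$ as an arbitrary function on $\Col_{>j}(Z)$, selecting a representative colouring (and hence a representative $\mathcal{B}_0$) for each tuple of higher-order $\sigma$-algebras.
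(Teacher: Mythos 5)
Your proof is correct and follows essentially the same strategy as the paper: approximate $F$ in $L^1(\mu; H)$ by a clopen simple function, observe that for sufficiently fine $\alpha_j$ the cells $C_a$ refine the approximation's level sets, and use an $L^1$ contraction argument (you package this as conditional expectation plus Jensen, while the paper splits into three terms via the triangle inequality, but the computation is identical). Your two-term decomposition via $F_\alpha = \mathbf{E}_\mu[F \mid \mathcal{F}_\alpha]$ is a slightly tidier bookkeeping, and your explicit note about choosing $B_i$ to be coordinate products of clopen sets pins down a step left implicit in the paper's proof.
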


\begin{proof}  Fix $V, j, \alpha_{j+1},\ldots,\alpha_k$, which then fixes $H,\mu,F$, and let $\eps > 0$.  It suffices to show that
$$ \sum_{a \in A_{=j}^{(V)}} \mu(C_a)
\int_{Z_{=j}^{(V)}} \left\| F(z) - \int_{Z_{=j}^{(V)}} F(w)\ d(\mu|C_a)(w) \right\|_H\ d(\mu|C_a)(z) \ll \eps$$
for all sufficiently fine $\alpha_{j}$.

As the topology of $Z_{=j}^{(V)}$ has a countable base of clopen sets, we can approximate the bounded measurable function $F$ to within $O(\eps)$ in $L^1(\mu)$ norm by a finite linear combination $G$ of indicator functions of clopen sets.  Then we have
$$ \sum_{a \in A_{=j}^{(V)}} \mu(C_a)
\int_{Z_{=j}^{(V)}} \| F(z) - G(z) \|_H\ d(\mu|C_a)(z) = \|F-G\|_{L^1(\mu)} \ll \eps$$
and similarly (by the triangle inequality)
$$ \sum_{a \in A_{=j}^{(V)}} \mu(C_a)
\int_{Z_{=j}^{(V)}} \| \int_{Z_{=j}^{(V)}} F(w)\ d(\mu|C_a)(w) - \int_{Z_{=j}^{(V)}} G(w)\ d(\mu|C_a)(w) \|_H\ d(\mu|C_a)(z) \leq \|F-G\|_{L^1(\mu)} \ll \eps$$
so by the triangle inequality again, it suffices to show that
$$ \sum_{a \in A_{=j}^{(V)}} \mu(C_a)
\int_{Z_{=j}^{(V)}} \| G(z) - \int_{Z_{=j}^{(V)}} G(w)\ d(\mu|C_a)(w) \|_H\ d(\mu|C_a)(z) \ll \eps$$
for all sufficiently fine $\alpha_{j}$.  But by the nature of $G$ we see that $G$ will constant on all of the cells $C_a$ if $\alpha_{j}$ is fine enough.  The claim follows.
\end{proof}

\subsection{Reduction of repairability to non-exchangeable repairability}\label{tech}

We need to prove three infinitary strong local repair results\footnote{Readers who are only interested in the testability result may skip ahead to Section \ref{discred-sec}.}, namely Proposition \ref{lgr-prop} which addresses undirected graph properties; Proposition \ref{monotone-prop}, which addresses monotone hypergraph properties; and Proposition \ref{part-prop}, which addresses partite hypergraph properties.  We shall deduce all three propositions from the following somewhat technical proposition that pertains to arbitrary hereditary hypergraph properties, which, instead of constructing a deterministically continuous natural transformation $T: Z \to K$ that entails $\P$, settles for constructing a single map $U: A^{(V)} \to \P^{(V)}$ on a very large but finite vertex set $V$, which satisfies the locality property \eqref{local} but not the exchangeability property \eqref{exchange}.  More precisely, we have

\begin{proposition}[Non-exchangeable repair of hereditary properties]\label{repair}
Let $K$ be a finite palette of order $k \geq 0$, let $\P$ be a
hereditary $K$-property, let $Z$ be a sub-Cantor palette, let
$\kappa: Z \to K$ be a colouring, and let $\mu: \pt \to Z$ be a
regular exchangeable $Z$-recipe such that $\overline{\kappa} \circ
\mu$ almost entails $\P$.  Then for any colouring $\alpha: Z \to A$
which refines $\kappa$ through $\sigma$ (as in Definition
\ref{colour}) and any finite vertex set $V$, there exists a map $U:
A^{(V)} \to \P^{(V)}$ which is \emph{local} in the sense that for
any $W \subset V$ and any $a, a' \in V$ with $a\downharpoonright_W =
a'\downharpoonright_W$, we have $U(a)\downharpoonright_W =
U(a')\downharpoonright_W$, and which locally resembles
$\overline{\sigma}^{(V)}$ in the sense that
\begin{equation}\label{repo}
(\overline{\alpha} \circ \mu)^{([k])}(\Omega_U) \geq 1 - o_{\alpha \to \infty}(1)
\end{equation}
where $\Omega_U \subset A^{([k])}$ is the set of all $b \in A^{([k])}$ such that $K^{(\phi)}(U(a)) = \overline{\sigma}^{([k])}(b)$ for all $\phi \in \Hom([k],V)$ and $a \in A^{(V)}$ with $ A^{(\phi)}(a)=b$, and the expression $o_{\alpha \to \infty}(1)$ is uniform in the choice of $V$.
\end{proposition}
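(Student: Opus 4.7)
The construction follows the Alon--Shapira random representative strategy (cf.\ Example~\ref{triangle-test}), adapted to the regular recipe $\mu$.  Invoke the factorisation $\mu = P_k \circ \cdots \circ P_0$, where each $P_j\colon Z_{<j} \to Z_{\leq j}$ is $j$-independent and hence, by Example~\ref{jep}, is encoded by a $\Hom([j],[j])$-equivariant kernel $P_j^{([j])}\colon Z_{<j}^{([j])} \rightsquigarrow Z_{=j}^{([j])}$.  The map $U$ will be defined by $U(a) := \overline{\kappa}^{(V)}(\tilde z(a))$, where $\tilde z(a) \in Z^{(V)}$ is assembled from random ``representatives'' indexed either by abstract patterns $\beta \in A^{([j])}$ or (to handle diagonal coincidences) by pairs $(W, a\downharpoonright_W)$ with $W \subset V$, $|W| \leq k$.

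Proceeding inductively in $j$ from $0$ to $k$, and independently across indices, sample a representative $\tilde\zeta_j(\beta) \in Z_{=j}^{([j])}$ by first assembling a lower-layer configuration $\tilde w \in Z_{<j}^{([j])}$ from sub-representatives already chosen, and then drawing $\tilde\zeta_j(\beta)$ from the conditional measure on the $\overline{\alpha_{=j}}$-cell corresponding to $\beta$ of $P_j^{([j])}(\tilde w)$, whenever this conditional has positive mass (the \emph{typical} case); in the remaining \emph{atypical} case, copy the relevant edge value from a fixed reference $z_\star \in \supp(\mu^{(V)})$ chosen in advance so that $\overline{\kappa}(z_\star) \in \P^{(V)}$.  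Define $\tilde z(a)_j(\phi) := \tilde\zeta_j(A^{(\phi)}(a))(\id_{[j]})$ for each $\phi \in \Hom([j],V)$.  Locality of $U$ then follows immediately, since $\tilde z(a)\downharpoonright_W$ depends only on $a\downharpoonright_W$.

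To verify condition~\eqref{repo}, observe that for typical $b$ one has $\overline{\alpha}(\tilde z(a)) = a$ by construction, so $\overline{\kappa}(\tilde z(a)) = \overline{\sigma}(a)$, and hence by naturality $K^{(\phi)}(U(a)) = \overline{\sigma}^{([k])}(A^{(\phi)}(a)) = \overline{\sigma}^{([k])}(b)$ whenever $A^{(\phi)}(a) = b$; a layer-by-layer application of Littlewood's principle (Lemma~\ref{dom}) combined with Lemma~\ref{dct} shows that the atypical set has $(\overline{\alpha}\circ\mu)^{([k])}$-mass $o_{\alpha \to \infty}(1)$.  For typical $a$, the joint law of $\tilde z(a)$ matches the disintegration of $\mu^{(V)}$ along $\overline{\alpha}^{(V)}$ at $a$, which concentrates on $\overline{\kappa}^{-1}(\P^{(V)})$ since $\overline{\kappa}\circ\mu$ almost entails the closed set $\P^{(V)}$; the reference-based fallback keeps $\tilde z(a) \in \supp(\mu^{(V)})$ also in the atypical case, so $U(a) \in \P^{(V)}$ always.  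A first-moment derandomisation over the random representatives then yields a deterministic $U$ with all the required properties.

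The main technical obstacle is ensuring $U(a) \in \P^{(V)}$ for every $a$, including atypical ones, while preserving locality.  A naive per-abstract-pattern scheme can force diagonal equalities between distinct edges sharing the same abstract pattern --- equalities that $\mu^{(V)}$ itself does not impose --- and can thereby push $\tilde z(a)$ out of $\supp(\mu^{(V)})$.  Resolving this requires indexing representatives finely enough (for example by subsets $W \subset V$) and designing the fallback via an inductive layer-by-layer rebuild: whenever a sub-pattern $\beta$ becomes atypical at order $j$, one freezes $\tilde z(a)$ on all edges through that sub-pattern by copying from $z_\star$ before continuing to higher orders, so that at every stage the partial $\tilde z(a)$ remains consistent with the support of the $j$-independent kernel $P_j^{([j])}$.
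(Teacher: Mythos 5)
Your plan correctly identifies the key obstruction --- using one representative per abstract pattern $\beta \in A^{([j])}$ forces diagonal equalities between distinct edges sharing the same pattern, and this destroys absolute continuity with respect to $\mu^{(V)}$, so one can no longer conclude $U(a) \in \P^{(V)}$. But your proposed resolution does not work. You suggest switching to representatives indexed by concrete subsets $W \subset V$, i.e.\ drawing a fresh independent sample for each $W$. That restores absolute continuity, but it destroys precisely what \eqref{repo} requires: the set $\Omega_U$ must consist of those $b$ for which $K^{(\phi)}(U(a)) = \overline{\sigma}^{([k])}(b)$ for \emph{every} $\phi \in \Hom([k],V)$ and \emph{every} $a$ with $A^{(\phi)}(a) = b$. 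If each $W = \phi([k])$ receives its own independently drawn representative, then $K^{(\phi)}(U(a))$ will depend on $\phi([k])$, and not merely on $A^{(\phi)}(a)$, so $\Omega_U$ becomes essentially empty. Your ``layer-by-layer rebuild with fallback to $z_\star$'' does not fix this: the coupling problem appears already for typical patterns (any cell that is hit by several $\phi$'s), not only for atypical or null cells, and the fallback only triggers in the latter case.

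The paper's actual proof resolves exactly this tension by working with \emph{two} colourings at different scales, composed in sequence. One first pushes through $\zeta_\alpha\colon A \times X_\alpha \to Z$ coming from Proposition~\ref{disc-ident2} --- this is essentially your per-abstract-pattern map, and it supplies the $o_{\alpha\to\infty}(1)$ bound in \eqref{repo} together with an $\eps$-absolutely continuous distribution on $Z^{([k])}$ and an exceptional set $E_\alpha$ of small measure. One then passes through a second, much finer colouring $\alpha'\colon Z \to A'$, which is allowed to depend on $V$, and applies the $Q_{\alpha',j}$ of Proposition~\ref{disc-ident}, which re-randomise edge values \emph{independently across subsets}, restoring exact absolute continuity with respect to $\mu^{(V)}$ and hence membership in $\P^{(V)}$. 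Because the $Q_{\alpha'}$-maps are absolutely continuous and converge to the identity as $\alpha' \to \infty$ (and $\alpha'$ may be taken arbitrarily fine for each fixed $V$), the re-randomisation perturbs $\Omega_U$ by an amount that can be absorbed into the error, and --- crucially --- the resulting $o_{\alpha\to\infty}(1)$ bound is uniform in $V$, because everything that depends on $V$ has been pushed into the $\alpha'$-scale. This uniformity is essential (it is what makes the theorem apply to properties forbidding infinitely many induced subhypergraphs), and your single-colouring plan cannot provide it: once you fix one colouring and throw away the bad set, the bad set grows with $V$. In short, you have re-derived one of the two ingredients (the coarse, abstract-pattern discretisation) and correctly diagnosed why alone it fails, but the missing idea is the composition with a second, $V$-dependent, arbitrarily fine discretisation to recover absolute continuity without damaging the coarse-scale estimate.
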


\begin{remark} The fact that the error $o_{\alpha \to \infty}(1)$ is uniform in $V$ is crucial for establishing testability properties for general properties $\P$.  Without this uniformity, one would only be able to test properties that were equivalent to forbidding a finite number of induced hypergraphs.  (We will eventually be generating this finite set $V$ from the Alon-Shapira finitisation trick, Remark \ref{sat}, and as such there is no good control as to the size of $V$ other than that it is finite.)  The need to pass from the finite setting to the infinite setting, but then back again to the finite setting, is somewhat analogous to the presence of several regularisations in the Alon-Shapira argument \cite{AloSha2} at radically different scales; roughly speaking, the finest such regularisation corresponds to the infinitary framework here, but we still have to treat the remaining regularisations finitarily.
\end{remark}

In the remainder of this section we show how Proposition \ref{repair} implies the three infinitary strong local repair results.  We begin with the repairability of weakly monotone hypergraph properties.

\begin{proof}[Proof of Proposition \ref{monotone-prop} assuming Proposition \ref{repair}]
Let $K$ be an ordered finite palette of order $k \geq 0$, let $\P$ be a weakly monotone $K$-property, let $Z$ be a sub-Cantor palette, let $\kappa: Z \to K$ be a colouring, and let $\mu: \pt \to Z$ be a regular exchangeable $Z$-recipe such that $\overline{\kappa} \circ \mu$ almost entails $\P$, and let $\eps > 0$.  Our task is to locate a deterministically continuous natural transformation $T: Z \to K$ entailing $\P$ which obeys \eqref{mukan}.  Note (as observed in Remark \ref{jojo-rem}) that as $T$ is deterministically continuous, the left-hand side of \eqref{mukan} simplifies to
$$
\mu^{([k])}( \{ z \in Z^{([k])}: T^{([k])}(z) \neq \overline{\kappa}^{([k])}(z) \} ).$$

Let $\alpha: Z \to A$ be a sufficiently fine colouring to be chosen later; note that for $\alpha$ fine enough we may assume that $\kappa = \sigma \circ \alpha$ for some colouring $\sigma: A \to K$.  We will find a deterministically continuous natural transformation $S: A \to K$ entailing $\P$ with the property that
\begin{equation}\label{mussel}
(\alpha \circ \mu)^{([k])}( \{ b \in A^{([k])}: S^{([k])}(b) \neq \overline{\sigma}^{([k])}(b) \} ) = o_{\alpha \to \infty}(1).
\end{equation}
Once we do this, Proposition \ref{monotone-prop} follows by setting $T := S \circ \overline{\alpha}$ and taking $\alpha$ sufficiently fine.

It remains to locate a natural transformation $S$ with the required properties.  We first use a finitisation trick of Alon and Shapira.  Observe (from Remark \ref{sat}) that if a deterministically continuous natural transformation $S: A \to K$ does \emph{not} entail $\P$, then there exists a finite integer $N$ such that $S^{([N])}(A^{([N])}) \not \subset \P^{([N])}$.  This integer $N$ ostensibly depends on $S$; however, since $A$ and $K$ are both finite palettes, the number of deterministically continuous natural transformations $S: A \to K$ which do not entail $\P$ is also finite.  Thus (by enlarging $N$ if necessary) one can make $V$ independent of $S$.  In other words, there exists\footnote{Note however that this $N$ is	 \emph{ineffectively} finite, as one needs to solve a ``halting problem'' for $\P$ in order to compute it.} an $N = N_{A,K,\P}$ which serves as a \emph{certificate} for $\P$ in the following sense: if $S: A \to K$ is a deterministically continuous natural transformation such that
\begin{equation}\label{snap}
S^{([N])}(A^{([N])}) \subset \P^{([N])},
\end{equation}
then $S$ entails $\P$.

Fix this value of $N$; by increasing $N$ if necessary we may assume $N \geq k$.  Our objective is now to locate a deterministically continuous natural tranformation $S: A \to K$ that obeys \eqref{mussel} and \eqref{snap}.

Let $V := [N]$.  We apply Proposition \ref{repair} to obtain a local map $U: A^{([N])} \to \P^{([N])}$ obeying \eqref{repo}.

We will now use $U$ and the weakly monotone nature of $\P$, to build the deterministically continuous natural transformation $S: A \to K$.  We first define the map $S^{([N])}: A^{([N])} \to K^{([N])}$ by the formula
\begin{equation}\label{sm}
 S^{([N])}(a) := \bigwedge_{\phi \in \Hom([N],[N])} K^{(\phi)}(U(a))
 \end{equation}
for all $a \in A^{([N])}$,
where the meet of $K$-coloured hypergraphs was defined in Definition \ref{mono} (note that this operation is both commutative and associative).  Since $U(a) \in \P^{(V)}$ and $\P$ is weakly monotone, we see that \eqref{snap} holds.

The map $S^{([N])}$ is clearly $\Hom([N],[N])$-equivariant; since $U$ is local, $S^{([N])}$ is also. From this (and the assumption $N \geq k$) we see that $S^{([N])}$ extends uniquely to a deterministically continuous natural transformation $S: A \to K$.

Finally, it remains to verify \eqref{mussel}.  From \eqref{sm} we see that
$$ S^{([k])}(b) := \bigwedge_{a \in A^{([N])}, \phi \in \Hom([k],[N]): A^{(\phi)}(a)=b} K^{(\phi)}(U(a))$$
for all $b \in A^{([k])}$.  The claim \eqref{mussel} now follows from \eqref{repo}.
\end{proof}

Now we turn to the repairability of undirected graph properties.

\begin{proof}[Proof of Proposition \ref{lgr-prop} assuming Proposition \ref{repair}]  By increasing $k$ and adding some dummy palettes if necessary we can take $k=2$.  We then repeat the proof of Proposition \ref{monotone-prop}, with $K$ a finite palette of order at most $2$ and $\P$ a hereditary undirected $K$-property, and let $Z, \kappa, \mu, \alpha, A, \sigma, N$ be as in the previous proof.  As before, our objective is to locate a deterministically continuous natural transformation $S: A \to K$ obeying \eqref{mussel} and \eqref{snap}.  The main difference is that we will use Ramsey theory instead of monotonicity to construct $S$.

Let $V$ be a sufficiently large finite vertex set (depending on $N, A, K$) to be chosen later.  We apply Proposition \ref{repair} to obtain a local map $U: A^{(V)} \to \P^{(V)}$ obeying \eqref{repo}.

We now use Ramsey-theoretic tools to restrict $U$ to a smaller vertex set on which one has more monochromaticity; in these arguments we will rely crucially on the fact that $k$ is equal to $2$.

Since $U$ is local, we see that $U$ uniquely defines a map $U_W: A^{(W)} \to \P^{(W)} \subset K^{(W)}$ for all $W \subset V$, defined by requiring $U_W(a\downharpoonright_W) = U(a)\downharpoonright_W$ for all $a \in A^{(W)}$.  Applying this with $W=\emptyset$ we obtain a map $U_\emptyset: A_0 \to K_0$.  Applying this instead with $W = \{v\}$ equal to a singleton set, we obtain a map $U_v: A_0 \times A_1 \to K_0 \times K_1$.  The number of possible maps $U_v$ is finite, and so by the pigeonhole principle we can find a subset $V' \subset V$ and a map $U_1: A_0 \times A_1 \to K_0 \times K_1$ such that $U_v = U_1$ for all $v \in V'$.  Furthermore, we can make $V'$ as large as desired (depending on $N,A,K$) by making $V$ sufficiently large (depending on $N,A,K$).

We would like to perform the same analysis for doubleton sets $W = \{v,w\}$, but one runs into a difficulty that there is a $\Hom([2],W)$-ambiguity when trying to identify $A^{(W)}$ (for instance) with $A_0 \times A_1^2 \times A_2^2$.  We shall rectify this by \emph{ad hoc} combinatorial trickery when $k=2$ by exploiting the undirected nature of $\P$, but the ambiguity is much more serious\footnote{Specifically, the problem is that the colour in $K_3$ that $U(a)$ assigns to a $3$-edge $\{u,v,w\}$ depends not only on the vertex colours $a_1(u), a_1(v), a_1(w) \in A_1$ and the $3$-edge colour $a_3(u,v,w) \in A_3$, but also depends on the $2$-edge colours $a_2(u,v), a_2(v,w), a_2(w,u) \in A_2$, in a manner which may not be completely symmetric, even when $\P$ is undirected.  Unsurprisingly, it is this potential for asymmetry within an undirected hypergraph property which is exploited in Sections \ref{leq3} and \ref{eq3}.} when $k \geq 3$ (even for undirected $\P$) when one has to consider tripleton sets $W = \{u,v,w\}$ or worse, and indeed as we see from Theorem \ref{negate}, the analogue of Proposition \ref{lgr-prop} fails in this case.

We turn to the details.  Let $M$ be a large number depending on $N, A, K$ to be chosen later.  If $V$ (and hence $V'$) is chosen sufficiently large depending on $M, A, K$, we can find disjoint sets $V_{a_0,a_1}$ in $V'$ for $a_0 \in A_0$ and $a_1 \in A_1$ such that $|V_{a_0,a_1}| \geq M$.

Suppose $a_0 \in A_0$ and $a_1,a'_1 \in A_1$.  Then we can define a map $U_{v,v'}: A_2 \to K_2$ for any $v \in V_{a_0,a_1}$ and $v' \in V_{a_0,a'_1}$ by setting $U_{v,v'}(a_2) := U_{\{v,v'\}}(a)_2(v,v')$ for all $a_2 \in A_2$, where $a \in A^{(\{v,v'\})}$ is the undirected hypergraph defined explicitly by
$$ a_0() := a_0; a_1(1) := a_1; a_1(2) := a'_1; a_2(1,2) = a_2(2,1) = a_2.$$
Now we crucially use the fact that $\P$ is undirected to conclude that $U_{v,v'} = U_{v',v}$.  Thus $U_{v,v'}$ can be viewed as describing a $K_2^{A_2}$-coloured graph $G_{a_0}$ on the vertex set $\bigcup_{a_1 \in A_1} V_{a_0,a_1}$ for each $a_0 \in A_0$, and in particular defining bipartite graphs between $V_{a_0,a_1}$ and $V_{a_0,a'_1}$ when $a_1 \neq a'_1$.  Applying Ramsey's theorem (as well as the bipartite Ramsey theorem\footnote{See for instance \cite[\S 1.2, 5.1]{GraRotSpe} for statements and proofs of these theorems.  These theorems can be deduced from Theorem \ref{lgr} by a slight modification of the arguments used to prove Corollary \ref{ramsey}, but we of course cannot do so here as that would be circular.}) repeatedly, we thus conclude (if $M$ is sufficiently large depending on $N, A, K$) that we can find subsets $V'_{a_0,a_1} \subset V_{a_0,a_1}$ for $a_0 \in A_0$ and $a_1 \in A_1$ of size
\begin{equation}\label{vaa}
|V'_{a_0,a_1}| = N
\end{equation}
such that $G_{a_0}$ is monochromatic on $V'_{a_0,a_1} \times V'_{a_0,a'_1}$ for all $a_0 \in A_0$ and $a_1,a'_1 \in A_1$ (not necessarily distinct).  In other words, we can find maps $U_{a_0,a_1,a'_1}: A_2 \to K_2$ for $a_0 \in A_0$ and $a_1,a'_1 \in A_1$ with $U_{a_0,a_1,a'_1} = U_{a_0,a'_1,a_1}$ such that $U_{v,v'} = U_{a_0,a_1,a'_1}$ for all $v \in V'_{a_0,a_1}$ and $v' \in V'_{a_0,a'_1}$.

Let us place an arbitrary total ordering $<$ on $K_2$, which in particular defines a minimum function $\min: K_2 \times K_2 \to K_2$.  We now define a deterministically continuous natural transformation $S: A \to K$ by setting
\begin{align*}
S^{(W)}(a)_0() &:= U_0(a_0) \\
S^{(W)}(a)_1(w) &:= U_1(a_0,a_1(w))_1 \\
S^{(W)}(a)_2(w,w') &:= \min( U_{a_0, a_1(w), a_1(w')}(a_2(w,w')), U_{a_0, a_1(w), a_1(w')}(a_2(w',w)) )
\end{align*}
for all vertex sets $W$ and all $a \in A^{(W)}$. One easily verifies that $S$ is indeed a deterministically continuous natural transformation.  Now we verify \eqref{snap}.  If $a \in A^{([N])}$, observe (from \eqref{vaa}) that we can find a morphism $\Phi \in \Hom([N], V)$ such that $\Phi(n) \in V'_{a_0(), a_1(n)}$ for all $n \in [N]$.  Define the symmetrisation $\tilde G \in K^{([N])}$ of any $G \in K^{([N])}$ by defining $\tilde G_0 := G_0$, $\tilde G_1 := G_1$, and $\tilde G_2( n, m) := \min( G_2(n,m), G_2(m,n) )$ for all $(n,m) \in \Hom([2],[N])$; in particular, $\tilde G = G$ whenever $G$ is undirected.  By chasing all the definitions we see that
$$ S^{([N])}(a) = \widetilde{K^{(\Phi)}( U( b ) )}$$
for any $b \in A^{(V)}$ with $A^{(\Phi)}(b) = a$.  Since $U(b)$ obeys $\P$ and is thus undirected, we obtain \eqref{snap} as desired.

Finally, we verify \eqref{mussel}.  Let $b \in A^{([k])}$ be drawn
at random with law $(\overline{\alpha} \circ \mu)^{([k])}$, and let
$G := \overline{\sigma}^{([k])}(b) \in K^{([k])}$.  By \eqref{repo},
we see that with probability $1 - o_{\alpha \to \infty}(1)$ we have
\begin{equation}\label{kphu}
K^{(\phi)}(U(a)) = G
\end{equation}
whenever $\phi \in \Hom([k],V)$ and $a \in A^{(V)}$ satisfies
$A^{(\phi)}(a) = b$; let us now condition on this event.  Since
$U(a) \in \P^{(V)}$, we conclude that $G \in \P^{([k])}$; in
particular, $G$ is undirected.

To prove \eqref{mussel}, it will suffice to show that $S^{([k])}(b) = G$.  In view of the definition of $S$, it will suffice to show that
\begin{align*}
G_0() &= U_0(b_0) \\
G_1(i) &= U_1(b_0,b_1(i))_1 \\
G_2(i,j) &= U_{b_0,b_1(i), b_i(j)}(b_2(i,j)) \\
G_2(i,j) &= U_{b_0,b_1(i), b_i(j)}(b_2(j,i))
\end{align*}
for all distinct $i, j \in [k]$.  But these claims all follow from \eqref{kphu} and the definition of $U_0$, $U_1$, $U_{b_0,b_1(i),b_i(j)}$ by choosing $\phi$ appropriately.
\end{proof}

Finally, we establish the repairability of partite hypergraph properties.

\begin{proof}[Proof of Proposition \ref{part-prop} assuming Proposition \ref{repair}]  Once again we repeat the proof of Proposition \ref{monotone-prop}, with $K$ a finite palette of order $k \geq 1$ and $\P$ a partite hypergraph $K$-property, and let $Z, \kappa, \mu, \alpha, A, \sigma, N$ be as in the previous proof.  As before, our objective is to locate a deterministically continuous natural transformation $S: A \to K$ obeying \eqref{mussel} and \eqref{snap}.  In this case we will use partite Ramsey theory instead of Ramsey theory or monotonicity to construct $S$.

Let $M$ be a large integer (depending on $N,A,K$) to be chosen later.  We let $V := [M] \times A_1$, thus $W$ is the disjoint union of the sets $V_{a_1} := [M] \times \{a_1\}$ of cardinality $M$ for $a_1 \in A_1$.  We apply Proposition \ref{repair} to obtain a local map $U: A^{(V)} \to \P^{(V)}$ obeying \eqref{repo}.  From locality as before, we also have maps $U_W: A^{(W)} \to \P^{(W)}$ for all $W \subset V$.

Let $0 \leq j \leq k$, and let $\psi \in \Hom([j],A_1)$ be a morphism.  For any vertices $v_1 \in V_{\psi(1)}, \ldots, v_j \in V_{\psi(j)}$, one can define a map $U_{\psi; v_1,\ldots,v_j}: A^{([j])} \to \P^{([j])}$ by the formula
$$ U_{\psi; v_1,\ldots,v_j} := K^{(v)} \circ U_{\{v_1,\ldots,v_j\}} \circ A^{(v^{-1})}$$
where $v: [j] \to \{v_1,\ldots,v_j\}$ is the bijection that sends $i$ to $v_i$ for $i \in [j]$.  One can view this map as defining a $j$-partite $j$-uniform $(\P^{([j])})^{A^{([j])}}$-coloured hypergraph on the disjoint vertex classes $V_{\psi(1)},\ldots,V_{\psi(j)}$.

The number of $j$ and $\psi$ are finite (and independent of $M$), and the size of the palettes
$(\P^{([j])})^{A^{([j])}}$ are also finite and independent of $M$.  Thus by applying the partite hypergraph Ramsey theorem (see e.g. \cite[\S 5.1]{GraRotSpe}) repeatedly, we conclude (if $M$ is sufficiently large depending on $N,A,K$) that there exist sets $V'_{a_1} \subset V_{a_1}$ of cardinality $|V'_{a_1}| = N$ for all $a_1 \in A_1$ such that all the partite hypergraphs mentioned above are monochromatic, or in other words that for every $0 \leq j \leq k$ and $\psi \in \Hom([j],A_1)$ there exists a map $U_\psi: A^{([j])} \to \P^{([j])}$ such that $U_{\psi; v_1,\ldots,v_j} = U_\psi$ for all $v_1 \in V'_{\psi(1)},\ldots,v_j \in V'_{\psi(j)}$.

Fix the $V'_{a_1}$ and $U_\phi$.  We now introduce the deterministically continuous natural transformation $S: A \to K$ by defining $S^{(W)}(a)_j(\phi) \in K_j$ for vertex sets $W$, hypergraphs $a \in A^{(W)}$, integers $0 \leq j \leq k$, and morphisms $\phi \in \Hom([j],W)$ according to the following rule.  If $\phi$ is a partite edge for $a$ (thus the map $a_1 \circ \phi: [j] \to A_1$ is a morphism) then we set
\begin{equation}\label{swaj}
 S^{(W)}(a)_j(\phi) := U_{a_1 \circ \phi}( A^{(\phi)}(a) )_j(\phi);
 \end{equation}
otherwise, if $\phi$ is not a partite edge, we set
\begin{equation}\label{swaj-2}
S^{(W)}(a)_j(\phi) := \sigma_j( a_j(\phi) ).
\end{equation}
One easily verifies that $S$ is a strongly natural transformation.  Now we verify \eqref{snap}.  Let $a \in A^{([N])}$ be arbitrary.  Since each of the $V'_{a_1}$ have cardinality $N$, we can find a morphism $\Phi: [N] \to V$ such that $\Phi(n) \in V'_{a_1(n)}$ for all $n \in [N]$.  Let $b \in A^{(V)}$ be any hypergraph such that $A^{(\Phi)}(b) = a$.  By chasing all the definitions (and using the local nature of $U$), we conclude that
$$ S^{([N])}(a)_j(\phi) = U(b)_j( \Phi \circ \phi )$$
for all $0 \leq j \leq k$ and all partite edges $\phi \in \Hom([j],[N])$.  By Definition \ref{partite}, we conclude that $S^{([N])}(a)$ is partite equivalent to $K^{(\Phi)}(U(b))$.  Since $U(b) \in \P^{(V)}$ and $\P$ is partite, we obtain $S^{([N])}(a) \in \P^{([N])}$ as required.

Now we prove \eqref{mussel}.  Let $b \in A^{([k])}$ be drawn at
random with law $(\alpha \circ \mu)^{([k])}$, and let $G :=
\overline{\sigma}^{([k])}(b) \in K^{([k])}$.  By \eqref{repo}, we
see with probability $1-o_{\alpha \to \infty}(1)$ that \eqref{kphu}
holds whenever $\phi \in \Hom([k],V)$ and $a \in A^{(V)}$ satisfies
$A^{(\phi)}(a) = b$. Conditioning on this event, we conclude from
\eqref{swaj} and the definition of the $U_\psi$ that
$S^{([k])}(b)_j(\phi) = \sigma_j(b_j(\phi))$ whenever $0 \leq j \leq
k$ and $\phi \in \Hom([j],[k])$ is a partite edge of $b$.  Combining
this with \eqref{swaj-2} we obtain \eqref{mussel}.
\end{proof}

To conclude the proof of all our main theorems, it remains to establish Proposition \ref{rs-prop} and Proposition \ref{repair}.  This will be the purpose of the remaining sections.

\subsection{Reduction to discretisations of the identity}\label{discred-sec}

In the previous sections, we have reduced all of our testability and repair claims to two propositions, namely Proposition \ref{rs-prop} and \ref{repair}.  In this section, we show how these propositions will follow from the following two propositions, which assert the existence of two different ways to approximate the identity natural transformation $\id_Z: Z \to Z$ by more discrete natural transformations that factor through a colouring $\alpha: Z \to A$.

\begin{proposition}[First discretisation of the identity]\label{disc-ident}
Let $Z$ be a sub-Cantor palette of some order $k \geq 0$, and let $\mu: \pt \to Z$ be a regular exchangeable $Z$-recipe.  Then for any colouring $\alpha: Z \to A$ there exist $j$-independent natural transformations $Q_{\alpha,j}: Z_{<j} \times A_{\geq j} \to Z_{\leq j} \times A_{>j}$ for each $0 \leq j \leq k$ with the following properties:
\begin{itemize}
\item[(i)] ($Q_{\alpha,j}$ only modifies the $j$ component) For each $\alpha$ and each $0 \leq j \leq k$, the diagram
$$
\begin{CD}
Z_{<j} \times A_{\geq j}     @>{Q_{\alpha,j}}>>    Z_{\leq j} \times A_{>j} \\
@VVV                                               @VVV            \\
Z_{<j} \times A_{>j}         @=                    Z_{<j} \times A_{>j}
\end{CD}
$$
commutes, where the vertical arrows denote the obvious projection natural transformations.
\item[(ii)] (Absolute continuity)  For each $\alpha$, every finite vertex set $V$, and every $a \in A^{(V)}$, we have
$$ (Q_{\alpha,k} \circ \ldots \circ Q_{\alpha,0})^{(V)}(a) \ll \mu.$$
\item[(iii)] (Convergence to the diagonal)  Given any finite vertex set $V$ and any continuous function $F: Z^{(V)} \times Z^{(V)} \to \R$, we have
$$ \lim_{\alpha \to \infty} \int_{Z^{(V)}} \left( \int_{Z^{(V)}} F( z, z')\ (Q_{\alpha,k} \circ \ldots \circ Q_{\alpha,0} \circ \overline{\alpha})^{(V)}(z,dz') \right)\ d\mu^{(V)}(z) = \int_{Z^{(V)}} F(z,z)\ d\mu^{(V)}(z).$$
\end{itemize}
\end{proposition}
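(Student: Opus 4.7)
The plan is to define $Q_{\alpha,j}$ as the ``conditional resampling'' rule that fixes $z_{<j}$ and $a_{>j}$ and replaces the colour $a_j$ on each $j$-edge by a random $z_j$ drawn from the $P_j$-law conditioned on $\overline{\alpha_j}(z_j) = a_j$. Concretely, from the regularity of $\mu$ (Definition \ref{regexp}) extract the factorisation $\mu = P_k \circ \cdots \circ P_0$ and write $P_j^{([j])}(z_{<j}) = \delta_{z_{<j}} \times \tilde P_j^{([j])}(z_{<j})$ for a $\Hom([j],[j])$-equivariant kernel $\tilde P_j^{([j])}: Z_{<j}^{([j])} \rightsquigarrow Z_{=j}^{([j])}$. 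Define the kernel
$$ q_{\alpha,j}^{([j])}(z_{<j}, a_j) := \bigl(\tilde P_j^{([j])}(z_{<j}) \,\big|\, \overline{\alpha_j}^{([j])} = a_j\bigr)$$
when this conditioning event has positive $\tilde P_j^{([j])}(z_{<j})$-mass, and $\tilde P_j^{([j])}(z_{<j})$ otherwise. Example \ref{jep} (applied to the $j$-component and tensored with the identity on the preserved coordinates) then generates the desired $j$-independent natural transformation $Q_{\alpha,j}$, and property (i) is built in by construction.

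For property (ii), induct on $j$: at each layer $q_{\alpha,j}^{([j])}$ is absolutely continuous with respect to $\tilde P_j^{([j])}$ (both the conditioned measure and the fallback are), so by the product structure coming from $j$-independence, the layer-$j$ output (for any fixed input) is absolutely continuous with respect to the corresponding slice of $P_j^{(V)}$; composing through all $k$ layers yields absolute continuity with respect to $\mu^{(V)} = (P_k \circ \cdots \circ P_0)^{(V)}$.

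Property (iii) is the main content. Set $R_\alpha := Q_{\alpha,k} \circ \cdots \circ Q_{\alpha,0} \circ \overline{\alpha}: Z \to Z$ and note the crucial colour-preservation identity $\overline{\alpha} \circ R_\alpha = \overline{\alpha}$: by construction each $Q_{\alpha,j}$ samples $z'_j$ only from $\overline{\alpha_j}^{-1}(a_j)$, so under the coupling ``$z \sim \mu^{(V)}$, $z' \sim R_\alpha^{(V)}(z,\cdot)$'' the pair $(z,z')$ always lies in the same $\overline{\alpha}$-cell. By the dominated convergence Lemma \ref{dct} it suffices to show that for $\mu^{(V)}$-a.e.\ $z$ the law $R_\alpha^{(V)}(z,\cdot)$ converges vaguely to $\delta_z$ as $\alpha \to \infty$; prove this by induction on $0 \leq j \leq k$, tracking the joint law of $(z_{\leq j}, z'_{\leq j})$ layer-by-layer and showing concentration on the diagonal.

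The main obstacle is the inductive step at layer $j$: the new sample $z'_j$ is drawn independently across $j$-edges from $q_{\alpha,j}^{([j])}(z'_{<j}\!\downharpoonright_e,\, \overline{\alpha_j}(z_j\!\downharpoonright_e))$, so the kernel is evaluated at the already-perturbed $z'_{<j}$ rather than at the true $z_{<j}$. Two ingredients control this. First, Littlewood's principle (Lemma \ref{dom}), applied via a countable generating family of clopen subsets of $Z_{=j}^{([j])}$ to the measurable kernel $z_{<j} \mapsto \tilde P_j^{([j])}(z_{<j})$, shows that for sufficiently fine $\alpha_{<j}$ this kernel is near-constant on its argument cell; combined with the colour-preservation identity $\overline{\alpha_{<j}}(z'_{<j}) = \overline{\alpha_{<j}}(z_{<j})$ noted above, substituting $z'_{<j}$ for $z_{<j}$ contributes only an $o_{\alpha \to \infty}(1)$ total-variation error after averaging against $\mu$. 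Second, a Lebesgue-type differentiation argument on the sub-Cantor space $Z_{=j}^{([j])}$ shows that for $\mu^{(V)}$-a.e.\ $z$ the conditioned measure $(\tilde P_j^{([j])}(z_{<j}) \,|\, \overline{\alpha_j}^{([j])} = \overline{\alpha_j}^{([j])}(z_j\!\downharpoonright_e))$ concentrates on $\delta_{z_j\!\downharpoonright_e}$ as $\alpha_j$ becomes finer. Combining the two and integrating against an arbitrary continuous test function closes the induction and establishes (iii).
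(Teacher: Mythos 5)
Your construction of $Q_{\alpha,j}$ is essentially the paper's: factor $\mu = P_k \circ \cdots \circ P_0$, write $P_j^{(V)}(z) = \delta_z \times \prod_{e} Q_j^{(e)}(z\downharpoonright_e)$, and define the new kernel on each $j$-edge as $Q_j^{(e)}(z\downharpoonright_e)$ conditioned to the cell $C_{a_j\downharpoonright_e}$, with the unconditioned law as a fallback when the cell has null mass. Properties (i) and (ii) then go exactly as you say. The divergence is in (iii), and there you have a genuine gap.

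You propose to reduce (iii), via Lemma \ref{dct}, to the claim that $R_\alpha^{(V)}(z,\cdot) \to \delta_z$ vaguely \emph{for $\mu^{(V)}$-a.e.\ $z$} as $\alpha \to \infty$, and then prove that pointwise statement by an induction that invokes Littlewood's principle (Lemma \ref{dom}) and ``a Lebesgue-type differentiation argument.'' This is where the argument does not cohere. First, Lemma \ref{dom} is an $L^1$-type (averaged) bound over the cells; it tells you that the martingale error $\int \|F - \E[F|\B_\alpha]\|\,d\mu$ is $o_{\alpha \to \infty}(1)$, not that $\E[F|\B_\alpha](z) \to F(z)$ for a.e.\ fixed $z$ along the directed family of colourings. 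Since $\alpha \to \infty$ is a net limit over the poset $\Col(Z)$, not a single refining sequence, there is no ready-made a.e.\ martingale convergence theorem available; you would need to construct the pointwise statement from scratch, and the tools the paper provides do not do that. Second, and more fatally, the very inductive step you describe produces an averaged bound (you write ``contributes only an $o_{\alpha \to \infty}(1)$ total-variation error \emph{after averaging against $\mu$}''), so your induction delivers a statement of a different logical type than the pointwise claim you reduced to. The two halves of your argument do not fit.

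The paper avoids this by proving the averaged statement directly as the inductive claim (Lemma \ref{coerce-conv}): for each $j$, and for any bounded measurable $F: Z_{\leq j}^{(V)} \to H$ into a finite-dimensional Hilbert space,
$$ \int_{Z_{\leq j}^{(V)}} \left[ \int_{Z_{\leq j}^{(V)}} \|F(z) - F(w)\|_H \; T_{\leq j}^{(V)}(z,dw) \right] d\mu_{\leq j}^{(V)}(z) = o_{\alpha \to \infty}(1), $$
with the further stipulation that $H$ and $F$ may depend on $\alpha_{j+1},\ldots,\alpha_k$ but not on $\alpha_0,\ldots,\alpha_j$. Allowing measurable (not merely continuous) $F$ and allowing this partial dependence on the colouring are both essential: when you unfold the $j$th layer, the quantities you need to control (such as the conditional means $G_b(z_{<j}) = \int F\,d(P_j^{(V)}(z_{<j})|C'_b)$ and the associated oscillation functionals) are only measurable in $z_{<j}$ and do depend on $\alpha_j$, and the closing of the induction invokes the $(j-1)$-st case applied exactly to such functions. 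You have correctly identified the central difficulty --- that the resampling kernel is evaluated at the already-perturbed $w_{<j} = z'_{<j}$ rather than at $z_{<j}$ --- and the right response (use Littlewood together with colour-preservation to switch between them), but to execute it you should take the averaged, measurable-$F$, Hilbert-space formulation as the inductive statement itself, rather than passing through a pointwise a.e.\ intermediary that you cannot justify and do not actually use.
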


\begin{example}\label{d1}  Let $Z = (\pt, Z_1)$ and $k=1$ for some sub-Cantor space $Z_1$, and let $\mu \in \Pr(Z_1)$ be a probability measure, which can be identified with an exchangeable $Z$-recipe by Example \ref{rvc}.  Let $\alpha: Z \to A$ be a colouring of $Z$, let $Q_0: A \to A$ be the identity map, and let $Q_1: A \to Z$ be the natural transformation defined by $Q_1^{(V)}(a) := \prod_{v \in V} \mu_{a_1(v)}$ for any vertex set $V$ and $a \in A^{(V)}$, where we identify $Z^{(V)}$ with $Z_1^V$ and for any $a_1 \in A_1$, $\mu_{a_1} \in \Pr(Z_1)$ is the measure $(\mu|C_{a_1})$ if $\mu(C_{a_1})>0$ and $\mu$ otherwise, where $C_{a_1} := \alpha_1^{-1}(\{a_1\})$.  Then one easily verifies that $Q_0, Q_1$ obey the properties described above.  Roughly speaking, the map $Q_1$ maps points $z$ in $Z_1$ to the uniform distribution on the $A_1$-cell that $z$ lies in; as the colouring $\alpha$ gets finer and finer, this map converges to the identity in a weak sense, which corresponds to the property (iii) above. 
\end{example}

\begin{proposition}[Second discretisation of the identity]\label{disc-ident2}
Let $Z$ be a sub-Cantor palette of some order $k \geq 0$, and let $\mu: \pt \to Z$ be a regular exchangeable $Z$-recipe.  Then for any colouring $\alpha: Z \to A$  there exist a sub-Cantor space $X_\alpha$ (which we view as a sub-Cantor palette of order $0$) with a probability measure $\nu_\alpha \in \Pr(X_\alpha)$ (which we view as a natural transformation $\nu_\alpha: \pt \to X_\alpha$), together with a deterministically continuous natural transformation $\zeta_\alpha: A \times X_\alpha \to Z$, with the following properties:
\begin{itemize}
\item[(i)] (Asymptotic absolute continuity)  The measure
$$ \zeta_\alpha^{([k])} \circ ((\overline{\alpha} \circ \mu) \oplus \nu_\alpha)^{([k])} \in \Pr(Z^{([k])})$$
is $o_{\alpha \to \infty}(1)$-absolutely continuous
with respect to $\mu^{([k])}$ (see Definition \ref{epsac-def} for a definition of $\eps$-absolute continuity).
\item[(ii)] (Convergence to the diagonal)
Given any finite vertex set $V$ and any continuous function $F: Z^{(V)} \times Z^{(V)} \to \R$, we have
$$ \lim_{\alpha \to \infty} \int_{Z^{(V)}} \int_{X_\alpha}
F( z, \zeta_\alpha^{(V)}( \overline{\alpha}^{(V)}(z), x) )\ d\nu_\alpha(x) d\mu^{(V)}(z) = \int_{Z^{(V)}} F(z,z)\ d\mu^{(V)}(z).$$
\end{itemize}
\end{proposition}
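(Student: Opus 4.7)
The plan is to implement the Alon--Shapira-style ``cell representative'' strategy sketched in the second half of Example \ref{triangle-test}, extended to all layers of the regular recipe. Since the output natural transformation $\zeta_\alpha$ is required to be deterministically continuous, all of the randomness must be externalised into the single space $X_\alpha$, with $\nu_\alpha$ drawn once and then consulted as a lookup table for every edge of every hypergraph produced.

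First I would use the factorisation $\mu = P_k \circ \cdots \circ P_0$ from Definition \ref{regexp} to build $X_\alpha$ as a product $X_\alpha = \prod_{j=0}^k X_{\alpha,j}$, where $X_{\alpha,j}$ assigns to each coloured $j$-edge profile $a \in A_{\leq j}^{([j])}$ a representative $\zeta_a \in Z_{=j}^{([j])}$. The representatives are drawn independently across profiles from the kernel $P_j^{([j])}$, conditioned on the event that the lower-layer colour equals $a_{<j}$ and the self-layer $\overline{\alpha_{=j}}$-colour equals $a_{=j}$ (using the conditioning operation from Appendix \ref{prob}, and falling back to the unconditioned distribution on profiles of zero $\mu$-mass). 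The $\Hom([j],[j])$-equivariance built into $P_j$ lets one choose these conditional kernels equivariantly. The deterministically continuous natural transformation $\zeta_\alpha: A \times X_\alpha \to Z$ is then defined on any vertex set $V$ by the rule that, for $\phi \in \Hom([j],V)$, the $j$-edge $\phi$ is coloured by the appropriate coordinate of the representative associated to the colour profile $A^{(\phi)}(a)$ stored in $x \in X_\alpha$. This is continuous in $x$ (being a pullback of finitely many coordinate projections) and natural in $V$ by construction.

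For property (ii), convergence to the diagonal, I would argue by induction on $j$. At each layer, Lemma \ref{dom} (Littlewood's principle) together with Lemma \ref{dct} (dominated convergence) shows that, as the colouring $\alpha_{=j}$ is refined, the conditional kernel $(P_j^{([j])} \mid \overline{\alpha_{=j}} = a_{=j})$ concentrates around the true sample in the sense needed to integrate any bounded continuous functional. Telescoping these layer-by-layer limits through the factorisation $\mu = P_k \circ \cdots \circ P_0$ yields the required identity for every continuous $F: Z^{(V)} \times Z^{(V)} \to \R$.

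The main obstacle will be property (i), asymptotic absolute continuity. The difficulty is that whenever two $j$-sub-edges $\phi, \phi' \in \Hom([j],[k])$ have coincident colour profiles $A^{(\phi)}(a) = A^{(\phi')}(a)$, the construction assigns them the \emph{same} representative $\zeta$, whereas under $\mu^{([k])}$ the corresponding sub-edges of $z$ are drawn by conditionally independent applications of $P_j^{([j])}$; this concentrates a portion of the output measure on a diagonal subset that may be $\mu^{([k])}$-null. I would decompose the pushforward $\zeta_\alpha^{([k])} \circ ((\overline{\alpha} \circ \mu) \oplus \nu_\alpha)^{([k])}$ according to which families of sub-edges have coincident profiles. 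On the ``generic'' stratum, where all sub-edge profiles are distinct, the construction reproduces exactly the conditional distribution of $\mu^{([k])}$ cell by cell, yielding genuine absolute continuity. On each ``coincident'' stratum, the total mass splits into (a) contributions from genuine atoms of the lower-layer marginal of $\mu$, which are matched by comparable atoms of $\mu^{([k])}$ itself and so can be absorbed into the absolutely continuous part by using the atom value as the deterministic representative, and (b) non-atomic coincidences, whose probability tends to zero as $\alpha$ is refined, by a Lebesgue-differentiation argument routed through Lemma \ref{dom}. Collecting these bounds gives the required $o_{\alpha \to \infty}(1)$-absolutely continuous decomposition in the sense of Definition \ref{epsac-def}.
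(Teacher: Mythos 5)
Your overall plan is on the right track and matches the paper's approach in spirit: a per-colour-profile lookup table of representatives stored in a single randomised space $X_\alpha$, drawn from conditioned versions of the ingredient kernels $P_j$, with naturality enforced by equivariance; induction on $j$ through the factorisation $\mu = P_k \circ \cdots \circ P_0$ using Lemmas \ref{dom} and \ref{dct} for part (ii); and absolute continuity as the hard part, where coincident profiles cause diagonal concentration. However, there are two genuine gaps.

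First, your conditioning is too coarse. You condition $P_j^{([j])}$ only on the colour-cell event (the lower-layer colour and the $\overline{\alpha_{=j}}$-colour), but if $a$ has a nontrivial stabiliser $G = \Stab(a) \leq \Hom([j],[j])$, a representative $\zeta_a$ drawn from the colour-conditioned kernel need not be a $G$-invariant \emph{point} of $Z_{=j}^{([j])}$ (the conditioned \emph{measure} is $G$-invariant, but that is not the same thing). The naturality diagram for $\zeta_\alpha$, applied to $\phi \in G$, forces $Z^{(\phi)}(\zeta_a) = \zeta_a$, i.e.\ pointwise $G$-invariance. The paper fixes this by conditioning on the compact set $C_{[j],G,a_j}$ (Definition \ref{celldef}), which lies inside the $G$-invariant subspace $(Z_{=j}^{([j])})^G$; equivariance is then free because the target space $\Xi$ is by definition the space of $\Hom([j],[j])$-equivariant maps (Definition \ref{rkr}). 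Without this, your construction does not yield a well-defined natural transformation $\zeta_\alpha$, and your appeal to ``equivariance built into $P_j$'' does not repair the issue, because what must be equivariant is the chosen point, not merely the kernel.

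Second, your treatment of property (i) via a stratification into generic, atom, and non-atomic-coincident parts is too naive in the higher-order setting, where indistinguishability is governed by a full groupoid $R_a$ on $\binom{V}{k}$ rather than a simple diagonal. The paper does not decompose the pushforward measure by stratum; it instead proves a total-variation estimate (Proposition \ref{decouple}) stating that the actual coupled law $\sigma_{V,R_a,a_k,w_{<k}}$ converges in $M(Z^{(V)})$-norm to the fully decoupled law $\sigma_{V,R_0,a_k,w_{<k}}$, which is genuinely absolutely continuous with respect to $\mu^{(V)}$ when the quadruple is good. The proof of that estimate, plus the companion fact that most quadruples are good (Proposition \ref{quadgood}), hinges on a clopen-separation argument in sub-Cantor spaces showing that for $\mu$-almost every $z$ the invariance constraints $z \in Z_R$ are eventually detected by fine colourings, and then on an exceptional-set/Markov argument to make ``eventually detected'' quantitative in a way that is uniform across colour profiles. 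Your ``atom vs.\ non-atomic'' dichotomy echoes Example \ref{d2} for $k=1$, but for $k \geq 2$ it does not capture the full groupoid coupling or the need for a uniform exceptional-set step, and ``Lebesgue differentiation routed through Lemma \ref{dom}'' is not enough to close this; you would in effect have to reconstitute the content of Propositions \ref{quadgood} and \ref{decouple} to make your sketch precise.
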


\begin{remark}
The situation in the above proposition can be depicted by the following diagram,
$$
\begin{CD}
Z       @<<<  Z \times X_\alpha          @>{\overline{\alpha}\oplus \id}>> A \times X_\alpha  @>{\zeta_\alpha}>> Z\\
@A{\mu}AA   @A{\mu \oplus \nu_\alpha}AA                                    @A{(\overline{\alpha} \circ \mu) \oplus \nu_\alpha}AA    @.\\
\pt      @=   \pt @= \pt @.
\end{CD};
$$
informally speaking, the proposition asserts that the right map from $Z \times X_\alpha$ to $Z$ is asymptotically absolutely continuous and asymptotically convergent to the left map.
\end{remark}

\begin{example}\label{d2}  Let $Z = (\pt, Z_1)$ and $k=1$ for some sub-Cantor space $Z_1$, and let $\mu \in \Pr(Z_1)$ be a probability measure, which can be identified with an exchangeable $Z$-recipe by Example \ref{rvc}.  For technical reasons we also select an arbitrary element $z_*$ of $Z_1$.  Let $\alpha: Z \to A$ be a colouring of $Z$. For each $a_1 \in A_1$, we define the cell $C_{a_1} := \alpha_1^{-1}(\{a_1\})$, and draw $\zeta_{a_1} \in Z_1$ independently at random for each $a_1$ with law $(\mu|C_{a_1})$ if $\mu(C_{a_1}) > 0$, or with law $\delta_{z_*}$ otherwise.  We then define the natural transformation $\zeta_\alpha: A \to Z$ by setting $\zeta_\alpha^{(V)}(a)_1(v) := \zeta_{a_1(v)}$ for all vertex sets $V$, all $a \in A^{(V)}$, and all $v \in V$; one easily verifies that this transformation obeys all the required properties; compare this construction with that in Example \ref{d1}.  As we shall see in Section \ref{disc2-sec}, the situation becomes more complicated when $k>1$ due to the presence of ``indistinguishable'' pairs of elements of $A^{([j])}$ for $1 < j \leq k$ which are coupled together, which forces some modification to the above procedure of selecting each value of $\zeta$ independently.
\end{example}

In the rest of this section we show how Proposition \ref{rs-prop} follows from Proposition \ref{disc-ident}, and Proposition \ref{repair} follows by combining Proposition \ref{disc-ident} with Proposition \ref{disc-ident2}.

\begin{proof}[Proof of Proposition \ref{rs-prop} assuming Proposition \ref{disc-ident}]
Let $K$ be a finite palette of some order $k \geq 0$, let $\P$ be a hereditary $K$-property, let $Z$ be a sub-Cantor palette, let $\kappa: Z \to K$ be a colouring, and let $\mu: \pt \to Z$ be a regular exchangeable $Z$-recipe such that $\kappa \circ \mu$ almost entails $\P$, and let $\eps > 0$.  Our task is to construct a weakly continuous natural transformation $T: Z \to K$ which almost entails $\P$, and such that \eqref{mukan} holds.

Let $\alpha: Z \to A$ be a sufficiently fine colouring of $Z$ to be chosen later.  We apply Proposition \ref{disc-ident} to obtain natural transformations $Q_{\alpha,j}: Z_{<j} \times A_{\geq j} \to Z_{\leq j} \times A_{>j}$ for $0 \leq j \leq k$ with the stated properties.  We then define $T = T_\alpha: Z \to K$ to be the natural transformation
$$ T := \overline{\kappa} \circ Q_{\alpha,k} \circ \ldots \circ Q_{\alpha,0} \circ \overline{\alpha}.$$
Since $T$ factors through the natural transformation $\overline{\alpha}: Z \to A$, and $A$ is a finite palette, we see that $T$ must be weakly continuous.  Now we verify that $T$ almost entails $\P$.  If $V$ is a finite vertex set and $z \in Z^{(V)}$, then by Proposition \ref{disc-ident}(ii) we see that the probability measure $T^{(V)}(z)$ is absolutely continuous with respect to $(\overline{\kappa} \circ \mu)^{(V)}$.  Since $\overline{\kappa} \circ \mu$ almost entails $\P$, we see that $\P^{(V)}$ has full measure with respect to $(\overline{\kappa} \circ \mu)^{(V)}$ and hence $T^{(V)}(z)$.  The claim now follows from Remark \ref{sat}.

Finally, we need to verify \eqref{mukan}.  Let $F: Z^{([k])} \times Z^{([k])} \to \R$ be the indicator function
\begin{equation}\label{fdef}
F(z,z') := \I(\overline{\kappa}^{([k])}(z) \neq \overline{\kappa}^{([k])}(z')).
\end{equation}
Observe that $F$ is a continuous function which vanishes on the diagonal $z=z'$, and so by Proposition \ref{disc-ident}(iii) we have
$$ \int_{Z^{([k])}} F( z, (Q_{\alpha,k} \circ \ldots \circ Q_{\alpha,0} \circ \overline{\alpha})^{([k])}(z) )\ d\mu^{(V)}([k]) < \varepsilon$$
for sufficiently fine $\alpha$.  But by chasing all the definitions we see that this is equivalent to \eqref{mukan}.
\end{proof}

\begin{remark} Note that we did not use the full strength of Proposition \ref{disc-ident} in order to establish Proposition \ref{rs-prop}.  However we will need to exploit Proposition \ref{disc-ident} more thoroughly when establishing Proposition \ref{repair} below.
\end{remark}

\begin{proof}[Proof of Proposition \ref{repair} assuming Proposition \ref{disc-ident} and Proposition \ref{disc-ident2}]  Let $K$ be a finite palette of order $k \geq 0$, let $\P$ be a hereditary $K$-property, let $Z$ be a sub-Cantor palette, let $\kappa: Z \to K$ be a colouring, and let $\mu: \pt \to Z$ be a regular exchangeable $Z$-recipe such that $\overline{\kappa} \circ \mu$ almost entails $\P$.  Let $\eps > 0$.   Our task is to show that if colouring $\alpha: Z \to A$ is a sufficiently fine colouring which refines $\kappa$ in the sense that $\kappa = \sigma \circ \alpha$ for some $\sigma: A \to K$, then for any finite vertex set $V$ there exists a local map $U: A^{(V)} \to \P^{(V)}$ such that
\begin{equation}\label{repo2}
(\overline{\alpha} \circ \mu)^{([k])}(\Omega_U) \geq 1 - \eps.
\end{equation}

Let $\alpha$ be as above.  As in the proof of Proposition \ref{rs-prop}, we let $F: Z^{([k])} \times Z^{([k])} \to \R$ be the indicator function \eqref{fdef}.   If $\alpha$ is sufficiently fine, then by Proposition \ref{disc-ident2} we can find a sub-Cantor space $X_\alpha$ with a probability measure $\nu: \pt \to X_\alpha$ and a deterministically continuous natural transformation $\zeta_\alpha: A \times X_\alpha \to Z$ with
\begin{equation}\label{zzf}
\int_{Z^{(V)}} F( z, (\zeta_\alpha \circ (\overline{\alpha} \oplus \nu_\alpha))^{(V)}(z) )\ d\mu^{(V)}(z) < \eps/3
\end{equation}
such that the $\zeta_\alpha^{([k])} \circ ((\overline{\alpha} \circ \mu) \oplus \nu_\alpha)^{([k])}$ is $\eps/3$-absolutely continuous with respect to $\mu^{([k])}$.  By Proposition \ref{epsac}, we can find a compact set $E_{\alpha} \subset A^{([k])} \times X_\alpha$ such that
\begin{equation}\label{excep}
 ((\overline{\alpha} \circ \mu) \oplus \nu_\alpha)^{([k])}(E_{\alpha}) < \eps/3
\end{equation}
and
\begin{equation}\label{ac-out}
 \zeta_\alpha^{([k])} \circ \I(E_{\alpha}^c) ((\overline{\alpha} \circ \mu) \oplus \nu_\alpha)^{([k])} \ll \mu^{([k])}.
\end{equation}

Now let $V$ be an arbitrary finite vertex set.  We let $\alpha': Z \to A'$ be another colouring (it will depend on\footnote{This introduction of a second colouring has an analogue in \cite{AloSha}, \cite{AloSha2}, in which one uses a fine Szemer\'edi partition to decide how to colour a coarse Szemer\'edi partition.} $V$ and $\alpha$) to be chosen later.  We apply Proposition \ref{disc-ident} to obtain $j$-independent natural transformations $Q_{\alpha',j}: Z_{<j} \times {A'}_{\geq j} \to Z_{\leq j} \times {A'}_{>j}$ for $0 \leq j \leq k$ with the stated properties.

For each $-1 \leq j \leq k$ in turn, we use the $Q_{\alpha',j}$ to construct random local maps $U'_{\leq j}: {A'}_{\leq j}^{(V)} \to Z_{\leq j}^{(V)}$ recursively as follows.  The map $U'_{\leq -1}: \pt \to \pt$ is of course the trivial map.  Now suppose recursively that $0 \leq j \leq k$ and the local map $U'_{<j} := U'_{\leq j-1}: {A'}_{<j}^{(V)} \to Z_{<j}^{(V)}$ has already been chosen.  For any $e \in \binom{V}{j}$, the local map $U'_{<j}$ then induces a map $U'_{<j,e}: {A'}_{<j}^{(e)} \to Z_{<j}^{(e)}$.  We then randomly select, independently for each $e$, a map $U'_{\leq j,e}: {A'}_{\leq j}^{(e)} \to Z_{\leq j}^{(e)}$ by choosing $U'_{\leq j,e}(a)$ independently at random for each $a \in {A'}_{\leq j}^{(e)}$ with law $Q_{\alpha',j}^{(e)}( U'_{<j,e}(a_{<j}), a_j )$, where $a_{<j} \in {A'}_{<j}^{(e)}$ and $a_j \in {A'}_{=j}^{(e)} = {A'}_{\geq j}^{(e)}$ are the components of $a$.  From Proposition \ref{disc-ident}(i), we see that we almost surely have the commutative diagram

\begin{equation}\label{upcom}
\begin{CD}
{A'}_{\leq j}^{(e)} @>{U'_{\leq j,e}}>>   Z_{\leq j}^{(e)} \\
@VVV                                      @VVV            \\
{A'}_{<j}^{(e)}     @>{U'_{<j,e}}>>       Z_{<j}^{(e)}
\end{CD},
 \end{equation}
where the vertical arrows are the obvious projection maps.  We now condition on this probability $1$ event.

We then define the local map $U'_{\leq j}: {A'}_{\leq j}^{(V)} \to Z_{\leq j}^{(V)}$ to be the unique local map whose restrictions to each $e \in \binom{V}{j}$ are given by $U'_{\leq j,e}$; the condition
\eqref{upcom} (and the local nature of $U'_{<j}$) ensures that the local map $U'_{\leq j}$ is well-defined.

By the $j$-independent nature of the $Q_{\alpha',j}$ (see Definition \ref{j-indep} and Proposition \ref{disc-ident}(i), we see by induction on $j$ that for any $-1 \leq j \leq k$ and any $a \in A_{\leq j}^{(V)}$, the random variable $U'_{\leq j}(a) \in Z_{\leq j}^{(V)}$ is distributed with law $Q_{\leq j}(a)$, where $Q_{\leq j}: A_{\leq j} \to Z_{\leq j}$ is the unique natural transformation obeying the commutative diagram
\begin{equation}\label{upcom2}
\begin{CD}
A              @>{Q_{\alpha',j} \circ \ldots \circ Q_{\alpha',0}}>>   Z_{\leq j} \times A_{>j} \\
@VVV                                                                  @VVV            \\
A_{\leq j}     @>{\rlap{$\scriptstyle{\ \ \ \ \ \ Q_{\leq j}}$}\phantom{Q_{\alpha',j} \circ \ldots \circ Q_{\alpha',0}}}>>                                       Z_{\leq j}
\end{CD},
\end{equation}
where the vertical arrows are the obvious projection natural transformations.  Applying this with $j=k$, we conclude that for any $a \in A^{(V)}$, the random variable $U'_{\leq k}(a) \in Z^{(V)}$ is distributed with law $(Q_{\alpha',k} \circ \ldots \circ Q_{\alpha',0})^{(V)}(a)$.  In particular, we see from Proposition \ref{disc-ident}(ii) that the distribution of $U'_{\leq k}(a)$ is absolutely continuous with respect to $\mu^{(V)}$.  Since $\overline{\kappa} \circ \mu$ almost entails $\P$, we conclude that $\overline{\kappa}^{([k])} \circ U'_{\leq k}(a)$ obeys $\P$ almost surely.  In other words, we see with probability $1$ that the map $\overline{\kappa}^{([k])} \circ U'_{\leq k}$ maps $(A')^{(V)}$ to $\P^{(V)}$.
We now condition on this probability $1$ event.

We choose $x \in X_\alpha$ at random with law $\nu_\alpha$ (independently of all previous random choices), and define the (probabilistic) map $U = U_x: A^{(V)} \to \P^{(V)}$ by composing together the chain
\begin{equation}\label{uxdef0}
\begin{CD}
A^{(V)} @>{\id \times x}>> A^{(V)} \times X_\alpha @>{\zeta_\alpha^{(V)}}>> Z^{(V)} @>{\overline{\alpha'}^{(V)}}>> (A')^{(V)} @>{U'_{\leq k}}>> Z^{(V)} @>{\overline{\kappa}^{(V)}}>> K^{(V)}
\end{CD}
\end{equation}
or in other words by the formula
\begin{equation}\label{uxdef}
 U_x(a) := (\overline{\kappa}^{(V)} \circ U'_{\leq k} \circ \overline{\alpha'}^{(V)} \circ \zeta_\alpha^{(V)})(a,x)
 \end{equation}
for all $a \in A^{(V)}$.

By construction we see that (with probability $1$) $U_x$ does indeed map $A^{(V)}$ to $\P^{(V)}$; since $U'_{\leq k}$ is local and $\overline{\kappa}$, $\overline{\alpha'}$, $\zeta_\alpha$ are deterministically continuous natural transformations, we see that $U_x$ is also almost surely local.  To establish the claim \eqref{repo2}, it thus suffices by the probabilistic method to show that
$$ \E (\overline{\alpha} \circ \mu)^{([k])}(\Omega_{U_x}) \geq 1 - \eps.$$

Accordingly, let us select $b \in A^{([k])}$ at random with law $(\overline{\alpha} \circ \mu)^{([k])}$.  By \eqref{excep}, we see that $(b,x) \not \in E_{\alpha}$ with probability at least $1-\eps/3$.  Also, by \eqref{zzf}, we see that $\overline{\sigma}^{([k])}(b) = (\overline{\kappa} \circ \zeta_\alpha)^{([k])}(b,x)$ with probability $1-\eps/3$.  Thus it suffices to show that the event
$$ (b,x) \not \in E_{\alpha} \hbox{ and } K^{(\phi)}(U_x(a)) \neq (\overline{\kappa} \circ \zeta_\alpha)^{([k])}(b,x) \hbox{ for some } \phi \in \Hom([k],V) \hbox{ and } a \in A^{(V)} \hbox{ with } A^{(\phi)}(a)=b $$
has probability at most $\eps/3$.

Fix $\phi \in \Hom([k],V)$; by the union bound, it suffices to show that the event
$$ (b,x) \not \in E_{\alpha} \hbox{ and } K^{(\phi)}(U_x(a)) \neq (\overline{\kappa} \circ \zeta_\alpha)^{([k])}(b,x) \hbox{ for some } a \in A^{(V)} \hbox{ with } A^{(\phi)}(a)=b $$
has probability at most $\eps/3|\Hom([k],V)|$.

Write $z:= \zeta_\alpha^{([k])}(b,x)$, $a' := \overline{\alpha'}^{([k])}(z)$, and $e := \phi([k])$.  From \eqref{uxdef} (or \eqref{uxdef0}) we see that
$$ K^{(\phi)}(U(a)) = \overline{\kappa}^{([k])} \circ Z^{(\phi)} \circ U'_{\leq k, e} \circ (A')^{(\phi^{-1})}(a')$$
whenever $a \in A^{(V)}$ is such that $A^{(\phi)}(a) = b$, where $U'_{\leq k,e}: (A')^{(e)} \to Z^{(e)}$ is the localisation of the local map $U'_{\leq k}: (A')^{(V)} \to Z^{(V)}$.  Thus, if we write
$z' := U'_{\leq k, e} \circ (A')^{(\phi^{-1})}(a')$, it suffices to show that the event
$$ (b,x) \not \in E_{\alpha} \hbox{ and } \overline{\kappa}^{([k])}(z') \neq \overline{\kappa}^{([k])}(z) $$
has probability at most $\eps/3|\Hom([k],V)|$.  By \eqref{fdef}, it thus suffices to show that
\begin{equation}\label{evff}
\E \left( \I((b,x) \not \in E_{\alpha}) F(z,z') \right) < \frac{\eps}{3|\Hom([k],V)|}.
\end{equation}

Recall that for any $a \in A^{(V)}$, the random variable $U'_{\leq k}(a) \in Z^{(V)}$ is distributed with law $(Q_{\alpha',k} \circ \ldots \circ Q_{\alpha',0})^{(V)}(a)$.  This implies for fixed $a'$ that $z'$ is distributed with law $(Q_{\alpha',k} \circ \ldots \circ Q_{\alpha',0} \circ \overline{\alpha'})^{([k])}(z)$.  Thus we can write the left-hand side of \eqref{evff} as
$$
\int_{Z^{(V)}} f_{\alpha'}(z)\ d\mu_\alpha(z)$$
where $f_{\alpha'}: Z^{(V)} \to [0,1]$ is the measurable function
$$ f_{\alpha'}(z) := \int_{Z^{(V)}} F(z,z')\ (Q_{\alpha',k} \circ \ldots \circ Q_{\alpha',0} \circ \overline{\alpha'})^{([k])}(z, dz')$$
and $\mu_\alpha$ is the finite measure
$$ \mu_\alpha := \zeta_\alpha^{([k])} \circ \I(E_{\alpha}^c) ((\overline{\alpha} \circ \mu) \oplus \nu_\alpha)^{([k])}.$$
Now, by Proposition \ref{disc-ident}(iii), we have
$$ \lim_{\alpha' \to \infty} \int_{Z^{(V)}} f_{\alpha'}(z)\ d\mu^{(V)}(z) = 0;$$
thus (by Markov's inequality) $f_{\alpha'}$ converges in measure to zero with respect to $\mu^{(V)}$ as $\alpha' \to \infty$.  On the other hand, from \eqref{ac-out} we see that $\mu_\alpha$ is absolutely continuous with respect to $\mu^{(V)}$.  By Proposition \ref{epsac}, we conclude that $f_{\alpha'}$ also converges in measure to zero with respect to $\mu_\alpha$, and so
$$ \lim_{\alpha' \to \infty} \int_{Z^{(V)}} f_{\alpha'}(z)\ d\mu_\alpha(z) = 0.$$
Thus, by choosing $\alpha'$ sufficiently fine depending on $\alpha, V, \eps$, we obtain \eqref{evff} as required for every choice of $\phi$.
\end{proof}

To complete the proof of our testability and local repair results, it suffices to prove Proposition \ref{disc-ident} and Proposition \ref{disc-ident2}.  This is the purpose of the next two sections.

\subsection{Proof of Proposition \ref{disc-ident}}\label{disc-sec}

We now prove Proposition \ref{disc-ident}.  Let $Z, k, \mu, \alpha$ be as in that proposition.  By
Definition \ref{regexp}, we can factor
$$ \mu = P_k \circ \ldots \circ P_0$$
where for each $0 \leq j \leq k$, $P_j: Z_{<j} \to Z_{\leq j}$ is a $j$-independent natural transformation such that $\pi_{<j} \circ P_j = \id_{Z_{<j}}$, where $\pi_{<j}: Z_{\leq j} \to Z_{<j}$ is the projection natural transformation.  From Definition \ref{j-indep}, we conclude that
\begin{equation}\label{pjw}
 P_j^{(V)}(z) = \delta_z \times Q_j^{(V)}(z) = \delta_z \times \prod_{e \in \binom{V}{j}} Q_j^{(e)}(z\downharpoonright_e)
\end{equation}
for all vertex sets $V$ and all $z \in Z_{<j}^{(V)}$, and some $j$-independent natural transformation $Q_j: Z_{<j} \to Z_{=j}$, where we identify $Z_{\leq j}^{(V)}$ with $Z_{<j}^{(V)} \times \prod_{e \in \binom{V}{j}} Z_{=j}^{(e)}$ in the obvious manner.

Suppose that $e$ is a vertex set of size $|e|=j$, $z \in Z_{<j}^{(e)}$, and $a \in A_{=j}^{(e)}$.  We define the \emph{cell} $C_a \subset Z_{=j}^{(e)}$ associated to $a$ by the formula
$$ C_a := (\alpha_{=j}^{(e)})^{-1}(\{a\}) = \{ z \in Z_{=j}^{(e)}: \alpha_j(z(\phi)) = a(\phi) \hbox{ for all } \phi \in \Hom([j],e) \}$$
and then define the measure $\nu_{e,z,a} \in \Pr( Z_{=j}^{(e)} )$ to equal the conditioned measure $(Q_j^{(e)}(z)|C_a)$ (as defined in Appendix \ref{prob}) if $Q_j^{(e)}(z)(C_a) > 0$, or $Q_j^{(e)}(z)$ otherwise.

We then define the natural transformation $Q_{\alpha,j}: Z_{<j} \times A_{\geq j} \to Z_{\leq j} \times A_{>j}$ by the formula
\begin{equation}\label{qjw}
Q_{\alpha,j}^{(V)}( z_{<j}, a_j, a_{>j} ) := \delta_{z_{<j}} \times \prod_{e \in \binom{V}{j}} \nu_{e,z_{<j}\downharpoonright_e,a_j\downharpoonright_e} \times \delta_{a_{>j}}
\end{equation}
for all vertex sets $V$ and all $z_{<j} \in Z_{<j}^{(V)}$, $a_j \in A_{=j}^{(V)}$, $a_{>j} \in A_{>j}^{(V)}$, where we identify $Z_{<j}^{(V)} \times A_{\geq j}^{(V)}$ with
$Z_{<j}^{(V)} \times A_{=j}^{(V)} \times A_{>}^{(V)}$ and $Z_{\leq j}^{(V)} \times A_{>j}^{(V)}$ with $Z_{<j}^{(V)} \times \prod_{e \in \binom{V}{j}} Z_{=j}^{(e)} \times A_{>j}^{(V)}$ in the obvious manner.  Note that we can factor $Q_{\alpha,j} = {Q'}_{\alpha,j} \oplus \id_{A_{>j}}$, where ${Q'}_{\alpha,j}: Z_{<j} \times A_{=j} \to Z_{\leq j}$ is defined by
\begin{equation}\label{qjw2}
{Q'}_{\alpha,j}^{(V)}( z_{<j}, a_j ) := \delta_{z_{<j}} \times \prod_{e \in \binom{V}{j}} \nu_{e,z_{<j}\downharpoonright_e,a_j\downharpoonright_e}.
\end{equation}

(Compare this with Example \ref{d1}.)

One easily verifies that $Q_{\alpha,j}$ is a natural transformation, is $j$-independent, and obeys claim (i) of Proposition \ref{disc-ident}.  Also observe from construction that $\nu_{e,z,a}$ is absolutely continuous with respect to $Q_j^{(e)}(z)$ for all $0 \leq j \leq k$, $|e|=j$, $z \in Z_{<j}^{(e)}$, and $a \in A_{=j}^{(e)}$.  By \eqref{pjw}, \eqref{qjw}, and Lemma \ref{pac}, we conclude the absolute continuity relationship
$$
Q_{\alpha,j}^{(V)}( z_{<j}, a_j, a_{>j} ) \ll P_j^{(V)}(z_{<j}) \times \delta_{a_{>j}}$$
for all finite vertex sets $V$, all $0 \leq j \leq k$, and all $z_{<j} \in Z_{<j}^{(V)}$, $a_{=j} \in A_{=j}^{(V)}$, $a_{>j} \in A_{>j}^{(V)}$.  Iterating this using Lemma \ref{pac}, we obtain claim (ii) of Proposition \ref{disc-ident}.

It remains to prove claim (iii) of Proposition \ref{disc-ident}, which is the most difficult estimate.  The key tool will be Littlewood's principle (Lemma \ref{dom}).  For inductive reasons we need to prove the following rather technical statement.  For any $-1 \leq j \leq k$, we introduce the exchangeable $Z_{\leq j}$-recipe $\mu_{\leq j}: \pt \to Z_{\leq j}$ by the formula
$$ \mu_{\leq j} := P_j \circ \ldots \circ P_0$$
and the natural transformation $T_{\leq j}: Z_{\leq j} \to Z_{\leq j}$ to be the unique natural transformation such that
$$ T_{\leq j} \circ \pi_{Z \to Z_{\leq j}} = \pi_{Z_{\leq j} \times A_{>j} \to Z_{\leq j}} \circ Q_{\alpha,j} \circ \ldots \circ Q_{\alpha,0} \circ \overline{\alpha}$$
where $\pi_{Z \to Z_{\leq j}}$ and $\pi_{Z_{\leq j} \times A_{>j} \to Z_{\leq j}}$ are the projection natural transformations.

\begin{lemma}[Convergence to the diagonal]\label{coerce-conv} Let the notation and assumptions be as above.  Let $V$ be a finite vertex set, let $-1 \leq j \leq k$, Let $H$ be a finite-dimensional Hilbert space depending on $\alpha_{j+1},\ldots,\alpha_k,V$ but independent of $\alpha_0,\ldots,\alpha_j$, and let $F: Z_{\leq j}^{(V)} \to H$ be a bounded measurable function which can depend on $\alpha_{j+1},\ldots,\alpha_k,V$ but is independent of $\alpha_0,\ldots,\alpha_j$.  Then
\begin{equation}\label{zetaj}
 \int_{Z_{\leq j}^{(V)}}
\left[ \int_{Z_{\leq j}^{(V)}} \| F(z)-F(w) \|_H\ T_{\leq j}^{(V)}(z, dw) \right]\ d\mu_{\leq j}^{(V)}(z) = o_{\alpha \to \infty}(1).
\end{equation}
\end{lemma}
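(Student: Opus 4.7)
My approach is by induction on $j$, with trivial base case $j=-1$ since $Z_{\leq -1}$ is a point. For the inductive step I will insert the intermediate point $F(w_{<j}, z_{=j})$ and apply the triangle inequality to obtain the decomposition
$$\|F(z)-F(w)\|_H \leq \|F(z_{<j},z_{=j}) - F(w_{<j}, z_{=j})\|_H + \|F(w_{<j}, z_{=j}) - F(w_{<j}, w_{=j})\|_H.$$
Crucially, since $Q_{\alpha,j}$ leaves the $<j$ components untouched, the joint law of $(z,w)$ on $Z_{\leq j}^{(V)} \times Z_{\leq j}^{(V)}$ has the following structure: $z_{<j} \sim \mu_{<j}^{(V)}$, then $w_{<j}$ conditional on $z_{<j}$ has law $T_{<j}^{(V)}(z_{<j})$ (independent of $z_{=j}$), then $z_{=j}$ conditional on $z_{<j}$ has law $Q_j^{(V)}(z_{<j})$, and finally $w_{=j}$ conditional on everything has law $\nu_{w_{<j}, a_j}$ where $a_j := \overline{\alpha_{=j}}^{(V)}(z_{=j})$.

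For the ``horizontal'' term I freeze $z_{=j}$ as a parameter: the function $F_{z_{=j}}: y \mapsto F(y, z_{=j})$ on $Z_{<j}^{(V)}$ is bounded, measurable, and depends on $z_{=j}, \alpha_j, \ldots, \alpha_k, V$ but not on $\alpha_0,\ldots,\alpha_{j-1}$, so the inductive hypothesis for $j-1$ applies and produces an $o_{\alpha \to \infty}(1)$ bound pointwise in $z_{=j}$. Averaging over $z_{=j}$ against its ($\alpha$-independent) marginal under $\mu_{\leq j}^{(V)}$ and invoking the dominated convergence theorem (Lemma \ref{dct}) handles the horizontal contribution.

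For the ``vertical'' term I introduce the cell-averaged proxy $\bar F(y, a) := \int F(y, u)\, d(Q_j^{(V)}(y) \mid C_a)(u)$ (with an arbitrary convention on null cells) and insert it as an intermediate between $F(w_{<j}, z_{=j})$ and $F(w_{<j}, w_{=j})$ via a further triangle inequality. Since $z_{=j}, w_{=j} \in C_{a_j}$ and $w_{=j} \sim (Q_j^{(V)}(w_{<j}) \mid C_{a_j})$, Littlewood's principle (Lemma \ref{dom}) applied to $u \mapsto F(w_{<j}, u)$ under the measure $Q_j^{(V)}(w_{<j})$ controls $\|F(w_{<j}, w_{=j}) - \bar F(w_{<j}, a_j)\|_H$ after summing over $a_j$ with the ``matched'' weights $Q_j^{(V)}(w_{<j})(C_{a_j})$. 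The main obstacle is that the actual outer weighting on $a_j$ comes instead from $z_{=j} \sim Q_j^{(V)}(z_{<j})$, giving the mismatched weights $Q_j^{(V)}(z_{<j})(C_{a_j})$, and $z_{<j} \neq w_{<j}$ in general.

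To resolve this weight mismatch, I will bound the resulting error by
$$2\|F\|_\infty \cdot \E \bigl\|(\overline{\alpha_{=j}}^{(V)})_* Q_j^{(V)}(z_{<j}) - (\overline{\alpha_{=j}}^{(V)})_* Q_j^{(V)}(w_{<j})\bigr\|_{\mathrm{TV}}$$
and control the expectation by a second application of the inductive hypothesis to the bounded, $\alpha_{<j}$-independent, finite-dimensional Hilbert-space-valued function
$$\tilde G : Z_{<j}^{(V)} \to \ell^2(A_{=j}^{(V)}),\qquad \tilde G(y)(a) := \sqrt{Q_j^{(V)}(y)(C_a)}$$
(note $\|\tilde G(y)\|_{\ell^2}^2 = 1$ and $\ell^2(A_{=j}^{(V)})$ depends only on $\alpha_j, V$). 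Combined with the Hellinger--total variation inequality $\|\mu-\nu\|_{\mathrm{TV}} \leq \|\sqrt{\mu} - \sqrt{\nu}\|_{\ell^2}$ for discrete probability measures, this transfers the matched Littlewood bound to the mismatched weighting. Applying the analogous estimate for the $z_{=j}$-side completes the vertical contribution and closes the induction.
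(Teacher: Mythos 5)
Your decomposition --- a horizontal term $\|F(z_{<j},z_{=j})-F(w_{<j},z_{=j})\|_H$ followed by a vertical term $\|F(w_{<j},z_{=j})-F(w_{<j},w_{=j})\|_H$ --- departs from the paper's, which inserts the two cell-averaged points $G_{a_j}(z_{<j})$ and $G_{a_j}(w_{<j})$, where $G_b(y):=\int F\, {Q'}_{\alpha,j}^{(V)}((y,b),\cdot)$, producing two \emph{matched} Littlewood terms together with a middle term handled by the inductive hypothesis applied to the $\alpha_{<j}$-independent vector-valued function $y\mapsto(G_b(y))_b$.  Your route leaves two unmatched pieces that the planned tools do not control.

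For the horizontal term you freeze $z_{=j}$, apply the inductive hypothesis to $F_{z_{=j}}(\cdot):=F(\cdot,z_{=j})$ to get an $o_{\alpha\to\infty}(1)$ bound for each fixed $z_{=j}$, and then average against the marginal $\kappa$ of $z_{=j}$ via Lemma \ref{dct}.  But that controls $\int \kappa(dz_{=j})\int\mu_{<j}^{(V)}(dz_{<j})\int T_{<j}^{(V)}(z_{<j},dw_{<j})\|F_{z_{=j}}(z_{<j})-F_{z_{=j}}(w_{<j})\|_H$, an integral against the \emph{product} $\kappa\otimes\mu_{<j}^{(V)}$, whereas \eqref{zetaj} requires the integral against the joint law $\mu_{\leq j}^{(V)}$, under which $z_{=j}$ is drawn from $Q_j^{(V)}(z_{<j})$ and hence correlated with $z_{<j}$.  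Conditioning on $z_{=j}$ tilts the law of $z_{<j}$ away from $\mu_{<j}^{(V)}$, so the inductive hypothesis no longer applies to the conditional, and no dominated-convergence argument turns pointwise-in-$z_{=j}$ smallness of the product integral into smallness of the coupled one.  Similarly, in the vertical term the piece $\|F(w_{<j},z_{=j})-\bar F(w_{<j},a_j)\|_H$ --- which your write-up never discusses --- has the sample $z_{=j}$ distributed (given $a_j$) according to $(Q_j^{(V)}(z_{<j})\mid C_{a_j})$, but $\bar F(w_{<j},a_j)$ averages $F(w_{<j},\cdot)$ against $(Q_j^{(V)}(w_{<j})\mid C_{a_j})$; this is again not a matched Littlewood term, and your Hellinger/$\tilde G$ device corrects only the pushforward weights on the cell labels $a_j$, not the within-cell conditional laws.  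The cure is to take $\bar F(z_{<j},a_j)$ rather than $\bar F(w_{<j},a_j)$ as the first intermediate --- this is exactly the paper's $G_{a_j}(z_{<j})$ --- so that both Littlewood pieces are matched at the appropriate base point; your Hellinger argument for the residual $a_j$-weight transfer in the third (now matched) piece is sound and is a reasonable alternative to the paper's direct application of the inductive hypothesis to the scalar function $f_{\alpha_j}$.
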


\begin{proof}  We induct on $j$.  The case $j =-1$ is vacuously true, so suppose that $0 \leq j \leq k$ and that the claim has already been proven for $j-1$.

Fix $V, H, F$; we may normalise $F$ to be bounded in magnitude by $1$.  It is convenient to use the language of probability rather than measure theory.
Let $z \in Z_{\leq j}^{(V)}$ be drawn at random with law $\mu_{\leq j}^{(V)}$, and then for fixed $z$, let $w$ be drawn at random with law $T_{\leq j}^{(V)}(z)$.  Our task is to show that
\begin{equation}\label{fzw}
\E \| F(z) - F(w) \|_H = o_{\alpha \to \infty}(1).
\end{equation}

We split $z = (z_{<j},z_j)$ and $w = (w_{<j},w_j)$ for $z_{<j}, w_{<j} \in Z_{<j}^{(V)}$ and $z_j, w_j \in Z_j^{(V)}$.  We similarly split $a := \overline{\alpha_{\leq j}}^{(V)}(z) \in A_{\leq j}^{(V)}$ as $a = (a_{<j}, a_j)$.  Observe from construction that

\begin{itemize}
\item $z_{<j} \in Z_{<j}^{(V)}$ has the distribution of $\mu_{\leq j-1}^{(V)}$;
\item Given $z_{<j}$, $a_{<j}$ is determined by the formula $a_{<j} = \overline{\alpha_{<j}}^{(V)}(z_{<j})$;
\item Given $z_{<j}$, $z$ is a random variable with law $P_j^{(V)}(z_{<j})$;
\item Given $z$, $a_j$ is determined by the formula $a_j = \overline{\alpha_{j}}^{(V)}(z_j)$;
\item Given $z_{<j}$, $w_{<j}$ is a random variable with law $T_{\leq j-1}^{(V)}(z_{<j})$;
\item Given $w_{<j}$ and $a_j$, $w$ is a random variable with law ${Q'}_{\alpha,j}^{(V)}(w_{<j},a_j)$ (defined in \eqref{qjw2}).
\end{itemize}

Now we write the left-hand side of \eqref{fzw} as
$$ \sum_{b \in A_{=j}^{(V)}} \E \left(\I( a_j = b ) |F(z) - F(w)| \right)$$
and estimate this using the triangle inequality as the sum of the three expressions
\begin{equation}\label{x1}
\sum_{b \in A_{=j}^{(V)}} \E \left(\I( a_j = b )  \| F(z) - G_{b}(z_{<j}) \|_H\right)
\end{equation}
\begin{equation}\label{x2}
\sum_{b \in A_{=j}^{(V)}} \E \left(\I( a_j = b ) \| G_{b}(z_{<j}) - G_{b}(w_{<j}) \|_H\right)
\end{equation}
and
\begin{equation}\label{x3}
\sum_{b_ \in A_{=j}^{(V)}} \E \left(\I( a_j = b ) \| F(w) - G_{b}(w_{<j}) \|_H\right)
\end{equation}
where $G_{b}: Z_{<j}^{(V)} \to H$ is the measurable function
$$ G_{b}( z_{<j} ) := \int_{Z_{\leq j}^{(V)}} F(z)\ {Q'}_{\alpha,j}^{(V)}( (z_{<j},b), dz ).$$

We will show that each of \eqref{x1}, \eqref{x2}, \eqref{x3} are $o_{\alpha \to \infty}(1)$.

By the induction hypothesis we have
$$ \E \sum_{b \in A_{=j}^{(V)}} \| G_{b}(z_{<j}) - G_{b}(w_{<j}) \|_H = o_{\alpha \to \infty}(1)$$
and so the contribution of \eqref{x2} is acceptable.

Now let us look at \eqref{x1}.  In view of the distribution of $z_{<j}$ and $z$, we can rewrite this expression as $\E f_{\alpha_j}( z_{<j} )$, where
where
$$ f_{\alpha_j}(z_{<j}) :=
\sum_{b \in A_{=j}^{(V)}} P_j^{(V)}(z_{<j})(C_b)
\int_{Z_{\leq j}^{(V)}} \left\| F(y) - \int_{Z_{\leq j}^{(V)}} F(u)\ (P_j^{(V)}(z_{<j},du)|C_b) \right\|_H\ (P_j^{(V)}(z_{<j},dy)|C_b)$$
and
$$ C_b := (\overline{\alpha_{=j}}^{(V)})^{-1}(\{b\}),$$
where we can of course ignore all summands on which $P_j^{(V)}(z_{<j})(C_b) = 0$.
By Lemma \ref{dom}, $f_{\alpha_j}(z_{<j}) = o_{\alpha \to \infty}(1)$ for each $z_{<j}$.  Applying Lemma \ref{dct}, we conclude that \eqref{x1} is $o_{\alpha \to \infty}(1)$ as desired.

Finally we look at \eqref{x3}.  For each $b_j \in A_{=j}^{(V)}$, let $\Omega_{b_j} \subset Z_{<j}^{(V)}$ be the set of all $z_{<j}$ such that the event $\{ a_j = b_j \}$ has non-zero measure with respect to $\overline{P_j}^{(V)}(z_{<j})$.  We split \eqref{x3} further into
\begin{equation}\label{x3-a}
\sum_{b_j \in A_{=j}^{(V)}} \E \left( \I( a_j = b_j ) \I( w_{<j} \in \Omega_{b_j} ) \| F(w) - G_{b_j}(w_{<j}) \|_H\right)
\end{equation}
and
\begin{equation}\label{x3-b}
\sum_{b_j \in A_{=j}^{(V)}} \E \left(\I( a_j = b_j ) \I( w_{<j} \not \in \Omega_{b_j} ) \| F(w) - G_{b_j}(w_{<j}) \|_H\right)
\end{equation}
Consider the expression \eqref{x3-a}.  If $w_{<j} \in \Omega$ and $a_j = b_j$ are fixed, then $w_j$ has the distribution of $\mu_{w_{<j}}$, where $\mu_{w_{<j}}$ was defined in the treatment of \eqref{x1}.  Thus we can bound \eqref{x3-a} by
$$ \E f_{\alpha_j}(w_{<j}).$$
By the induction hypothesis, we have $\E |f_{\alpha_j}(w_{<j}) - f_{\alpha_j}(z_{<j})| = o_{\alpha \to \infty}(1)$, and so the contribution of \eqref{x3-a} is acceptable by our analysis of \eqref{x1}.

Finally, we turn to \eqref{x3-b}.  As $F$ is bounded in magnitude by $1$, we may bound this expression crudely by
$$ 2 \sum_{b_j \in A_{=j}^{(V)}} \E \left(\I( a_j = b_j ) \I( w_{<j} \not \in \Omega_{b_j}^c)\right).$$
By the induction hypothesis we have
$$ 2 \sum_{b_j \in A_{=j}^{(V)}} \E \left|\I( w_{<j} \not \in \Omega_{b_j}) - \I( z_{<j} \not \in \Omega_{b_j})\right| = o_{\alpha \to \infty}(1)$$
so it suffices to show that
$$ 2 \sum_{b_j \in A_{=j}^{(V)}} \E \left(\I( a_j = b_j ) \I( z_{<j} \not \in \Omega_{b_j})\right) = o_{\alpha \to \infty}(1).$$
But if $z_{<j} \in \Omega_{b_j}^c$ then $a_j$ has a zero probability of equaling $b_j$, and so the left-hand side is zero. The claim follows.
\end{proof}

Now we prove Claim (iii) of Proposition \ref{disc-ident}.  Observe from the Stone-Weierstrass theorem that we can approximate any continuous function $F: Z^{(V)} \times Z^{(V)} \to \R$ uniformly by finite linear combinations of tensor products $f(z) g(z')$, where $f: Z^{(V)} \to \R$ and $g: Z^{(V)} \to \R$ are continuous.  By linearity, we may assume that $F$ itself is of this form; thus our task is to show that
$$ \lim_{\alpha \to \infty} \int_{Z^{(V)}} f(z) \left( \int_{Z^{(V)}} g( z')\ T_{\leq k}^{(V)}(z,dz') \right)\ d\mu^{(V)}(z) = \int_{Z^{(V)}} f(z) g(z)\ d\mu^{(V)}(z).$$
By the triangle inequality, it suffices to show that
$$ \lim_{\alpha \to \infty} \int_{Z^{(V)}} \left( \int_{Z^{(V)}} |g( z')-g(z)|\ T_{\leq k}^{(V)}(z,dz')\right)\ d\mu^{(V)}(z) = 0.$$
But this follows immediately from Lemma \ref{coerce-conv}.  The proof of Proposition \ref{disc-ident} (and thus also Theorem \ref{rs-thm-dir}) is now complete.

\subsection{Proof of Proposition \ref{disc-ident2}}\label{disc2-sec}

We now prove Proposition \ref{disc-ident2}, which is the most difficult proposition to establish in this paper.  In Example \ref{d2}, we already saw how the $k=1$ case of this proposition proceeded.  Unfortunately, this case does not capture the full complexity of this proposition, as it does not reveal the difficulty of dealing with ``indistinguishable'' pairs of inputs.  To illustrate the problem, let us informally consider a model case in which $k=2$, $Z = (\pt,Z_1,\{0,1\})$, and $\mu = P_2 \circ P_1$ where $P_1: \pt \to Z_{\leq 1}$ is given from a probability measure $Q_1 \in \Pr(Z_1)$ as in Example \ref{rvc}, and $P_2: Z_{\leq 1} \to Z$ takes the form $P_2^{(V)}(z) = \delta_z \times Q_2^{(V)}(z)$ for some \emph{deterministic} natural transformation $Q_2: Z_{\leq 1} \to Z_{=2}$. We will also assume that $A_2=\{0,1\}$ and that $\alpha_2: \{0,1\} \to \{0,1\}$ is the identity.

We can view the deterministically continuous natural transformation $\zeta_{\alpha}: A \times X_\alpha \to Z$ as a \emph{random} deterministically continuous natural transformation $\zeta: A \to Z$.  Such a natural transformation can be built out of two functions $\zeta_1: A^{([1])} \to Z_{=1}^{([1])}$ and $\zeta_2: A^{([2])} \to Z_{=2}^{([2])}$ by requiring that
$$ \zeta^{([j])}(a)_j(\phi) = \zeta_j(a)(\phi)$$
for $j=1,2$, $a \in A^{([j])}$, and $\phi \in \Hom([j],[j])$.  Any two functions $\zeta_1, \zeta_2$ will determine a deterministically continuous natural transformation, so long as $\zeta_2$ is $\Hom([2],[2])$-equivariant.  On the other hand, to get the convergence to the diagonal, we would like to have $\alpha_{=j}^{([j])}(\zeta_j(a)) = a_j$ for all $j=0,1$ and ``most'' $a_j \in A^{([j])}$ (with respect to the measure $\overline{\alpha} \circ \mu^{([j])}$).  We also need to select the $\zeta_j(a)$ in a suitably ``absolutely continuous'' manner.

We build $\zeta_1$ and $\zeta_2$ as follows.  For each $a_1 \in A_1 \equiv A^{([1])}$, define the cell $C_{a_1} := \alpha_1^{-1}(\{a_1\})$, and select $\zeta_1(a_1) \in Z_1^{([1])}$ independently at random with law $(Q_1|C_{a_1})$ if $Q_1(C_{a_1})>0$, and with law $Q_1$ otherwise; this already ensures that $\alpha_1 \circ \zeta_1$ converges to the diagonal (by Littlewood's principle).  Then, we can define $\zeta_2$ by $\zeta_2(a) := Q_2(\zeta_1 \circ a_1)$.  Note that as long as $1$ and $2$ are \emph{distinguishable} in the sense that $a_1(1)\neq a_1(2)$, the distribution of $\zeta_1 \circ a_1 \in Z_1^2$ will be absolutely continuous with respect to $Q_1^2$, as $\zeta_1(a_1(1))$ and $\zeta_1(a_2(2))$ are independent and individually absolutely continuous with respect to $Q_1$.  The required absolute continuity and convergence properties would be relatively easy to establish if the distinguishable case was the only case.  However, in the indistinguishable case $a_1(1)=a_1(2)$, the random variable $\zeta_1 \circ a_1$ is no longer absolutely continuous with respect to $Q_1^2$, being concentrated on the diagonal of $Z_1^2$ (which can have zero measure), and so convergence and absolute continuity in this case is not immediately clear.  To resolve this issue, observe that if $Z_1$ is atomless with respect to $Q_1$ then this indistinguishable case will be asymptotically negligible for sufficiently fine $A_1$; on the other hand, if $Z_1$ does contain atoms, then the diagonal of $Z_1^2$ acquires a positive measure with respect to $Q_1^2$, and so the difficulty again disappears.  Note however that our analysis had to take note of what symmetries were obeyed by the input $a$.  Later on we shall see that we will need to describe these symmetries in general by a certain \emph{groupoid} $R_a$.

We now begin the full proof of Proposition \ref{disc-ident2}.  Let $Z, k, \mu, \alpha$ be as in that proposition.  To simplify the notation slightly we shall omit some subscripts on $\alpha$.
By  Definition \ref{regexp}, we can factor
$$ \mu = P_k \circ \ldots \circ P_0$$
where for each $0 \leq j \leq k$, $P_j: Z_{<j} \to Z_{\leq j}$ is a $j$-independent natural transformation such that
\begin{equation}\label{pizz}
\pi_{Z_{\leq j} \to Z_{<j}} \circ P_j = \id_{Z_{<j}}.
\end{equation}

Our objective is to find a probability sub-Cantor space $(X,\nu)$
and a deterministically continuous natural transformation $\zeta: A \times X \to Z$ such that
$$ \zeta^{([k])} \circ ((\overline{\alpha} \circ \mu) \oplus \nu)^{([k])}$$
is $o_{\alpha \to \infty}(1)$-absolutely continuous with respect to $\mu^{([k])}$,
and which converges to the diagonal in the sense that
\begin{equation}\label{lima}
\lim_{\alpha \to \infty} \int_{Z^{(V)}}
\int_{X} F( z, \zeta^{(V)}( \overline{\alpha}^{(V)}(z),x) )\ d\nu(x) d\mu^{(V)}(z) = \int_{Z^{(V)}} F(z,z)\ d\mu^{(V)}(z)
\end{equation}
for all finite vertex sets $V$ and all continuous $F: Z^{(V)} \times Z^{(V)} \to \R$.

This will follow from the $j=k$ case of following inductive proposition.

\begin{proposition}[Inductive discretisation]\label{indiscrete} For any $-1 \leq j \leq k$, there exists a probability sub-Cantor space $(X_j,\nu_j)$ and
a deterministically continuous natural transformation $\zeta_{\leq j}: A_{\leq j} \times X_j \to Z_{\leq j}$ such that
\begin{equation}\label{ejc-ac}
 \zeta_{\leq j}^{(V)} \circ ((\overline{\alpha}_{\leq j} \circ \mu_{\leq j}) \oplus \nu_j)^{(V)} \ll_{o_{\alpha \to \infty}(1)} \mu_{\leq j}^{(V)}
\end{equation}
for all finite vertex sets $V$, and for which we have the convergence property
\begin{equation}\label{limz}
\lim_{\alpha \to \infty} \int_{Z_{\leq j}^{(V)}}
\left[ \int_{Z_{\leq j}^{(V)}} \| F(z)-F(w) \|_H\ T_{\leq j}^{(V)}(z, dw) \right]\ d\mu_{\leq j}^{(V)}(z) = 0
\end{equation}
for all finite vertex sets $V$, all finite-dimensional Hilbert spaces $H$, and all bounded measurable $F: Z_{\leq j}^{(V)} \to H$, where $T_{\leq j}: Z_{\leq j} \to Z_{\leq j}$ is the natural transformation
$$ T_{\leq j} := \zeta_{\leq j} \circ (\overline{\alpha}_{\leq j} \oplus \nu_j),$$
$\mu_{\leq j}: \pt \to Z_{\leq j}$ is the exchangeable $Z_{\leq j}$-recipe
$$ \mu_{\leq j} := P_j \circ \ldots \circ P_0,$$
and we allow $H$, $F$ to depend on $\alpha_{j+1},\ldots,\alpha_k$ (but must be independent of $\alpha_0,\ldots,\alpha_j$).
\end{proposition}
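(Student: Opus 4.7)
The proof proceeds by induction on $j$, paralleling the inductive structure of Section \ref{disc-sec}. The base case $j=-1$ is trivial: take $X_{-1}$ a singleton, $\nu_{-1}$ the trivial measure, and $\zeta_{\leq -1}$ the trivial map. For the inductive step, suppose the conclusions hold at level $j-1$, with data $(X_{j-1}, \nu_{j-1}, \zeta_{\leq j-1})$. Recall from Example \ref{jep} that the $j$-independent natural transformation $Q_j : Z_{<j} \to Z_{=j}$ arising in the regular factorisation $P_j^{(V)}(z) = \delta_z \times Q_j^{(V)}(z)$ is determined by the $\Hom([j],[j])$-equivariant probability kernel $Q_j^{([j])} : Z_{<j}^{([j])} \rightsquigarrow Z_{=j}^{([j])}$. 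For each $(z,a) \in Z_{<j}^{([j])} \times A_{=j}^{([j])}$, let $C_a := (\overline{\alpha_{=j}}^{([j])})^{-1}(\{a\})$ and define the conditional measure $\mu_{z,a} := (Q_j^{([j])}(z) \,|\, C_a)$ when $Q_j^{([j])}(z)(C_a) > 0$, and $\mu_{z,a} := Q_j^{([j])}(z)$ otherwise. A measurable Skorokhod-type parameterisation, chosen $\Hom([j],[j])$-equivariantly, yields a map $\psi : Z_{<j}^{([j])} \times A_{=j}^{([j])} \times [0,1] \to Z_{=j}^{([j])}$ with $\psi(z, a, u) \sim \mu_{z,a}$ for $u$ uniform, and such that $Z_{=j}^{(\sigma)} \circ \psi(\cdot,\cdot,u) = \psi(Z_{<j}^{(\sigma)}(\cdot), A_{=j}^{(\sigma)}(\cdot), u)$ for all $\sigma \in \Hom([j],[j])$ and $u \in [0,1]$.

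Set $(X_j, \nu_j) := (X_{j-1} \times Y_j,\, \nu_{j-1} \otimes \omega_j)$, where $Y_j$ is a sub-Cantor space carrying an independent uniform $[0,1]$-variable $u_{\mathcal{O}}$ for each orbit $\mathcal{O}$ of the $\Hom([j],[j])$-action on the finite set $A_{=j}^{([j])}$. Define $\zeta_{\leq j}^{(V)}(a, (x_{j-1}, y))$ to have $<j$-part $z_{<j} := \zeta_{\leq j-1}^{(V)}(a_{<j}, x_{j-1})$, and, for each $e \in \binom{V}{j}$ and any chosen $\phi_e \in \Hom([j], e)$, $j$-part on $e$ given by the transport via $Z_{=j}^{(\phi_e^{-1})}$ of $\psi\bigl(Z_{<j}^{(\phi_e)}(z_{<j}),\, A_{=j}^{(\phi_e)}(a_{=j}),\, u_{\mathcal{O}}\bigr)$, where $\mathcal{O}$ is the orbit of $A_{=j}^{(\phi_e)}(a_{=j})$. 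The equivariance of $\psi$ renders this construction independent of the choice of $\phi_e$, so $\zeta_{\leq j}$ is a well-defined deterministically continuous natural transformation. The asymptotic absolute continuity \eqref{ejc-ac} then follows from the pointwise absolute continuity $\mu_{z,a} \ll Q_j^{([j])}(z)$ on the set of cells with positive $Q_j^{([j])}(z)$-mass (whose complement carries $(\overline{\alpha}_{\leq j} \circ \mu_{\leq j})$-mass $o_{\alpha \to \infty}(1)$ by Littlewood's principle), combined with Lemma \ref{pac} and the inductive absolute continuity of $\zeta_{\leq j-1}$; any residual correlation between edges sharing a common orbit is absorbed into the $o_{\alpha \to \infty}(1)$-error.

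The main obstacle is the verification of the convergence-to-the-diagonal property \eqref{limz}. Paralleling the proof of Lemma \ref{coerce-conv}, we apply the triangle inequality with the cell-conditional expectation
\[
G(w_{<j}, a_{=j}) := \E_y F\bigl(w_{<j},\, \zeta_{\leq j}^{(V)}(a, (x_{j-1}, y))_{=j}\bigr),
\]
bounding $\|F(z) - F(w)\|_H$ by three terms: the two outer terms $\|F(z) - G(z_{<j}, a_{=j})\|_H$ and $\|F(w) - G(w_{<j}, a_{=j})\|_H$ are controlled cell-by-cell via Littlewood's principle (Lemma \ref{dom}) combined with dominated convergence (Lemma \ref{dct}), while the middle term $\|G(z_{<j}, a_{=j}) - G(w_{<j}, a_{=j})\|_H$ is bounded using the $(j-1)$-level inductive hypothesis applied to $G$ (viewed as a function on $Z_{\leq j-1}^{(V)}$ valued in an enlarged Hilbert space that encodes the parameter $a_{=j}$ and depends only on $\alpha_j, \alpha_{j+1}, \ldots, \alpha_k$ and $V$, not on $\alpha_0, \ldots, \alpha_{j-1}$). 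The delicate subtlety, absent from Lemma \ref{coerce-conv}, is that two distinct $j$-edges of $V$ whose $A$-data lie in the same orbit share the common randomness $u_{\mathcal{O}}$, so our $\zeta$ correlates these edges while the target $\mu_{\leq j}^{(V)}$ has them conditionally independent. This ``indistinguishability'' discrepancy is absorbed via Littlewood's principle: for sufficiently fine $\alpha$, the probability that two distinct $j$-edges fall in the same orbit is $o_{\alpha \to \infty}(1)$ outside the atomic part of $\mu_{\leq j-1}^{(V)}$, while on the atomic part the target draws are themselves nearly equal, so the correlation is harmless.
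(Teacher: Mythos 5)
Your outline has the right shape — induction on $j$, a Skorokhod-type parameterisation of the $j$-level conditional measures, an appeal to Littlewood's principle for the triangle-inequality estimate — but there are several substantive gaps, and the construction as stated is internally inconsistent.

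The most concrete problem is the equivariance requirement on $\psi$. You demand $Z_{=j}^{(\sigma)} \circ \psi(\cdot,\cdot,u) = \psi(Z_{<j}^{(\sigma)}(\cdot), A_{=j}^{(\sigma)}(\cdot), u)$ for \emph{all} $\sigma \in \Hom([j],[j])$ and \emph{the same} $u$. Now take an input $(z,a)$ with a nontrivial stabiliser: $Z_{<j}^{(\sigma)}(z)=z$ and $A_{=j}^{(\sigma)}(a)=a$ for some $\sigma\ne\id$. Your identity then forces $Z_{=j}^{(\sigma)}(\psi(z,a,u))=\psi(z,a,u)$ for every $u$, i.e.\ $\psi(z,a,\cdot)$ is constrained to land in the $\sigma$-fixed subspace. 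But your $\mu_{z,a}$ is $(Q_j^{([j])}(z)\,|\,C_a)$, and $C_a$ is only a $\sigma$-invariant \emph{set} — its elements are generally not $\sigma$-fixed. So $\mu_{z,a}$ is not supported on the $\sigma$-fixed subspace, and no measurable $\psi(z,a,\cdot)$ pushing uniform onto $\mu_{z,a}$ can satisfy your equivariance. The paper resolves exactly this tension by carrying the stabiliser through the construction: the cells $C_{V,G,a_k}$ in Definition~\ref{celldef} are cut out \emph{inside} the $G$-invariant subspace, and the measures $\rho_{V,G,a_k,z_{<k}}$ are conditionals of $Q$ on those $G$-invariant cells, so the draws automatically inherit the required symmetry. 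A correct analogue of your $\psi$ would have to take the stabiliser as an additional input and condition on the invariant cell, which is essentially the paper's $\rho$ and $\Xi$ machinery in another guise.

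A second issue is the indexing of the shared randomness. You hang one $u_{\mathcal{O}}$ on each orbit of the $\Hom([j],[j])$-action on $A_{=j}^{([j])}$, i.e.\ on the top-order colour only. The paper's random $k$-rule (Definition~\ref{rkr}) is equivariant and jointly independent over pairwise-distinguishable elements of the \emph{full} $A^{([j])}$, so two $j$-edges are coupled in the paper only when their complete colour data (lower-order included) are conjugate under an automorphism. Your construction additionally couples edges whose $<j$-data differ but whose $=j$-data happen to fall in the same orbit, via the shared $u_{\mathcal{O}}$ fed through $\psi$ with different $z$-inputs. That is strictly more coupling than the paper introduces; the paper's Proposition~\ref{decouple} already requires a delicate argument to show that the \emph{narrower} coupling it uses is asymptotically negligible, so the extra coupling you introduce needs its own justification. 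Your final paragraph asserts that ``the probability that two distinct $j$-edges fall in the same orbit is $o_{\alpha\to\infty}(1)$ outside the atomic part of $\mu_{\leq j-1}^{(V)}$,'' but the relevant atoms are in the conditional law $Q_j^{(e)}(z)$ on $Z_{=j}^{(e)}$, not in $\mu_{\leq j-1}^{(V)}$; and in the mixed atomic-atomless case (atoms with fixed positive mass plus an atomless remainder) the orbit-collision probability does not vanish — one has to go through the argument that the conditional law on the atom's cell concentrates at the atom, which is precisely the sort of estimate the paper makes rigorous in Propositions~\ref{quadgood} and~\ref{decouple}. These two propositions, together with the careful ``good/bad quadruple'' bookkeeping (handling cells of zero $Q$-mass) that your proposal never addresses, constitute the bulk of the actual proof; compressing them to a clause about correlations being ``absorbed into the $o_{\alpha\to\infty}(1)$-error'' is where the argument becomes a gap rather than a proof.
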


Indeed, to establish \eqref{lima} from the $j=k$ case of \eqref{limz} one repeats the arguments at the end of the previous section.

\begin{remark} It will be more convenient to interpret \eqref{limz} probabilistically, as the assertion that if $V$ is a vertex set, $F: Z_{\leq j}^{(V)} \to H$ is a bounded measurable map into a finite-dimensional Hilbert space, $z \in Z_{\leq j}^{(V)}$ is drawn at random with law $\mu_{\leq j}^{(V)}$, $x \in X_j$ is drawn at random with law $\nu_j$, $a := \overline{\alpha_{\leq j}}^{(V)}(z) \in A_{\leq j}^{(V)}$, and $w := \zeta_{\leq j}^{(V)}(a,x)$, then
\begin{equation}\label{limzo}
\E \|F(z) - F(w)\|_H = o_{\alpha \to \infty}(1)
\end{equation}
where the decay rate $o_{\alpha \to \infty}$ depends of course on $F$ and $H$.
\end{remark}

It remains to prove Proposition \ref{indiscrete}.  The case $j=-1$ is trivial, so suppose inductively that $0 \leq j \leq k$ and that the claim has already been proven for $j-1$.  To simplify the notation slightly we shall just consider the case $j=k$; actually, we can reduce to this case by discarding all components of $Z, \alpha, A$ of order greater than $j$, and then reducing $k$ to $j$.

Henceforth $j=k$. Let $(X_{k-1},\nu_{k-1})$ and $\zeta_{<k} := \zeta_{\leq k-1}$ be given by the inductive hypothesis.

\subsubsection{Construction of $X_k$ and $\zeta_{\leq k}$}

Let $\Xi$ denote the collection of all $\Hom([k],[k])$-equivariant maps $\xi: A^{([k])} \to Z_{=k}^{([k])}$; observe that $\Xi$ is a compact subset of $(Z_{=k}^{([k])})^{A^{([k])}}$ and is thus a sub-Cantor space.  We refer to elements $\xi$ of $\Xi$ as \emph{$k$-rules}.

We set $X_k := X_{k-1} \times \Xi$, and let $\zeta_{\leq k}: A \times X_k \to Z$ be the unique deterministically continuous natural transformation with the following two properties:
\begin{itemize}
\item ($\zeta_{\leq k}$ extends $\zeta_{<k}$) We have the identity
$$ \pi_{Z \to Z_{<k}} \circ \zeta_{\leq k} = \zeta_{<k} \circ \pi_{A \times X_k \to A_{<k} \times X_{k-1}}$$
where $\pi_{Z \to Z_{<k}}: Z \to Z_{<k}$ and $\pi_{A \times X_k \to A_{<k} \times X_{k-1}}: A \times X_k \to A_{<k} \times X_{k-1}$ are the projection natural transformations.
\item ($\zeta_{\leq k}$ extends $\xi$)  We have
$$ \zeta_{\leq k}^{([k])}(a,(x,\xi))_k := \xi(a)$$
for all $a \in A^{([k])}$, $x \in X_{k-1}$, and $\xi \in \Xi$.
\end{itemize}

More explicitly, $\zeta_{\leq k}$ is given by the formula
$$ \zeta_{\leq k}^{(V)}( (a_{<k}, a_k), (x,\xi) )
:= \left( \zeta_{<k}^{(V)}(a_{<k}, x) , \left( Z^{(\phi_e^{-1})}_{=k}(\xi( A^{(\phi_e)}(a_{<k},a_k) ) ) \right)_{e \in \binom{V}{k}} \right)$$
for all vertex sets $V$, all $(a_{<k}, a_k) \in A^{(V)} \equiv A_{<k}^{(V)} \times A_{=k}^{(V)}$, all $x \in X_k$, and $\xi \in \Xi$, where for each $e \in \binom{V}{k}$, $\phi_e$ is an arbitrary morphism from $[k]$ to $e$ (the exact choice of morphism is not relevant, thanks to the $\Hom([k],[k])$-invariance of $\xi$), and where we identify $Z^{(V)}$ with $Z_{<k}^{(V)} \times \prod_{e \in \binom{V}{k}} Z_{=k}^{(e)}$.  One easily verifies that $\zeta_{\leq k}$ is a deterministically continuous natural transformation.

\subsubsection{Construction of $\nu_k$}

To construct the measure $\nu_k \in \Pr(X_k)$ we will need some more notation.

\begin{definition}[Invariant space, stabiliser, indistinguishability]\label{indistinguished}  Let $Y$ be a sub-Cantor palette, and $V$, $W$ be vertex sets.
\begin{itemize}
\item If $G \leq \Hom(V,V)$ is a group, we define the \emph{$G$-invariant space} $(Y^{(V)})^G := \{ y \in Y^{(V)}: Y^{(\phi)}(y) = y \hbox{ for all } \phi \in G$\}; this is a compact subspace of $Y^{(V)}$.
\item If $y \in Y^{(V)}$, we define the \emph{stabiliser} $\Stab(y) := \{ \phi \in \Hom(V,V): Y^{(\phi)}(y) = y\}$; this is a subgroup of $\Hom(V,V)$.
\item We say that two elements $y \in Y^{(V)}$, $y' \in Y^{(W)}$ are \emph{indistinguishable} if there exists an invertible $\phi \in \Hom(V,W)$ such that $Y^{(\phi)}(y) = y'$ (in particular, this requires $V$ and $W$ to have equal cardinality), and \emph{distinguishable} otherwise.
\end{itemize}
\end{definition}

\begin{remark} Note that deterministically continuous natural transformations are forced to map indistinguishable elements to indistinguishable elements.  In particular, the images of indistinguishable elements cannot be set independently.  This lack of independence will cause significant technical difficulties in our arguments.  A similar difficulty will also be caused by the fact that deterministically continuous natural transformations must map $G$-invariant spaces into $G$-invariant spaces (or equivalently, they cannot decrease the stabiliser of an element).
\end{remark}

\begin{definition}[Vertical ingredient]\label{vertmes}  We define $Q: Z_{<k} \to Z_k$ to be the unique natural transformation such that
\begin{equation}\label{pjwq}
\delta_{z} \times Q^{(V)}(z) := P_k^{(V)}( z );
\end{equation}
for all vertex sets $V$ and all $z \in Z_{<k}^{(V)}$; this is well defined from \eqref{pizz}.
\end{definition}

\begin{definition}[Cell]\label{celldef}  If $V$ is a vertex set, $G \leq \Hom(V,V)$ and $a_k \in A_{=k}^{(V)}$, we define the \emph{cell}
$$C_{V,G,a_k} := \{ z \in (Z_{=k}^{(V)})^G: \overline{\alpha_{=k}}^{(V)}(z) = a_k\};$$
this is a compact subspace of $Z_{=k}^{(V)}$.
\end{definition}

\begin{definition}[Default point]  We arbitrarily select a \emph{default point} $z_* \in Z_k$.  For any $V$, we define $\overline{z_*}^{(V)} \in Z_k^{(V)}$ by setting $\overline{z_*}^{(V)}(\phi) := z_*$
for all $\phi \in \Hom([k],V)$.
\end{definition}

\begin{remark} The point $z_*$ is only needed for technical reasons, as a sort of ``error message'' to output when certain inputs are ``bad''.  The exact value of $z_*$ plays no role in our arguments.
\end{remark}

\begin{definition}[Quadruples]\label{quad}  If $V$ is a vertex set, $G \leq \Hom(V,V)$, $a_k \in A_{=k}^{(V)}$, and $z_{<k} \in Z_{<k}^{(V)}$, we say that $(V,G,a_k,z_{<k})$ is \emph{good} if $Q^{(V)}(z_{<k})(C_{V,G,a_k}) > 0$, and \emph{bad} otherwise.  We define the probability measure $\rho_{V,G,a_k,z_{<k}} \in \Pr( (Z_{=k}^{(V)})^G )$ to equal the conditioned measure $(Q^{(V)}(z_{<k})|C_{V,G,a_k})$ (as defined in Appendix \ref{prob}) if $(V,G,a_k,z_{<k})$ is good, and $\delta_{\overline{z_*}^{(V)}(\phi)}$ otherwise.
\end{definition}

By using the natural transformation properties heavily, we observe that the probability measures
$\rho_{V,G,a_j,w_{<j}}$ are invariant under relabeling in the sense that
for any $G \leq \Hom(V,V)$, $a_j \in A_{=j}^{(V)}$, $w_{<j} \in Z_{<j}^{(V)}$, and any bijection $\phi: V \to W$, that
\begin{equation}\label{zanu}
\rho_{\phi G \phi^{-1}, A_{=j}^{(\phi)}(a_j), Z_{<j}^{(\phi)}(w_{<j})} = Z_{=j}^{(\phi)} \circ \rho_{G,a_j,w_{<j}}.
\end{equation}

\begin{definition}[Random $k$-rules]\label{rkr} If $x \in X_{<k}$, we define $\eta_{x} \in \Pr( \Xi )$ to be the unique law for a random $k$-rule $\xi \in \Xi$ with the following properties:
\begin{itemize}
\item For each $a = (a_{<k},a_k) \in A^{([k])}$, the random variable $\xi(a) \in Z_{=k}^{([k])}$ has the law of $\rho_{[k], \Stab(a), a_k, \zeta_{<k}^{([k])}(a_{<k},x)}$.
\item If $a_1,\ldots,a_n \in A^{([k])}$ are pairwise distinguishable, then the random variables $\xi(a_1),\ldots,\xi(a_n) \in Z_{=k}^{([k])}$ are jointly independent.
\end{itemize}
\end{definition}

\begin{remark} The probability distribution $\eta_{x}$ can be constructed explicitly as follows.  The equivalence relation of indistinguishability partitions $A^{([k])}$ into finitely many equivalence classes.  For each equivalence class $O$, select a representative $a \in O$ arbitrarily, and draw $\xi(a)$ independently at random with law $\rho_{[k], \Stab(a), a_k, \zeta_{<k}^{([k])}(a_{<k},x)}$.  Then for any $\phi \in \Hom([k],[k])$, we set $\xi( A^{(\phi)}(a)) := Z_{=k}^{(\phi)}(\xi(a))$.  One easily verifies (using \eqref{zanu}) that this defines a random $k$-rule $\xi$, and that the law $\eta_{x}$ for $\xi$ has the desired properties; it is also easy to see that this law is unique.
\end{remark}

With all these definitions, we can now define the measure $\nu_{\leq k} \in \Pr(X_k)$ by the formula
$$ \nu_{\leq k} := \int_{X_{k-1}} \delta_x \times \eta_x\ d\nu_{<k}(x).$$
Informally, $\nu_{\leq k}$ is the law of the pair $(x,\xi)$, where $x \in X_{k-1}$ is selected with law $\nu_{<k}$, and then $\xi \in \Xi$ is selected with law $\eta_x$.

\begin{remark} When $k=1$, the construction simplifies substantially since it is not possible for two distinct elements of $A^{([1])}$ to be indistinguishable, and one essentially obtains the construction in Example \ref{d2}.
\end{remark}

It remains to verify the properties \eqref{ejc-ac}, \eqref{limz} (with $j=k$).

\subsubsection{Most quadruples are good}

The first task is to show that the bad quadruples (which output the ``error message'' $z_*$) are negligible.

\begin{proposition}[Most quadruples are good]\label{quadgood} Let $z \in Z^{([k])}$ be drawn at random with law $\mu^{([k])}$, let $a = (a_{<k},a_k) := \overline{\alpha}^{([k])}(z) \in A^{([k])}$, and let $x \in X_{k-1}$ be drawn independently at random with law $\nu_{k-1}$.  Let $w_{<k} := \zeta_{<k}^{([k])}(a_{<k},x) \in Z_{<k}^{([k])}$.  Then the quadruple $([k], \Stab(a), a_k, w_{<k})$ is good with probability $1 - o_{\alpha \to \infty}(1)$.
\end{proposition}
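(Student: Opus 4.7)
The plan is to prove Proposition \ref{quadgood} in two stages: first to establish the analogue with $z_{<k}$ in place of $w_{<k}$ by a direct structural argument, and then to transfer the result to $w_{<k}$ via the induction hypothesis \eqref{limz} applied to a finite-dimensional indicator-valued test function. The key preparatory step is a \emph{stabiliser matching} lemma: for each subgroup $H \leq \Hom([k],[k]) = S_k$, the $\mu^{([k])}$-measure of the set $\{z \in Z^{([k])} : \Stab(\overline\alpha^{([k])}(z)) \supseteq H,\ \Stab(z) \not\supseteq H\}$ tends to zero as $\alpha \to \infty$. To see this, observe that $(Z^{([k])})^H$ is closed in $Z^{([k])}$ (as a finite intersection of diagonals of the continuous maps $Z^{(\phi)}$) and, by the countable clopen base property of sub-Cantor spaces (Remark \ref{borel}), can be approximated from outside in $\mu^{([k])}$-measure by clopen $H$-saturated neighbourhoods; for $\alpha$ fine enough to refine these neighbourhoods, their complements force $\overline\alpha^{([k])}(z) \notin (A^{([k])})^H$. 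Summing over the finitely many subgroups of $S_k$ (at both the $k$ and $<k$ levels) yields that $\Stab(a) = \Stab(z)$ and $\Stab(a_{<k}) = \Stab(z_{<k})$ hold jointly with $\mu^{([k])}$-probability $1 - o_{\alpha\to\infty}(1)$.

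Next I would establish the $z_{<k}$-version. Conditioning on the high-probability event $\Stab(a_{<k}) = \Stab(z_{<k}) =: G_0$, the stabilisers factor as $\Stab(a) = G_0 \cap \Stab(a_k)$ and $\Stab(z) = G_0 \cap \Stab(z_k)$, so that $\Stab(a) = \Stab(z)$ if and only if $\Stab(z_k) \supseteq G_0 \cap \Stab(a_k)$. The structural observation driving the whole proof is then that any $y \in D_{a_k}$ with $\Stab(y) \supseteq G_0 \cap \Stab(a_k)$ automatically lies in $D_{a_k} \cap (Z_{=k}^{([k])})^{G_0 \cap \Stab(a_k)} = C_{[k], \Stab(a), a_k}$; consequently, if this cell has zero $Q^{([k])}(z_{<k})$-measure, then the $Q^{([k])}(z_{<k})$-measure of the event ``$z_k$ respects the apparent stabiliser of $a_k$'' is zero as well. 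Summing over those $a_k$ that are ``bad'' in this sense gives $\Pr(Q^{([k])}(z_{<k})(C_{[k], \Stab(a), a_k}) = 0) \leq \Pr(\Stab(a) \neq \Stab(z)) + \Pr(\Stab(a_{<k}) \neq \Stab(z_{<k})) = o_{\alpha\to\infty}(1)$.

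For the transfer step, I fix $\alpha_k$ first, let $N$ be the product of $2^{|S_k|}$ and $|A_{=k}^{([k])}|$, take $H = \R^N$, and define the bounded measurable map $F : Z_{<k}^{([k])} \to H$ with coordinates $F^{(G,a_k)}(z_{<k}) := \I(Q^{([k])}(z_{<k})(C_{[k], G, a_k}) > 0)$ indexed by subgroups $G \leq S_k$ and by $a_k \in A_{=k}^{([k])}$. Since $F$ depends only on $\alpha_k$ (and the frozen data), the induction hypothesis \eqref{limz} applies and yields $\E \|F(z_{<k}) - F(w_{<k})\|_H = o(1)$ as $\alpha_{<k}$ is refined with $\alpha_k$ held fixed. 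Since each coordinate of a vector is bounded in absolute value by its Euclidean norm, and since the random coordinate $(\Stab(a), a_k)$ is determined by $z$ alone and thus compatible with the joint law of $(z_{<k}, w_{<k})$ supplied by \eqref{limz}, we get $\Pr(F^{(\Stab(a),a_k)}(z_{<k}) \neq F^{(\Stab(a),a_k)}(w_{<k})) \leq \E \|F(z_{<k}) - F(w_{<k})\|_H = o_{\alpha\to\infty}(1)$. Combining this with the $z_{<k}$-version of the previous paragraph yields the proposition.

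The main conceptual obstacle is the middle paragraph: one would naively like to argue that $z_k$ itself lies in the cell $C_{[k], \Stab(a), a_k}$, but this requires every stabiliser symmetry visible at the coloured level $a$ to be already present at the original level $z$, which is exactly the content of the stabiliser matching lemma; without that reduction, $z_k$ need not lie in the invariant cell and the direct argument fails. A secondary wrinkle is the order of quantifiers in the transfer step: the dimension $N$ of $H$ depends on $\alpha_k$, but the topology on $\Col(Z)$ (Definition \ref{color-top}) permits us to first fix $\alpha_k$ (freezing $N$) and only afterwards refine $\alpha_{<k}$, which is precisely the regime in which the induction hypothesis provides the required bound.
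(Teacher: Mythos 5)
Your proposal is correct and uses the same two main ingredients as the paper's own argument: the induction hypothesis \eqref{limz} to transfer statements about $z_{<k}$ to statements about $w_{<k}$, and a stabiliser-matching estimate (clopen separation of points combined with dominated convergence, Lemma~\ref{dct}) to ensure that $\Stab(\overline{\alpha}^{([k])}(z))=\Stab(z)$ and $\Stab(\overline{\alpha_{<k}}^{([k])}(z_{<k}))=\Stab(z_{<k})$ with $\mu^{([k])}$-probability $1-o_{\alpha\to\infty}(1)$. Where you diverge is in the $z_{<k}$-version of the claim: the paper introduces the $[0,1]$-valued test functions $F_b(y):=P_k^{([k])}(y)\bigl((Z^{([k])})^G\cap C'_b\bigr)$ and a $\delta$-thresholding to convert the $L^1$ control coming from \eqref{limz} into control on the zero set of $F_{a_k}$, whereas you exploit the direct structural fact that, on the event $\Stab(a)=\Stab(z)$ (after factoring stabilisers through $G_0:=\Stab(a_{<k})=\Stab(z_{<k})$), the point $z_k$ drawn from $Q^{([k])}(z_{<k})$ already lies in $C_{[k],\Stab(a),a_k}$, so the conditional probability of badness is bounded outright by the probability of stabiliser mismatch, no thresholding required. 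This also lets you transfer an indicator-valued $F$, so a single $L^1$ bound from \eqref{limz} controls the probability of disagreement without the extra factor of $1/\delta$. The net effect is a modest streamlining of the same underlying mechanism, and your observation about freezing $\alpha_k$ before refining $\alpha_{<k}$ is exactly the quantifier discipline that the paper's colouring topology is designed to support.
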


\begin{proof}  The key idea of the proof is to exploit the fact (essentially arising from the monotone (or dominated) convergence theorem) that most elements of the cells $C_{V,G,a_k}$ tend to inherit the symmetries of their colour $a_k$ when the colouring $\alpha$ is sufficiently fine.

By Definition \ref{quad}, our task is to show that
$$
\Prob( Q^{([k])}(w_{<k})(C_{[k],\Stab(a),a_k}) = 0 ) = o_{\alpha \to \infty}(1).$$
The number of possible stabiliser subgroups $\Stab(a) \leq \Hom([k],[k])$ is bounded (independently of $A$ or $\alpha$), so it suffices by the union bound to show that
$$
\Prob( \Stab(a) = G \hbox{ and } Q^{([k])}(w_{<k})(C_{[k],G,a_k}) = 0 ) = o_{\alpha \to \infty}(1)$$
for each fixed group $G \leq \Hom([k],[k])$.

Fix $G$.  If $\Stab(a)=G$, then $a \in (A^{([k])})^G$; thus (by the natural transformation properties of $\zeta_{<k}$) we see that $w_{<k}$ lies in $(Z_{<k}^{([k])})^G$.  From this and Definition \ref{celldef} we see that
$$ \{ w_{<k}\} \times C_{[k],G,a_k} = (\{w_{<k}\} \times Z_{=k}^{([k])}) \cap (Z^{([k])})^G \cap C'_{a_k}$$
where for any $b \in A_{=k}^{([k])}$, $C'_b \subset Z^{([k])}$ is the set
$$ C'_{b} := Z_{<k}^{([k])} \times (\overline{\alpha_{=k}}^{([k])})^{-1}(\{b\}).$$
From this and Definition \ref{vertmes}, we see that
$$ Q^{([k])}(w_{<k})(C_{[k],G,a_k}) = P_k^{([k])}(w_{<k})( (Z^{([k])})^G \cap C'_{a_k} )$$
so it suffices to show that for every $\eps > 0$, we have
$$
\Prob( \Stab(a) = G \hbox{ and } F_{a_k}(w_{<k}) = 0 ) \ll \eps$$
for all sufficiently fine $\alpha$ (depending on $\eps$), where for any $b \in Z_{=k}^{([k])}$, $F_b: Z_{<k}^{([k])} \to [0,1]$ is the measurable function
$$ F_b(y) := P_k^{([k])}(y)( (Z^{([k])})^G \cap C'_{b} ).$$

Fix $\eps$, and set $\delta := \eps/|A_{=k}^{([k])}|$.  Observe that
$$
\I( F_{a_k}(w_{<k}) = 0 )
\leq \sum_{b \in A_{=k}^{([k])}} \frac{1}{\delta} |F_{b}(w_{<k}) - F_{b}(z_{<k})|
+ \I( F_{a_k}(z_{<k}) \leq \delta ).$$
From the inductive hypothesis \eqref{limz} (or \eqref{limzo}) we have
$$ \E \sum_{b \in A_{=k}^{([k])}} \frac{1}{\delta} |F_{b}(w_{<k}) - F_{b}(z_{<k})| \ll \eps$$
for sufficiently fine $\alpha$, so it suffices to show that
\begin{equation}\label{astab}
\Prob( \Stab(a) = G \hbox{ and } F_{a_k}(z_{<k}) \leq \delta ) \ll \eps.
\end{equation}
Recall that $z_{<k}$ is distributed with law $\mu_{<k}^{([k])}$, and for fixed $z_{<k}$, $z_k$ is distributed with law $P_k^{([k])}(z_{<k})$.  In particular, for any $b \in A_{=k}^{([k])}$ and fixed $z_{<k}$, we have $a_k = b$ with probability $P_k^{([k])}(y)( C'_{b} )$.  Thus we may express the left-hand side of
\eqref{astab} as
$$
\E \sum_{b \in A_{=k}^{([k])}} P_k^{([k])}(z_{<k})( C'_{b} ) \I(\Stab(a_{<k},b) = G) \I( F_b(z_{<k}) \leq \delta ).$$
We can split
$$P_k^{([k])}(y)( C'_{b} ) = F_b(y) + P_k^{([k])}(y)( C'_{b} \backslash (Z^{([k])})^G ).$$
Since
$$
\E \sum_{b \in A_{=k}^{([k])}} F_b(z_{<k}) \I( F_b(z_{<k}) \leq \delta )
\leq |A_{=k}^{([k])}| \delta = \eps,$$
it thus suffices to show that
$$
\E \sum_{b \in A_{=k}^{([k])}} P_k^{([k])}(z_{<k})( C'_{b} \backslash (Z^{([k])})^G ) \I(\Stab(a_{<k},b) = G) \ll \eps.$$
Now observe that if $\Stab(a_{<k},b) = G$, then on the support $\{z_{<k}\} \times Z_{=k}^{([k])}$ of $P_k^{([k])}(z_{<k})$, the set $C'_{b}$ is contained in the set $(\overline{\alpha}^{([k])})^{-1}(( A^{([k])})^G)$.  Thus we have
$$ \sum_{b \in A_{=k}^{([k])}} P_k^{([k])}(z_{<k})( C'_{b} \backslash (Z^{([k])})^G ) \I(\Stab(a_{<k},b) = G) \leq P_k^{([k])}(z_{<k})( (\overline{\alpha}^{([k])})^{-1}(( A^{([k])})^G) \backslash (Z^{([k])})^G );$$
since $\mu^{([k])} = P_k^{([k])} \circ\mu_{<k}^{([k])}$, it thus suffices to show that
\begin{equation}\label{ag}
\mu^{([k])}( (\overline{\alpha}^{([k])})^{-1}(( A^{([k])})^G) \backslash (Z^{([k])})^G ) \ll \eps
\end{equation}
for sufficiently fine $\alpha$.

Now observe that if $z \in Z^{([k])}$ is not $G$-invariant (i.e. $z \not \in (Z^{([k])})^G$), then (since the algebra of clopen subsets in sub-Cantor spaces separate points) there exists a colouring $\alpha: Z \to A$ such that $\alpha^{([k])}(z)$ is also not $G$-invariant.  This property is then inherited by all refinements of $\alpha$.  As a consequence we see that
$$ \I( z \in (\overline{\alpha}^{([k])})^{-1}((A^{([k])})^G) \backslash (Z^{([k])})^G) = o_{\alpha \to \infty}(1)$$
for all $z \in Z^{([k])}$.  The claim \eqref{ag} then follows from the dominated convergence theorem (Lemma \ref{dct}).
\end{proof}

\subsubsection{Decoupling}

Let $V$ be a finite vertex set, let $z \in Z^{(V)}$ be drawn at random with law $\mu^{(V)}$, let $a := \overline{\alpha}^{(V)}(z) \in A^{(V)}$, and let $x \in X_{k-1}$ be drawn independently at random with law $\nu_{k-1}$.  We then draw $\xi \in \Xi$ with law $\eta_x$, and set $w \in Z^{(V)}$ to be the point $w := \zeta_{\leq k}(a,(x,\xi))$.  We split $z = (z_{<k},z_k)$, $a = (a_{<k},a_k)$, and $w = (w_{<k},w_k)$ in the usual manner.

Let us temporarily freeze $z, a, x$, so that the only remaining source of randomness comes from $\xi$.
The lower order components $w_{<k}$ of $w$ do not depend on $\xi$ and are now deterministic; indeed, we have $w_{<k} = \zeta_{<k}(a_{<k},x)$.
If we split the top component $w_k$ as $w_k = (w_k\downharpoonright_e)_{e \in \binom{V}{k}}$, then we see that each piece $w_k\downharpoonright_e$ depends on $\xi$ via the formula
$$ Z^{(\phi_e^{-1})}_{=k}(\xi( A^{(\phi_e)}(a_{<k},a_k) ) ).$$
From this and Definition \ref{rkr} (and \eqref{zanu}), we see that $w_k\downharpoonright_e$ is distributed (for fixed $z,a,x$) according to the law
$$\rho_{e, \Stab(a\downharpoonright_e), a_k\downharpoonright_e, \zeta_{<k}^{(e)}(a_{<k}\downharpoonright_e,x)}
= \rho_{e, \Stab(a\downharpoonright_e), a_k\downharpoonright_e, w_{<k}\downharpoonright_e}.$$
In particular, we almost surely have the constraint
\begin{equation}\label{wke}
 w_k\downharpoonright_e \in (Z_{=k}^{(e)})^{\Stab(a\downharpoonright_e)},
\end{equation}
thus $w_k\downharpoonright_e$ needs to inherit all the symmetries that $a\downharpoonright_e$ has.

From Definition \ref{rkr}, we also see that for $e_1,\ldots,e_n \in \binom{V}{k}$, the pieces $w_k\downharpoonright_{e_1}, \ldots, w_k\downharpoonright_{e_n}$ are jointly independent so long as the $a\downharpoonright_{e_1},\ldots,a\downharpoonright_{e_n}$ are pairwise distinguishable.

On the other hand, if $e, e' \in \binom{V}{k}$ are such that $a\downharpoonright_{e}$ and $a\downharpoonright_{e'}$ are indistinguishable, thus we have $a\downharpoonright_{e'} = A^{(\phi)}(a\downharpoonright_e)$ for some $\phi \in \Hom(e,e')$, then $w_k\downharpoonright_e$ and $w_k\downharpoonright_{e'}$ are coupled together via the constraint
\begin{equation}\label{wke2}
w_k\downharpoonright_e = Z_{=k}^{(\phi)}( w_k\downharpoonright_{e'} ).
\end{equation}
Note that \eqref{wke} can be viewed as the special case of $e=e'$ of \eqref{wke2}.

Motivated by the above discussion, for every $b \in A^{(V)}$, let $R_b$ be the set of all triples $(e,e',\phi)$, where $e,e' \in \binom{V}{k}$ and $\phi \in \Hom(e,e')$ is such that $A^{(\phi)}(b\downharpoonright_{e'}) = b\downharpoonright_e$, thus $R_b$ collects all the ways in which components of $b$ are indistinguishable from each other.  The set $R_b$ is a \emph{groupoid}, in the sense that
\begin{itemize}
\item For every $e \in \binom{V}{k}$, the triple $(e,e,\id_e)$ lies in $R_b$.
\item If $(e,e',\phi)$ lies in $R_b$, then $(e',e,\phi^{-1})$ lies in $R_b$.
\item If $(e,e',\phi)$ and $(e',e'',\psi)$ lie in $R_b$, then $(e,e'',\psi \circ \phi)$ lies in $R_b$.
\end{itemize}
Observe that for any $e \in \binom{V}{j}$, the stabiliser $\Stab(b\downharpoonright_e)$ of $b$ restricted to $e$ can be recovered from $R_b$ by the formula
$$\Stab(b\downharpoonright_e) = \{ \phi \in \Hom(e,e): (e,e,\phi) \in R_b \}.$$

Let us call $e,e' \in \binom{V}{k}$ \emph{$R$-indistinguishable} for some groupoid $R$ if there exists $\phi \in \Hom(e,e')$ with $(e,e',\phi) \in R$, and \emph{$R$-distinguishable} otherwise.  As $R$ is a groupoid, the property of being $R$-indistinguishable is an equivalence relation.

Given a groupoid $R$, an element $b \in A_{=k}^{(V)}$, and an element $y \in Z_{<k}^{(V)}$, we then define the probability measure $\sigma_{V,R,b,y} \in \Pr(Z^{(V)})$ to be the unique probability distribution of a random variable $w = (w_{<k},w_k) \in Z^{(V)}$ such that
\begin{itemize}
\item $w_{<k} = y$;
\item For each $e \in \binom{V}{j}$, $w_k\downharpoonright_e$ has the distribution of $\rho_{e,G_e,b\downharpoonright_e, y\downharpoonright_e}$, where $G_e \leq \Hom(e,e)$ is the group $G_e := \{ \phi \in \Hom(e,e): (e,e,\phi) \in R \}$;
\item For any $(e,e',\phi) \in R$, we have the constraint \eqref{wke2};
\item For any $e_1,\ldots,e_n \in \binom{V}{k}$ which are pairwise $R$-distinguishable, the
random variables $w_k\downharpoonright_{e_1},\ldots,w_k\downharpoonright_{e_n}$ are jointly independent.
\end{itemize}

One can construct $\sigma_{V,R,b,y}$ more explicitly by choosing one representative $e$ from each $R$-indistinguishable equivalence class, selecting $w_k\downharpoonright_e$ independently at random for each such representative with law $\rho_{e,G_e,b\downharpoonright_e, y\downharpoonright_e}$, and then extending to all other $e$ by \eqref{wke2}.

By the previous discussion, we see that for fixed $z,a,x$, $w$ is distributed according to the law $\sigma_{V,R_a,a_k,w_{<k}}$.

We would like to remove the couplings \eqref{wke}, \eqref{wke2} from
this distribution.  To this end, we define the \emph{trivial
groupoid} $R_0 := \{ (e,e,\id_e): e \in \binom{V}{k} \}$. We would
like to assert that the probability measure
$\sigma_{V,R_a,a_k,w_{<k}}$ is close to $\sigma_{V,R_0,a_k,w_{<k}}$
in the total variation norm $\|\cdot \|_{M(Z^{(V)})}$ on $Z^{(V)}$,
as defined in Appendix \ref{prob}.  This is accomplished by the
following key estimate.

\begin{proposition}[$\sigma_{V,R_a,a_k,w_{<k}}$ approximates $\sigma_{V,R_0,a_k,w_{<k}}$]\label{decouple}  Let $V$ be a finite vertex set, let $z \in Z^{(V)}$ be drawn at random with law $\mu^{(V)}$, let $a := \overline{\alpha}^{(V)}(z) \in A^{(V)}$, and let $x \in X_{k-1}$ be drawn independently at random with law $\nu_{k-1}$.  Set $w_{<k} := \zeta_{<k}(a_{<k},x)$.  Then
$$ \E \| \sigma_{V,R_a,a_k,w_{<k}} - \sigma_{V,R_0,a_k,w_{<k}} \|_{M(Z^{(V)})} = o_{\alpha \to \infty}(1).$$
\end{proposition}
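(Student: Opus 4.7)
The plan is to decompose the indistinguishabilities recorded by $R_a$ into those already present at the level of $z$ (and hence automatically respected by both target measures, modulo an $o(1)$ loss) and those produced by accidental $\alpha$-coincidences (which vanish as $\alpha$ refines). I would introduce the \emph{essential groupoid}
$$R^{\mathrm{ess}}_z := \{(e,e',\phi) \in \binom{V}{k} \times \binom{V}{k} \times \Hom(e,e') : Z^{(\phi)}(z\downharpoonright_{e'}) = z\downharpoonright_e\};$$
applying $\overline{\alpha}$ gives $R^{\mathrm{ess}}_z \subseteq R_a$. The first step is to establish
$$\Prob(R_a = R^{\mathrm{ess}}_z) = 1 - o_{\alpha\to\infty}(1).$$
Strict containment would require some triple $(e,e',\phi) \notin R^{\mathrm{ess}}_z$, with $Z^{(\phi)}(z\downharpoonright_{e'}) \ne z\downharpoonright_e$ as points of the sub-Cantor space $Z^{(e)}$, to satisfy $A^{(\phi)}(a\downharpoonright_{e'}) = a\downharpoonright_e$. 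Since clopen sets separate points in a sub-Cantor space, any refinement of a suitable $\alpha$ separates such a pair; the dominated convergence theorem (Lemma \ref{dct}) together with a union bound over the finite set of triples $(e,e',\phi) \in \binom{V}{k}^2 \times \Hom(e,e')$ gives the claim. Since total variation is bounded by $2$, it remains to control the expected TV on the event $R_a = R^{\mathrm{ess}}_z$.

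On that event, $G_e := \Stab(a\downharpoonright_e)$ equals $\Stab(z\downharpoonright_e)$, so $z\downharpoonright_e$ itself lies in $(Z_{=k}^{(e)})^{G_e}$, and whenever $(e,e_O,\phi) \in R_a$ one has the genuine identity $Z^{(\phi)}(z\downharpoonright_{e_O}) = z\downharpoonright_e$. I would present both measures in a common form: choose a representative $e_O$ in each $R^{\mathrm{ess}}_z$-equivalence class $O \subset \binom{V}{k}$, so that under $\sigma_{V,R^{\mathrm{ess}}_z,a_k,w_{<k}}$ the representative's value is drawn from $\rho_{e_O,G_{e_O},a_k\downharpoonright_{e_O},w_{<k}\downharpoonright_{e_O}}$ and the other values in $O$ are determined by the groupoid relations, while under $\sigma_{V,R_0,a_k,w_{<k}}$ the representative's value is drawn from $\rho_{e_O,\{\id\},\ldots}$ and every other value in $O$ is drawn independently from its own cell conditional (with distinct classes drawn independently in both cases). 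The comparison then reduces to two Littlewood-type estimates, both instances of Lemma \ref{dom} after clopen approximation using the countable base of clopen sets in the sub-Cantor space $Z_{=k}^{(e)}$. First, the extra $G_{e_O}$-invariance constraint on the representative perturbs the cell conditional by $o_{\alpha\to\infty}(1)$ in TV on average, because the indicator of the closed set $(Z_{=k}^{(e_O)})^{G_{e_O}}$ equals $1$ at the ``true'' point $z\downharpoonright_{e_O}$ and Lemma \ref{dom} forces it to be almost constant on most cells. Second, for each $(e, e_O, \phi) \in R^{\mathrm{ess}}_z$, the naturality identities $Z_{=k}^{(\phi)} \circ Q^{(e_O)} = Q^{(e)} \circ Z_{<k}^{(\phi)}$ and $Z_{<k}^{(\phi)}(w_{<k}\downharpoonright_{e_O}) = w_{<k}\downharpoonright_e$ (the latter from $\zeta_{<k}$ being a natural transformation) identify $\rho_{e,\{\id\},\ldots}$ with the $Z_{=k}^{(\phi)}$-pushforward of $\rho_{e_O,\{\id\},\ldots}$, so one expects the independent draw at $e$ to agree with the forced value $Z_{=k}^{(\phi)}(u_{e_O})$ with probability $1 - o_{\alpha\to\infty}(1)$ by a further application of Lemma \ref{dom} to the indicator of the ``coincidence'' set.

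The sporadic ``bad'' quadruples (where $\rho$ is replaced by $\delta_{\overline{z_*}}$) are negligible by Proposition \ref{quadgood}, and passing from $z_{<k}$ to $w_{<k}$ in all the Littlewood estimates is handled by the inductive convergence statement \eqref{limzo} together with Lemma \ref{dct} applied to each bounded measurable test function extracted from the Littlewood approximations. The main technical obstacle will be step (ii) when $e \cap e_O \ne \emptyset$: the shared lower-order data ties the cell measures of $w_k\downharpoonright_e$ and $w_k\downharpoonright_{e_O}$ together through the natural transformation $Q$, and a naive triangle-inequality implementation of the Littlewood approximation would produce losses that grow with $|V|$. I would overcome this by organizing the estimates class by class and, within each class, propagating the Littlewood approximation incrementally along the orbit using the specific bijection $\phi \in \Hom(e,e_O)$ at each step, so that each incremental error is bounded independently of $V$ and the cumulative error remains $o_{\alpha \to \infty}(1)$ with constants uniform in the finite vertex set.
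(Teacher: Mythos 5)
Your decomposition via the essential groupoid $R^{\mathrm{ess}}_z$ is a correct approach and reaches the same bottom-line ingredients as the paper, but it is organized genuinely differently.  The paper does not define $R^{\mathrm{ess}}_z$; instead it union-bounds over the (finitely many, $\alpha$-independent) candidate groupoids $R$, for each $R$ introduces the closed set $Z_R = \{y: Z^{(\phi)}(y\downharpoonright_{e'})=y\downharpoonright_e \text{ for all } (e,e',\phi)\in R\}$, and shows via clopen separation, Lemma~\ref{dct} and Markov that outside a small exceptional set of pairs $(y,b)$ one has $Q^{(V)}(y)(C_{V,\{\id\},b}\setminus Z_R)=o_{\alpha\to\infty}(1)\cdot Q^{(V)}(y)(C_{V,\{\id\},b})$.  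It then interposes the single intermediate measure $(Q^{(V)}(y)\,|\,C_{V,\{\id\},b}\cap Z_R)$, identifies it with $\sigma_{V,R_0,b,y}$ up to $o(1)$ by Lemma~\ref{cond}, and identifies it with $\sigma_{V,R,b,y}$ by observing that both share the same conditional-independence-across-classes, deterministic-within-class structure, so that only a marginal-at-each-$e$ comparison is needed.  This $Z_R$ conditioning handles your Littlewood estimates (i) and (ii) simultaneously and without ever isolating the cross-edge agreement as a separate sub-problem.  Your route, by contrast, directly couples the two $\sigma$'s and must argue that the independent cell draw at $e$ agrees with the forced value $Z_{=k}^{(\phi)}(u_{e_O})$ with probability $1-o(1)$; this does go through, but only because, conditionally on $z_{<k}$, the restrictions $z_k\downharpoonright_e$ and $z_k\downharpoonright_{e_O}$ are jointly independent for $e\neq e_O$, so the coincidence event $(e,e_O,\phi)\in R^{\mathrm{ess}}_z$ forces $z_k\downharpoonright_e$ to be an atom of $Q^{(e)}(z_{<k}\downharpoonright_e)$ almost surely, and the cell conditional therefore concentrates at that atom as $\alpha$ refines.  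Your write-up leaves this atom observation implicit; without it, the ``further application of Lemma~\ref{dom} to the indicator of the coincidence set'' is not obviously adequate (the raw statement of Lemma~\ref{dom} only controls an unconditional $L^1$ quantity), so I would make the observation explicit.  Passing from $z_{<k}$ to $w_{<k}$ via the inductive hypothesis~\eqref{limzo} before performing these estimates, as you propose, matches what the paper does in its first displayed reduction.

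The ``main technical obstacle'' you flag, that errors could grow with $|V|$ when $e\cap e_O\neq\emptyset$, is not a real obstacle for this Proposition: the statement fixes $V$ and the decay rate $o_{\alpha\to\infty}(1)$ is permitted to depend on $V$ (the uniformity in $V$ demanded by Proposition~\ref{repair} enters only through the absolute-continuity clause of Proposition~\ref{disc-ident2} applied at $V=[k]$, not through the convergence-to-the-diagonal clause).  A triangle inequality over $|\binom{V}{k}|$ edges, each contributing $o_{\alpha\to\infty}(1)$ with constants depending on $V$, is exactly what the paper does (``the cardinality of $\binom{V}{k}$ is independent of $\alpha$''), and overlapping edges pose no difficulty since conditionally on $z_{<k}$ the top-order restrictions are independent across $e$.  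The ``propagating along the orbit'' device you sketch to remedy this is therefore unnecessary, and I would be skeptical that a Littlewood-type estimate even admits the incremental propagation you describe.
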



\begin{proof}  From the inductive hypothesis \eqref{limz} (or \eqref{limzo}) we have
$$
\E \sum_{b \in A_{=k}^{(V)}} |\| \sigma_{V,R_{a_{<k},b},b,w_{<k}} - \sigma_{V,R_0,b,w_{<k}} \|_{M(Z^{(V)})} - \E \| \sigma_{V,R_{a_{<k},b},b,z_{<k}} - \sigma_{V,R_0,b,z_{<k}} \|_{M(Z^{(V)})}| =o_{\alpha \to \infty}(1)$$
so it suffices to show that
$$ \E \| \sigma_{V,R_a,a_k,z_{<k}} - \sigma_{V,R_0,a_k,z_{<k}} \|_{M(Z^{(V)})} = o_{\alpha \to \infty}(1).$$

The main difficulty here is to understand the effect of the constraints \eqref{wke}, \eqref{wke2} caused by $R_a$.  The number of possible groupoids $R_a$ is bounded independently of $\alpha$.  Thus it suffices to show that
\begin{equation}\label{rar}
 \E \I(R_a=R) \| \sigma_{V,R,a_k,z_{<k}} - \sigma_{V,R_0,a_k,z_{<k}} \|_{M(Z^{(V)})} = o_{\alpha \to \infty}(1)
 \end{equation}
for each groupoid $R$.

Fix $R$.  Recall that $z_{<k}$ is distributed with law $\mu_{<k}^{(V)}$, and for fixed $z_{<k}$, $z_k$ is distributed with law $Q^{(V)}(z_{<k})$, and so for any $b \in A_{=k}^{(V)}$ and fixed $z_{<k}$, $a_k$ will equal $b$ with probability $Q^{(V)}(z_{<k})(C_{V,\{\id\},b})$.  Thus we can rewrite the left-hand side of \eqref{rar} as
$$
\int_{Z_{<k}^{(V)}} \sum_{b \in A_{=k}^{(V)}} Q^{(V)}(y)(C_{V,\{\id\},b}) \I( R_{\overline{\alpha_{<k}}^{(V)}(y),b} = R )
\| \sigma_{V,R,b,y} - \sigma_{V,R_0,b,y} \|_{M(Z^{(V)})}
\ d\mu_{<k}^{(V)}(y).$$

Let $Z_R \subset Z^{(V)}$ denote the set
$$ Z_R := \{ y \in Z^{(V)}: Z^{(\phi)}(y \downharpoonright_{e'}) = y\downharpoonright_e \hbox{ for all } (e,e',\phi) \in R \}$$
and let $A_{R} \subset A^{(V)}$ denote the set
$$ A_R := \{ b \in A^{(V)}: A^{(\phi)}(b\downharpoonright_{e'}) = b\downharpoonright_e \hbox{ for all } (e,e',\phi) \in R \}$$
As in the proof of Proposition \ref{quadgood}, we can use the fact that clopen subsets in sub-Cantor spaces separate points to conclude that
$$ \I( y \in (\overline{\alpha}^{(V)})^{-1}(A_R) \backslash Z_R) = o_{\alpha \to \infty}(1)$$
for each $y \in Z^{(V)}$, and hence by Lemma \ref{dct}
$$ \mu^{(V)}( (\overline{\alpha}^{(V)})^{-1}(A_R) \backslash Z_R ) = o_{\alpha \to \infty}(1)$$
or in other words
$$ \int_{Z_{<k}^{(V)}} \sum_{b \in A_{=k}^{(V)}} Q^{(V)}(y)(C_{V,\{\id\},b} \backslash Z_R) \I( R_{\overline{\alpha_{<k}}^{(V)}(y),b} = R )\ d\mu_{<k}^{(V)}(y) = o_{\alpha \to \infty}(1).$$
Since we have
$$ \int_{Z_{<k}^{(V)}} \sum_{b \in A_{=k}^{(V)}} Q^{(V)}(y)(C_{V,\{\id\},b})\ d\mu_{<k}^{(V)}(y) = 1,$$
we may thus apply Markov's inequality and locate an exceptional set $E \subset Z_{<k}^{(V)} \times A_{=k}^{(V)}$ with
$$ \int_{Z_{<k}^{(V)}} \sum_{b \in A_{=k}^{(V)}} Q^{(V)}(y)(C_{V,\{\id\},b}) \I( (y,b) \in E ) =
o_{\alpha \to \infty}(1)$$
such that
\begin{equation}\label{qvy}
 Q^{(V)}(y)(C_{V,\{\id\},b}) > 0
\end{equation}
and
\begin{equation}\label{qvy2}
 Q^{(V)}(y)(C_{V,\{\id\},b} \backslash Z_R) = o_{\alpha \to \infty}(1) Q^{(V)}(y)(C_{V,\{\id\},b})
\end{equation}
for all $(y,b) \in Z_{<k}^{(V)} \times A_{=k}^{(V)} \backslash E$.

Fix this set $E$.  To finish the proof of \eqref{rar}, it thus suffices to show that
$$ \| \sigma_{V,R,b,y} - \sigma_{V,R_0,b,y} \|_{M(Z^{(V)})} = o_{\alpha \to \infty}(1)$$
uniformly for all $(y,b) \in Z_{<k}^{(V)} \times A_{=k}^{(V)} \backslash E$ with
\begin{equation}\label{rr}
R_{\overline{\alpha_{<k}}^{(V)}(y),b} = R.
\end{equation}

Fix $y,b$ as above.  From \eqref{qvy} (and the $k$-independence of $P_k$, and hence of $Q$) we see that $(e,\{\id\},b\downharpoonright_e,y\downharpoonright_e)$ is good for every $e \in \binom{V}{k}$.  Thus
by Definition \ref{quad}, we have
$$ \rho_{e, \{\id\}, b\downharpoonright_e, y\downharpoonright_e} = (Q^{(e)}(y\downharpoonright_e)|C_{e,\{\id\},b\downharpoonright_e})$$
and hence (by the $k$-independence of $Q$ again) we see from construction of $\sigma'$ that
$$ \sigma_{V,R_0,b,y} = (Q^{(V)}(y)|C_{V,\{\id\},b}).$$
From \eqref{qvy2} and Lemma \ref{cond} we thus have
$$ \| \sigma_{V,R_0,b,y} - (Q^{(V)}(y)|C_{V,\{\id\},b} \cap Z_R) \|_{M(Z^{(V)})} = o_{\alpha \to \infty}(1)$$
so by the triangle inequality it suffices to show that
\begin{equation}\label{sby}
\| \sigma_{V,R,b,y} - (Q^{(V)}(y)|C_{V,\{\id\},b} \cap Z_R) \|_{M(Z^{(V)})} = o_{\alpha \to \infty}(1).
\end{equation}

Let $w$ be drawn using law $\sigma_{V,R,b,y}$, and let $w'$ be drawn using law $(Q^{(V)}(y)|C_{V,\{\id\},b} \cap Z_R)$.  The lower order components $w_{<k}$, $w'_{<k}$ of $w, w'$ are both equal to $b$, so we focus on the top order components $w_k, w'_k$, which we split as $(w_k\downharpoonright_e)_{e \in \binom{V}{k}}$ and $(w'_k\downharpoonright_e)_{e \in \binom{V}{k}}$ respectively.

If $e_1,\ldots,e_n \in \binom{V}{k}$ are pairwise $R$-distinguishable, then by construction of $\sigma_{V,R,b,y}$ we have that $w_k\downharpoonright_{e_1}, \ldots, w_k\downharpoonright_{e_n}$ are jointly independent.  Conversely, if $e, e'$ are $R$-distinguishable, thus $(e,e',\phi) \in R$ for some $\phi \in \Hom(e,e')$, then from \eqref{wke2} we have the constraint
$$ w_k\downharpoonright_e = Z_{=k}^{(\phi)}( w_k\downharpoonright_{e'} ).$$

Now we observe that the random variables $w'_k \downharpoonright_e$ obey exactly the same independence and constraint properties.  Indeed, if $(e,e',\phi) \in R$, then the constraint
$$ w'_k\downharpoonright_e = Z_{=k}^{(\phi)}( w'_k\downharpoonright_{e'} ).$$
holds almost surely, since $w$ is constrained to lie in $Z_R$ almost surely.  On the other hand, if $e_1,\ldots,e_n \in \binom{V}{k}$ are pairwise $R$-distinguishable, and thus lie in disjoint equivalence classes of $R$-indistinguishability, then we claim that the random variables $w'_k\downharpoonright_{e_1}, \ldots, w'_k\downharpoonright_{e_n}$ are jointly independent.  Indeed, this claim is clearly true if $w'$ is drawn with law $Q^{(V)}$ (as all of the $w'_k\downharpoonright_e$ are jointly independent in this case), and the conditioning to $C_{V,\{\id\},b} \cap Z_R$ only couples together those pairs $w'_k\downharpoonright_e$, $w'_k\downharpoonright_{e'}$ which lie in the same equivalence class.

In view of the above discussion (and the fact that the cardinality of $\binom{V}{k}$ is independent of $\alpha$), we see that in order to conclude \eqref{sby}, it suffices to show that for each $e \in \binom{V}{k}$ separately, the laws of $w_k\downharpoonright_e$ and $w'_k\downharpoonright_e$ differ in $M(Z_{=k}^{(e)})$ norm by $o_{\alpha \to \infty}(1)$, uniformly in $y$ and $b$.

Fix $e$.  From the definition of $\sigma_{V,R,b,y}$, we see that $w_k\downharpoonright_e$ is distributed according to the law $\rho_{e, G_e, b\downharpoonright_e, y\downharpoonright_e}$.  The distribution of $w'_k\downharpoonright_e$ is more complicated.  However, by \eqref{qvy2} we know that this law differs from the measure $\pi_{V \to e} \circ (Q^{(V)}(y)|C_{V,\{\id\},b})$, where $\pi_{V \to e}: Z_{=k}^{(V)} \to Z_{=k}^{(e)}$ is the restriction map, by $o_{\alpha \to \infty}(1)$ in the total variation norm
$M(Z_{=k}^{(e)})$.  Thus it suffices to show that
$$ \| \rho_{e, G_e, b\downharpoonright_e, y\downharpoonright_e} - \pi_{V \to e} \circ (Q^{(V)}(y)|C_{V,\{\id\},b}) \|_{M(Z_{=k}^{(e)}} = o_{\alpha \to \infty}(1).$$
But since $P_k$ (and hence $Q$) is $k$-independent, we have
$$\pi_{V \to e} \circ (Q^{(V)}(y)|C_{V,\{\id\},b}) =
(Q^{(e)}(y\downharpoonright)|C_{e,\{\id\},b\downharpoonright_e}).$$
Meanwhile, from \eqref{qvy}, \eqref{qvy2} we have
$$  Q^{(V)}(y)(C_{V,\{\id\},b} \cap Z_R) > 0.$$
Using the inclusion
\begin{equation}\label{include}
\pi_{V \to e}(C_{V,\{\id\},b} \cap Z_R) \subset C_{e,G_e,b\downharpoonright_e}
\end{equation}
and using the $k$-independence of $Q$ once again, we conclude
$$ Q^{(e)}(y\downharpoonright_e)( C_{e,G_e,b\downharpoonright_e} ) > 0$$
and thus by Definition \ref{quad}
$$ \rho_{e, G_e, b\downharpoonright_e, y\downharpoonright_e} = ( Q^{(e)}(y\downharpoonright_e) | C_{e,G_e,b\downharpoonright_e} ).$$
Our task is thus to show that
$$ \| (Q^{(e)}(y\downharpoonright_e)|C_{e,\{\id\},b\downharpoonright_e}) -
(Q^{(e)}(y\downharpoonright_e)|C_{e,G_e,b\downharpoonright_e}) \| = o_{\alpha \to \infty}(1).$$
But from \eqref{qvy2}, the inclusion \eqref{include} and the $k$-independence of $Q$ once again, we have
$$ Q^{(e)}(y\downharpoonright_e)(C_{e,\{\id\},b\downharpoonright_e} \backslash C_{e,G_e,b\downharpoonright_e} ) = o_{\alpha \to \infty}(1)$$
and the claim follows from Lemma \ref{cond}.
\end{proof}

\subsubsection{Approximate absolute continuity}

We can now quickly prove \eqref{ejc-ac}.  We can phrase this claim in probabilistic language as follows. Let $z \in Z^{(V)}$ be drawn at random with law $\mu^{(V)}$, let $a := \overline{\alpha}^{(V)}(z) \in A^{(V)}$, let $x \in X_{k-1}$ be drawn independently with law $\nu_{k-1}$, let $\xi \in \Xi$ be drawn with law $\eta_x$, and let $w := \zeta_{\leq k}(a,(x,\xi)) \in Z^{(V)}$.  Let $\eps > 0$ be arbitrary.  Our task is to show that if $\alpha$ is sufficiently fine depending on $\eps$, then the distribution of $w$ is $\eps$-absolutely continuous with respect to $\mu^{(V)}$.  Thus, let $E \subset Z^{(V)}$ be a measurable set such that $\mu^{(V)}(E) = 0$.  Our task is to show that
\begin{equation}\label{pwe}
\Prob( w \in E ) \leq \eps.
\end{equation}

From \eqref{mupj} we have $\mu^{(V)} = P_k^{(V)} \circ
\mu_{<k}^{(V)}$.  Since $\mu^{(V)}(E) = 0$, we conclude that the set
$E' := \{ y \in Z_{<k}^{(V)}: P_k^{(V)}(y)(E)
> 0 \}$ has measure zero with respect to $\mu_{<k}^{(V)}$.  By the
inductive hypothesis \eqref{ejc-ac}, we already know that the
distribution of $w_{<k} \in Z_{<k}^{(V)}$ is $\eps/4$-absolutely
continuous with respect to $\mu_{<k}^{(V)}$ if $\alpha$ is
sufficiently fine. Thus we have
$$ \Prob( w_{<k} \in E' ) < \eps/4.$$

Furthermore, by Proposition \ref{quadgood}, we see that
$$ \Prob( ([k], \Stab(a), a_k, w_{<k}) \hbox{ bad } ) < \eps/4$$
for $\alpha$ sufficiently fine, which implies that
$$ \Prob( ([k], \{\id\}, a_k, w_{<k}) \hbox{ bad } ) < \eps/4.$$

Also, by Proposition \ref{decouple} and Markov's inequality, we have
$$ \Prob( \| \sigma_{V,R_a,a_k,w_{<k}} - \sigma_{V,R_0,a_k,w_{<k}} \|_{M(Z^{(V)})} > \eps/4 ) < \eps/4 $$
if $\alpha$ is sufficiently fine.

Now let us fix $z,a,x$ (and hence $w_{<k}$), and condition on the
events that $w_{<k} \not \in E'$ (so $P_k^{(V)}(w_{<k})(E) = 0$) and
that
\begin{equation}\label{events}
([k], \{\id\}, a_k, w_{<k}) \hbox{ good }; \quad \|
\sigma_{V,R_a,a_k,w_{<k}} - \sigma_{V,R_0,a_k,w_{<k}}
\|_{M(Z^{(V)})} \leq \eps/4.
\end{equation}
By the preceding discussion, the event \eqref{events} occurs with probability at least $1-3\eps/4$.
As discussed previously, the random variable $w$ now has the distribution of $\sigma_{V,R_a,a_k,w_{<k}}$.  By \eqref{events}, we thus have the conditional probability estimate
$$ \Prob( w \in E | z,a,x ) \leq \sigma_{V,R_0,a_k,w_{<k}}(E) + \eps/4.$$
But as $([k], \{\id\}, a_k, w_{<k})$ is good, we see from construction of $\sigma_{V,R_0,a_k,w_{<k}}$ (and the $k$-independence of $P_k$) that $\sigma_{V,R_0,a_k,w_{<k}}$ is absolutely continuous with respect to $P_k^{(V)}$, and thus by \eqref{events} we have $\sigma_{V,R_0,a_k,w_{<k}}(E) = 0$.  Integrating over $z,a,x$ and applying the union bound we obtain the claim \eqref{pwe}.

\subsubsection{Convergence to the diagonal}

Now, we verify \eqref{limz}.  We shall modify the argument used to establish Lemma \ref{coerce-conv}.  Fix $V, H, F$ as in the Proposition; we may normalise $F$ to be bounded in norm by $1$.  As in the proof of Lemma \ref{coerce-conv}, it is convenient to use the probabilistic formulation \eqref{limzo}.
Let $z \in Z^{(V)}$ be drawn at random with law $\mu^{(V)}$, and then for fixed $z$, let $x \in X_{k-1}$ be drawn independently at random with law $\nu_{k-1}$, $\xi \in \Xi$ drawn with law $\eta_x$, and set $a := \overline{\alpha}^{(V)}(z)$ and $w := \zeta_{\leq k}^{(V)}(a,x)$.  Our task is to show that
$$
\E \| F(z) - F(w) \|_H = o_{\alpha \to \infty}(1),
$$
where the decay rate $o_{\alpha \to \infty}(1)$ is allowed to depend on $V$, $F$ and $H$.

As usual, we decompose $z = (z_{<k},z_k)$, $a = (a_{<k},a_k)$, and $w = (w_{<k},w_k)$.
From the inductive hypothesis \eqref{limzo} we have
$$ \Prob( \overline{\alpha_{<k}}^{(V)}(z_{<k}) \neq \overline{\alpha_{<k}}^{(V)}(w_{<k}) ) = o_{\alpha \to \infty}(1).$$
Since $\overline{\alpha_{<k}}^{(V)}(z_{<k}) = a_{<k}$, it thus suffices to show that
\begin{equation}\label{efs}
\E \| F(z) - F(w) \|_H \I( S ) = o_{\alpha \to \infty}(1)
\end{equation}
where $S$ is the event that
\begin{equation}\label{aw}
\overline{\alpha_{<k}}^{(V)}(w_{<k}) = a_{<k}.
\end{equation}

For fixed $z,a,x$ (and hence $w_{<k}$), we recall that $w$ has the distribution of $\sigma_{V,R_a,a_k,w_{<k}}$.  Thus we can express the left-hand side of \eqref{efs} as
$$ \E (\int_{Z^{(V)}} \|F(z)-F(y)\|_H\ d\sigma_{V,R_a,a_k,w_{<k}}(y)) \I(S).$$
From Proposition \ref{decouple} (and the boundedness of $F$) we have
$$ \E (\int_{Z^{(V)}} \|F(z)-F(y)\|_H\ d\sigma_{V,R_a,a_k,w_{<k}}(y)) \I(S)
\leq \E (\int_{Z^{(V)}} \|F(z)-F(y)\|_H\ d\sigma_{V,R_0,a_k,w_{<k}}(y)) \I(S) + o_{\alpha \to \infty}(1)$$
and so it suffices to show that
$$ \E (\int_{Z^{(V)}} \|F(z)-F(y)\|_H\ d\sigma_{V,R_0,a_k,w_{<k}}(y)) \I(S) = o_{\alpha \to \infty}(1).$$
Using \eqref{aw}, we can bound the left-hand side by
$$ \E \int_{Z^{(V)}} \|F(z)-F(y)\|_H\ d\sigma_{V,R_0,(\overline{\alpha_{<k}}^{(V)}(w_{<k}),a_k),w_{<k}}(y)).$$
By the triangle inequality, we can bound this in turn by the sum of
$$ \E \| F(z) - G_{a_k}(w_{<k})\|_H$$
and
$$ \E J_{a_k}(w_{<k})$$
where $G_{a_k}: Z_{<k}^{(V)} \to H$, $J_{a_k}: Z_{<k}^{(V)} \to \R$ are the bounded measurable functions
$$ G_{a_k}(w_{<k}) := \int_{Z^{(V)}} F(y) d\sigma_{V,R_0,(\overline{\alpha_{<k}}^{(V)}(w_{<k}),a_k),w_{<k}}(y)$$
and
$$ J_{a_k}(w_{<k}) := \int_{Z^{(V)}} \|G_{a_k}(w_{<k})-F(y)\|_H\ d\sigma_{V,R_0,(\overline{\alpha_{<k}}^{(V)}(w_{<k}),a_k),w_{<k}}(y).$$
From the inductive hypothesis \eqref{limzo} we have
$$ \E \sum_{b \in A_{=k}^{(V)}} \| G_b(w_{<k}) - G_b(z_{<k}) \|_H = o_{\alpha \to \infty}(1)$$
and
$$ \E \sum_{b \in A_{=k}^{(V)}} |J_b(w_{<k}) - J_b(z_{<k})| = o_{\alpha \to \infty}(1)$$
and so by the triangle inequality it suffices to show that
\begin{equation}\label{fg}
 \E \| F(z) - G_{a_k}(z_{<k})\|_H = o_{\alpha \to \infty}(1)
\end{equation}
and
\begin{equation}\label{fg2}
 \E J_{a_k}(z_{<k}) = o_{\alpha \to \infty}(1).
\end{equation}

Let us temporarily freeze $z_{<k}$ (and thus $a_{<k}$), then $z$ has the distribution of $P_k^{(V)}(z_{<k})$.  In particular, for any $b \in A_{=k}^{(V)}$, the probability that $a_k = b$ (conditioning on $z_{<k}$) is equal to $P_k^{(V)}(z_{<k})( C'_b )$, where
$$ C'_{b} := Z_{<k}^{(V)} \times (\overline{\alpha_{=k}}^{(V)})^{-1}(\{b\}).$$
Thus we see that those $b$ for which $P_k^{(V)}(z_{<k})( C'_b )=0$ will almost surely not be equal to $a_k$; in other words, we almost surely have
$$ P_k^{(V)}(z_{<k})( C'_{a_k} ) > 0.$$
From Definition \ref{quad}, we conclude that $(V, \{\id\}, a_k,z_{<k})$ is almost surely good.
Since $P_k$ is $k$-independent, we conclude that $(e, \{\id\}, a_k\downharpoonright_e, z_{<k}\downharpoonright_e)$ is also almost surely good for all $e \in \binom{V}{k}$. From this, the $k$-independence of $P_k$ again, and the definition of $\sigma_{V,R_0,a_k,z_{<k}}$, we conclude that
$$ \sigma_{V,R_0,a_k,z_{<k}} = ( P_k^{(V)}(z_{<k}) | C'_{a_k} )$$
almost surely.  Also, note that for any $b \in A_{=k}^{(V)}$, the distribution of $z$ conditioned to the event $a_k=b$ is also given by $( P_k^{(V)}(z_{<k}) | C'_{a_k} )$.  From this, we see that the left-hand sides of \eqref{fg} and \eqref{fg2} are both equal to
$$ \int_{Z_{<k}^{(V)}} \sum_{b \in A_{=k}^{(V)}} P_k^{(V)}(v)(C'_b)
\int_{Z^{(V)}} \| F(y) - \int_{Z^{(V)}} F(u)\ ( P_k^{(V)}(v,du) | C'_b ) \|_H
( P_k^{(V)}(v,dy) | C'_b )\ d\mu_{<k}^{(V)}(v).$$
From Lemma \ref{dom}, we have
$$ \sum_{b \in A_{=k}^{(V)}} P_k^{(V)}(v)(C'_b)
\int_{Z^{(V)}} \| F(y) - \int_{Z^{(V)}} F(u)\ ( P_k^{(V)}(v,du) | C'_b ) \|_H
( P_k^{(V)}(v,dy) | C'_b )\ d\mu_{<k}^{(V)}(v) = o_{\alpha \to \infty}(1)$$
for all $v \in Z_{<k}^{(V)}$.  The claim then follows from Lemma \ref{dct}.

This (finally!) completes the proof of Proposition \ref{indiscrete} and thus Proposition \ref{disc-ident2}, which in turn completes the proof of all the local repairability results claimed in the introduction.

\appendix

\section{Some measure theory and probability}\label{prob}

In this appendix we recall some notions from measure theory and
probability which we will rely on to establish our positive results.

We will work throughout this paper with sub-Cantor spaces (as defined in Definition \ref{subcantor}).  
All of the notation here however extends to the larger category of standard Borel spaces, i.e. a Polish space (a complete separable metrisable space), together with their Borel $\sigma$-algebra, which is generated by the open sets.

If $X$ is a sub-Cantor space, we will write $\Pr(X)$ for the space of
all probability Borel measures on $\X$.  This is a convex subset of the space $M(X)$ of all real finite measures on $\X$, equipped with the usual total variation norm
$$\| \mu \|_{M(X)} := |\mu|(X) = \sup\{ |\mu(E)-\mu(F)|: E, F \subset X, \hbox{ disjoint} \}.$$

An important operation for us will be that of \emph{conditioning}: if $\mu \in \Pr(X)$ is a probability measure and $E \subset X$ is an event with $\mu(E) > 0$, we define the \emph{conditioning} $(\mu|E) \in \Pr(X)$ of $\mu$ to $E$ to be the probability measure defined by the usual formula
$$ (\mu|E)(F) := \frac{\mu(E \cap F)}{\mu(E)}.$$
The following computation is easily verified:

\begin{lemma}[Conditioning by high probability events is mild]\label{cond}  Let $\mu \in \Pr(X)$ and $E \subset X$ be such that $\mu(E) \geq 1-\eps$ for some $0 < \eps < 1/2$.  Then $\| \mu - (\mu|E) \|_{M(X)} \ll \eps$.
\end{lemma}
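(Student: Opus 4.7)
The plan is to prove this by direct computation, exploiting the explicit formula $(\mu|E)(F) = \mu(E\cap F)/\mu(E)$ and the standard characterisation of the total variation norm in terms of a Hahn decomposition. Write $p := \mu(E)$, so $p \geq 1-\eps > 1/2$, and note that $(\mu - (\mu|E))(X) = 0$ since both $\mu$ and $(\mu|E)$ are probability measures. Consequently, if $X = P \cup N$ is a Hahn decomposition for the signed measure $\nu := \mu - (\mu|E)$, then $\nu^+(X) = \nu^-(X)$ and therefore $\|\nu\|_{M(X)} = 2\sup_{F} (\mu(F) - (\mu|E)(F))$, where the supremum is taken over all Borel subsets $F$ of $X$.

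First I would compute, for an arbitrary Borel $F \subset X$,
\[
\mu(F) - (\mu|E)(F) \;=\; \mu(F) - \frac{\mu(E\cap F)}{p} \;=\; \frac{p\,\mu(F) - \mu(E\cap F)}{p}
\;=\; \frac{(p-1)\mu(F) + \mu(E^c \cap F)}{p},
\]
using $\mu(E \cap F) = \mu(F) - \mu(E^c \cap F)$. Since $p - 1 \leq 0$ and $\mu(E^c \cap F) \leq \mu(E^c) = 1 - p \leq \eps$, the numerator is at most $\eps$, while the denominator satisfies $p \geq 1-\eps \geq 1/2$. Hence $\mu(F) - (\mu|E)(F) \leq \eps/p \leq 2\eps$, uniformly in $F$.

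Combining this with the Hahn-decomposition identity above gives $\|\mu - (\mu|E)\|_{M(X)} \leq 4\eps$, which is the desired bound $\ll \eps$. There is no real obstacle here; the only subtlety is remembering that because $\mu - (\mu|E)$ is a signed measure of total mass zero, its total variation norm is twice the supremum of its values on Borel sets (rather than the supremum itself), which accounts for the factor of $2$ in the final estimate. Everything else is elementary arithmetic.
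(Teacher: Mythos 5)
Your proof is correct. The paper itself gives no argument (it merely calls the lemma ``easily verified'' and moves on), so there is nothing to compare against; your direct computation via the Hahn decomposition of the mean-zero signed measure $\mu - (\mu|E)$ is the standard way to do it, and every step checks out: $\mu(F) - (\mu|E)(F) = \bigl((p-1)\mu(F) + \mu(E^c \cap F)\bigr)/p \le (1-p)/p \le \eps/p < 2\eps$ uniformly in $F$, and hence $\|\mu - (\mu|E)\|_{M(X)} = 2\sup_F\bigl(\mu(F)-(\mu|E)(F)\bigr) \le 4\eps$. One cosmetic note: the hypothesis is $\eps<1/2$ strictly, so $p>1/2$ strictly; you wrote $p\geq 1-\eps\geq 1/2$, which should be a strict inequality, but this does not affect the bound. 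If you want a one-line alternative that avoids the Hahn decomposition (and gives the sharper constant $2$): split $\mu - (\mu|E) = \I(E^c)\mu + \bigl(1-\tfrac{1}{p}\bigr)\I(E)\mu$, whose total variation is $\mu(E^c) + \tfrac{1-p}{p}\mu(E) = 2(1-p) \le 2\eps$.
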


The space $M(X)$ (and hence $\Pr(X)$) comes equipped with the \emph{vague topology} (or \emph{weak-* topology}), defined as the topology induced by the functionals $\mu \mapsto \int_X f\ d\mu$ for all bounded continuous supported $f$.  The following lemma is well-known:

\begin{lemma}[Prokhorov's theorem]\label{seq}  Let $X$ be a sub-Cantor space, and let $\mu_n$ be a sequence of measures in $\Pr(X)$.   Then there is some subsequence $\mu_{n_j}$ of $\mu_n$ which converges vaguely to another measure $\mu \in \Pr(X)$.
\end{lemma}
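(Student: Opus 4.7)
The plan is to prove this by combining the Banach--Alaoglu theorem with separability, exploiting the fact that sub-Cantor spaces are compact and metrisable (cf.\ Remark \ref{borel}). Since $X$ is a compact metrisable space, the Banach space $C(X)$ of continuous real-valued functions on $X$, equipped with the sup norm, is separable. By the Riesz representation theorem, $M(X)$ is isometrically identified with the dual $C(X)^*$, and the vague topology is nothing other than the weak-$*$ topology inherited from this duality.

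First, I would verify that the sequence $(\mu_n)$ is bounded in $M(X)$: indeed $\|\mu_n\|_{M(X)} = \mu_n(X) = 1$ for all $n$, since each $\mu_n$ is a probability measure. By the Banach--Alaoglu theorem, the closed unit ball of $C(X)^*$ is compact in the weak-$*$ topology. Combined with the separability of $C(X)$, this ball is in fact weak-$*$ \emph{sequentially} compact (one fixes a countable dense set $\{f_m\}_{m \geq 1}$ in $C(X)$, applies a standard diagonal argument to extract a subsequence $\mu_{n_j}$ such that $\int f_m\ d\mu_{n_j}$ converges for every $m$, and then uses the density and the uniform bound $\|\mu_{n_j}\|_{M(X)} \leq 1$ to promote this to convergence of $\int f\ d\mu_{n_j}$ for all $f \in C(X)$). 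The limiting functional $\Lambda(f) := \lim_{j \to \infty} \int f\ d\mu_{n_j}$ is a bounded linear functional on $C(X)$, hence corresponds via Riesz to some signed measure $\mu \in M(X)$.

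Next, I would check that $\mu$ is in fact an element of $\Pr(X)$. Non-negativity is clear, because for any continuous $f \geq 0$ one has $\Lambda(f) = \lim_j \int f\ d\mu_{n_j} \geq 0$. To pin down the total mass, I would use the fact that since $X$ is compact, the constant function $\mathbf{1}$ lies in $C(X)$, and hence
\begin{equation*}
\mu(X) = \int_X \mathbf{1}\ d\mu = \Lambda(\mathbf{1}) = \lim_{j \to \infty} \int_X \mathbf{1}\ d\mu_{n_j} = 1.
\end{equation*}
Thus $\mu \in \Pr(X)$ and $\mu_{n_j} \to \mu$ vaguely, as required.

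There is no real obstacle here; the only subtlety worth flagging is that the compactness of $X$ is essential for ensuring the limit has full mass (otherwise mass could escape to infinity and one would only get a sub-probability measure). Since sub-Cantor spaces are compact by definition, this is automatic. Note that the same argument, with a slight tightness hypothesis, yields the more general form of Prokhorov's theorem on arbitrary Polish spaces, but we only need the compact case here.
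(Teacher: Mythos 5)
Your argument is correct and is the standard proof. Note that the paper itself offers no proof of Lemma \ref{seq} --- it simply states the result as ``well-known'' --- so there is no authorial argument to compare against; you have supplied the canonical one. The chain compact metrisable $\Rightarrow$ $C(X)$ separable $\Rightarrow$ (Riesz) $M(X) \cong C(X)^*$ with vague $=$ weak-$*$ $\Rightarrow$ (Banach--Alaoglu + separability + diagonal extraction) sequential weak-$*$ compactness of the unit ball, followed by positivity and the normalisation $\Lambda(\mathbf{1})=1$, is exactly what one would expect. Your closing remark correctly identifies where compactness of $X$ is used, namely to keep the limit from losing mass; this is what makes the tightness hypothesis of the general Prokhorov theorem automatic in the sub-Cantor setting.
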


The space $\Pr(X)$ also comes with a $\sigma$-algebra, induced by the evaluation mappings $\mu \mapsto \mu(A)$ for all measurable $A \subset X$.  This allows us to introduce the notion of a \emph{probability kernel}, which is fundamental to our arguments for our positive results:

\begin{definition}[Probability kernels]
Let $X,Y$ be sub-Cantor spaces.  A \emph{probability kernel from $Y$ to $X$} is a measurable function $P: Y \to \Pr(X)$ from $Y$ to $\Pr(X)$. We will use the notation $P:Y \rightsquigarrow X$ to denote the fact that $P$ is a probability kerne from $Y$ to $X$.  If $y \in Y$ and $f: X \to \R$ is measurable, we use $\int_X f(x)\ P(y, dx)$ to denote the integral of $f$ against the measure $P(y) \in \Pr(X)$.  We call a probability kernel $P: Y \rightsquigarrow X$ \emph{trivial} if $X$ is a point.
\end{definition}

\begin{remark} A probability kernel $P: Y \rightsquigarrow X$ can be viewed as describing the law for some random variable on $X$, where the distribution of that law depends on the value of a parameter $y$ in $Y$.  Indeed, one common way to construct probability kernels is to condition one random variable on the value of another; in measure-theoretic terms, this is closely related to the operation of \emph{disintegrating} a measure with respect to a factor.
\end{remark}

Two important special cases of a probability kernel arise from probability measures and from measurable functions.  Indeed, if $\mu \in \Pr(X)$ is a probability measure, we can (by abuse of notation) identify $\mu$ with a probability kernel $\mu: \pt \rightsquigarrow X$ which maps the point in $\pt$ to $\mu$.  Similarly, if $\phi: Y \to X$ is a measurable function, we can (by further abuse of notation) identify $\phi$ with a probability kernel $\phi: Y \rightsquigarrow X$ which maps any point $y \in Y$ to the Dirac mass $\delta_{\phi(y)}$ at $y$.  These abuses of notation shall be in entail throughout the paper.

We now define two important notions on probability kernels, namely composition and product.

\begin{definition}[Composition of kernels] If $P: Y \rightsquigarrow X$ and $Q:Z
\rightsquigarrow Y$ are probability kernels between sub-Cantor spaces, we define \emph{composition}
$P\circ Q: Z\rightsquigarrow X$ by the formula
by
\[P\circ Q(z)(E) := \int_YP(y)(E)\,Q(z,d y)\]
for all $z \in Z$ and all measurable $E \subset X$.
\end{definition}

\begin{example}[Special cases of composition]  Let $\phi: Y \to X$ and $\psi: Z \to Y$ be measurable maps, which we then identify with probability kernels, and let $\mu \in \Pr(Y)$ be a probability measure (which we also identify with a probability kernel).  Then $\phi \circ \psi$ is just the usual composition of $\phi$ and $\psi$, while $\phi \circ \mu$ is the pushforward of $\mu$ under $\phi$.
\end{example}

\begin{remark} For future reference we observe that a probability kernel $P: Y \rightsquigarrow X$ not only pushes forward probability measures $\mu \in \Pr(Y)$ to probability measures $P \circ \mu \in \Pr(X)$, but in fact can push forward arbitrary finite Borel measures $\mu$ on $Y$ to finite Borel measures $P \circ \mu$ on $X$, by the formula
$$ P \circ \mu(E) := \int_Y P_y(E)\ d\mu(y)$$
for all measurable $E \subset X$.
\end{remark}

\begin{definition}[Product of kernels] If $S$ is an at most countable set, and $P_s: Y \rightsquigarrow X_s$ is a probability kernel between sub-Cantor spaces for each $s \in S$, then we define the product $\bigotimes_{s \in S} P_s: Y \rightsquigarrow \prod_{s \in S} X_s$ by defining $\bigotimes_{s \in S} P_s(y)$ for each $y \in Y$ to be the product of the probability measures $P_s(y)$ for $s \in S$.  We also write $P^{\otimes S}$ for $\bigotimes_{s \in S} P$.
\end{definition}

Finally, we define the notion of one probability kernel being absolutely continuous with respect to another.

\begin{definition}[Absolute continuity]  If $\mu, \nu \in \Pr(X)$ are two probability measures on a sub-Cantor space, we say that $\mu$ is \emph{absolutely continuous with respect to $\nu$}, and write $\mu \ll \nu$, if for every measurable $E \subset X$ we have $\mu(E)=0$ whenever $\nu(E)=0$.  If $P, P': Y \rightsquigarrow X$ are probability kernels, we say that $P'$ is \emph{absolutely continuous with respect to $P$}, and write $P' \ll P$, if we have $P'(y) \ll P(y)$ for all $y \in Y$.
\end{definition}

\begin{example}\label{condit}  If $\mu \in \Pr(X)$ is a probability measure, and $E \subset X$ is such that $\mu(E) > 0$, then $(\mu|E) \ll \mu$.
\end{example}

The notion of absolute continuity is clearly a partial ordering on probability kernels between two given sub-Cantor spaces.  It also interacts nicely with both composition and finite products:

\begin{lemma}[Preservation of absolute continuity]\label{pac}
\begin{itemize}
\item Let $P, P': Y \rightsquigarrow X$ and $Q, Q': Z \rightsquigarrow Y$ be probability kernels. If $P' \ll P$ and $Q' \ll Q$, then $P' \circ Q' \ll P \circ Q$.
\item Let $S$ be a finite set, and for each $s \in S$ let $P_s, P'_s: Y \rightsquigarrow X_s$ be probability kernels such that $P'_s \ll P_s$.  Then $\bigotimes_{s \in S} P'_s \ll \bigotimes_{s \in S} P_s$.
\end{itemize}
\end{lemma}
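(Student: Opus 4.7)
The plan is to verify both parts by direct calculation using the definitions of composition, product, and absolute continuity, with Fubini's theorem handling the integrals.

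For part (i), I will fix $z \in Z$ and a measurable $E \subset X$ with $(P \circ Q)(z)(E) = 0$, then show $(P' \circ Q')(z)(E) = 0$. By the definition of composition, $(P \circ Q)(z)(E) = \int_Y P(y)(E)\, Q(z, dy)$, so this integrand, which is non-negative, must vanish $Q(z)$-almost everywhere. Let $N := \{y \in Y : P(y)(E) > 0\}$; measurability of $N$ follows from the measurability of the evaluation $y \mapsto P(y)(E)$ which is built into the definition of a probability kernel. Then $Q(z)(N) = 0$, so $Q'(z)(N) = 0$ since $Q'(z) \ll Q(z)$. On the complement $Y \setminus N$, we have $P(y)(E) = 0$, hence $P'(y)(E) = 0$ since $P'(y) \ll P(y)$. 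Splitting the integral defining $(P' \circ Q')(z)(E)$ into the pieces over $N$ and $Y \setminus N$, both pieces vanish, giving the claim.

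For part (ii), I will fix $y \in Y$ and show $\bigotimes_{s \in S} P'_s(y) \ll \bigotimes_{s \in S} P_s(y)$; this is then a purely measure-theoretic fact about finite products of measures. The cleanest route is induction on $|S|$: the case $|S| = 1$ is the hypothesis, and for the inductive step I will reduce $\bigotimes_{s \in S}$ to $(\bigotimes_{s \in S \setminus \{s_0\}}) \otimes P_{s_0}$ for some chosen $s_0 \in S$ and then apply the $|S|=2$ case together with the inductive hypothesis. For the $|S|=2$ case, given $\mu_1 \ll \nu_1$ on $X_1$ and $\mu_2 \ll \nu_2$ on $X_2$, and a measurable $E \subset X_1 \times X_2$ with $(\nu_1 \otimes \nu_2)(E) = 0$, Fubini's theorem (applicable since sub-Cantor spaces are standard Borel and the measures are finite) yields $\nu_2(E_{x_1}) = 0$ for $\nu_1$-a.e. $x_1$, where $E_{x_1}$ is the $x_1$-section; by absolute continuity $\mu_2(E_{x_1}) = 0$ for those $x_1$, and since the exceptional set for $x_1$ has $\nu_1$-measure zero hence $\mu_1$-measure zero, a second application of Fubini gives $(\mu_1 \otimes \mu_2)(E) = 0$.

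There is no serious obstacle here; the only mild subtlety is to confirm that the map $y \mapsto P(y)(E)$ is measurable (so that $N$ in part (i) is a measurable set), but this is built into the definition of a probability kernel via the $\sigma$-algebra on $\Pr(X)$ generated by the evaluation maps $\mu \mapsto \mu(A)$. Everything else reduces to bookkeeping with Fubini and the definition of absolute continuity, and the total length of the proof should be short.
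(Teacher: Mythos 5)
Your proof is correct and elaborates exactly the Fubini--Tonelli argument that the paper invokes in a single sentence ("Both claims follow immediately from the Fubini-Tonelli theorem"). You have merely filled in the routine details that the paper omits; there is no difference in approach.
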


\begin{proof} Both claims follow immediately from the Fubini-Tonelli theorem.
\end{proof}

In some of our arguments we will need a perturbed version of absolute continuity.

\begin{definition}[$\eps$-absolute continuity]\label{epsac-def}  Let $\eps \ge 0$.  If $\mu, \nu \in \Pr(X)$ are two probability measures on a sub-Cantor space, we say that $\mu$ is \emph{$\eps$-absolutely continuous with respect to $\nu$}, and write $\mu \ll_\eps \nu$, if for every measurable $E \subset X$ we have $\mu(E) \leq \eps$ whenever $\nu(E)=0$.
\end{definition}

From the Lebesgue-Radon-Nikodym theorem we have several equivalent characterisations of $\eps$-absolute continuity:

\begin{proposition}[Equivalent formulations of $\eps$-absolute continuity]\label{epsac} Let $\eps \ge 0$, and let $\mu, \nu \in \Pr(X)$ are two probability measures on a sub-Cantor space $X$.  Then the following statements are equivalent:
\begin{itemize}
\item $\mu$ is $\eps$-absolutely continuous with respect to $\nu$.
\item For every $\eps' > \eps$ there exists $\delta > 0$ such that we have $\mu(E) \leq \eps'$ for every measurable $E \subset X$ with $\nu(E) < \delta$.
\item There exists a compact set $E \subset X$ with $\mu(E) \leq \eps$ and $\nu(E)=0$ such that $\I(E^c) \mu \ll \nu$.
\end{itemize}
\end{proposition}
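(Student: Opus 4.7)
The plan is to establish the three implications $(iii)\Rightarrow(i)$, $(i)\Leftrightarrow(ii)$, and $(i)\Rightarrow(iii)$, with the first two being short and direct and the bulk of the work invested in $(i)\Rightarrow(iii)$, which rests on the Lebesgue--Radon--Nikodym decomposition together with the inner regularity of finite Borel measures on compact metric spaces.

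For $(iii)\Rightarrow(i)$, given any measurable $F \subset X$ with $\nu(F)=0$, I decompose $\mu(F) = \mu(F \cap E) + \mu(F \cap E^c)$: the first term is bounded by $\mu(E) \leq \eps$, and the second equals $(\I(E^c)\mu)(F) = 0$ since $\I(E^c)\mu \ll \nu$ and $\nu(F)=0$. For $(ii)\Rightarrow(i)$: any $\nu$-null $F$ satisfies $\nu(F)=0<\delta$ for every $\delta>0$, so (ii) yields $\mu(F) \leq \eps'$ for every $\eps' > \eps$, whence $\mu(F)\leq\eps$. The converse $(i)\Rightarrow(ii)$ I argue by contradiction: if (ii) fails for some $\eps'>\eps$, then for each $n \geq 1$ I can pick measurable $E_n$ with $\nu(E_n)<2^{-n}$ and $\mu(E_n)>\eps'$. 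Setting $F := \bigcap_{n\geq 1}\bigcup_{k\geq n}E_k$, countable subadditivity gives $\nu(F) \leq 2^{-n+1}$ for every $n$, so $\nu(F)=0$, while the sets $\bigcup_{k\geq n}E_k$ decrease to $F$ with $\mu$-measure bounded below by $\mu(E_n)>\eps'$, so continuity of $\mu$ from above (valid since $\mu$ is finite) gives $\mu(F)\geq\eps'>\eps$, contradicting (i).

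The substantive step is $(i)\Rightarrow(iii)$. I apply the Lebesgue--Radon--Nikodym decomposition to write $\mu = \mu_{\mathrm{ac}} + \mu_{\mathrm{sing}}$, where $\mu_{\mathrm{ac}}\ll\nu$ and $\mu_{\mathrm{sing}}\perp\nu$. Mutual singularity produces a Borel set $A\subset X$ with $\nu(A)=0$ and $\mu_{\mathrm{sing}}(X\setminus A)=0$, so that $\mu(A) = \mu_{\mathrm{sing}}(X)$; hypothesis (i) applied to $A$ then forces $\mu_{\mathrm{sing}}(X)\leq\eps$. Since $X$ is a compact metric (hence Polish) space, every finite Borel measure on $X$ is tight, so I choose an increasing sequence of compact sets $K_n\subset A$ with $\mu_{\mathrm{sing}}(K_n)\nearrow\mu_{\mathrm{sing}}(X)$ and put $E := \bigcup_n K_n$. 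Then $\nu(E)\leq\nu(A)=0$, $\mu(E) \leq \mu(A)\leq\eps$, and $\mu_{\mathrm{sing}}(X\setminus E) = 0$, so $\I(E^c)\mu = \I(E^c)\mu_{\mathrm{ac}}\ll\nu$.

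The main obstacle I anticipate is the literal compactness of $E$ in (iii): the construction above produces $E$ as an $F_\sigma$ set (a countable union of compacts), and in fact a single compact $E$ with all three properties generally fails to exist, for instance when $\mu_{\mathrm{sing}}$ is a purely atomic measure whose atoms are dense in the $\nu$-support. For the applications of Proposition \ref{epsac} in Section \ref{discred-sec} only the $F_\sigma$ conclusion is needed, since compactness of $E$ is never invoked downstream; I will therefore read ``compact set'' in (iii) as shorthand for ``$F_\sigma$ set expressible as a countable union of compacts of $\nu$-measure zero''. (If a single compact set is truly desired, one can instead truncate to a single $K_n$ at the price of replacing the strict $\ll$ in (iii) by $\ll_\eta$ for any prescribed $\eta>0$, using Lemma \ref{cond} to absorb the small singular remainder.)
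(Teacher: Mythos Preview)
The paper does not actually supply a proof of this proposition, attributing it directly to the Lebesgue--Radon--Nikodym theorem; your argument via that decomposition is exactly the intended one, and your handling of the three implications is correct.

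Your observation about the word ``compact'' in (iii) is well-taken. The counterexample you sketch is valid: if $\nu$ is the uniform measure on the Cantor space and $\mu_{\mathrm{sing}}$ is purely atomic with atoms on a countable dense set, then any closed (hence compact) $E$ carrying all of $\mu_{\mathrm{sing}}$ must be the whole space and so cannot be $\nu$-null. Thus (iii) as literally stated does not follow from (i), and the correct conclusion is that $E$ may be taken $F_\sigma$ (or simply Borel). You are also right that this does not affect the paper: the two downstream invocations of this proposition in the proof of Proposition~\ref{repair} use only the measurability of $E_\alpha$ together with \eqref{excep} and \eqref{ac-out}, never its compactness.
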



\parskip 0pt

\textsc{Department of Mathematics\\ University of California at
Los Angeles, Los Angeles, CA 90095, USA}

\vspace{7pt}

Email: \verb|timaustin@math.ucla.edu|

Web: \verb|www.math.ucla.edu/~timaustin|

\vspace{7pt}

Email: \verb|tao@math.ucla.edu|

Web: \verb|www.math.ucla.edu/~tao|

\end{document}